\documentclass[reqno,12pt,letterpaper]{amsart}
\usepackage{amsmath,amssymb,amsthm,url,bookmark,hyperref}
\usepackage[pdftex]{graphicx}

\setlength{\textheight}{8in} \setlength{\oddsidemargin}{0.0in}
\setlength{\evensidemargin}{0.0in} \setlength{\textwidth}{6.4in}
\setlength{\topmargin}{0.18in} \setlength{\headheight}{0.18in}
\setlength{\marginparwidth}{1.0in}
\setlength{\abovedisplayskip}{0.2in}
\setlength{\belowdisplayskip}{0.2in}
\setlength{\parskip}{0.05in}

\DeclareGraphicsRule{.1}{mps}{*}{}
\DeclareGraphicsRule{.2}{mps}{*}{}
\DeclareGraphicsRule{.3}{mps}{*}{}
\DeclareGraphicsRule{.4}{mps}{*}{}
\DeclareGraphicsRule{.5}{mps}{*}{}
\DeclareGraphicsRule{.eps}{eps}{*}{}

\newtheorem{theo}{Theorem}
\newtheorem{prop}{Proposition}[section]
\newtheorem{defi}{Definition}[section]
\numberwithin{equation}{section}

\DeclareMathOperator{\Real}{Re}
\DeclareMathOperator{\Imag}{Im}
\DeclareMathOperator{\loc}{loc}
\DeclareMathOperator{\comp}{comp}
\DeclareMathOperator{\const}{const}
\DeclareMathOperator{\supp}{supp}
\DeclareMathOperator{\Hol}{Hol}
\DeclareMathOperator{\cl}{cl}
\DeclareMathOperator{\WFF}{WF}
\def\WF{\WFF_h}
\DeclareMathOperator{\Even}{Even}
\DeclareMathOperator{\Odd}{Odd}
\DeclareMathOperator{\Ker}{Ker}

\DeclareMathOperator{\ad}{ad}
\DeclareMathOperator{\GL}{GL}
\DeclareMathOperator{\sgn}{sgn}
\DeclareMathOperator{\Res}{Res}

\title[Asymptotic distribution of QNMs for Kerr--de Sitter]%
{Asymptotic distribution of quasi-normal modes\\
for Kerr--de Sitter black holes}
\author{Semyon Dyatlov}
\email{dyatlov@math.berkeley.edu}
\address{Department of Mathematics, Evans Hall, University of California,
Berkeley, CA 94720, USA}

\begin{document}

\begin{abstract}
We establish a Bohr--Sommerfeld type condition for quasi-normal modes
of a slowly rotating Kerr--de Sitter black hole, providing their full
asymptotic description in any strip of fixed width. In particular, we
observe a Zeeman-like splitting of the high multipicity modes at $a=0$
(Schwarzschild--de Sitter), once spherical symmetry is broken.  The
numerical results presented in Appendix~\ref{s:numerics} show that the
asymptotics are in fact accurate at very low energies and agree with
the numerical results established by other methods in the physics
literature.  We also we prove that solutions of the wave equation can
be asymptotically expanded in terms of quasi-normal modes; this
confirms the validity of the interpretation of their real
parts as frequencies of oscillations, and imaginary parts as decay
rates of gravitational waves.
\end{abstract}

\maketitle


\addcontentsline{toc}{section}{Introduction}

Quasi-normal modes (QNMs) of black holes are a topic of continued
interest in theoretical physics: from the classical interpretation as
ringdown of gravitational waves~\cite{Chan} to the recent	
investigations in the context of string theory~\cite{HoHu}.  The
ringdown\footnote{Here is an irresistible quote of Chandrasekhar
``\dots we may expect that any intial perturbation will, during its last
stages, decay in a manner characteristic of the black hole itself and
independent of the cause. In other words, we may expect that during
these last stages, the black hole emits gravitational waves with
frequencies and rates of damping that are characteristic of the black
hole itself, in the manner of a bell sounding its last dying notes.''} 
plays a role in experimental projects aimed at the detection of
gravitational waves, such as LIGO~\cite{Ligo}. See~\cite{k-s} for an
overview of the vast physics literature on the topic
and~\cite{b-c-s,k-z-2,k-z-1,y-u-f} for some more recent developments.

In this paper we consider the Kerr--de Sitter model of a rotating
black hole and assume that the speed of rotation $a$ is small; for
$a=0$, one gets the stationary Schwarzschild--de Sitter black hole.
The de Sitter model corresponds to assuming that the cosmological
constant $\Lambda$ is positive, which is consistent with the current
Lambda-CDM standard model of cosmology.

A rigorous definition of quasi-normal modes for Kerr--de Sitter black
holes was given using the scattering resolvent in~\cite{skds}.  In
Theorem 1 below we give an asymptotic description of QNMs in a band of
any fixed width, that is, for any bounded decay rate.  The result
confirms the heuristic analogy with the Zeeman effect: the high
multiplicity modes for the Schwarzschild black hole split.

Theorem 2 confirms the standard interpretation of QNMs as complex
frequencies of exponentially decaying gravitational waves; namely, we
show that the solutions of the scalar linear wave equation in the
Kerr--de Sitter background can be expanded in terms of QNMs.

In the mathematics literature quasi-normal modes of black holes were
studied by Bachelot, Motet-Bachelot, and Pravica~\cite{b-1,b-2,b-3,p}
using the methods of scattering theory. QNMs of Schwarzschild--de
Sitter metric were then investigated by S\'a
Barreto--Zworski~\cite{sb-z}, resulting in the lattice of pseudopoles
given by~\eqref{e:sb-z-q} below. For this case,
Bony--H\"afner~\cite{b-h} established polynomial cutoff resolvent
estimates and a resonance expansion, Melrose--S\'a
Barreto--Vasy~\cite{m-sb-v} obtained exponential decay for solutions
to the wave equation up to the event horizons, and
Dafermos--Rodnianski~\cite{d-r3} used physical space methods to obtain
decay of linear waves better than any power of $t$.

Quasi-normal modes for Kerr--de Sitter were rigorously defined
in~\cite{skds} and exponential decay beyond event horizons was proved
in~\cite{xpd}.  Vasy~\cite{v} has recently obtained a microlocal
description of the scattering resolvent and in particular recovered
the results of~\cite{skds,xpd} on meromorphy of the resolvent and
exponential decay; see~\cite[Appendix]{v} for how his work relates
to~\cite{skds}. The crucial component for obtaining exponential decay
was the work of Wunsch--Zworski~\cite{w-z} on resolvent estimates for
normally hyperbolic trapping.

We add that there have been many papers on decay of linear waves for
Schwarzschild and Kerr black holes~--- see
\cite{a-b,b-s,d-r,d-r2,dss,dss1,fksy,fksy:erratum,ta,t-t,to}
and references given there. In that case the cosmological constant is
0 (unlike in the de Sitter case, where it is positive), and the
methods of scattering theory are harder to apply because of an
asymptotically Euclidean infinity.
\begin{theo}\label{l:theorem-asymptotics}
Fix the mass $M_0$ of the black hole and the cosmological constant
$\Lambda$. (See Section~\ref{s:k-ds} for details.)  Then there exists
a constant $a_0>0$ such that for $|a|<a_0$ and each $\nu_0$,
there exist constants $C_\omega,C_m$%
\footnote{As in~\cite{skds}, the indices $\omega,m,\dots$ next to
constants, symbols, operators, and functions do not imply differentiation.}
such that the set of quasi-normal modes $\omega$ satisfying
\begin{equation}\label{e:region-0}
\Real\omega>C_\omega,\
\Imag\omega>-\nu_0
\end{equation}
coincides modulo $O(|\omega|^{-\infty})$ with the set of pseudopoles
\begin{equation}\label{e:our-q}
\omega=\mathcal F(m,l,k),\
m,l,k\in \mathbb Z,\
0\leq m\leq C_m,\
|k|\leq l.
\end{equation}
(Since the set of QNMs is symmetric with respect to the imaginary axis,
one also gets an asymptotic description for $\Real\omega$ negative. Also,
by~\cite[Theorem~4]{skds}, all QNMs lie in the lower half-plane.)
Here $\mathcal F$ is a complex valued classical symbol%
\footnote{Here `symbol' means a microlocal symbol
as in for example~\cite[Section~8.1]{tay}. For the proofs, however,
we will mostly use semiclassical
symbols, as defined in Section~\ref{s:prelim-pseudor}.} of order 1
in the $(l,k)$ variables, defined and smooth in the cone
$\{m\in [0,C_m],\ |k|\leq l\}\subset \mathbb R^3$. 
The principal symbol $\mathcal F_0$
of $\mathcal F$ is real-valued and
independent of $m$; moreover,
\begin{gather}\label{e:sb-z-q}
\mathcal F={\sqrt{1-9\Lambda M_0^2}\over 3\sqrt 3 M_0}
[(l+1/2)-i(m+1/2)]+O(l^{-1})\text{ for }a=0,\\
\label{e:qc-a}
(\partial_k \mathcal F_0)(m,\pm k,k)
={(2+9\Lambda M_0^2)a\over 27M_0^2}+O(a^2).
\end{gather}
\end{theo}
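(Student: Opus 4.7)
My approach is to exploit the separability of the Kerr--de Sitter wave operator (Carter's constant) to reduce the construction of QNMs to a pair of coupled semiclassical ODE problems, and then to apply a barrier-top Bohr--Sommerfeld analysis in the radial variable together with a standard elliptic quantization in the angular variable. Concretely, writing $u=e^{-i\omega t}e^{ik\phi}S(\theta)R(r)$ splits the stationary problem into an angular equation $\mathcal P_\theta(\omega,k,a)S=\lambda S$ and a radial equation $\mathcal P_r(\omega,k,\lambda,a)R=0$, coupled through the separation constant $\lambda$. A QNM is characterized by simultaneous existence of an angular eigenfunction regular at the poles and a radial solution outgoing at both the event and cosmological horizons (as in \cite{skds}). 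Setting $h=1/\Real\omega$ and rescaling, both equations become semiclassical: the angular operator is an elliptic self-adjoint (for real $\omega$) Sturm--Liouville problem on $[0,\pi]$, while the radial operator is of semiclassical Schr\"odinger type with an effective potential $V_\lambda(r)$ whose only obstruction to non-trapping is a non-degenerate maximum at the photon sphere.

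Step one is the angular side: treat $\mathcal P_\theta$ as an $h$-pseudodifferential operator depending analytically on $\omega$ and $a$, and show that its eigenvalues form a smooth family $\lambda=\Lambda(h;\omega,k,a)$ indexed by $l\in\mathbb Z$, $|k|\le l$, satisfying a Bohr--Sommerfeld condition whose principal symbol reduces, at $a=0$, to the spherical-harmonic value $h^2 l(l+1)$. The small parameter $a$ enters as a smooth perturbation, and $\Lambda$ is a classical symbol of order $2$ in $(l,k)$. Step two is the radial side: with $\lambda$ frozen, $\mathcal P_r$ becomes a one-dimensional semiclassical problem on the exterior region whose Hamilton flow is normally hyperbolic at the single trapped orbit over the photon sphere, inherited from the Schwarzschild--de Sitter structure for $a$ small by \cite{w-z,v}. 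I would then invoke the Sj\"ostrand-type complex Bohr--Sommerfeld quantization at a non-degenerate barrier maximum (the framework behind \cite{sb-z,b-h}): resonances lie on a lattice in $\omega$ labelled by an integer $m\ge 0$ counting the transverse quantum number at the top of the barrier, with imaginary part $\sim -i(m+1/2)$ times the expansion rate divided by the period. This yields an equation $G(\omega,\lambda,k,a,m)=0$ with nonvanishing $\partial_\omega G$ for $\Real\omega$ large.

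Step three is to close the loop: substitute $\lambda=\Lambda(h;\omega,k,a)$ into $G=0$ and solve for $\omega$ by the implicit function theorem, obtaining $\omega=\mathcal F(m,l,k)$ as a classical symbol of order $1$ in $(l,k)$. The principal symbol $\mathcal F_0$ is determined by the real symbols of the two quantization conditions and is therefore $m$-independent; setting $a=0$ one recovers from the explicit Schwarzschild--de Sitter photon-sphere radius $r_0=3M_0$ the formula \eqref{e:sb-z-q}, reproducing \cite{sb-z}. The first-order correction in $a$ at the extremal azimuthal modes $k=\pm l$ comes from the shift of the trapped orbit's angular velocity under rotation; computing $\partial_k \mathcal F_0$ along $k=\pm l$ by differentiating the Bohr--Sommerfeld condition yields \eqref{e:qc-a}.

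The hardest step is the second one, for two linked reasons. First, the semiclassical parameter is $h=1/\Real\omega$ while $\omega$ itself is complex with a bounded imaginary part, so one must run the Bohr--Sommerfeld analysis on an operator that is not self-adjoint; this requires a Grushin-problem reduction near the trapped set and control of the subprincipal behavior to all orders, the source of the eventual $O(|\omega|^{-\infty})$ error. Second, one must verify that the trapped set for $\mathcal P_r$ remains a single, non-degenerate, normally hyperbolic orbit uniformly for $|a|<a_0$ and for all $(l,k,\lambda)$ in the relevant symbolic range; combined with the uniformity needed in the angular quantization as $\omega$ varies in a strip, this is where the bulk of the work goes. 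The final formulas \eqref{e:sb-z-q}--\eqref{e:qc-a} then fall out of explicit computations with the Schwarzschild--de Sitter photon sphere and the leading $a$-dependence of the Kerr--de Sitter Hamiltonian.
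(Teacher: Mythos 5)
Your proposal follows essentially the same route as the paper: Teukolsky separation of variables, an angular Bohr--Sommerfeld condition $\lambda=\Lambda(h;\omega,k,a)$, a radial barrier-top resonance condition at the photon sphere, and an implicit-function-theorem matching step giving $\omega=\mathcal F(m,l,k)$; this is precisely how the paper proceeds (equation~\eqref{e:final-q-eq} and Propositions~\ref{l:radial}, \ref{l:angular}).

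One substantive point of disagreement, worth flagging: you identify the radial barrier-top step as the hardest and say it is where the Grushin reduction and the all-orders subprincipal control are needed. The paper's own assessment is the reverse. The radial condition, while requiring an exact-WKB connection of the outgoing data at the horizons to the hyperbolic normal form at the barrier top, follows the known scheme of Ramond and Sj\"ostrand/Colin de Verdi\`ere--Parisse and yields the $\beta\in -ih\mathbb N$ condition fairly directly (Section~\ref{s:radial}). The genuine obstacle is the angular problem: for $a\neq 0$ and $\Imag\omega=O(1)$, $P_\theta$ is an $O(h)$ non-self-adjoint perturbation, the underlying integrable system $(p_{10},p_{20})$ degenerates on the two equatorial circles $\widetilde K_\pm$ (so ``standard elliptic quantization'' fails there and one must glue a global condition from local normal forms \`a la V\~u Ng\d oc), and above all the joint spectrum of $P_1,P_2$ cannot be reached by spectral theory. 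Existence of joint eigenfunctions relies on \emph{exact} commutation, while microlocal normal forms only give $[P_1,P_2]=O(h^\infty)$; reconciling the two is what forces the multi-operator Grushin problem of Appendix~\ref{s:prelim-grushin}. Your plan as written treats the angular step as routine perturbation theory of a Sturm--Liouville problem, which would not by itself produce the rank-one decomposition and polynomial resolvent bounds packaged in Definition~\ref{d:asymptotics} --- and those bounds are needed both to identify the poles with the lattice and for Theorem~\ref{l:theorem-resdec} downstream.
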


The pseudopoles~\eqref{e:our-q} can be computed numerically; we have
implemented this computation in a special case $l-|k|=O(1)$ and
compared the pseudopoles with the QNMs computed by the authors
of~\cite{b-c-s}.  The results are described in
Appendix~\ref{s:numerics}.  One should note that the quantization
condition of~\cite{sb-z} was stated up to~$O(l^{-1})$ error, while
Theorem~\ref{l:theorem-asymptotics} has error $O(l^{-\infty})$; we
demonstrate numerically that increasing the order of the quantization
condition leads to a substantially better approximation.

Another difference between~\eqref{e:our-q} and the quantization
condition of~\cite{sb-z} is the extra parameter $k$, resulting from
the lack of spherical symmetry of the problem.  In fact, for $a=0$
each pole in~\eqref{e:sb-z-q} has multiplicity $2l+1$; for $a\neq 0$
this pole splits into $2l+1$ distinct QNMs, each corresponding to its
own value of $k$, the angular momentum with respect to the axis of
rotation. (The resulting QNMs do not coincide for small values of $a$,
as illustrated by~\eqref{e:qc-a}).  In the physics literature this is
considered an analogue of the Zeeman effect.

Since the proof of Theorem~\ref{l:theorem-asymptotics} only uses
microlocal analysis away from the event horizons, it implies
estimates on the cutoff resolvent polynomial in $\omega$
(Proposition~\ref{l:main-lemma-2}).  Combining these with the detailed
analysis away from the trapped set (and in particular near the event
horizons) by Vasy~\cite{v}, we obtain estimates on the resolvent on
the whole space (Proposition~\ref{l:main-lemma}).  These in turn allow
a contour deformation argument leading to an expansion of waves in
terms of quasinormal modes.  Such expansions have a long tradition in
scattering theory going back to Lax--Phillips and Vainberg~---
see~\cite{t-z} for the strongly trapping case and for references.

For Schwarzschild-de Sitter black holes a full expansion involving
infinite sums over quasinormal modes was obtained in~\cite{b-h} (see
also \cite{c-z} for simpler expansions involving infinite sums over
resonances).  The next theorem presents an expansion of waves for
Kerr--de Sitter black holes in the same style as the Bony--H\"afner
expansion:
\begin{theo}\label{l:theorem-resdec}
Under the assumptions of Theorem~\ref{l:theorem-asymptotics}, take
$\nu_0>0$ such that for some $\varepsilon>0$, every QNM $\omega$ has
$|\Imag\omega+\nu_0|>\varepsilon$. (Such $\nu_0$ exists and can be
chosen arbitrarily large, as the imaginary parts of QNMs lie within
$O(|a|+l^{-1})$ of those in~\eqref{e:sb-z-q}.)  Then for $s$ large
enough depending on $\nu_0$, there exists a constant $C$ such that
every solution $u$ to the Cauchy problem on the Kerr--de Sitter space
\begin{equation}\label{e:cauchy-problem}
\Box_g u=0,\
u|_{t^*=0}=f_0\in H^{s}(X_\delta),\
\partial_{t^*}u|_{t^*=0}=f_1\in H^{s-1}(X_\delta),
\end{equation}
where $X_{\delta}=(r_--\delta,r_++\delta)\times \mathbb S^2$ is
the space slice, $t^*$ is the time variable, and $\delta>0$ is a small constant
(see Section~\ref{s:k-ds} for details), satisfies for $t^*>0$,
\begin{equation}\label{e:resonance-est}
\|u(t^*)-\Pi_{\nu_0}(f_0,f_1)(t^*)\|_{H^1(X_\delta)}
\leq Ce^{-\nu_0 t^*}(\|f_0\|_{H^s}+\|f_1\|_{H^{s-1}}).
\end{equation}
Here
\begin{equation}\label{e:resonance-exp}
\Pi_{\nu_0}(f_0,f_1)(t^*)=\sum_{\Imag\widehat\omega>-\nu_0}
e^{-it^*\widehat\omega}\sum_{0\leq j<J_{\widehat\omega}} (t^*)^j
\Pi_{\widehat\omega,j}(f_0,f_1);
\end{equation}
the outer sum is over QNMs $\widehat\omega$, $J_{\widehat\omega}$ is the
algebraic multiplicity of $\widehat\omega$ as a pole of the scattering
resolvent, and $\Pi_{\widehat\omega,j}$ are finite rank operators mapping
$H^s(X_\delta)\oplus H^{s-1}(X_\delta)\to
C^\infty(X_\delta)$. Moreover, for $|\widehat\omega|$ large enough (that
is, for all but a finite number of QNMs in the considered strip),
$J_{\widehat\omega}=1$, $\Pi_{\widehat\omega,0}$ has rank one, and
$$
\|\Pi_{\widehat\omega,0}\|_{H^s(X_\delta)\oplus H^{s-1}(X_\delta)
\to H^1(X_\delta)}\leq C|\widehat\omega|^{N-s}.
$$
Here $N$ is a constant depending on $\nu_0$, but not on $s$;
therefore, the series~\eqref{e:resonance-exp} converges in $H^1$ for
$s>N+2$.
\end{theo}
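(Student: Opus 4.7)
\medskip

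\noindent\textbf{Proof plan.} My plan is to reduce the Cauchy problem to an inverse Fourier transform of the scattering resolvent and then deform the integration contour downward, collecting residues at the quasi-normal modes. This is the Lax--Phillips/Vainberg/Tang--Zworski strategy and, most directly, the Bony--H\"afner approach \cite{b-h} in the Schwarzschild--de Sitter case.

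First, I would extend the solution $u$ to $\{t^*<0\}$ by zero and compute $\Box_g \widetilde u = g$, where $g$ is a distribution supported at $\{t^*=0\}$ that depends linearly on $(f_0,f_1)$ and involves at most one $\partial_{t^*}$ derivative of a $\delta$-function. Taking the Fourier transform in $t^*$ along a horizontal contour $\Imag\omega=C_0$ placed above every QNM (this is possible by~\cite[Theorem 4]{skds}), and using $t^*$-invariance of the metric, produces a stationary equation $P(\omega)\widehat u(\omega)=\widehat g(\omega)$ that is inverted by the scattering resolvent $R(\omega)$. Inverting the transform gives the representation
\begin{equation*}
u(t^*)=\frac{1}{2\pi}\int_{\Imag\omega=C_0}e^{-it^*\omega}R(\omega)\widehat g(\omega)\,d\omega,
\end{equation*}
in which $\widehat g(\omega)$ is a polynomial of degree one in $\omega$ with coefficients linear in $(f_0,f_1)$.

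Next, I would shift the contour from $\Imag\omega=C_0$ down to $\Imag\omega=-\nu_0$. The separation hypothesis $|\Imag\widehat\omega+\nu_0|>\varepsilon$ keeps the deformed contour away from poles, and Theorem~\ref{l:theorem-asymptotics} guarantees that only finitely many QNMs lie in any vertical strip $|\Real\omega|\le R$ inside $\{\Imag\omega>-\nu_0\}$, with the asymptotic lattice governing the rest. Closing the two contours picks up the residues of $e^{-it^*\omega}R(\omega)\widehat g(\omega)$ at each QNM $\widehat\omega$; these residues have exactly the form $e^{-it^*\widehat\omega}\sum_{j<J_{\widehat\omega}}(t^*)^j \Pi_{\widehat\omega,j}(f_0,f_1)$ because the Laurent expansion of $R(\omega)$ at a pole of a meromorphic Fredholm family has finite-rank coefficients, giving the main term $\Pi_{\nu_0}(f_0,f_1)$ in~\eqref{e:resonance-exp}.

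The error is the remaining integral along $\Imag\omega=-\nu_0$, and to bound it in $H^1(X_\delta)$ I would invoke Proposition~\ref{l:main-lemma}, which furnishes a polynomial bound $\|R(\omega)\|_{H^{s_0-1}\to H^{s_0}}\le C|\omega|^N$ on this line. This proposition is itself obtained by combining the semiclassical estimate at the trapped set (Proposition~\ref{l:main-lemma-2}, derived from Theorem~\ref{l:theorem-asymptotics}) with Vasy's~\cite{v} Fredholm framework near the event horizons. Trading $s$ derivatives on the initial data for $|\omega|^{-s}$ factors on $\widehat g$ yields an integrand of size $O(|\omega|^{N-s+1})$ times $e^{\nu_0 t^*-it^*\omega}$, which is absolutely integrable once $s>N+2$; the factor $e^{-it^*\omega}$ on the contour contributes the bound $e^{-\nu_0 t^*}$ of~\eqref{e:resonance-est}. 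The rank-one statement and the norm bound $\|\Pi_{\widehat\omega,0}\|\le C|\widehat\omega|^{N-s}$ for $|\widehat\omega|$ large follow by expressing $\Pi_{\widehat\omega,0}$ as a Cauchy integral of $R(\omega)$ on a small circle of fixed radius around $\widehat\omega$ (permissible because Theorem~\ref{l:theorem-asymptotics} shows the pseudopoles are uniformly separated) and inserting the same polynomial resolvent bound; simplicity $J_{\widehat\omega}=1$ reflects simplicity of the Bohr--Sommerfeld pseudopoles, which forces the local inverse to have a simple pole with one-dimensional residue.

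The main obstacle will be making the contour deformation uniform as $|\Real\omega|\to\infty$: one must justify the vanishing of the contribution from $|\Real\omega|=R$ in the limit $R\to\infty$, and one must have the polynomial resolvent bound on $\Imag\omega=-\nu_0$ hold \emph{uniformly} on the whole line. The latter is exactly what is packaged in Proposition~\ref{l:main-lemma}; the former requires a careful accounting of how $\widehat g(\omega)$ is controlled at high frequency (analytic in a strip, with at most linear polynomial growth), and a Phragm\'en--Lindel\"of-type argument using that the original contour at $\Imag\omega=C_0$ is above all poles. Once these quantitative pieces are in place the expansion and error estimate fall out as above.
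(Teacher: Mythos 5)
Your overall strategy---Fourier--Laplace transform, meromorphic continuation via the scattering resolvent, and a downward contour deformation collecting residues at the quasi-normal modes---is the same as the paper's. However there are two concrete gaps in how you set it up.

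\textbf{Reduction of the Cauchy problem.} You extend $u$ by zero to $\{t^*<0\}$ and take $g=\Box_g\widetilde u$, a distribution supported on $\{t^*=0\}$ whose Fourier transform $\widehat g(\omega)$ is a polynomial of degree one in $\omega$. This $\widehat g$ \emph{grows} as $|\omega|\to\infty$, and the spatial regularity of $(f_0,f_1)$ does not convert into decay in $\omega$: the $\omega$-dependence of $\widehat g$ sits entirely in the prefactor $\omega^j$, not in the initial data. Consequently the integrand $R(\omega)\widehat g(\omega)$ on $\{\Imag\omega=-\nu_0\}$ is at best $O(|\omega|^{N+1})$ in any fixed Sobolev norm, and the contour integral in~\eqref{e:contour-deformation} does not converge absolutely for any $s$. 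The ``trading $s$ derivatives for $|\omega|^{-s}$ factors'' step you invoke does not go through in this setup. The paper avoids this by multiplying $u$ by a \emph{smooth} time cutoff $\chi(t^*)$ with $\supp\chi\subset\{t^*>0\}$ and $\supp(1-\chi)\subset\{t^*<1\}$, so that $f=[\Box_g,\chi]u$ is a genuine $H^{s-1}$ function of $(t^*,x)$ compactly supported in $\{0\le t^*\le 1\}$. This is what produces the crucial bound~\eqref{e:f-hat-estimate}, giving $\|\widehat f(\omega)\|_{H^{s-1}_{\langle\omega\rangle^{-1}}}=O(\langle\omega\rangle^{-(s-1)})$, and hence $O(|\omega|^{N-s})$ decay after applying the resolvent. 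Without this smooth cutoff, the whole quantitative argument collapses.

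\textbf{Side contours.} You propose a ``Phragm\'en--Lindel\"of-type argument'' to dismiss the contributions on $\{|\Real\omega|=R\}$ as $R\to\infty$. This is not the right tool: the obstruction is not growth in the strip, it is that a fixed vertical segment may pass arbitrarily close to a QNM, where $R(\omega)$ blows up. The paper handles this with a pigeonhole argument that is specific to the resonance structure provided by Theorem~\ref{l:theorem-asymptotics}: there are only $O(h^{-2})$ QNMs in a dyadic window $\{h^{-1}<|\Real\omega|<2h^{-1}\}$, so one can choose $\omega_0(h)\in[h^{-1},2h^{-1}]$ with no QNM within $h^2$ of the segments $\gamma_\pm(h)=\{\Real\omega=\pm\omega_0(h),\ -\nu_0\le\Imag\omega\le C_e\}$. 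Combined with the meromorphic decomposition from Definition~\ref{d:asymptotics}, this yields a polynomial bound on $R(\omega)$ along $\gamma_\pm(h)$, so that after multiplying by $\widehat f(\omega)=O(h^{s-1})$ the side contributions vanish as $h\to 0$ for $s$ large. Your argument would need to be replaced by, or supplemented with, something of this kind; a Phragm\'en--Lindel\"of estimate alone does not control the resolvent near poles.

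The remainder of your outline---the form of the residues, the invocation of Proposition~\ref{l:main-lemma} for the uniform polynomial bound on $\{\Imag\omega=-\nu_0\}$ with $N$ independent of $s$, and the rank-one statement for large $|\widehat\omega|$ from the simplicity of the pseudopoles---matches the paper. But the two points above are load-bearing and the proposal as written would not close.
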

The proofs start with the Teukolsky separation of variables used
in~\cite{skds}, which reduces our problem to obtaining quantization
conditions and resolvent estimates for certain radial and angular
operators (Propositions~\ref{l:radial} and~\ref{l:angular}).  These
conditions are stated and used to obtain
Theorems~\ref{l:theorem-asymptotics} and~\ref{l:theorem-resdec} in
Section~\ref{s:separation}. Also, at the end of
Section~\ref{s:separation2} we present the separation argument in the
simpler special case $a=0$, for convenience of the reader.

In the spherically symmetric case $a=0$, the angular problem is the
eigenvalue problem for the Laplace--Beltrami operator on the round
sphere. For $a\neq 0$, the angular operator $P_\theta$
is not selfadjoint; however, in the semiclassical scaling it is an
operator of real principal type with completely integrable Hamiltonian
flow. We can then use some of the methods of~\cite{h-s} to obtain a
microlocal normal form for $h^2P_\theta$; since our perturbation is
$O(h)$, we are able to avoid using analyticity of the coefficients of
$P_\theta$.  The quantization condition we get is global, similarly
to~\cite{svn}. The proof is contained in Section~\ref{s:angular}; it
uses various tools from semiclassical analysis described in
Section~\ref{s:prelim}.

To complete the proof of the angular quantization condition, we need
to extract information about the joint spectrum of $h^2P_\theta$ and
$hD_\varphi$ from the microlocal normal form; for that, we formulate a
Grushin problem for several commuting operators.  The problem that
needs to be overcome here is that existence of joint spectrum is only
guaranteed by exact commutation of the original operators, while
semiclassical methods always give $O(h^\infty)$ errors. This
complication does not appear in~\cite{h-s,h-s-svn} as they study the
spectrum of a single operator, nor in earlier works~\cite{ch,svn} on
joint spectrum of differential operators, as they use spectral theory
of selfadjoint operators.  Since this part of the construction can be
formulated independently of the rest, we describe Grushin problems for
several operators in an abstract setting in Appendix~\ref{s:prelim-grushin}.

The radial problem is equivalent to one-dimensional semiclassical
potential scattering.  The principal part of the potential is
real-valued and has a unique quadratic maximum; the proof of the
quantization condition follows the methods developed
in~\cite{cdv-p,ra,sj2}.  In~\cite{cdv-p}, the microlocal behavior of
the principal symbol near a hyperbolic critical point is studied in
detail; however, only self-adjoint operators are considered and the
phenomenon that gives rise to resonances in our case does not
appear. The latter phenomenon is studied in~\cite{ra} and~\cite{sj2};
our radial quantization condition, proved in Section~\ref{s:radial},
can be viewed as a consequence of~\cite[Theorems~2
and~4]{ra}. However, we do not compute the scattering matrix, which
simplifies the calculations; we also avoid using analyticity of the
potential near its maximum and formulate the quantization condition by
means of real microlocal analysis instead of the action integral in
the complex plane.  As in~\cite{ra}, we use analyticity of the
potential near infinity and the exact WKB method to relate the
microlocal approximate solutions to the outgoing condition at
infinity; however, the construction is somewhat simplified compared
to~\cite[Sections~2 and~3]{ra} using the special form of the
potential.

It would be interesting to see whether our statements still hold if
one perturbs the metric, or if one drops the assumption of smallness
of $a$. Near the event horizons, we rely on~\cite[Section~6]{skds},
which uses a perturbation argument (thus smallness of $a$) and
analyticity of the metric near the event horizons.  Same applies to
Section~\ref{s:radial-wkb} of the present paper; the exact WKB
construction there requires analyticity and
Proposition~\ref{l:u-pm-wkb} uses that the values $\omega_\pm$ defined
in~\eqref{e:omega-pm} are nonzero, which might not be true for large
$a$. However, it is very possible that the construction of the
scattering resolvent of~\cite{v} can be used instead. The methods
of~\cite{v} are stable under rather general perturbations,
see~\cite[Section~2.7]{v}, and apply in particular to Kerr--de Sitter
black holes with $a$ satisfying~\cite[(6.12)]{v}.

A more serious problem is the fact that
Theorem~\ref{l:theorem-asymptotics} is a quantization condition, and
thus is expected to hold only when the geodesic flow is completely
integrable, at least on the trapped set. For large $a$, the separation
of variables of Section~\ref{s:separation2} is still valid, and it is
conceivable that the global structure of the angular integrable system
in Section~\ref{s:angular-hamiltonian} and of the radial barrier-top
Schr\"odinger operator in Section~\ref{s:radial-trapping} would be
preserved, yielding Theorem~\ref{l:theorem-asymptotics} in this case.
Even then, the proof of Theorem~\ref{l:theorem-resdec} no longer
applies as it relies on having gaps between the imaginary parts of
resonances, which might disappear for large $a$.

However, a generic smooth perturbation of the metric supported near
the trapped set will destroy complete integrability and thus any hope
of obtaining Theorem~\ref{l:theorem-asymptotics}. One way of dealing
with this is to impose the condition that the geodesic flow is
completely integrable on the trapped set. In principle, the global
analysis of~\cite{svn} together with the methods for handling $O(h)$
nonselfadjoint perturbations developed in Section~\ref{s:angular} and
Appendix~\ref{s:prelim-grushin} should provide the quantization
condition in the direction of the trapped set, while the barrier-top
resonance analysis of Section~\ref{s:radial-barrier} should handle the
transversal directions.  However, without separation of variables one
might need to merge these methods and construct a normal form at the
trapped set which is not presented here.

Another possibility is to try to establish
Theorem~\ref{l:theorem-resdec} without a quantization condition,
perhaps under the (stable under perturbations) assumption that the
trapped set is normally hyperbolic as in~\cite{w-z}.  However, this
will require to rethink the contour deformation argument, as it is not
clear which contour to deform to when there is no stratification of
resonances by depth, corresponding to the parameter $m$ in
Theorem~\ref{l:theorem-asymptotics}.

\section{Proofs of Theorems~\ref{l:theorem-asymptotics}
and~\ref{l:theorem-resdec}}\label{s:separation}

\subsection{Kerr--de Sitter metric}\label{s:k-ds}

First of all, we define Kerr--de Sitter metric and briefly review how
solutions of the wave equation are related to the scattering
resolvent; see also~\cite[Section~1]{skds} and~\cite[Section~6]{v}.
The metric is given by
$$
\begin{gathered}
g=-\rho^2\Big({dr^2\over \Delta_r}+{d\theta^2\over\Delta_\theta}\Big)
-{\Delta_\theta\sin^2\theta\over (1+\alpha)^2\rho^2}
(a\,dt-(r^2+a^2)\,d\varphi)^2\\
+{\Delta_r\over (1+\alpha)^2\rho^2}
(dt-a\sin^2\theta\,d\varphi)^2.
\end{gathered}
$$
Here $\theta\in [0,\pi]$ and $\varphi\in \mathbb R/2\pi \mathbb Z$ are
the spherical coordinates on $\mathbb S^2$ and $r,t$ take values in
$\mathbb R$; $M_0$ is the mass of the black hole, $\Lambda$ is the
cosmological constant, and $a$ is the angular momentum;
$$
\begin{gathered}
\Delta_r=(r^2+a^2)\Big(1-{\Lambda r^2\over 3}\Big)-2M_0r,\
\Delta_\theta=1+\alpha\cos^2\theta,\\
\rho^2=r^2+a^2\cos^2\theta,\
\alpha={\Lambda a^2\over 3}.
\end{gathered}
$$
The metric in the $(t,r,\theta,\varphi)$ coordinates is defined for
$\Delta_r>0$; we assume that this happens on an open interval $r\in
(r_-,r_+)$, where $r_\pm$ are two of the roots of the fourth order
polynomial equation $\Delta_r(r)=0$. The metric becomes singular at
$r=r_\pm$; however, this apparent singularity goes away if we consider
the following version of the Kerr-star coordinates
(see~\cite[Section~5.1]{d-r} and~\cite{t-t}):
\begin{equation}\label{e:kerr-star}
t^*=t-F_t(r),\
\varphi^*=\varphi-F_\varphi(r),
\end{equation}
with the functions $F_t,F_\varphi$ blowing up like $c_\pm\log
|r-r_\pm|$ as $r$ approaches $r_\pm$. One can choose $F_t,F_\varphi$
so that the metric continues smoothly across the surfaces
$\{r=r_\pm\}$, called event horizons, to
$$
M_\delta=\mathbb R_t\times X_\delta,\
X_\delta=(r_--\delta,r_++\delta)\times \mathbb S^2,
$$
with $\delta>0$ is a small constant. Moreover, the surfaces
$\{t^*=\const\}$ are spacelike, while the surfaces $\{r=\const\}$ are
timelike for $r\in (r_-,r_+)$, spacelike for $r\not\in [r_-,r_+]$, and
null for $r\in \{r_-,r_+\}$.  See~\cite[Section~1]{skds},
\cite[Section~1.1]{xpd}, or~\cite[Section~6.4]{v} for more information
on how to construct $F_t,F_\varphi$ with these properties.

Let $\Box_g$ be the d'Alembert--Beltrami operator of the Kerr--de
Sitter metric.  Take $f\in H^{s-1}(M_\delta)$ for some $s\geq 1$, and
furthermore assume that $f$ is supported in $\{0\leq t^*\leq
1\}$. Then, since the boundary of $M_\delta$ is spacelike and every
positive time oriented vector at $\partial M_\delta$ points outside of
$M_\delta$, by the theory of hyperbolic equations (see for
example~\cite[Proposition~3.1.1]{d-r}
or~\cite[Sections~2.8 and~7.7]{tay}) there exists unique solution
$u\in H^s_{\loc}(M_\delta)$ to the problem
\begin{equation}\label{e:we-rhs}
\Box_g u=f,\ \supp u\subset \{t^*\geq 0\}.
\end{equation}
We will henceforth consider the problem~\eqref{e:we-rhs}; the Cauchy
problem~\eqref{e:cauchy-problem} can be reduced to~\eqref{e:we-rhs} as
follows. Assume that $u$ solves~\eqref{e:cauchy-problem} with some
$f_0\in H^s(X_\delta)$, $f_1\in H^{s-1}(X_\delta)$.  Take a function
$\chi\in C^\infty(\mathbb R)$ such that $\supp\chi\subset \{t^*>0\}$
and $\supp (1-\chi)\subset \{t^*<1\}$; then $\chi(t^*)u$
solves~\eqref{e:we-rhs} with $f=[\Box_g,\chi] u$ supported in $\{0\leq
t^*\leq 1\}$ and the $H^{s-1}$ norm of $f$ is controlled by
$\|f_0\|_{H^s}+\|f_1\|_{H^{s-1}}$.

Since the metric is stationary, there exists a constant $C_e$ such
that every solution $u$ to~\eqref{e:we-rhs} grows slower than
$e^{(C_e-1)t^*}$; see~\cite[Proposition~1.1]{skds}. Therefore, the
Fourier--Laplace transform
$$
\hat u(\omega)=\int e^{i\omega t^*} u(t^*)\,dt^*
$$
is well-defined and holomorphic in $\{\Imag\omega\geq C_e\}$.  Here
both $u(t^*)$ and $u(\omega)$ are functions on $X_\delta$.  Moreover,
if $\hat f(\omega)$ is the Fourier--Laplace transform of $f$, then
\begin{equation}\label{e:scattering-eq}
P_g(\omega)\hat u(\omega)=\rho^2\hat f(\omega),\
\Imag\omega\geq C_e,
\end{equation}
where $P_g(\omega)$ is the stationary d'Alembert--Beltrami operator,
obtained by replacing $D_{t^*}$ with $-\omega$ in $\rho^2\Box_g$. (The
$\rho^2$ factor will prove useful in the next subsection.)  Finally,
since $f$ is supported in $\{0\leq t^*\leq 1\}$, the function $\hat
f(\omega)$ is holomorphic in the entire $\mathbb C$, and
\begin{equation}\label{e:f-hat-estimate}
\|\langle\omega\rangle^{s-1}\hat f(\omega)\|_{H^{s-1}_{\langle\omega\rangle^{-1}}(X_\delta)}
\leq C \|f\|_{H^{s-1}(M_\delta)} 
\end{equation}
for $\Imag\omega$ bounded by a fixed constant. Here $H^{s-1}_h$,
$h>0$, is the semiclassical Sobolev space, consisting of the same
functions as $H^{s-1}$, but with norm $\|\langle
hD\rangle^{s-1}f\|_{L^2}$ instead of $\|\langle
D\rangle^{s-1}f\|_{L^2}$.

If $P_g(\omega)$ was, say, an elliptic operator, then the
equation~\eqref{e:scattering-eq} would have many solutions; however,
because of the degeneracies occuring at the event horizons, the
requirement that $\hat u\in H^s$ acts as a boundary condition. This
situation was examined in detail in~\cite{v}; the following
proposition follows from~\cite[Theorem~1.2 and Lemma~3.1]{v}
(see~\cite[Proposition~1.2]{skds} for the cutoff version):
\begin{prop}\label{l:mellin-transform}
Fix $\nu_0>0$. Then for $s$ large enough depending on $\nu_0$, there
exists a family of operators (called the scattering resolvent)
$$
R(\omega):H^{s-1}(X_\delta)\to H^{s}(X_\delta),\
\Imag\omega\geq -\nu_0,
$$
meromorphic with poles of finite rank and such that for $u$
solving~\eqref{e:we-rhs}, we have
\begin{equation}\label{e:scattering-eq2}
\hat u(\omega)=R(\omega)\hat f(\omega),\
\Imag\omega\geq C_e.
\end{equation}
\end{prop}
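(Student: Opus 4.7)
The plan is to construct $R(\omega)$ as the meromorphic family of inverses of the stationary operator $P_g(\omega)$ on $X_\delta$, using Vasy's microlocal framework, and then to verify the identity~\eqref{e:scattering-eq2} by analytic continuation from the half-plane $\{\Imag\omega\geq C_e\}$, where it is forced by~\eqref{e:scattering-eq} together with the exponential bound on $u$.

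The first observation is that in the Kerr-star coordinates~\eqref{e:kerr-star} the metric extends smoothly across the event horizons, and the spatial boundary components of $X_\delta$ are spacelike. The operator $P_g(\omega)$ is then a polynomial (in $\omega$) family of second-order differential operators with smooth coefficients on $X_\delta$: elliptic in the interior of $(r_-,r_+)$, but with principal symbol vanishing on radial source/sink submanifolds of $T^*X_\delta$ sitting over $\{r=r_\pm\}$. I would then set up Fredholm theory for $P_g(\omega):\mathcal X^s\to\mathcal Y^{s-1}$ on suitable anisotropic Sobolev-type spaces that absorb the degeneracy at $\{r=r_\pm\}$ into the choice of function space. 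The required ingredients, packaged in~\cite[Theorem~1.2]{v}, are interior elliptic regularity off the characteristic set, real-principal-type propagation of singularities, radial point estimates at $\{r=r_\pm\}$ (which impose a threshold condition of the form $s>\tfrac12+C\nu_0$ tied to the surface gravities, thereby fixing how large $s$ must be chosen in terms of $\nu_0$), and the Wunsch--Zworski estimate~\cite{w-z} at the normally hyperbolic trapped set. Together these show that $P_g(\omega):\mathcal X^s\to\mathcal Y^{s-1}$ is a holomorphic Fredholm family of index zero on $\{\Imag\omega>-\nu_0\}$, and invertible for $\Imag\omega$ sufficiently large (a standard estimate in which the imaginary spectral parameter dominates). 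Analytic Fredholm theory then yields the meromorphic inverse $R(\omega)$ with finite-rank poles.

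To verify~\eqref{e:scattering-eq2} on $\{\Imag\omega\geq C_e\}$, the exponential bound on $u$ and the support condition $\supp u\subset\{t^*\geq 0\}$ ensure that $\hat u(\omega)$ lies in the Fredholm domain $\mathcal X^s$ for $\Imag\omega\geq C_e$, so one may apply $R(\omega)$ to both sides of~\eqref{e:scattering-eq} and conclude. The main obstacle throughout is the radial point analysis at the event horizons, where $P_g(\omega)$ changes type: this is what dictates the quantitative lower bound on $s$ in terms of $\nu_0$, and it is exactly the step supplied by~\cite[Theorem~1.2 and Lemma~3.1]{v}. Once Vasy's result is invoked, the proposition is an essentially routine rephrasing in the Fourier--Laplace setting, in parallel with the cutoff version~\cite[Proposition~1.2]{skds}.
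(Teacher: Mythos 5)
Your proposal matches the paper's approach: the paper does not prove this proposition but simply cites Vasy's Fredholm-theoretic construction of the scattering resolvent (\cite[Theorem~1.2 and Lemma~3.1]{v}), and your sketch is a faithful account of what that theorem provides (radial-point and propagation estimates near the horizons, the Wunsch--Zworski bound at the normally hyperbolic trapped set, invertibility for large $\Imag\omega$, and analytic Fredholm theory), together with the straightforward verification of~\eqref{e:scattering-eq2} by applying $R(\omega)$ to~\eqref{e:scattering-eq} on $\{\Imag\omega\geq C_e\}$. This is essentially the same route as the paper.
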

Note that even though we originally defined the left-hand side
of~\eqref{e:scattering-eq2} for $\Imag\omega\geq C_e$, the right-hand
side of this equation makes sense in a wider region $\Imag\omega\geq
-\nu_0$, and in fact in the entire complex plane if $f$ is smooth.
The idea now is to use Fourier inversion formula
\begin{equation}\label{e:fourier-inversion}
u(t^*)={1\over 2\pi}\int_{\Imag\omega=C_e}
e^{-i\omega t^*}R(\omega)\hat f(\omega)\,d\omega
\end{equation}
and deform the contour of integration to $\{\Imag\omega=-\nu_0\}$ to
get exponential decay via the $e^{-i\omega t^*}$ factor. We pick up
residues from the poles of $R(\omega)$ when deforming the contour;
therefore, \emph{one defines quasi-normal modes as the poles of
$R(\omega)$}.

Our ability to deform the contour and estimate the resulting integral
depends on having polynomial resolvent estimates. To formulate these,
let us give the technical
\begin{defi}\label{d:asymptotics}
Let $h>0$ be a parameter and $\mathcal R(\omega;h):\mathcal H_1\to
\mathcal H_2$, $\omega\in \mathcal U(h)\subset \mathbb C$, be a meromorphic
family of operators, with $\mathcal H_j$ Hilbert spaces. Let also
$\Omega(h)\subset \mathcal U(h)$ be open and $\mathcal Z(h)\subset
\mathbb C$ be a finite subset; we allow elements of $\mathcal Z(h)$ to
have multiplicities. We say that the poles of $\mathcal R$ in
$\Omega(h)$ are simple with a polynomial resolvent estimate and given
modulo $O(h^\infty)$ by $\mathcal Z(h)$, if for $h$ small enough,
there exist maps $\mathcal Q$ and $\Pi$ from $\mathcal Z(h)$ to
$\mathbb C$ and the algebra of bounded operators $\mathcal H_1\to
\mathcal H_2$, respectively, such that:
\begin{itemize}
\item for each $\widehat\omega'\in \mathcal Z(h)$,
$\widehat\omega=\mathcal Q(\widehat\omega')$
is a pole of $\mathcal R$, $|\widehat\omega-\widehat\omega'|=O(h^\infty)$, and
$\Pi(\widehat\omega')$ is a rank one operator;
\item there exists a constant $N$ such that
$\|\Pi(\widehat\omega')\|_{\mathcal H_1\to \mathcal H_2}=O(h^{-N})$ for
each $\widehat\omega'\in \mathcal Z(h)$ and, moreover,
$$
\mathcal R(\omega;h)=\sum_{\widehat\omega'\in \mathcal Z(h)}
{\Pi(\widehat\omega')\over \omega-\mathcal Q(\widehat\omega')}
+O_{\mathcal H_1\to \mathcal H_2}(h^{-N}),\
\omega\in\Omega(h). 
$$
In particular, every pole of $\mathcal R$ in $\Omega(h)$ lies in the
image of $\mathcal Q$.
\end{itemize}
\end{defi}
The quantization condition and resolvent estimate that we need to
prove Theorems~\ref{l:theorem-asymptotics} and~\ref{l:theorem-resdec}
are contained in
\begin{prop}\label{l:main-lemma}
Fix $\nu_0>0$ and let $h>0$ be a parameter.  Then for $a$ small
enough (independently of $\nu_0$), the poles of $R(\omega)$ in the region
\begin{equation}\label{e:region-1}
|\Imag\omega|<\nu_0,\
h^{-1}<|\Real\omega|<2h^{-1},
\end{equation}
are simple with a polynomial resolvent estimate and given modulo
$O(h^\infty)$ by
\begin{equation}\label{e:main-lemma-qc}
\begin{gathered}
\omega=h^{-1}\mathcal F^\omega(m,hl,hk;h),\
m,l,k\in \mathbb Z,\\
0\leq m\leq C_m,\
C_l^{-1}\leq hl\leq C_l,\
|k|\leq l.
\end{gathered}
\end{equation}
Here $C_m$ and $C_l$ are some constants and $\mathcal
F^\omega(m,\tilde l,\tilde k;h)$ is a classical symbol:
$$
\mathcal F^\omega(m,\tilde l,\tilde k;h)\sim \sum_{j\geq 0}
h^j \mathcal F^\omega_j(m,\tilde l,\tilde k).
$$
The principal symbol $\mathcal F^\omega_0$ is real-valued and
independent of $m$; moreover,
$$
\begin{gathered}
\mathcal F^\omega(m,\tilde l,\tilde k;h)={\sqrt{1-9\Lambda M_0^2}\over 3\sqrt 3 M_0}
(\tilde l+h/2-ih(m+1/2))+O(h^2)\text{ for }a=0,\\
(\partial_{\tilde k} \mathcal F^\omega_0)(m,\pm\tilde k,\tilde k)
={(2+9\Lambda M_0^2)a\over 27M_0^2}+O(a^2).
\end{gathered}
$$
Finally, if we consider $R(\omega)$ as a family of operators between
the semiclassical Sobolev spaces $H_h^{s-1}\to H_h^s$, then the
constant $N$ in Definition~\ref{d:asymptotics} is independent of $s$.
\end{prop}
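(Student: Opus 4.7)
The plan is to reduce Proposition~\ref{l:main-lemma} to the separated radial and angular problems (Propositions~\ref{l:radial} and~\ref{l:angular}) via the Teukolsky ansatz, then combine the two quantization conditions through a Grushin problem for commuting operators.

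First I would rescale semiclassically, writing $\omega = h^{-1}z$ with $z$ in a bounded region, so that $h^2 P_g(\omega)$ becomes a semiclassical operator. Fourier-decomposing in $\varphi$ picks out the integer angular momentum $k$, leaving an operator on $(r,\theta)$. The Teukolsky separation $P_g(\omega) = P_r(\omega) + P_\theta(\omega)$ (where the two summands commute after conjugation) reduces the study of $R(\omega)$ to finding $\omega$ for which there is a common $\lambda$ such that $P_\theta(\omega)u_\theta = \lambda u_\theta$ on $\mathbb{S}^2$ with outgoing (smooth) boundary behavior and $P_r(\omega,\lambda)u_r = 0$ with outgoing radial conditions at $r = r_\pm$.

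Next I would invoke Proposition~\ref{l:angular} to get, for each $k \in \mathbb Z$ and $z$ in the relevant range, a discrete family of separation constants $\lambda = \mathcal F^\theta(hl, hk, z; h)$ indexed by integer $l \geq |k|$, as a classical symbol of order $2$; and Proposition~\ref{l:radial} to get, for each $\lambda$, a discrete family of radial pseudopoles $z = \mathcal F^r(m,\lambda; h)$ indexed by $m \in \mathbb Z_{\geq 0}$, of order $0$, with polynomial resolvent estimates. Pairing the two, a point $z$ is a pseudopole of $R$ exactly when the system
\begin{equation*}
z = \mathcal F^r(m,\lambda;h),\qquad \lambda = \mathcal F^\theta(hl,hk,z;h)
\end{equation*}
has a solution. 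Since the rotation parameter $a$ is small and the principal symbols are given explicitly (reducing to the round-sphere Laplacian and the Schwarzschild--de Sitter barrier-top at $a=0$), the implicit function theorem applied in the class of classical symbols solves this system, producing $z = h \cdot \mathcal F^\omega(m, hl, hk; h)$. The values $C_m, C_l$ come from requiring the imaginary part to stay above $-\nu_0$ and the real part to lie in the range of interest; the two special formulas for $\mathcal F^\omega$ and $\partial_{\tilde k}\mathcal F^\omega_0$ fall out of the principal-symbol computation at $a=0$ and a first-order expansion in $a$, respectively.

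The main obstacle will be combining the two quantization conditions while preserving a genuine rank-one pole structure and polynomial bounds. Propositions~\ref{l:radial} and~\ref{l:angular} only provide approximate spectral projectors with $O(h^\infty)$ errors, and $P_\theta(\omega)$ is not selfadjoint, so one cannot simply invoke a functional calculus to diagonalize the system jointly. This is precisely where the abstract Grushin problem for several commuting operators from Appendix~\ref{s:prelim-grushin} must be used: one sets up a coupled $2\times 2$ Grushin problem whose off-diagonal entries encode joint eigenfunctions for $(P_r, P_\theta - \lambda)$, inverts the bulk part using the resolvent estimates of Propositions~\ref{l:radial} and~\ref{l:angular}, and reads off the pseudopole locations and rank-one residues from the determinant of the effective Hamiltonian, which by construction is a classical symbol. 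Once this is in place, the semiclassical resolvent bound passes through all Sobolev indices because the Teukolsky decomposition commutes with $\langle hD\rangle^s$ on the angular factor, yielding $N$ independent of $s$ as required. The final step is to translate the estimate near the trapped set provided by this semiclassical construction into a global estimate on $X_\delta$ using the event-horizon analysis of~\cite{v}, exactly as in the proof of Proposition~\ref{l:main-lemma-2}.
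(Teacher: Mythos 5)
Your reduction chain is essentially the paper's: rescale semiclassically, Fourier-decompose in $\varphi$, separate variables into radial and angular pieces, invoke Propositions~\ref{l:radial} and~\ref{l:angular}, and glue the near-trapping estimate to the full space via the event-horizon analysis of~\cite{v}. Where you go off course is in how you propose to combine the radial and angular quantization conditions.

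You want to apply the abstract Grushin machinery for commuting operators from Appendix~\ref{s:prelim-grushin} to the pair $(P_r, P_\theta - \lambda)$. That framework, however, is built around joint \emph{discrete} spectrum of commuting pseudodifferential operators on a compact manifold, where one of the operators is elliptic outside a compact subset of $T^*M$ so that joint eigenfunctions are $L^2$ and smooth. The radial problem does not fit this mold: $R_r(\omega,\lambda,k)$ is a right inverse for $P_r(\omega,k)+\lambda$ with \emph{outgoing} boundary conditions on a noncompact interval with singular endpoints, its poles are scattering resonances, and the resonant radial states are not square-integrable. You cannot set up a joint eigenfunction problem for $(P_r,P_\theta)$ in the sense of Definition~\ref{d:joint-spectrum}, so the Grushin determinant you invoke is not there to be computed. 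The Grushin problem for several commuting operators is indeed used in the paper --- but \emph{inside} the proof of Proposition~\ref{l:angular}, where the two operators are $h^2P_\theta$ and $hD_\varphi$, both acting on $L^2(\mathbb S^2)$ with genuine discrete joint spectrum; it plays no role in stitching the radial and angular conditions together.

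What the paper actually does in Proposition~\ref{l:main-lemma-3} is deform the contour in the formula $R_g(\omega,k)=\frac{1}{2\pi i}\int_\gamma R_r(\omega,\lambda,k)\otimes R_\theta(\omega,\lambda,k)\,d\lambda$ from~\cite{skds}, choosing an admissible contour $\gamma$ that separates the (finitely many relevant) radial poles $\lambda^r_m(\omega,k)$ from the angular poles $\lambda^\theta_l(\omega,k)$. Picking up residues at the radial poles and then using the meromorphic decomposition of $R_\theta$ yields
$$
R_g(\omega,k)=\Hol(\omega)+\sum_{m,l}\frac{\Pi^r_m\otimes\Pi^\theta_l}{\lambda^r_m(\omega,k)-\lambda^\theta_l(\omega,k)},
$$
so the pole equation is precisely $\mathcal F^r(m,\tilde\omega,\tilde\nu,\tilde k;h)=\mathcal F^\theta(hl,\tilde\omega,\tilde\nu,\tilde k;h)$, and the rank-one residues and polynomial bounds come directly from residue calculus combined with the estimates in Propositions~\ref{l:radial} and~\ref{l:angular}. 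This residue argument is the missing ingredient in your proposal: it handles the scattering nature of the radial piece cleanly, where the joint Grushin framework cannot.
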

Theorem~\ref{l:theorem-asymptotics} follows from the here almost
immediately.  Indeed, since $R(\omega)$ is independent of $h$, each
$\mathcal F^\omega_j$ is homogeneous in $(\tilde l,\tilde k)$
variables of degree $1-j$; we can then extend this function
homogeneously to the cone $|\tilde k|\leq \tilde l$ and define the
(non-semiclassical) symbol
$$
\mathcal F(m,l,k)\sim\sum_{j\geq 0} \mathcal F^\omega_j(m,l,k).
$$
Note that $\mathcal F(m,l,k)=h^{-1}\mathcal F^\omega(m,hl,hk;h)+O(h^\infty)$
whenever $C_l^{-1}\leq hl\leq C_l$.  We can then
cover the region~\eqref{e:region-0} for large $C_\omega$ with the
regions~\eqref{e:region-1} for a sequence of small values of $h$ to
see that QNMs in~\eqref{e:region-0} are given by~\eqref{e:our-q}
modulo $O(|\omega|^{-\infty})$.

Now, we prove Theorem~\ref{l:theorem-resdec}. Let $u$ be a solution
to~\eqref{e:we-rhs}, with $f\in H^{s-1}$ and $s$ large enough. We
claim that one can deform the contour in~\eqref{e:fourier-inversion}
to get
\begin{equation}\label{e:contour-deformation}
u(t^*)=i\sum_{\Imag\widehat\omega>-\nu_0}\Res_{\omega=\widehat\omega}
[e^{-i\omega t^*} R(\omega)\hat f(\omega)]
+{1\over 2\pi}\int_{\Imag\omega=-\nu_0}
e^{-i\omega t^*}R(\omega)\hat f(\omega)\,d\omega.
\end{equation}
The series in~\eqref{e:contour-deformation}
is over QNMs $\widehat\omega$; all but a finite
number of them in the region $\{\Imag\omega>-\nu_0\}$ are equal to
$\mathcal Q(\widehat\omega')$ for some $\widehat\omega'$ given
by~\eqref{e:our-q} and the residue in this case is
$e^{-i\widehat\omega t^*}\Pi(\widehat\omega')\hat f(\widehat\omega)$.
Here $\mathcal Q$ and $\Pi$ are taken from
Definition~\ref{d:asymptotics}.  Now, by~\eqref{e:f-hat-estimate}, we
have
$$
\begin{gathered}
\|\Pi(\widehat\omega')\hat f(\widehat\omega)\|_{H^1}\leq
C\langle\widehat\omega\rangle^{N-s}\|f\|_{H^{s-1}};\\
\|R(\omega)\hat f(\omega)\|_{H^1}\leq
C\langle\omega\rangle^{N-s}\|f\|_{H^{s-1}},\
\Imag\omega=-\nu_0,
\end{gathered}
$$
for some constant $N$ independent of $s$; therefore, for $s$ large
enough, the series in~\eqref{e:contour-deformation} converges in $H^1$
and the $H^1$ norm of the integral in~\eqref{e:contour-deformation}
can be estimated by~$Ce^{-\nu_0 t^*}\|f\|_{H^{s-1}}$, thus proving
Theorem~\ref{l:theorem-resdec}.

To prove~\eqref{e:contour-deformation}, take small $h>0$. There are
$O(h^{-2})$ QNMs in the region~\eqref{e:region-1}; therefore, by
pigeonhole principle we can find $\omega_0(h)\in [h^{-1},2h^{-1}]$
such that there are no QNMs $h^2$-close to the segments
$$
\gamma_\pm(h)=\{\Real\omega=\pm\omega_0(h),\
-\nu_0\leq \Imag\omega\leq C_e\}.
$$
Then $\|R(\omega)\hat f(\omega)\|_{H^1}=O(h^{s-N-4})$ on
$\gamma_\pm(h)$; we can now apply the residue theorem to the rectangle
formed from $\gamma_\pm(h)$ and segments of the lines
$\{\Imag\omega=C_e\},\{\Imag\omega=-\nu_0\}$, and then let $h\to 0$.

\subsection{Separation of variables}
\label{s:separation2}

First of all, using~\cite[(A.2), (A.3)]{v}, we reduce
Proposition~\ref{l:main-lemma} to the following\footnote{One could
also try to apply the results of~\cite{d-v} here, but we use the
slightly simpler construction of~\cite[Appendix]{v}, exploiting the
fact that we have information on the exact cutoff resolvent.}
\begin{prop}\label{l:main-lemma-2}
Take $\delta>0$ and put
$$
K_\delta=(r_-+\delta,r_+-\delta)\times \mathbb S^2,\
R_g(\omega)=1_{K_\delta}R(\omega)1_{K_\delta}:
L^2(K_\delta)\to H^2(K_\delta).
$$
Then for $a$ small enough\footnote{The smallness of $a$ is implied in
all following statements.} and fixed $\nu_0$, the poles of
$R_g(\omega)$ in the region~\eqref{e:region-1} are simple with a
polynomial resolvent estimate $L^2\to L^2$ and given modulo
$O(h^\infty)$ by~\eqref{e:main-lemma-qc}.
\end{prop}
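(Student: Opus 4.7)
The plan is to apply the Teukolsky separation of variables of Section~\ref{s:separation2}. After expanding in Fourier modes $e^{ik\varphi}$ on $\mathbb{S}^2$ and multiplying by $\rho^2$, the stationary operator decomposes as $\rho^2 P_g(\omega) = P_r(\omega,k) + P_\theta(\omega,k)$, where $P_r$ involves only $r$-derivatives and $P_\theta$ only $\theta$-derivatives, with $h\omega$ and $hk$ acting as semiclassical parameters after rescaling by $h\sim|\Real\omega|^{-1}$. A value $\omega$ in~\eqref{e:region-1} is a pole of $R_g(\omega)$ exactly when the radial operator $P_r(\omega,k)$, with its separation constant matched to an eigenvalue $\lambda$ of the angular operator $P_\theta(\omega,k)$ (with $|k|\le l$), has a resonance at $\omega$.

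First I would apply the angular quantization, Proposition~\ref{l:angular}: for $\omega$ in~\eqref{e:region-1} and $|hk|$ bounded, the approximate eigenvalues of $h^2 P_\theta(\omega,k)$ form a classical-symbol family $\lambda=\mathcal F^\theta(hl,hk,h\omega;h)$ indexed by $l\in\mathbb Z$ with $|k|\le l$, modulo $O(h^\infty)$, together with polynomial bounds on $(h^2 P_\theta-\lambda)^{-1}$ off these eigenvalues. Next I would apply the radial barrier-top quantization, Proposition~\ref{l:radial}: for each admissible $\lambda$ and $k$, the radial resonances are simple, polynomially bounded, and quantized by $h\omega=\mathcal F^r(m,\lambda,hk;h)$, with $m\in\mathbb Z_{\ge 0}$, $m\le C_m$. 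Substituting the angular symbol into the radial quantization gives the implicit relation
\[
h\omega \;=\; \mathcal F^r\!\bigl(m,\,\mathcal F^\theta(hl,hk,h\omega;h),\,hk;\,h\bigr),
\]
which I would solve for $h\omega$ by contraction as a classical symbol $\mathcal F^\omega(m,hl,hk;h)$, using that the $\omega$-dependence on the right enters only through $\mathcal F^\theta$ and thus contributes at subprincipal order in $h$. The specializations at $a=0$ and the derivative formula at $|k|=l$ descend from the corresponding expansions of $\mathcal F^\theta$ and $\mathcal F^r$.

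To assemble these separated results into the claimed polynomial $L^2\to L^2$ bound and simplicity for $R_g(\omega)$, I would write $R_g(\omega)$ as a sum over angular modes and compose the radial and angular polynomial resolvent estimates; simplicity of poles in $\omega$ comes from the simplicity of the radial quantization combined with the nondegeneracy of $\partial_{\omega}\mathcal F^\theta$, which converts a radial zero of $h\omega-\mathcal F^r$ at a fixed $\lambda$ into a simple zero of the composed quantization function. The $s$-independence of the constant $N$ is a consequence of elliptic regularity on the compact region $K_\delta$, which lets one trade $\langle hD\rangle^s$ against the principal symbol of $P_g(\omega)$ at no cost in $h$.

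The hardest step will be handling the fact that $P_\theta(\omega,k)$ is non-selfadjoint once $a\neq 0$ or $\Imag\omega\neq 0$, and depends on the spectral parameter $\omega$ itself, so the putative splitting is only an approximate joint spectral decomposition: although $h^2P_\theta$ and $hD_\varphi$ commute, their joint spectrum as used above is only defined modulo $O(h^\infty)$, and the angular eigenfunctions are only quasimodes. To upgrade this approximate joint data into honest poles of $R_g(\omega)$ with simple residues and the required polynomial bounds, I would rely on the abstract Grushin problem for several commuting operators developed in Appendix~\ref{s:prelim-grushin}; this is the machinery that absorbs the $O(h^\infty)$ commutator errors into an auxiliary Grushin matrix and replaces the self-adjoint joint-spectrum arguments of~\cite{ch,svn}, which are unavailable in the present non-selfadjoint setting.
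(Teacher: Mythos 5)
Your high-level skeleton matches the paper's: Fourier decompose in $\varphi$, apply the angular and radial quantization lemmas (Propositions~\ref{l:angular} and~\ref{l:radial}), solve the resulting implicit equation for $\omega$, and handle the non-selfadjointness with the Grushin apparatus. However, there is a genuine gap in your assembly step. You propose to ``write $R_g(\omega)$ as a sum over angular modes and compose the radial and angular polynomial resolvent estimates,'' but no such spectral sum exists: $P_\theta(\omega)$ is non-selfadjoint and $\omega$-dependent, its generalized eigenfunctions are not orthogonal, and what one actually controls is the angular \emph{resolvent} $R_\theta(\omega,\lambda,k)$ as a meromorphic function of the separation constant $\lambda$. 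The paper's mechanism (inherited from~\cite[Section~2]{skds}) is a contour integral in $\lambda$ of $R_r(\omega,\lambda,k)\otimes R_\theta(\omega,\lambda,k)$ along an admissible contour that separates the radial poles $\lambda_m^r(\omega,k)$ from the angular poles $\lambda_l^\theta(\omega,k)$; shrinking the contour onto the finitely many radial poles yields the residue expansion~\eqref{e:mer-dec-1} and then~\eqref{e:mer-dec-2}, from which simplicity and polynomial bounds drop out because $\Pi^r_m$ and $\Pi^\theta_l$ are rank one. Without this contour construction your sum over angular modes has no meaning, and the Grushin machinery of Appendix~\ref{s:prelim-grushin} will not supply it either: that apparatus is deployed \emph{inside} the proof of the angular lemma to produce the quantization of the joint spectrum of $(h^2P_\theta,hD_\varphi)$, not to glue the radial and angular resolvents together.

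Two smaller inaccuracies: you write the radial quantization as $h\omega=\mathcal F^r(m,\lambda,hk;h)$, whereas in Proposition~\ref{l:radial} it is $\tilde\lambda+ih\tilde\mu=\mathcal F^r(m,\tilde\omega,\tilde\nu,\tilde k;h)$; the paper then solves $\mathcal F^r=\mathcal F^\theta$ for $\tilde\omega+ih\tilde\nu$ (equation~\eqref{e:final-q-eq}), which is a cleaner equivalent of your implicit relation. And your claim that simplicity hinges on ``nondegeneracy of $\partial_\omega\mathcal F^\theta$'' has the roles reversed: the principal part of $\mathcal F^\theta$ is essentially $\omega$-independent (at $a=0$ it is $\tilde l(\tilde l+h)$), while $\partial_\omega\mathcal F^r_0\neq 0$; the relevant nondegeneracy is of $\partial_\omega(\mathcal F^r-\mathcal F^\theta)$, driven by the radial term.
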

Furthermore, by~\cite[Proposition~A.1]{v} the family of operators
$R_g(\omega)$ coincides with the one constructed
in~\cite[Theorem~2]{skds}, if the functions $F_t,F_\varphi$
in~\eqref{e:kerr-star} are chosen so that
$(t,\varphi)=(t^*,\varphi^*)$ in $\mathbb R_t\times K_\delta$.  We now
review how the construction of $R_g(\omega)$ in~\cite{skds} works and
reduce Proposition~\ref{l:main-lemma-2} to two separate spectral
problems in the radial and the angular variables. For the convenience
of reader, we include the simpler separation of variables
procedure for the case $a=0$ at the end of this section.

First of all, the operator $P_g(\omega)$ is invariant under the
rotation $\varphi\mapsto\varphi+s$; therefore, the spaces $\mathcal
D'_k=\Ker(D_\varphi-k)$ of functions of angular momentum $k\in \mathbb
Z$ are invariant under both $P_g(\omega)$ and
$R_g(\omega)$. In~\cite{skds}, we construct $R_g(\omega)$ by piecing
together the restrictions $R_g(\omega,k)=R_g(\omega)|_{\mathcal D'_k}$
for all $k$. Then, Proposition~\ref{l:main-lemma-2} follows from
\begin{prop}\label{l:main-lemma-3}
Under the assumptions of Proposition~\ref{l:main-lemma-2}, there
exists a constant $C_k$ such that for each $k\in \mathbb Z$,
\begin{enumerate}
\item if $h|k|>C_k$, then $R_g(\omega,k)$ has no poles in the region~\eqref{e:region-1}
and its $L^2\to L^2$ norm is $O(|k|^{-2})$; (This is a reformulation
of~\cite[Proposition~3.3]{skds}.)
\item if $h|k|\leq C_k$, then the poles of $R_g(\omega,k)$ in the region~\eqref{e:region-1}
are simple with a polynomial resolvent estimate $L^2\to L^2$
and given modulo $O(h^\infty)$
by~\eqref{e:main-lemma-qc}, with this particular value of $k$.
\end{enumerate}
\end{prop}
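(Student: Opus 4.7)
Part (1) is a direct reformulation of \cite[Proposition~3.3]{skds} and requires no new argument. For part (2), I would start with the Teukolsky separation of variables from~\cite{skds}: on $\mathcal D'_k$, the operator $\rho^2 P_g(\omega)$ decomposes additively as $P_r(\omega,k) + P_\theta(\omega,k)$, where $P_r$ acts only in $r$ and $P_\theta$ only in $\theta$. Hence $\omega$ is a QNM at angular momentum $k$ if and only if there exists a separation constant $\lambda$ which is simultaneously an $L^2$-eigenvalue of $P_\theta(\omega,k)$ (with natural conditions at the poles $\theta=0,\pi$) and a resonance of $P_r(\omega,k)$ (with outgoing conditions at $r=r_\pm$). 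In the semiclassical scaling $\tilde\omega=h\omega$, $\tilde\lambda=h^2\lambda$ with $h|k|\leq C_k$, Proposition~\ref{l:angular} produces a classical symbol $\mathcal F^\theta(hl,hk,\tilde\omega;h)$ such that the admissible values of $\tilde\lambda$ are given modulo $O(h^\infty)$ by $\tilde\lambda=\mathcal F^\theta$, parametrized by $l\in\mathbb Z$ with $|k|\leq l$ and $C_l^{-1}\leq hl\leq C_l$; Proposition~\ref{l:radial} produces a classical symbol $\mathcal F^r(m,\tilde\lambda,hk;h)$ such that the outgoing values of $\tilde\omega$ at fixed $\tilde\lambda$ are given modulo $O(h^\infty)$ by $\tilde\omega=\mathcal F^r$, parametrized by $0\leq m\leq C_m$.

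Combining these yields the fixed-point equation
$$
\tilde\omega = \mathcal F^r\bigl(m,\mathcal F^\theta(hl,hk,\tilde\omega;h),hk;h\bigr),
$$
whose right-hand side has $O(h)$ derivative in $\tilde\omega$. The implicit function theorem in the algebra of classical symbols then produces a unique solution $\tilde\omega = \mathcal F^\omega(m,hl,hk;h)$, which is the quantization condition~\eqref{e:main-lemma-qc}. The principal-symbol identities follow from those in Propositions~\ref{l:radial} and~\ref{l:angular}: for $a=0$ the angular operator is the round-sphere Laplacian with eigenvalues $l(l+1)$, and substituting into the radial symbol reproduces the S\'a Barreto--Zworski formula~\eqref{e:sb-z-q}; differentiating the implicit relation in $\tilde k$ at $\tilde k=\pm\tilde l$ and using the $O(a)$-expansion of $\mathcal F^\theta,\mathcal F^r$ gives~\eqref{e:qc-a}.

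To upgrade the approximate quantization condition to the statement about simple poles with a polynomial $L^2\to L^2$ resolvent estimate, I would assemble the radial and angular Grushin problems supplied by Propositions~\ref{l:radial} and~\ref{l:angular} into a joint Grushin problem for $R_g(\omega,k)$ via the abstract framework of Appendix~\ref{s:prelim-grushin}; the Schur complement of the resulting matrix vanishes precisely at the QNMs and, by the non-degenerate $\tilde\omega$-derivative established in the previous step, produces both the rank-one residues $\Pi(\widehat\omega')$ and the $O(h^{-N})$ off-pole bound. The main obstacle lies in this final step: although $P_r$ and $P_\theta$ commute as differential operators, their semiclassical normal form reductions commute only modulo $O(h^\infty)$, so classical joint-spectrum arguments do not suffice; the Grushin framework of Appendix~\ref{s:prelim-grushin} is designed precisely to extract a genuine joint spectrum from such approximate commutation, and one must verify its hypotheses uniformly in $k$ over the full range $|k|\leq C_k/h$.
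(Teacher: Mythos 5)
Your high-level decomposition---separate into radial and angular pieces, obtain quantization conditions for the separation constant $\lambda$, then solve the compatibility equation for $\omega$---matches the paper's route, and your identification of the final step as the real obstacle is correct. Two issues arise, though. First, a minor one: Proposition~\ref{l:radial} quantizes $\tilde\lambda+ih\tilde\mu=\mathcal F^r(m,\tilde\omega,\tilde\nu,\tilde k;h)$, i.e.\ it gives $\tilde\lambda$ as a function of $\tilde\omega$, not the other way around; the paper writes the compatibility equation as~\eqref{e:final-q-eq}, $\mathcal F^r=\mathcal F^\theta$, and solves it for $\tilde\omega+ih\tilde\nu$. If you insist on rewriting it as a fixed-point equation for $\tilde\omega$, the relevant smallness of the derivative comes from $\partial_{\tilde\omega}\mathcal F^\theta_0=O(a)$ (since at $a=0$ the angular symbol $\tilde l(\tilde l+h)$ is $\tilde\omega$-independent), not $O(h)$; this still gives a contraction for $a$ small, but you should state it correctly.

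Second and more substantively, the final step of your argument---assembling a joint Grushin problem for $R_g(\omega,k)$ via the abstract framework of Appendix~\ref{s:prelim-grushin}---misidentifies where that machinery applies and would not go through as written. The commuting-operators Grushin framework is deployed in the paper only \emph{inside} the angular problem, for the pair $(h^2P_\theta,hD_\varphi)$ which commute exactly on $\mathbb S^2$ and whose joint $L^2$ point spectrum is precisely what Appendix~\ref{s:prelim-grushin} is built to extract from $O(h^\infty)$-approximate normal forms. It does not apply to the radial/angular matching, because the radial operator has no $L^2$ spectrum to speak of: the quantity being quantized there is a pole of the \emph{outgoing} right inverse $R_r$, i.e.\ a scattering resonance, so there is no self-adjoint joint functional calculus feeding into the Appendix construction and its hypotheses (G1)--(G6) simply cannot be met with $P_r$ in place of one of the commuting operators. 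The paper instead invokes the admissible-contour representation of $R_g(\omega,k)$ from~\cite[(2.1)]{skds}: it writes $R_g$ as a $\lambda$-contour integral of $R_r\otimes R_\theta$, chooses a contour $\gamma$ together with small circles $\gamma_m$ around the $O(1)$ radial poles $\lambda^r_m(\omega,k)$, and peels off residues, producing~\eqref{e:mer-dec-1}--\eqref{e:mer-dec-2}. That explicit formula simultaneously yields the simple poles (at the solutions of $\lambda^r_m=\lambda^\theta_l$), the rank-one residues $\Pi^r_m\otimes\Pi^\theta_l$, and the polynomial $L^2\to L^2$ bound from the estimates on $R_r$ and $R_\theta$ along $\gamma$. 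If you want to prove part (2) you need something functionally equivalent to this contour argument; the Appendix Grushin framework does not substitute for it.
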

Now, we recall from~\cite[Section~1]{skds} that the restriction of
$P_g(\omega)$ to $\mathcal D'_k$ has the form\footnote{The operator
$P_g(\omega)$ of~\cite{skds} differs from our operator by the
conjugation done in~\cite[Appendix]{v}; however, the two coincide
in~$K_\delta$.} $P_r(\omega,k)+P_\theta(\omega)|_{\mathcal D'_k}$,
where
\begin{equation}\label{e:p-r-theta}
\begin{gathered}
P_r(\omega,k)=D_r(\Delta_rD_r)-{(1+\alpha)^2\over\Delta_r}((r^2+a^2)\omega-ak)^2,\\
P_\theta(\omega)={1\over\sin\theta}D_\theta(\Delta_\theta\sin\theta D_\theta)
+{(1+\alpha)^2\over \Delta_\theta\sin^2\theta}(a\omega\sin^2\theta-D_\varphi)^2
\end{gathered}
\end{equation}
are differential operators in $r$ and $(\theta,\varphi)$,
respectively.  Then $R_g(\omega,k)$ is constructed in~\cite[Proof of
Theorem~1]{skds} using a certain contour integral~\cite[(2.1)]{skds}
and the radial and angular resolvents
$$
\begin{gathered}
R_r(\omega,\lambda,k):L^2_{\comp}(r_-,r_+)\to H^2_{\loc}(r_-,r_+),\\
R_\theta(\omega,\lambda):L^2(\mathbb S^2)\to H^2(\mathbb S^2),\
\lambda\in \mathbb C;
\end{gathered}
$$
$R_r$ is a certain right inverse to $P_r(\omega,k)+\lambda$, while
$R_\theta$ is the inverse to $P_\theta(\omega)-\lambda$; we write
$R_\theta(\omega,\lambda,k)=R_\theta(\omega,\lambda)|_{\mathcal
D'_k}$. Recall that both $R_r$ and $R_\theta$ are meromorphic families
of operators, as defined in~\cite[Definition~2.1]{skds}; in
particular, for a fixed value of $\omega$, these families are
meromorphic in $\lambda$ with poles of finite rank. By definition of
$R_g(\omega,k)$, a number $\omega\in \mathbb C$ is a pole of this
operator if and only if there exists $\lambda\in \mathbb C$ such that
$(\omega,\lambda,k)$ is a pole of both $R_r$ and $R_\theta$.

Now, for small $h>0$ we put
\begin{equation}\label{e:semiclassical-parameters}
\tilde\omega=h\Real\omega,\
\tilde\nu=\Imag\omega,\
\tilde\lambda=h^2\Real\lambda,\
\tilde\mu=h\Imag\lambda,\
\tilde k=hk;
\end{equation}
the assumptions of Proposition~\ref{l:main-lemma-3}(2) imply that
$1\leq\tilde\omega\leq 2$, $|\tilde\nu|\leq\nu_0$, and $|\tilde k|\leq
C_k$.  Moreover, \cite[Proposition~3.4]{skds} suggests that under
these assumptions, all values of $\lambda$ for which
$(\omega,\lambda,k)$ is a pole of both $R_r$ and $R_\theta$ have to
satisfy $|\tilde\lambda|,|\tilde\mu|\leq C_\lambda$, for some constant
$C_\lambda$.

We are now ready to state the quantization conditions and resolvent
estimates for $R_r$ and $R_\theta$; the former is proved in
Section~\ref{s:radial} and the latter, in Section~\ref{s:angular}.
\begin{prop}[Radial lemma]\label{l:radial} 

Let $C_\lambda$ be a fixed constant and put
$K_r=(r_-+\delta,r_+-\delta)$.  Then the poles of
$1_{K_r}R_r(\omega,\lambda,k)1_{K_r}$ as a function of $\lambda$, in
the region
\begin{equation}\label{e:radial-regime}
1<\tilde\omega<2,\
|\tilde\nu|<\nu_0,\
|\tilde k|<C_k,\
|\tilde\lambda|,|\tilde\mu|<C_\lambda,
\end{equation}
are simple with polynomial resolvent estimate $L^2\to L^2$ (in the
sense of Definition~\ref{d:asymptotics}) and given modulo
$O(h^\infty)$ by
\begin{equation}\label{e:radial-qc}
\tilde\lambda+ih\tilde\mu=\mathcal F^r(m,\tilde\omega,\tilde\nu,\tilde k;h),\
m\in \mathbb Z,\
0\leq m\leq C_m,
\end{equation}
for some constant $C_m$. The principal part $\mathcal F^r_0$ of the
classical symbol $\mathcal F^r$ is real-valued, independent of $m$ and
$\tilde\nu$, and
$$
\begin{gathered}
\mathcal F^r=\bigg[ih(m+1/2)+{3\sqrt 3 M_0\over\sqrt{1-9\Lambda M_0^2}}
(\tilde\omega+ih\tilde\nu)\bigg]^2+O(h^2)\text{ for }a=0,\\
\mathcal F^r_0(\tilde\omega,\tilde k)={27 M_0^2\over 1-9\Lambda M_0^2}
\tilde\omega^2-{6a\tilde k\tilde\omega\over 1-9\Lambda M_0^2}+O(a^2).
\end{gathered}
$$
In particular, for $\omega,k$ satisfying~\eqref{e:radial-regime},
every pole $\lambda$ satisfies $\tilde\lambda>\varepsilon$ for some
constant $\varepsilon>0$.
\end{prop}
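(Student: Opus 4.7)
The plan is to recast the spectral problem for $P_r(\omega,k)+\lambda$ as a one-dimensional semiclassical barrier-top resonance problem and then apply the machinery of \cite{cdv-p,ra,sj2} in the non-selfadjoint but $O(h)$-perturbed setting. Multiplying by $h^2$ and using the semiclassical parameters \eqref{e:semiclassical-parameters}, the operator becomes $h^2P_r(\omega,k)+\tilde\lambda+ih\tilde\mu$ with semiclassical principal symbol (at $\tilde\nu=\tilde\mu=0$)
$$
p_r(r,\xi)=\Delta_r\xi^2-{(1+\alpha)^2\over\Delta_r}((r^2+a^2)\tilde\omega-a\tilde k)^2+\tilde\lambda.
$$
Using a Regge--Wheeler-type variable $dx/dr=(r^2+a^2)/\Delta_r$ converts this to Schr\"odinger form $(hD_x)^2+V(x;\tilde\omega,\tilde k)$, with $x$ ranging over all of $\mathbb{R}$ and the event horizons pushed to $x=\pm\infty$. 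For $a=0$ the effective potential is known to have a unique nondegenerate maximum at $r_0=3M_0$ with value proportional to $\tilde\omega^2$; by perturbation theory this persists for small $a$, giving a unique hyperbolic trapped point at some $r_0(\tilde\omega,\tilde k)=3M_0+O(a)$.

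First, I would set up the boundary behaviour: on $K_r=(r_-+\delta,r_+-\delta)$ one can freely localize, but to identify the correct ``outgoing'' class of solutions cut out by the particular right inverse $R_r$, one must construct genuine outgoing solutions $u_\pm$ at $r=r_\pm$. Following \cite[Sections 2--3]{ra}, I would use the analyticity of $\Delta_r$ near $r=r_\pm$ together with the exact WKB method to build such $u_\pm$; the special polynomial form of the coefficients simplifies this somewhat, as the paper indicates. This step produces, for each $(\tilde\omega,\tilde\nu,\tilde k)$, microlocal outgoing data at the two classical endpoints of the barrier region.

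Next, near the barrier top $r_0$ the principal symbol $p_r+\tilde\lambda$ has a nondegenerate hyperbolic critical point, so by the normal form of \cite{cdv-p,sj2} one can conjugate $h^2P_r(\omega,k)$ microlocally near $r_0$, modulo $O(h^\infty)$, into a classical function $\mathcal{G}(A;h)$ of the model operator $A=\tfrac12(yhD_y+hD_y\,y)$. Crucially, the non-selfadjoint part of the operator is $O(h)$ (arising from $h\tilde\nu=h\Imag\omega$ and $h\tilde\mu=\Imag\lambda$), so no analyticity near $r_0$ is required and the standard real-microlocal normal form with full symbolic expansion suffices. Matching the two microlocal branches emanating from $r_0$ to the WKB solutions $u_\pm$ produced in the previous step gives a Wronskian-type matching condition: a microlocal solution exists iff the value of $\mathcal{G}$ equals one of the eigenvalues $-ih(m+\tfrac12)$ of $A$ on its generalized eigenfunctions. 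Inverting this relation for $\tilde\lambda+ih\tilde\mu$ yields the quantization condition \eqref{e:radial-qc}, with $\mathcal{F}^r$ a classical symbol because the normal form produces a full asymptotic expansion. The polynomial resolvent estimate and simplicity of the poles then follow from a standard Grushin problem based on these microlocal solutions.

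The explicit formulas are obtained by Taylor expansion of the principal symbol: for $a=0$ the identity $p_r(r_0,0)+\tilde\lambda=0$ reads $\tilde\lambda=27M_0^2\tilde\omega^2/(1-9\Lambda M_0^2)$, and the $ih(m+\tfrac12)$ shift coming from the barrier top eigenvalues of $A$ combined with the quadratic curvature of $p_r$ at $r_0$ produces the displayed formula once the full $\tilde\omega+ih\tilde\nu$ dependence is tracked; the $O(a)$ correction follows by first-order perturbation of $r_0$ and of the critical value. The main technical obstacle is the matching step: one must propagate the microlocal normal form solution out of a neighborhood of $r_0$ across the classically allowed intervals on both sides of the barrier and identify the resulting linear combination with the outgoing branch selected by $u_\pm$ at $\pm\infty$. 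In \cite{ra} this is done by contour integration of a complex action, which requires analyticity throughout; here the strategy is to use exact WKB only near $r=r_\pm$ (where analyticity is available) and to bridge the rest by real microlocal propagation, which requires care since the normal form is valid only on a neighborhood of the trapped point.
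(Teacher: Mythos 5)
Your proposal follows essentially the same route as the paper: a Regge--Wheeler change of variables to a semiclassical Schr\"odinger operator, exact WKB near the horizons (using the analyticity available there) to normalize the outgoing solutions, a Colin de Verdi\`ere--Parisse barrier-top normal form $hxD_x-\beta$ near the trapped point (valid by real microlocal analysis because the non-selfadjoint terms are $O(h)$), and microlocal matching of the outgoing branches across the barrier to obtain the quantization condition. The only cosmetic divergence is at the last step: rather than assembling a Grushin problem, the paper reads off the resolvent bound and simplicity directly from the explicit Wronskian formula $W(u_+,u_-)=c\,(\Gamma(-i\beta/h)^{-1}+O(h^\infty))$, but either route delivers the same conclusion.
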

\begin{prop}[Angular lemma]\label{l:angular}
Let $C_\theta$ be a fixed constant.  Then the poles of
$R_\theta(\omega,\lambda,k)$ as a function of $\lambda$ in the region
\begin{equation}\label{e:angular-regime}
1<\tilde\omega<2,\ |\tilde\nu|<\nu_0,\
|\tilde k|<C_k,\
C_\theta^{-1}<\tilde\lambda< C_\theta,\
|\tilde\mu|<C_\theta,
\end{equation}
are simple with polynomial resolvent estimate $L^2\to L^2$ and given
modulo $O(h^\infty)$ by
\begin{equation}\label{e:angular-qc}
\tilde\lambda+ih\tilde\mu=\mathcal F^\theta(hl,\tilde\omega,\tilde\nu,\tilde k;h),\
l\in \mathbb Z,\
\max(|\tilde k|,C_l^{-1})\leq hl\leq C_l,\
\end{equation}
for some constant $C_l$.  The principal part $\mathcal F^\theta_0$ of
the classical symbol $\mathcal F^\theta$ is real-valued, independent
of $\tilde\nu$, and
$$
\mathcal F^\theta=\tilde l(\tilde l+h)+O(h^\infty)\text{ for }a=0.
$$
Moreover, $\mathcal F^\theta_0(\pm\tilde k,\tilde\omega,\tilde
k)=(1+\alpha)^2(\tilde k-a\tilde\omega)^2$, $\partial_{\tilde l} \mathcal
F^\theta_0(\pm\tilde k,\tilde\omega,\tilde k)=\pm 2\tilde k+O(a^2)$,
and consequently, $\partial_{\tilde k} \mathcal F^\theta_0(\pm\tilde
k,\tilde\omega,\tilde k)=-2a\tilde\omega+O(a^2)$.
\end{prop}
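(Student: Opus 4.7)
The plan is to analyze $h^2 P_\theta(\omega)$ as a semiclassical operator on $\mathbb{S}^2$, exploiting that $[hD_\varphi, h^2P_\theta]=0$: restricted to $\mathcal D'_k$ the problem becomes one-dimensional in $\theta$ with $\tilde k = hk$ a parameter, and its principal symbol
\[
p_\theta(\theta,\xi_\theta;\tilde\omega,\tilde k)
= \Delta_\theta\,\xi_\theta^2
+ \frac{(1+\alpha)^2}{\Delta_\theta\sin^2\theta}
(\tilde k - a\tilde\omega\sin^2\theta)^2
\]
is real-valued. In the energy window $C_\theta^{-1}<\tilde\lambda<C_\theta$ with $|\tilde k|<\tilde l$, the level sets $\{p_\theta=\tilde\lambda\}$ are smooth embedded circles, so the reduced flow is trivially completely integrable; the degenerate cases $\tilde l=\pm\tilde k$ correspond to these circles collapsing to the equatorial critical point $\theta=\pi/2$, $\xi_\theta=0$, at which direct evaluation yields $p_\theta=(1+\alpha)^2(\tilde k-a\tilde\omega)^2$, supplying the stated boundary value of $\mathcal F^\theta_0$.

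With this integrable structure in hand, I would construct a microlocal normal form in the style of \cite{cdv-p}, globalized as in \cite{svn}: a microlocally unitary Fourier integral operator conjugating $h^2P_\theta(\omega)$ into a function $F(hD_y;\tilde\omega,\tilde\nu,\tilde k;h)$ of the semiclassical action operator on $S^1$, modulo $O(h^\infty)$. The key observation, which lets one avoid the analyticity hypothesis of \cite{h-s}, is that $P_\theta$ is self-adjoint at $a=0$ and the departure from self-adjointness is $O(h)$ in the semiclassical calculus, so the iterative cohomological step can be carried out in the smooth category, producing a complex-valued classical symbol with real-valued principal part. At $a=0$ the normalization is forced by the known spectrum $\{l(l+1):l\geq|k|\}$ of $-\Delta_{\mathbb{S}^2}$ on $\mathcal D'_k$, yielding $\mathcal F^\theta=\tilde l(\tilde l+h)+O(h^\infty)$.

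The final and most delicate step is to turn the normal form into actual poles of $R_\theta(\omega,\lambda,k)$ together with simplicity and polynomial $L^2\to L^2$ bounds, which amounts to reading off the joint spectrum of $h^2P_\theta$ and $hD_\varphi$. Here the obstacle flagged in the introduction arises: the normal form only commutes with $hD_\varphi$ modulo $O(h^\infty)$, while existence of a joint spectrum demands exact commutation. I would resolve this through the abstract Grushin problem for several commuting operators developed in Appendix~\ref{s:prelim-grushin}, whose Schur complement produces the effective symbol $\mathcal F^\theta$ whose simple zeros give the pseudopoles~\eqref{e:angular-qc} with polynomial resolvent bounds; simplicity follows from $\partial_{\tilde l}\mathcal F^\theta_0\neq 0$ in the interior regime, visible from the $a=0$ model and stable under perturbation. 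The boundary formula $\partial_{\tilde l}\mathcal F^\theta_0(\pm\tilde k,\tilde\omega,\tilde k)=\pm 2\tilde k+O(a^2)$ then follows from the second-order Taylor expansion of $p_\theta$ at the critical point, and the $\partial_{\tilde k}$-formula by implicitly differentiating the identity $\mathcal F^\theta_0(\pm\tilde k,\tilde\omega,\tilde k)=(1+\alpha)^2(\tilde k-a\tilde\omega)^2$ and using $\alpha=O(a^2)$.
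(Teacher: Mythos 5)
Your high-level picture (integrable Hamiltonian, Egorov plus Moser averaging, global quantization condition, appendix Grushin machinery) matches the paper, and your endpoint computations of $\mathcal F^\theta_0$ and its derivatives at $\tilde l=\pm\tilde k$ are right. However, your choice of microlocal normal form is wrong in exactly the place where the action matters. You propose conjugating $h^2P_\theta|_{\mathcal D'_k}$ to a function of the action operator $hD_y$ on~$\mathbb S^1$. That would be correct if the level sets $\{p_\theta=\tilde\lambda\}$ were nondegenerate circles throughout the window $C_\theta^{-1}<\tilde\lambda<C_\theta$; but the minimum value $(1+\alpha)^2(\tilde k-a\tilde\omega)^2$ of $p_\theta$ lies in this window whenever $|\tilde k|$ is near $\sqrt{\tilde\lambda}/(1+\alpha)$, and at that minimum the reduced circle collapses to the equatorial point. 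You acknowledge the collapse yourself, but a normal form $F(hD_y)$ on $\mathbb S^1$ cannot see it: the spectrum of $hD_y$ is $h\mathbb Z$ and has no bottom, so you could never recover the constraint $\tilde l\geq|\tilde k|$ (equivalently $G_\pm\geq 0$), which is the crux of the quantization condition in~\eqref{e:angular-qc}. The correct local model near $\widetilde K_\pm$ is the harmonic oscillator. In the paper this is built into the model space $\mathcal M=\mathbb R_x\times\mathbb S^1_y$ with $(T_1,T_2)=(\tfrac12((hD_x)^2+x^2)-\tfrac h2,\ hD_y)$: the $\mathbb R_x$ factor carries the $h\mathbb N$ spectrum that encodes $\tilde l-|\tilde k|$, and the degeneracy at $\zeta=(x^2+\xi^2)/2=0$ is exactly the collapse at $\widetilde K_\pm$ (Propositions~\ref{l:arnold-liouville-degenerate} and~\ref{l:f-pm}).

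There is a second, smaller inconsistency. You begin by restricting to the invariant space $\mathcal D'_k$, which turns $hD_\varphi$ into the scalar $\tilde k$ and removes the commutation problem entirely --- in which case the Appendix~\ref{s:prelim-grushin} machinery for several commuting operators is superfluous, a standard one-operator Grushin problem would do. Then in the last paragraph you invoke the joint-spectrum obstruction and the multi-operator Grushin problem anyway. These two stances do not coexist. The paper resolves this by working on all of $T^*\mathbb S^2$ throughout: the conjugating FIOs of Proposition~\ref{l:moeser} are constructed on $T^*\mathbb S^2$, they only preserve $[\,\cdot\,,hD_\varphi]=0$ up to $O(h^\infty)$ error, and that is precisely why the two-operator Grushin problem (with exact commutation of $(P_1,P_2)$ as input, and $O(h^\infty)$ microlocal identities as hypotheses) is needed. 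If you want to keep your 1D-per-$k$ reduction, you must replace your $\mathbb S^1$ model with the harmonic-oscillator-on-$\mathbb R$ model, handle the transition region by piecing local quantization conditions together as in Proposition~\ref{l:angular-switch}, and drop the appeal to the multi-operator Grushin problem; if instead you want to use the appendix as stated, you must work on $\mathbb S^2$ with both $P_1,P_2$ in play and the model space $\mathcal M=\mathbb R\times\mathbb S^1$.
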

Combining Propositions~\ref{l:radial} and~\ref{l:angular}
with the results of~\cite{skds}, we get
\begin{proof}[Proof of Proposition~\ref{l:main-lemma-3}]
We let $\mathcal F^\omega(m,\tilde l,\tilde k;h)$ be the solution
$\tilde\omega+ih\tilde\nu$ to the equation
\begin{equation}\label{e:final-q-eq}
\mathcal F^r(m,\tilde\omega,\tilde\nu,\tilde k;h)=
\mathcal F^\theta(\tilde l,\tilde\omega,\tilde\nu,\tilde k;h).
\end{equation}
We can see that this equation has unique solution by writing $\mathcal
F^r-\mathcal F^\theta=\mathcal F'+ih \mathcal F''$ and examining the
principal parts of the real-valued symbols $\mathcal F',\mathcal F''$ for $a=0$.

The idea now is to construct an admissible contour in the sense
of~\cite[Definition~2.3]{skds}; e.g. a contour that separates the sets
of poles (in the variable $\lambda$) of $R_r$ and $R_\theta$ from each
other; then~\cite[(2.1)]{skds} provides a formula for $R_g(\omega,k)$,
which can be used to get a resolvent estimate.
\begin{figure}
\includegraphics{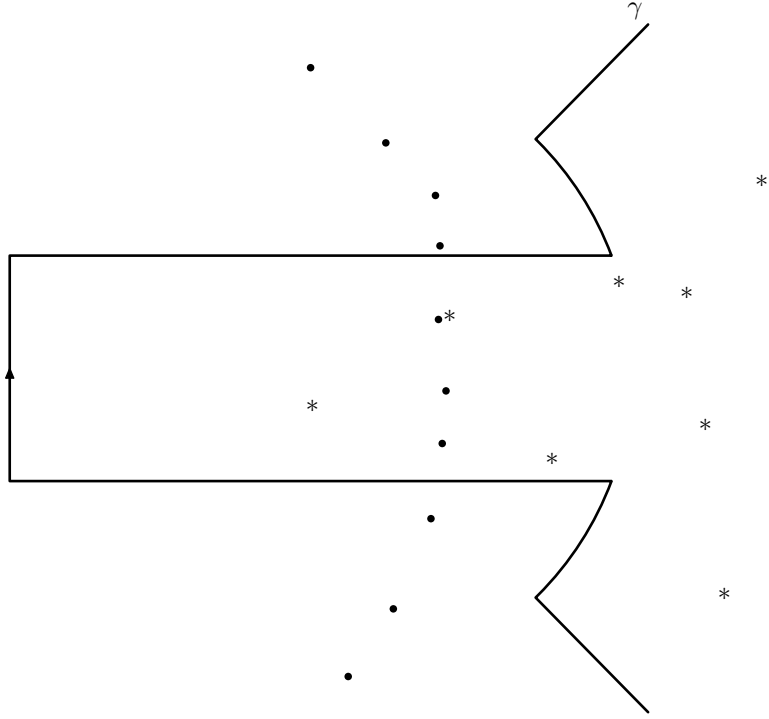}
\caption{The contour $\gamma$ and interaction between radial poles (denoted by dots)
and angular poles (denoted by asterisks).}
\end{figure}
We will use the method of proof of~\cite[Proposition~3.4]{skds}.  Take
the contour $\gamma$ introduced there, for $l_2=C_\lambda h^{-1}$,
$l_1=L=C_\lambda h^{-2}$, and $C_\lambda$ some large constant. Then we
know that all angular poles are to the right of $\gamma$ (in
$\Gamma_2$). Moreover, the only radial poles to the right of $\gamma$
lie in the domain $\{|\Imag\lambda|\leq l_2,\ |\lambda|\leq L\}$ and
they are contained in the set $\{\lambda^r_0,\dots,\lambda^r_{C_m}\}$
for some constant $C_m$, where $\lambda^r_m(\omega,k)$ is the radial
pole corresponding to $h^{-2}\mathcal
F^r(m,\tilde\omega,\tilde\nu,\tilde k;h)$.  In particular, those
radial poles are contained in
$$
U_\lambda=\{C_\theta^{-1}<\tilde\lambda<C_\theta,\ |\tilde\mu|\leq C_\theta\},
$$
for some constant $C_\theta$.

Assume that $\omega$ is not a pole of $R_g(\omega,k)$; then we can
consider the admissible contour composed of $\gamma$ and the circles
$\gamma_m$, $0\leq m\leq C_m$, enclosing $\lambda_m^r(\omega,k)$, but
none of the other poles of $R_r$ or $R_\theta$. Using the meromorphic
decomposition of $R_r$ at $\lambda_m^r$ and letting its principal part
be $\Pi^r_m/(\lambda-\lambda^r_m)$, we get
\begin{equation}\label{e:mer-dec-1}
R_g(\omega,k)=\sum_m \Pi^r_m(\omega,k) \otimes R_\theta(\omega,\lambda^r_m(\omega,k),k)
+{1\over 2\pi i}\int_\gamma
R_r(\omega,\lambda,k)\otimes R_\theta(\omega,\lambda,k)\,d\lambda.
\end{equation}
Here we only include the poles $\lambda_m^r$ lying to the right of
$\gamma$; one might need to change $l_1$ in the definition of $\gamma$
a little bit in case some $\lambda_m^r$ comes close to $\gamma$.  The
integral in~\eqref{e:mer-dec-1} is holomorphic and bounded
polynomially in $h$, by the bounds for $R_r$ given by
Proposition~\ref{l:radial}, together with the estimates in the proof
of~\cite[Proposition~3.4]{skds}.

Now, the poles of $R_\theta$ in $U_\lambda$ are given
by~\eqref{e:angular-qc}; let $\lambda^\theta_l(\omega,k)$ be the pole
corresponding to $h^{-2}\mathcal
F^\theta(hl,\tilde\omega,\tilde\nu,\tilde k;h)$ and
$\Pi^\theta_l/(\lambda-\lambda^\theta_l)$ be the principal part of the
corresponding meromorphic decomposition.  Then the resolvent estimates
on $R_\theta$ given by Proposition~\ref{l:angular} together
with~\eqref{e:mer-dec-1} imply
\begin{equation}\label{e:mer-dec-2}
R_g(\omega,k)=\Hol(\omega)+\sum_{m,l} {\Pi^r_m(\omega,k)
\otimes \Pi^\theta_l(\omega,k)
\over\lambda^r_m(\omega,k)-\lambda_l^\theta(\omega,k)}.
\end{equation}
Here $\Hol(\omega)$ is a family of operators holomorphic in $\omega$ and
bounded polynomially in $h$. Moreover,
\begin{equation}\label{e:mer-dec-3}
\lambda^r_m(\omega,k)-\lambda_l^\theta(\omega,k)
=h^{-2}(\mathcal F^r(m,\tilde\omega,\tilde\nu,\tilde k;h)
-\mathcal F^\theta(hl,\tilde\omega,\tilde\nu,\tilde k;h))
+O(h^\infty);
\end{equation}
therefore, the equation
$\lambda^r_m(\omega,k)-\lambda_l^\theta(\omega,k)=0$ is an
$O(h^\infty)$ perturbation of~\eqref{e:final-q-eq} and it has a unique
solution $\omega_{m,l}(k)$, which is $O(h^\infty)$ close to
$h^{-1}\mathcal F^\omega(m,hl,\tilde k;h)$. Finally, in the
region~\eqref{e:region-1} we can write by~\eqref{e:mer-dec-2}
$$
R_g(\omega,k)=\Hol(\omega)+\sum_{m,l}{\Pi_{m,l}(k)\over \omega-\omega_{m,l}(k)},
$$
with $\Hol(\omega)$ as above and $\Pi_{m,l}(k)$ being the product of a
coefficient polynomially bounded in $h$ with $(\Pi^r_m\otimes
\Pi^\theta_l)(\omega_{m,l}(k),k)$; this finishes the proof.
\end{proof}
Finally, let us present the simplified separation of variables for
the case $a=0$, namely the Schwarzschild--de Sitter metric:
$$
\begin{gathered}
g={\Delta_r\over r^2}dt^2-{r^2\over\Delta_r} dr^2-r^2(d\theta^2+\sin^2\theta\,d\varphi^2),\\
\Delta_r=r^2\Big(1-{\Lambda r^2\over 3}\Big)-2M_0r.
\end{gathered}
$$
Note that $d\theta^2+\sin^2\theta\,d\varphi^2$ is just the round metric on the
unit sphere. The metric decouples without the need to take Fourier series in
$\varphi$; the stationary d'Alembert--Beltrami operator has the form
$P_g(\omega)=P_r(\omega)+P_\theta$, where
$$
\begin{gathered}
P_r(\omega)=D_r(\Delta_r D_r)-{r^4\over\Delta_r}\omega^2,\\
P_\theta={1\over\sin\theta}D_\theta(\sin\theta D_\theta)+{D_\varphi^2\over\sin^2\theta}.
\end{gathered}
$$
Here $P_\theta$ is the Laplace--Beltrami operator on the round sphere;
it is self-adjoint (thus no need for the contour integral construction
of~\cite[Section~2]{skds}) and is known to have eigenvalues $l(l+1)$,
where $l\geq 0$. Each such eigenvalue has multiplicity $2l+1$,
corresponding to the values $-l,\dots,l$ of the $\varphi$-angular
momentum $k$. The angular Lemma~\ref{l:angular} follows
immediately. (We nevertheless give a more microlocal explanation
in this case at the end of Section~\ref{s:angular-outline}.)
One can now decompose $L^2$ into an orthogonal sum of the
eigenspaces of $P_\theta$; on the space $V_\lambda$ corresponding to
the eigenvalue $\lambda=l(l+1)$, we have
$$
P_g(\omega)|_{V_\lambda}=P_r(\omega)+\lambda.
$$
Therefore, the only problem is to show the radial Lemma~\ref{l:radial}
in this case, which is in fact no simpler than the general case. (Note
that we take a different path here than~\cite{sb-z} and~\cite{b-h},
using only real microlocal analysis near the trapped set, which
immediately gives polynomial resolvent bounds.)

\section{Preliminaries}\label{s:prelim}

\subsection{Pseudodifferential operators and microlocalization}\label{s:prelim-pseudor}
\relax

First of all, we review the classes of semiclassical
pseudodifferential operators on manifolds and introduce notation used
for these classes; see~\cite[Sections~9.3 and~14.2]{e-z}
or~\cite{d-s} for more information.

For $k\in \mathbb R$, we consider the symbol class $S^k(\mathbb R^n)$
consisting of functions $a(x,\xi;h)$ smooth in $(x,\xi)\in \mathbb
R^{2n}$ and satisfying the following growth conditions: for each
compact set $K\subset \mathbb R^n$ and each pair of multiindices
$\alpha,\beta$, there exists a constant $C_{\alpha\beta K}$ such that
$$
|\partial^\alpha_x \partial^\beta_\xi a(x,\xi;h)|\leq
C_{\alpha\beta K}\langle\xi\rangle^{k-|\beta|},\
x\in K,\ \xi\in \mathbb R^n,\ h>0.
$$
If we treat $\mathbb R^{2n}$ as the cotangent bundle to $\mathbb R^n$,
then the class $S^k$ is invariant under changes of variables; this
makes it possible, given a manifold $M$, to define the class $S^k(M)$
of symbols depending on $(x,\xi)\in T^*M$.

If $k\in \mathbb R$ and $a_j(x,\xi)\in S^{k-j}(M)$, $j=0,1,\dots$, is
a sequence of symbols, then there exists the asymptotic sum
\begin{equation}\label{e:asymptotic-sum}
a(x,\xi;h)\sim \sum_{j\geq 0} h^j a_j(x,\xi);
\end{equation}
i.e., a symbol $a(x,\xi;h)\in S^k(M)$ such that for every
$J=0,1,\dots$, $$ a(x,\xi;h)-\sum_{0\leq j<J} h^j a_j(x,\xi)\in
h^JS^{k-J}.
$$
The asymptotic sum $a$ is unique modulo the class $h^\infty
S^{-\infty}$ of symbols all of whose derivatives decay faster than
$h^N\langle\xi\rangle^{-N}$ for each $N$ on any compact set in $x$. If
$a$ is given by an asymptotic sum of the
form~\eqref{e:asymptotic-sum}, then we call it a classical symbol and
write $a\in S^k_{\cl}(M)$. We call $a_0(x,\xi)$ the principal part of
the symbol $a(x,\xi;h)$.

Let $\Psi^k(M)$ be the algebra of (properly supported)
semiclassical pseudodifferential operators on $M$ with symbols in $S^k(M)$.  If
$H^m_{h,\loc}(M)$, $m\in \mathbb R$, consists of functions locally
lying in the semiclassical Sobolev space, then every element of
$\Psi^k(M)$ is continuous $H^m_{h,\loc}(M)\to H^{m-k}_{h,\loc}(M)$
with every operator seminorm being $O(1)$ as $h\to 0$.  Let
$\Psi^k_{\cl}(M)$ be the algebra of operators with symbols in
$S^k_{\cl}(M)$ and $\Psi_{\cl}(M)$ be the union of $\Psi^k_{\cl}$ for
all $k$. Next, let the operator class $h^\infty\Psi^{-\infty}(M)$
correspond to the symbol class $h^\infty S^{-\infty}(M)$; it can be
characterized as follows: $A\in h^{\infty}\Psi^{-\infty}(M)$ if and
only if for each $N$, $A$ is continuous $H^{-N}_{h,\loc}(M)\to
H^N_{h,\loc}(M)$, with every operator seminorm being $O(h^N)$.  The
full symbol of an element of $\Psi^k(M)$ cannot be recovered as a
function on $T^*M$; however, if $A\in\Psi^k_{\cl}(M)$, then the
principal symbol of $A$ is an invariantly defined function on the
cotangent bundle. If $M$ is an open subset of $\mathbb R^n$, then we
can define the full symbol of a pseudodifferential operator modulo
$h^\infty S^{-\infty}$; we will always use Weyl quantization.

We now introduce microlocalization; see also~\cite[Section~8.4]{e-z}
and~\cite[Section~3]{sj-z}.  Define $U\subset T^*M$ to be conic at
infinity, if there exists a conic set $V\subset T^*M$ such that the
symmetric difference of $U$ and $V$ is bounded when restricted to
every compact subset of $M$.  For $a\in S^k(M)$ and $U\subset T^*M$
open and conic at infinity, we say that $a$ is rapidly decaying on
$U$, if for every $V\subset U$ closed in $T^*M$, conic at infinity,
and with compact projection onto $M$, every derivative of $a$ decays
on $V$ faster than $h^N
\langle\xi\rangle^{-N}$ for every $N$.  We say that $A\in\Psi^k(M)$
vanishes microlocally on $U$ if its full symbol (in any coordinate
system) is rapidly decaying on $U$.  If $A,B\in\Psi^k(M)$, then we say
that $A=B$ microlocally on $U$, if $A-B$ vanishes microlocally on $U$.

For $A\in\Psi^k(M)$, we define the semiclassical wavefront set
$\WF(A)\subset T^*M$ as follows: $(x,\xi)\not\in \WF(A)$ if and only
if $A$ vanishes microlocally on some neighborhood of $(x,\xi)$.  The
set $\WF(A)$ is closed; however, it need not be conic at infinity.
Next, we say that $A$ is compactly
microlocalized, if there exists a compact set $K\subset T^*M$ such
that $A$ vanishes microlocally on $T^*M\setminus K$. We denote by
$\Psi^{\comp}(M)$ the set of compactly microlocalized operators. Here
are some properties of microlocalization:
\begin{itemize}
\item If $A$ vanishes microlocally on $U_1$ and $U_2$, then it
vanishes microlocally on $U_1\cup U_2$.
\item The set of pseudodifferential operators vanishing microlocally
on some open and conic at infinity $U\subset T^*M$ is a two-sided
ideal; so is the set of operators with wavefront set contained in some
closed $V\subset T^*M$. In particular, $\WF(AB)\subset\WF(A)\cap\WF(B)$.
\item $A$ vanishes microlocally on the whole $T^*M$ if and only
if it lies in $h^\infty \Psi^{-\infty}$.
\item If $A$ vanishes microlocally on $U$, then $\WF(A)\cap U=\emptyset$;
the converse is true if $U$ is bounded.
However,%
\footnote{This issue can be avoided if we consider
$\WF(A)$ as a subset of the fiber compactified cotangent
bundle $\overline{T^*M}$, as in~\cite[Section~2.1]{v}.
Then an operator is compactly microlocalized if and only
if its wavefront set does not intersect the fiber infinity.}
$A$ does not necessarily vanish microlocally on the
complement of $\WF(A)$; for example, the operator $A=e^{-1/h}$ lies in
$\Psi^0$ and has an empty wavefront set, yet it does not lie in
$h^\infty\Psi^{-\infty}$.
\item The set $\Psi^{\comp}$ forms a two-sided ideal and it lies
in $\Psi^{-N}$ for every $N$.
\item Each $A\in\Psi^{\comp}$ vanishes microlocally on the complement
of $\WF(A)$.
\item Let $A\in\Psi^k_{\cl}(M)$ and let its symbol in some coordinate system
have the form~\eqref{e:asymptotic-sum}; introduce
$$
V=\bigcup_{j\geq 0} \supp a_j.
$$
Then $A$ vanishes microlocally on some open set $U$ if and only if $U\cap
V=\emptyset$; $A$ is compactly microlocalized if $V$ is bounded, and
$\WF(A)$ is the closure of $V$.
\end{itemize}
We now consider microlocally defined operators. Let $U\subset T^*M$ be
open. A local pseudodifferential operator $A$ on $U$ is, by
definition, a map
$$
B\mapsto 
[A\cdot B]\in\Psi^{\comp}(M)/h^\infty\Psi^{-\infty}(M),\ 
B\in\Psi^{\comp}(M),\ \WF(B)\subset U,
$$
such that:
\begin{itemize}
\item $\WF([A\cdot B])\subset \WF(B)$.
\item If $B_1,B_2\in\Psi^{\comp}(M)$ and $\WF(B_j)\subset U$, then
$[A\cdot (B_1+B_2)]=[A\cdot B_1]+[A\cdot B_2]$.
\item If $C\in\Psi^k(M)$, then $[A\cdot B]C=[A\cdot (BC)]$.
\end{itemize}
We denote by $\Psi^{\loc}(U)$ the set of all local operators on $U$.
Note that a local operator is only defined modulo an
$h^\infty\Psi^{-\infty}$ remainder. If $A\in\Psi^k(M)$, then the
corresponding local operator $\widetilde A$ is given by $[\widetilde A\cdot
B]=AB\mod h^\infty \Psi^{-\infty}$; we say that $A$ represents $\widetilde
A$.  For $M=\mathbb R^n$ and $U\subset T^*M$, there is a one-to-one
correspondence between local operators and their full symbols modulo
$h^\infty$; the symbols of local operators are functions $a(x,\xi;h)$
smooth in $(x,\xi)\in U$ all of whose derivatives are uniformly
bounded in $h$ on compact subsets of $U$.  In fact, for a symbol
$a(x,\xi;h)$, the corresponding local operator is defined by $[A\cdot
B]=(a\# b)^w(x,hD_x)$, where $b(x,\xi;h)$ is the full symbol of
$B\in\Psi^{\comp}$; since $b$ is compactly supported inside $U$ modulo
$O(h^\infty)$ and $a$ is defined on $U$, we can define the symbol
product $a\#b$ uniquely modulo $O(h^\infty)$. In particular, a
classical local operator $A\in\Psi^{\loc}_{\cl}(M)$ is uniquely
determined by the terms of the decomposition~\eqref{e:asymptotic-sum}
of its full symbol.  Note that $U$ is not required to be conic at
infinity, and we do not impose any conditions on the growth of $a$ as
$\xi\to\infty$.

Local operators form a sheaf of algebras; that is, one can multiply
local operators defined on the same set, restrict a local operator to
a smaller set, and reconstruct a local operator from its restrictions
to members of some finite open covering of $U$.  This makes it
possible to describe any local operator $A\in\Psi^{\loc}(U)$ on a
manifold using its full symbols in various coordinate charts.  For
$A\in\Psi^{\loc}(U)$, one can define its wavefront set $\WF(A)$ as
follows: $(x_0,\xi_0)\not\in\WF(A)$
if and only if the full symbol of $A$ is $O(h^\infty)$
in some neighborhood of $(x_0,\xi_0)$. If $A\in\Psi^k$
represents $\widetilde A\in\Psi^{\loc}$, then $\WF(A)=\WF(\widetilde A)$;
in general, wavefront sets of local operators obey
$\WF(A+B)\subset\WF(A)\cup\WF(B)$ and $\WF(AB)\subset\WF(A)\cap\WF(B)$.

Finally, we study microlocalization of arbitrary operators. Let $M_1$
and $M_2$ be two manifolds.  An $h$-dependent family of (properly
supported) operators $A(h):C^\infty(M_1)\to \mathcal D'(M_2)$ is
called tempered, or polynomially bounded, if for every compact
$K_1\subset M_1$, there exist $N$ and $C$ such that for any $u\in
C_0^\infty(K_1)$, $\|A(h)u\|_{H^{-N}_h(M_2)}\leq Ch^{-N}
\|u\|_{H^N_h}$.  Note that the composition of a tempered operator with
an element of $\Psi^k$ is still tempered. We can also treat distributions
on $M_2$ as operators from a singleton to $M_2$.

For a tempered family $A(h)$, we define its wavefront set
$\WF(A)\subset T^*(M_1\times M_2)$ as follows:
$(x,\xi;y,\eta)\not\in\WF(A)$, if and only if there exist
neighborhoods $U_1(x,\xi)$ and $U_2(y,\eta)$ such that for every
$B_j\in\Psi^{\comp}(M_j)$ with $\WF(B_j)\subset U_j$, we have
$B_2A(h)B_1\in h^\infty\Psi^{-\infty}$. We say that $A_1=A_2$
microlocally in some open and bounded $U\subset T^*M$, if
$\WF(A_1-A_2)\cap U=\emptyset$. Also, $A(h)$ is said to be compactly
microlocalized, if there exist $C_j\in\Psi^{\comp}(M_j)$ such that
$A(h)-C_2A(h)C_1\in h^\infty\Psi^{-\infty}$. In this case, all
operator norms $\|A(h)\|_{H^{N_1}_h\to H^{N_2}_h}$ are equivalent
modulo $O(h^\infty)$; if any of these norms is $O(h^r)$ for some
constant $r$, we write $\|A(h)\|=O(h^r)$. Here are some properties:
\begin{itemize}
\item If $A$ is compactly microlocalized, then $\WF(A)$ is compact.
The converse, however, need not be true.
\item If $A\in\Psi^k(M)$, then the two definitions
of compact microlocalization of $A$ (via its symbol and as a tempered
family of operators) agree; the wavefront set of $A$ as a tempered
family of operators is just $\{(x,\xi;x,\xi)\mid (x,\xi)\in\WF(A)\}$.
\item If $A_1,A_2$ are two tempered operators and at least one of them
is either compactly microlocalized or pseudodifferential, then
the product $A_2A_1$ is a tempered operator, and
$$
\begin{gathered}
\WF(A_2A_1)\subset\WF(A_1)\circ \WF(A_2)\\
=\{(x,\xi;z,\zeta)\mid \exists (y,\eta):(x,\xi;y,\eta)\in \WF(A_1),\
(y,\eta;z,\zeta)\in\WF(A_2)\}.
\end{gathered}
$$
Moreover, if both $A_1,A_2$ are compactly microlocalized, so is $A_2A_1$.
\end{itemize}
Let us quote the following microlocalization fact for
oscillatory integrals, which is the starting point for the
construction of semiclassical Fourier integral operators used in
Section~\ref{s:prelim-egorov}:
\begin{prop}\label{l:oi-wf}
Assume that $M$ is a manifold, $U\subset M\times \mathbb R^m$ is
open, $\varphi(x,\theta)$ is a smooth real-valued function on $U$,
with $x\in M$ and $\theta\in \mathbb R^m$, and $a(x,\theta)\in
C_0^\infty(U)$. Then the family of distributions
$$
u(x)=\int_{\mathbb R^m} e^{i\varphi(x,\theta)/h} a(x,\theta)\,d\theta
$$
is compactly microlocalized and
$$
\WF(u)\subset\{(x,\partial_x\varphi(x,\theta))\mid (x,\theta)\in\supp a,\
\partial_\theta\varphi(x,\theta)=0\}.
$$
\end{prop}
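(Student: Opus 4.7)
The plan is to establish both assertions by applying non-stationary phase to a doubled oscillatory integral that represents the action of a test pseudodifferential operator on $u$.

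First, I would verify that $u$ is a tempered family. Since $a \in C_0^\infty(U)$, the $\theta$-integration runs over a compact set uniformly in $x$, so $u$ belongs to $L^\infty_{\loc}(M)$ with norm uniformly bounded in $h$; in particular $u$ is tempered as a distribution on $M$.

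For the wavefront inclusion, fix $(x_0,\xi_0)$ outside the claimed set. Working in a coordinate chart near $x_0$, let $b(x,\xi) \in C_0^\infty(T^*M)$ be supported in a small neighborhood of $(x_0,\xi_0)$ and equal to $1$ near $(x_0,\xi_0)$. I would write
\[
(b^w(x,hD_x)u)(x) = \frac{1}{(2\pi h)^n}\iiint e^{i\Phi(x,y,\xi,\theta)/h}\, b\bigl(\tfrac{x+y}{2},\xi\bigr)\, a(y,\theta)\, dy\, d\xi\, d\theta,
\]
with phase $\Phi = (x-y)\cdot\xi + \varphi(y,\theta)$. The critical set of $\Phi$ in the integration variables $(y,\xi,\theta)$ is cut out by $y = x$, $\xi = \partial_y\varphi(y,\theta)$, and $\partial_\theta\varphi(y,\theta)=0$; by the choice of $(x_0,\xi_0)$, shrinking the supports of $b$ and restricting $x$ to a small neighborhood of $x_0$ ensures that no critical point lies on the support of the amplitude. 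I would then construct a first-order operator $L$ in $(y,\xi,\theta)$ with $Le^{i\Phi/h} = e^{i\Phi/h}$ and integrate by parts $N$ times, producing an overall factor of $h^N$. Since $b$ is compactly supported in $\xi$, applying $(hD_x)^\alpha$ to $b^w u$ only multiplies the amplitude by the bounded factor $\xi^\alpha$, so the same bound gives $\|b^w u\|_{H^N_h} = O(h^\infty)$ near $x_0$, yielding $(x_0,\xi_0) \notin \WF(u)$.

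For compact microlocalization, I would observe that the set
\[
K = \{(x,\partial_x\varphi(x,\theta)) \mid (x,\theta) \in \supp a,\ \partial_\theta\varphi(x,\theta)=0\}
\]
is contained in a compact subset of $T^*M$, because $\supp a$ is compact and $\partial_x\varphi$ is continuous. Choosing $C \in \Psi^{\comp}(M)$ with full symbol equal to $1$ in a neighborhood of $K$ and applying the same non-stationary phase argument to $(I-C)u$, one concludes $(I-C)u \in h^\infty\Psi^{-\infty}$; combined with the wavefront inclusion above this establishes the stated microlocalization.

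The main technical point will be maintaining a uniform positive lower bound on $|\nabla_{(y,\xi,\theta)}\Phi|$ on the support of the amplitude, so that the differential operator $L$ and its transpose can be iterated cleanly; this is where compactness of $\supp b$ in $\xi$ and of $\supp a$ in $\theta$ is essential, and why the argument would fail without the cutoff $b$ restricting $\xi$ to a bounded region.
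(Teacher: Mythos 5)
Your approach is the standard non-stationary phase argument, and it is sound. The paper itself offers no proof of this proposition --- it is quoted as a known microlocalization fact for oscillatory integrals --- so there is nothing to contrast against; your argument is exactly what one would supply.

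One small wrinkle is worth flagging in the compact-microlocalization step. You write that one should apply ``the same non-stationary phase argument'' to $(I-C)u$, but this integral has amplitude $1-c\bigl(\tfrac{x+y}{2},\xi\bigr)$, which --- unlike $b$ --- is \emph{not} compactly supported in $\xi$. So your closing remark, that a uniform lower bound on $|\nabla_{(y,\xi,\theta)}\Phi|$ is secured by the compactness of $\supp b$ in $\xi$, does not carry over to this step. What saves you is a different mechanism: on the support of $(1-c)\cdot a$, either $|\xi|\leq R$ (and one is away from $K$, with a uniform lower bound as you say), or $|\xi|>R$, in which case $|\partial_y\Phi|=|\xi-\partial_y\varphi(y,\theta)|\geq |\xi|-C_0\gtrsim\langle\xi\rangle$ since $\partial_y\varphi$ is bounded on $\supp a$. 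The resulting operator $L$ then has coefficients that are symbolic of order $-1$ in $\xi$, and iterating produces both the $h^N$ gain and the $\langle\xi\rangle^{-N}$ decay needed to conclude $(I-C)u\in h^\infty\Psi^{-\infty}$. With this adjustment the proof is complete.
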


\subsection{Ellipticity and formal functional calculus}\label{s:prelim-functional}

Assume that $U\subset T^*M$ is open and $A\in \Psi^{\loc}_{\cl}(U)$.
We say that $A$ is (semiclassically) elliptic on $U$ if its principal
symbol does not vanish on $U$. Under this condition, there exists
unique operator $A^{-1}\in\Psi^{\loc}_{\cl}(U)$ such that
$A^{-1}A,AA^{-1}=I$ as local operators.  The next proposition provides
the form of the symbol of $A^{-1}$; it is based on the standard parametrix
construction:
\begin{prop}\label{l:elliptic-detail}
Fix a coordinate system on $M$. Assume that $A\in\Psi^{\loc}_{\cl}(U)$
is elliptic and has the full symbol $a\sim a_0+ha_1+h^2a_2+\dots$. Then
$A^{-1}$ has the full symbol $b\sim b_0+hb_1+h^2b_2+\dots$, where each
$b_j$ is a linear combination with constant coefficients of the terms
of the form
\begin{equation}\label{e:elliptic-form}
a_0^{-M-1}\prod_{m=1}^M \partial_x^{\alpha_m} \partial_\xi^{\beta_m} a_{l_m}.
\end{equation}
Here the (multi)indices $\alpha_m,\beta_m,l_m$ satisfy the condition
$$
\sum_{m=1}^M|\alpha_m|=\sum_{m=1}^M |\beta_m|=j-\sum_{m=1}^M l_m.
$$
Furthermore, we can assume that $|\alpha_m|+|\beta_m|+l_m>0$ for all $m$.
\end{prop}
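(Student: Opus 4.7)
The plan is the standard formal parametrix construction. In the fixed coordinate chart, using Weyl quantization, the Moyal expansion reads
\begin{equation*}
a \# b \sim \sum_{k \geq 0} \frac{1}{k!}\left(\frac{ih}{2}\right)^k \sigma^k(a,b),
\end{equation*}
where $\sigma = D_{\xi_1}\cdot D_{x_2} - D_{x_1}\cdot D_{\xi_2}$ acts on $a(x_1,\xi_1) b(x_2,\xi_2)$ before restriction to the diagonal, so that each application of $\sigma$ distributes one $x$-derivative and one $\xi$-derivative between the two factors. Substituting $a \sim \sum_l h^l a_l$ and $b \sim \sum_n h^n b_n$ and equating the coefficient of $h^j$ in $a\#b\sim 1$ gives $a_0 b_0 = 1$ (hence $b_0 = a_0^{-1}$, the $M=0$ empty-product case) and, for $j \geq 1$, the triangular recursion
\begin{equation*}
b_j = -\,a_0^{-1}\sum_{\substack{l+n+k=j\\ (l,k)\neq(0,0)}} \frac{(i/2)^k}{k!}\, \sigma^k(a_l, b_n),
\end{equation*}
which expresses $b_j$ in terms of $b_0,\ldots,b_{j-1}$. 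Borel summation then produces a classical symbol $b\sim\sum_j h^j b_j$ and hence a local operator $A^{-1}\in\Psi^{\loc}_{\cl}(U)$; the companion identity $b\# a = 1$ is obtained the same way, and the two candidates agree by $b = b\#(a\#b') = (b\#a)\#b' = b'$.

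I would prove the structural claim for $b_j$ by induction on $j$, substituting the assumed form of $b_n$ into the recursion and applying Leibniz. Two kinds of new terms arise: a derivative landing on an existing factor $\partial_x^{\alpha_m}\partial_\xi^{\beta_m} a_{l_m}$ simply raises $|\alpha_m|$ or $|\beta_m|$ by one, while a derivative landing on the prefactor $a_0^{-M-1}$ yields $-(M+1)\,a_0^{-(M+1)-1}\,\partial a_0$, which promotes $M$ to $M+1$ and appends a new factor with $l=0$ and exactly one derivative. Adjoining the factor $\partial_x^{\alpha}\partial_\xi^{\beta} a_l$ and multiplying by $-a_0^{-1}$ raises $M$ once more. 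The key counting is that $\sigma^k$ deposits exactly $k$ $x$-derivatives and $k$ $\xi$-derivatives across the pair $(a_l,b_n)$; combined with the inductive identities $\sum|\alpha_m|=\sum|\beta_m|=n-\sum l_m$ for $b_n$, the new totals become $(n-\sum l_m)+k$, which by $l+n+k=j$ equals $j-\sum\tilde l_m$ with $\sum\tilde l_m=\sum l_m+l$, exactly as required.

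Finally, the positivity condition $|\tilde\alpha_m|+|\tilde\beta_m|+\tilde l_m>0$ is automatic for the $\partial a_0$ factors (at least one derivative) and is preserved for factors inherited from $b_n$ (indices only grow). The one case requiring care is when the new factor $\partial_x^{\alpha}\partial_\xi^{\beta} a_l$ is a bare $a_0$, i.e.\ $l=|\alpha|=|\beta|=0$; the constraint $l+n+k=j>n$ forces $k\geq 1$, so this arises precisely when all $k$ derivatives from $\sigma^k$ fall on the $b_n$ side. In that situation one absorbs the bare $a_0$ into the prefactor via $a_0^{-M'-2}\cdot a_0 = a_0^{-(M'-1)-1}$, lowering $M$ by one without altering any index sum. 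The main obstacle in the whole argument is keeping this two-species Leibniz bookkeeping (and the absorption step) consistent; no analytic difficulty arises.
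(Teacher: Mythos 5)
Your proof is correct and takes essentially the same route as the paper: write $a\#b=1$, extract the coefficient of $h^j$ to get a triangular recursion, and track by induction the structural type of each $b_j$. The paper states the same argument more compactly by introducing the notion of an ``expression of type $(M_\alpha,M_\beta,L)$'' and noting that $\partial_x^\alpha\partial_\xi^\beta$ sends type $(M_\alpha,M_\beta,L)$ to $(M_\alpha+|\alpha|,M_\beta+|\beta|,L)$; your Leibniz--Moyal bookkeeping is a fuller version of exactly that.

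One remark on your final paragraph: the ``bare $a_0$'' case you guard against never actually arises, so the absorption step is vacuous. In the Weyl/Moyal product, $\sigma=D_{\xi_1}D_{x_2}-D_{x_1}D_{\xi_2}$ places one derivative on \emph{each} factor per application; hence $\sigma^k$ always deposits exactly $k$ derivatives on $a_l$ and $k$ on $b_n$, and the situation ``all $k$ derivatives fall on the $b_n$ side'' cannot occur for $k\geq 1$. The new appended factor carries indices with $|\alpha|+|\beta|+l = k+l$, and the only way this is zero is $(l,k)=(0,0)$, which is excluded from the sum. (Also, the recursion excludes $(l,k)=(0,0)$, which forces $l+k\geq1$ — not $k\geq1$; you could have $l\geq1$, $k=0$, but then the new factor is $a_l$ with $l\geq1$ and positivity still holds.) So the positivity condition is automatic, with no need for the absorption step.
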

\begin{proof}
We call~\eqref{e:elliptic-form} an expression of type
$(M_\alpha,M_\beta,L)$, where $M_\alpha,M_\beta,L$ are the sums of
$|\alpha_m|$, $|\beta_m|$, and $l_m$, respectively. If $f$ is an
expression of type $(M_\alpha,M_\beta,L)$, then we can prove by
induction that $\partial^\alpha_x \partial^\beta_\xi f$ is an
expression of type $(M_\alpha+|\alpha|,M_\beta+|\beta|,L)$. Now, we
write the equation $a\# b=1$; the principal term gives $b_0=a_0^{-1}$,
and the next terms give that each $b_j$ is the sum of expressions of
type $(j-L,j-L,L)$, by induction and the formula for the symbol
product $a\# b$.
\end{proof}
Let $A\in\Psi^k_{\cl}$; we say that it is elliptic on an open conic at
infinity $U\subset T^*M$ in the class $\Psi^k$ (or microlocally
elliptic), if its principal symbol $a_0$ satisfies $|a_0(x,\xi)|\geq
\langle\xi\rangle^k/C(K)$ for $(x,\xi)$ in any given closed conic at
infinity $K\subset U$ with compact projection onto $M$, and some
constant $C(K)$ depending on $K$. In this case, the full symbol of
$A^{-1}$ satisfies the decay conditions of the class $\Psi^{-k}$ in
$U$. In particular, if $A\in\Psi^k_{\cl}$ is elliptic in the class
$\Psi^k$ everywhere, then we can define $A^{-1}\in\Psi^{-k}_{\cl}$
for $h$ small enough.

We now construct functional calculus of local real principal
pseudodifferential operators.  For this, we use holomorphic functional
calculus~\cite[Section~7.3]{du-s}; another approach would be via
almost analytic continuation~\cite[Chapter~8]{d-s}.  First, assume
that $A\in\Psi^{\comp}_{\cl}(M)$ has compactly supported Schwartz
kernel.  In particular, the principal symbol $a_0$ of $A$ is compactly
supported; let $K\subset \mathbb C$ be the image of $a_0$. Let $f(z)$
be holomorphic in a neighborhood $\Omega$ of $K$, and let
$\gamma\subset\Omega$ be a contour such that $K$ lies inside of
$\gamma$. For each $h$, the operator $A$ is bounded $L^2\to L^2$; for
$h$ small enough, its spectrum lies inside of $\gamma$. Then we can
define the operator $f(A)$ by the formula
$$
f(A)={1\over 2\pi i}\oint_\gamma f(z)(z-A)^{-1}\,dz.
$$ 
For $z\in\gamma$, the operator $z-A$ is elliptic in the class
$\Psi^0$; therefore, $(z-A)^{-1}\in\Psi^0_{\cl}(M)$. It follows that
$f(A)\in\Psi^0_{\cl}(M)$. By Proposition~\ref{l:elliptic-detail}, the
full symbol of $f(A)$ (in any coordinate system) is the asymptotic sum
\begin{equation}\label{e:functional-calculus-symbol}
\sum_{j=0}^\infty h^j \sum_{M=0}^{2j} f^{(M)}(a_0(x,\xi))b_{jM}(x,\xi). 
\end{equation}
Here $a\sim a_0+ha_1+\dots$ is the full symbol of $A$; $b_{jM}$ are
the functions resulting from applying certain nonlinear differential
operators to $a_0,a_1,\dots$.

Now, assume that $U\subset T^*M$ is open and
$A\in\Psi^{\loc}_{\cl}(U)$ has real-valued principal symbol
$a_0$. Then the formula~\eqref{e:functional-calculus-symbol} can be
used to define an operator $f[A]\in\Psi^{\loc}_{\cl}(U)$ for any $f\in
C^\infty(\mathbb R)$.  Note that the principal symbol of $(z-A)^{-1}$
is $(z-a_0)^{-1}$; therefore, the principal symbol of $f[A]$ is
$f\circ a_0$.  The constructed operation posesses the following
properties of functional calculus:
\begin{prop}\label{l:formal-functional-calculus}
Assume that $U\subset T^*M$ is open, $A\in \Psi^{\loc}_{\cl}(U)$, and
$f,g\in C^\infty(\mathbb R)$. Then:

1. $\WF(f[A])\subset a_0^{-1}(\supp f)$, where $a_0$ is the principal
symbol of $A$.

2. If $f(t)=\sum_{j=0}^K f_j t^j$ is a polynomial, then $f[A]=f(A)$, where
$f(A)=\sum_j f_j A^j.$

3. $(f+g)[A]=f[A]+g[A]$ and $(fg)[A]=f[A]g[A]$.

4. If $B\in\Psi^{\loc}_{\cl}(U)$ and $[A,B]=0$, then
$[f[A],B]=0$.

The identities in parts~2---4 are equalities of local operators; in
particular, they include the $h^\infty\Psi^{-\infty}$ error. In fact,
the operator $f[A]$ is only defined uniquely modulo
$h^\infty\Psi^{-\infty}$.
\end{prop}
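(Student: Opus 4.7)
The plan is to deduce all four properties from the symbol expansion \eqref{e:functional-calculus-symbol} together with a reduction to the compactly microlocalized case, where the Dunford--Cauchy integral applies directly. Property~1 is immediate: if $(x_0,\xi_0)\in U$ with $a_0(x_0,\xi_0)\notin\supp f$, then by continuity the same holds on a neighborhood, so every term $f^{(M)}(a_0)$ vanishes there, and hence the full symbol of $f[A]$ is identically zero (not merely $O(h^\infty)$) near $(x_0,\xi_0)$.

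For the algebraic properties I would first prove a localization lemma. Fix $(x_0,\xi_0)\in U$ and pick $\chi\in C_c^\infty(U)$ with $\chi\equiv 1$ near $(x_0,\xi_0)$; let $\tilde A\in\Psi^{\comp}_{\cl}(M)$ be a Weyl quantization of $\chi\cdot a$, where $a\sim\sum h^j a_j$ is the full symbol of a representative of $A$ in a chart. Then $\tilde A$ is compactly microlocalized, $L^2$-bounded with spectrum in a fixed disk for small $h$, and represents the same local operator as $A$ on a neighborhood of $(x_0,\xi_0)$. Applying Proposition~\ref{l:elliptic-detail} to the parametrix of $z-\tilde A$ and integrating against $f$ along a contour $\gamma$ enclosing the spectrum, residue calculus collapses each resulting integral of the form $(2\pi i)^{-1}\oint f(z)(z-a_0)^{-M-1}\,dz$ into $f^{(M)}(a_0)/M!$; this produces precisely the series \eqref{e:functional-calculus-symbol} for the full symbol of $f(\tilde A)$ (and lets one read off the bound $M\le 2j$ from the parametrix). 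Since the coefficients $b_{jM}$ are universal differential polynomials in $a_0,\ldots,a_j$, the symbol of $f[\tilde A]$ agrees with that of $f[A]$ on $\{\chi=1\}$ modulo $O(h^\infty)$, giving $f[A]\equiv f(\tilde A)$ microlocally near $(x_0,\xi_0)$.

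With this identification, properties~2--4 follow from the corresponding properties of the Dunford calculus on the bounded operator $\tilde A$. For~2, the standard identity $\oint z^n(z-\tilde A)^{-1}dz=2\pi i\, \tilde A^n$ gives $f(\tilde A)=\sum f_j \tilde A^j$, and $\tilde A^j$ has the same symbol as $A^j$ on $\{\chi=1\}$. For~3, additivity is immediate from linearity of \eqref{e:functional-calculus-symbol} in $f$, and multiplicativity $(fg)(\tilde A)=f(\tilde A)g(\tilde A)$ is classical for holomorphic calculus of a bounded operator. For~4, if $[A,B]=0$ as local operators, then a matching localization $\tilde B$ of $B$ satisfies $[\tilde A,\tilde B]\in h^\infty\Psi^{-\infty}$ microlocally near $(x_0,\xi_0)$, hence $\tilde B$ commutes with $(z-\tilde A)^{-1}$ uniformly in $z\in\gamma$ modulo $O(h^\infty)$, and integration yields $[f(\tilde A),\tilde B]\in h^\infty\Psi^{-\infty}$ microlocally. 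Since $(x_0,\xi_0)\in U$ was arbitrary, a partition-of-unity argument on $U$ promotes each statement to an identity of local operators on all of $U$.

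The main obstacle is the universality step: one must verify carefully that the symbolic coefficients $b_{jM}$ produced by the parametrix construction of Proposition~\ref{l:elliptic-detail} depend only on the germ of $a$ at $(x_0,\xi_0)$, and that the result of integrating the parametrix expansion against $f$ term-by-term along $\gamma$ is the expression \eqref{e:functional-calculus-symbol}. This is essentially a bookkeeping exercise in combining Proposition~\ref{l:elliptic-detail} with the Cauchy formula $f^{(M)}(w)/M!=(2\pi i)^{-1}\oint f(z)(z-w)^{-M-1}dz$, but once it is in place the functorial properties of $f[\cdot]$ become formal consequences of their honest operator-theoretic counterparts for the compactly microlocalized representative $\tilde A$.
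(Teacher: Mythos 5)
Your parts~1 and~2 match the paper's argument essentially verbatim: part~1 reads off the wavefront set directly from \eqref{e:functional-calculus-symbol}, and part~2 localizes to a compactly microlocalized representative $\widetilde A$ and invokes holomorphic functional calculus, which is licit there because a polynomial is entire. The one genuinely contentious step in part~2---checking that the parametrix series integrated against $f$ along $\gamma$ reproduces \eqref{e:functional-calculus-symbol}---is what you correctly flag as ``bookkeeping,'' and the paper treats it the same way via Proposition~\ref{l:elliptic-detail}.

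The gap is in parts~3 and~4. You write that multiplicativity $(fg)(\widetilde A)=f(\widetilde A)g(\widetilde A)$ ``is classical for holomorphic calculus of a bounded operator,'' and similarly for~4 you integrate the resolvent commutator along $\gamma$. But $f$ and $g$ are arbitrary elements of $C^\infty(\mathbb R)$, not holomorphic, so the Dunford--Cauchy integral $\frac{1}{2\pi i}\oint_\gamma f(z)(z-\widetilde A)^{-1}\,dz$ is not even defined for them, and there is no operator $f(\widetilde A)$ to which the classical multiplicativity could apply. For non-analytic $f$ the object $f[A]$ exists \emph{only} as the symbolic expression \eqref{e:functional-calculus-symbol}; it does not arise from any contour integral. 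The paper's proof avoids this by a reduction-to-polynomials argument that you are missing: observe that each term of the full symbol of $(fg)[A]-f[A]g[A]$ (or of $[f[A],B]$) at a point $(x,\xi)$ depends only on finitely many derivatives of the full symbol of $A$ at $(x,\xi)$ and finitely many derivatives of $f$ and $g$ at $a_0(x,\xi)$. Hence to prove the vanishing of the symbol to order $j$, one may replace $f$ and $g$ by their Taylor polynomials of sufficiently high degree at $a_0(x,\xi)$, and for polynomials part~2 reduces the claim to the trivial operator identities $f(A)g(A)=(fg)(A)$ and $[f(A),B]=0$. Without this reduction, your appeal to holomorphic calculus does not close parts~3 and~4 for general smooth $f,g$.

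A secondary, minor imprecision: after localizing, you write that $f[\widetilde A]=f(\widetilde A)$ microlocally and cite the universality of $b_{jM}$; this is fine, but note that the definition of $f[A]$ for $C^\infty$ functions already \emph{presupposes} $a_0$ real-valued (otherwise $f\circ a_0$ is meaningless), and your localized $\widetilde A$ need not be self-adjoint. So there is no route through spectral calculus either; the polynomial reduction is essential.
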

\begin{proof}
1. Follows immediately from~\eqref{e:functional-calculus-symbol}.

2. Take an open set $V$ compactly contained in $U$; then there exists
$\widetilde A\in\Psi^{\comp}_{\cl}(M)$ such that $A=\widetilde A$ microlocally
on $V$.  Since $f$ is entire and $\widetilde A$ is compactly
microlocalized, we can define $f(\widetilde A)$ by means of holomorphic
functional calculus; it is be a pseudodifferential operator
representing $f[\widetilde A]$. Now, $f(A)=f(\widetilde A)$ microlocally on
$V$ by properties of multiplication of pseudodifferential operators
and $f[A]=f[\widetilde A]$ microlocally on $V$
by~\eqref{e:functional-calculus-symbol}; therefore, $f(A)=f[A]$
microlocally on $V$. Since $V$ was arbitrary, we have $f(A)=f[A]$
microlocally on the whole $U$.

3. We only prove the second statement.  It suffices to show that for
every coordinate system on $M$, the full symbols of $(fg)[A]$ and
$f[A]g[A]$ are equal.  However, the terms in the asymptotic
decomposition of the full symbol of $(fg)[A]-f[A]g[A]$ at $(x,\xi)$
only depend on the derivatives of the full symbol of $A$ at $(x,\xi)$
and the derivatives of $f$ and $g$ at $a_0(x,\xi)$. Therefore, it
suffices to consider the case when $f$ and $g$ are polynomials. In
this case, we can use the previous part of the proposition and the
fact that $f(A)g(A)=(fg)(A)$.

4. This is proven similarly to the previous part, using the fact that
$[A,B]=0$ yields $[f(A),B]=0$ for every polynomial $f$.
\end{proof}
Finally, under certain conditions on the growth of $f$ and the symbol of $A$ at
infinity, $f[A]$ is a globally defined operator:
\begin{prop}\label{l:formal-functional-elliptic}
Assume that $A\in\Psi_{\cl}^k(M)$, with $k\geq 0$, and that $A$ is
elliptic in the class $\Psi^k$ outside of a compact subset of
$T^*M$. Also, assume that $f\in C^\infty(\mathbb R)$ is a symbol of
order $s$, in the sense that for each $l$, there exists a constant
$C_l$ such that
$$
|f^{(l)}(t)|\leq C_l\langle t\rangle^{s-l},\
t\in \mathbb R.
$$
Then $f[A]$ is represented by an operator in $\Psi^{sk}_{\cl}(M)$.
\end{prop}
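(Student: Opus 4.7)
The plan is to construct the global operator representing $f[A]$ directly from the local symbol formula~\eqref{e:functional-calculus-symbol}: in a fixed coordinate chart, verify that the asymptotic expansion appearing there lies in $S^{sk}_{\cl}$, and then patch across charts with a partition of unity together with a Borel summation of the resulting asymptotic series. Since both the symbol class $S^{sk}_{\cl}$ and the notion of representing a local operator are coordinate-invariant, working in a single chart is enough.

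First I would bound the coefficients $b_{jM}$ in~\eqref{e:functional-calculus-symbol}. These arise from substituting the parametrix of Proposition~\ref{l:elliptic-detail} for $(z-A)^{-1}$ into the Cauchy integral and using $\frac{1}{2\pi i}\oint f(z)(z-a_0)^{-N-1}\,dz = f^{(N)}(a_0)/N!$. Thus $b_{jM}$ is a finite linear combination of products $\prod_{m=1}^M \partial^{\alpha_m}_x \partial^{\beta_m}_\xi a_{l_m}$ with $\sum_m|\alpha_m| = \sum_m|\beta_m| = j - \sum_m l_m$ and $|\alpha_m|+|\beta_m|+l_m > 0$. Since $a_l \in S^{k-l}$ by classicality of $A$, the $m$-th factor has order $k - l_m - |\beta_m|$, and the whole product lies in $S^{Mk - j}$.

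The heart of the argument is showing $f^{(M)}(a_0) \in S^{k(s-M)}$. By the ellipticity assumption, there is a compact set outside which $c\langle\xi\rangle^k \leq |a_0(x,\xi)| \leq C\langle\xi\rangle^k$. Combined with the growth condition $|f^{(l)}(t)| \leq C_l \langle t \rangle^{s-l}$, this gives $|f^{(M+r)}(a_0(x,\xi))| \leq C'_r \langle\xi\rangle^{k(s-M-r)}$ outside the exceptional compact set; here I would invoke the upper bound on $\langle a_0\rangle$ when $s-M-r \geq 0$ and the lower bound when $s-M-r < 0$. Faà di Bruno then writes $\partial^\alpha_x \partial^\beta_\xi [f^{(M)}(a_0)]$ as a sum of terms $f^{(M+r)}(a_0) \prod_i \partial^{\alpha_i}_x \partial^{\beta_i}_\xi a_0$ with $\sum_i \beta_i = \beta$ and $(\alpha_i,\beta_i)\neq 0$, each bounded by $\langle\xi\rangle^{k(s-M-r) + rk - |\beta|} = \langle\xi\rangle^{k(s-M) - |\beta|}$. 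Smoothness of $f$ handles the exceptional set trivially, so indeed $f^{(M)}(a_0) \in S^{k(s-M)}$.

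Combining, $c_j := \sum_{M=0}^{2j} f^{(M)}(a_0)\, b_{jM}$ lies in $S^{sk-j}$, so the asymptotic sum $c(x,\xi;h) \sim \sum_{j\geq 0} h^j c_j(x,\xi)$ defines an element of $S^{sk}_{\cl}$ in each chart. Patching produces a properly supported $\widetilde C \in \Psi^{sk}_{\cl}(M)$ whose full symbol in each chart agrees modulo $h^\infty S^{-\infty}$ with that of $f[A]$, so $\widetilde C$ represents $f[A]$. The main technical obstacle I expect is the Faà di Bruno bookkeeping together with the need to use both sides of the ellipticity inequality uniformly in the sign of $s-M-r$; everything else reduces to direct application of Proposition~\ref{l:elliptic-detail} and the combinatorial structure of~\eqref{e:functional-calculus-symbol}.
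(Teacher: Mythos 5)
Your proof is correct and follows essentially the same route as the paper's: bound $b_{jM}\in S^{Mk-j}$ via Proposition~\ref{l:elliptic-detail}, show $f^{(M)}\circ a_0\in S^{k(s-M)}$ using ellipticity and $k\geq 0$, conclude each term of~\eqref{e:functional-calculus-symbol} is in $S^{sk-j}$ so the asymptotic sum gives an element of $\Psi^{sk}_{\cl}$. The only difference is that you spell out the Fa\`a di Bruno bookkeeping (and the need to use the upper or lower bound on $|a_0|$ depending on the sign of $s-M-r$) that the paper's one-line remark ``$f^{(M)}\circ a_0\in S^{k(s-M)}$'' leaves implicit.
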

\begin{proof}
We use~\eqref{e:functional-calculus-symbol}; by
Proposition~\ref{l:elliptic-detail}, the symbol $b_{jM}$ lies in
$S^{kM-j}$. Since $f$ is a symbol of order $s$, $f^{(M)}$ is a symbol
of order $s-M$. Then, since $a_0\in S^k$ is elliptic outside of a
compact set and $k\geq 0$, we have $f^{(M)}\circ a_0\in
S^{k(s-M)}$. It follows that each term
in~\eqref{e:functional-calculus-symbol} lies in $S^{sk-j}$; therefore,
this asymptotic sum gives an element of $\Psi^{sk}_{\cl}$.
\end{proof}

\subsection{Quantizing canonical transformations}\label{s:prelim-egorov}

Assume that $M_1$ and $M_2$ are two manifolds of the same
dimension. Recall that the symplectic form $\omega^S_j$ on $T^*M_j$ is
given by $\omega^S_j=d\sigma^S_j$, where $\sigma^S_j=\xi\,dx$ is the
canonical 1-form. We let $K_j\subset T^*M_j$ be compact and assume
that $\Phi:T^*M_1\to T^*M_2$ is a symplectomorphism defined in a
neighborhood of $K_1$ and such that $\Phi(K_1)=K_2$. Then the form
$\sigma^S_1-\Phi^*\sigma^S_2$ is closed; we say that $\Phi$ is an
exact symplectomorphism if this form is exact.  Define the classical
action over a closed curve in $T^*M_j$ as the integral of $\sigma^S_j$
over this curve; then $\Phi$ is exact if and only if for each
closed curve $\gamma$ in the domain of $\Phi$, the classical action
over $\gamma$ is equal to the classical action over $\Phi\circ\gamma$.
We can quantize exact symplectomorphisms as follows:
\begin{prop}\label{l:quantization-canonical}
Assume that $\Phi$ is an exact symplectomorphism.  Then there exist
$h$-dependent families of operators
$$
B_1: \mathcal D'(M_1)\to C_0^\infty(M_2),\
B_2: \mathcal D'(M_2)\to C_0^\infty(M_1)
$$
such that:

1. Each $B_j$ is compactly microlocalized and has operator norm
$O(1)$; moreover, $\WF(B_1)$ is contained in the graph of $\Phi$ and
$\WF(B_2)$ is contained in the graph of $\Phi^{-1}$.

2. The operators $B_1B_2$ and $B_2B_1$ are equal to the identity
microlocally near $K_2\times K_2$ and $K_1\times K_1$, respectively.

3. For each $P\in\Psi_{\cl}(M_1)$, there exists
$Q\in\Psi_{\cl}(M_2)$ that is intertwined with $P$ via $B_1$
and $B_2$:
$$
B_1P=QB_1,\
PB_2=B_2Q
$$
microlocally near $K_1\times K_2$ and $K_2\times K_1$,
respectively. Similarly, for each $Q\in\Psi_{\cl}(M_2)$ there exists
$P\in\Psi_{\cl}(M_1)$ intertwined with it. Finally, if $P$ and
$Q$ are intertwined via $B_1$ and $B_2$ and $p$ and $q$ are their
principal symbols, then $p=q\circ \Phi$ near $K_1$.

If the properties 1--3 hold, we say that the pair $(B_1,B_2)$
quantizes the canonical transformation $\Phi$ near $K_1\times K_2$.
\end{prop}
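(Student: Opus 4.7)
The plan is to construct $B_1, B_2$ as semiclassical Fourier integral operators (FIOs) whose underlying Lagrangians are the graphs of $\Phi$ and $\Phi^{-1}$, respectively. Locally near any point of $K_1$, the symplectomorphism $\Phi$ admits a generating function $\varphi(y,x,\theta)$ on an open subset of $M_2\times M_1\times \mathbb R^m$, whose critical set $\{\partial_\theta\varphi=0\}$ parameterizes the graph of $\Phi$ via $(y,x,\theta)\mapsto (y,\partial_y\varphi;x,-\partial_x\varphi)$; existence is standard Lagrangian geometry. Then I set, locally,
\[
B_1 u(y)=\int e^{i\varphi(y,x,\theta)/h}\,a(y,x,\theta;h)\,u(x)\,dx\,d\theta,
\]
with $a$ a classical amplitude compactly supported in the chart. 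Applying Proposition~\ref{l:oi-wf} to the Schwartz kernel of $B_1$ shows that $B_1$ is compactly microlocalized and $\WF(B_1)$ is contained in the graph of $\Phi$; an analogous construction produces $B_2$ with $\WF(B_2)$ in the graph of $\Phi^{-1}$.

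To produce a single global $B_1$, I cover $K_1$ by finitely many such charts and patch the local FIOs via a partition of unity. In an overlap, two local phase functions $\varphi,\varphi'$ parameterize the same Lagrangian, hence agree up to a fibered change of $\theta$-variables plus an additive constant. The hypothesis that $\sigma^S_1-\Phi^*\sigma^S_2$ is exact lets these constants be chosen coherently across the cover, so that the local oscillatory integrals combine without spurious phase mismatches; without exactness one would meet an action/Maslov cocycle obstructing the patching.

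For property (2), the FIO composition calculus gives that $\WF(B_1 B_2)$ is contained in $\{(y',y'';y',y'')\mid y'=y''\in K_2\}$, so $B_1 B_2$ is pseudodifferential microlocally near $K_2\times K_2$, with principal symbol computable via stationary phase from the amplitudes of $B_1$ and $B_2$. I then choose the amplitudes iteratively, order by order in $h$, so that the full symbol of $B_1 B_2$ equals $1$ modulo $O(h^\infty)$; at each step this reduces to division by the (arranged to be non-vanishing) leading amplitude, i.e.\ an elliptic correction. The relation $B_2 B_1=I$ microlocally near $K_1\times K_1$ is arranged by a parallel construction, and the two corrections are mutually consistent by standard FIO calculus. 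For property (3), given $P\in\Psi_{\cl}(M_1)$, set $Q:=B_1 P B_2$; the same composition argument shows $Q\in\Psi_{\cl}(M_2)$ microlocally near $K_2$, with principal symbol $q_0=p_0\circ\Phi^{-1}$, and then $Q B_1=B_1 P B_2 B_1=B_1 P$ microlocally near $K_1\times K_2$ by property~(2). The second intertwining identity and the reverse direction (given $Q$, produce $P=B_2 Q B_1$) are symmetric, and the principal-symbol relation $p=q\circ\Phi$ follows.

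The main obstacle is the passage from local to global in the second paragraph combined with the inductive amplitude construction: ensuring that the local generating-function pieces add up without cancellation requires precisely the exactness of $\Phi$, and the parametrix argument for $B_1 B_2=I$ requires solvability of a transport-type equation for the amplitude at each order in $h$. Both difficulties are classical in the FIO literature, but they are the places where the hypotheses of the proposition are genuinely used, rather than being purely algebraic consequences of the symbol calculus.
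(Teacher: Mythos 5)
Your proposal is correct and takes essentially the same route as the paper: constructing $B_1,B_2$ as compactly supported semiclassical FIOs associated to $\Phi$ and $\Phi^{-1}$, using exactness of $\Phi$ to patch local generating-function representations into a globally defined elliptic amplitude, and then invoking the FIO composition calculus for the parametrix and intertwining properties. The paper compresses all of this into citing \cite[Chapter~8]{gus} and \cite[Chapter~2]{svn}; what you have written out is precisely the content of those references.
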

\begin{proof}
We take $B_1,B_2$ to be semiclassical Fourier integral operators
associated with $\Phi$ and $\Phi^{-1}$, respectively; their symbols
are taken compactly supported and elliptic in a neighborhood of
$K_1\times K_2$.  The existence of globally defined elliptic symbols
follows from the exactness of $\Phi$; the rest follows from calculus
of Fourier integral operators.  See~\cite[Chapter~8]{gus}
or~\cite[Chapter~2]{svn} for more details.
\end{proof}
Note that the operators $B_1$ and $B_2$ quantizing a given canonical
transformation are not unique. In fact, if
$X_j\in\Psi^{\comp}_{\cl}(M_j)$ are elliptic near $K_j$ and
$Y_j\in\Psi^{\comp}_{\cl}(M_j)$ are their inverses near $K_j$, then
$(X_2B_1X_1,Y_1B_2Y_2)$ also quantizes $\Phi$; moreover, $P$ is
intertwined with $Q$ via the new pair of operators if and only if
$X_1PY_1$ is intertwined with $Y_2QX_2$ via $(B_1,B_2)$.

We now study microlocal properties of Schr\"odinger propagators. Take
$A\in\Psi^{\comp}_{\cl}(M)$ with compactly supported Schwartz kernel
and let $a_0$ be its principal symbol; we assume that $a_0$ is
real-valued. In this case the Hamiltonian flow $\exp(tH_{a_0})$, $t\in
\mathbb R$, is a family of symplectomorphisms defined on the whole
$T^*M$; it is the identity outside of $\supp a_0$.  Moreover,
$\exp(tH_{a_0})$ is exact; indeed, if $V=H_{a_0}$, then by Cartan's
formula
$$
\mathcal L_V \sigma^S=d(i_V\sigma^S)+i_V d(\sigma^S)=
d(i_V\sigma^S-a_0) 
$$
is exact. Therefore,
$$
d_t \exp(tV)^* \sigma^S=\exp(tV)^* \mathcal L_V\sigma^S
$$
is exact and $\exp(tV)^*\sigma^S-\sigma^S$ is exact for all $t$.

For each $t$, define the operator $\exp(itA/h)$ as the solution to the
Schr\"odinger equation
$$
hD_t \exp(itA/h)=A\exp(itA/h)=\exp(itA/h)A
$$
in the algebra of bounded operators on $L^2(M)$, with the initial
condition $\exp(i0A/h)=I$. Such a family exists since $A$ is a bounded
operator on $L^2(M)$ for all $h$. Here are some of its properties
(see also~\cite[Chapter~10]{e-z}):
\begin{prop}\label{l:exp-fourior}
1. The operator $\exp(itA/h)-I$ is compactly microlocalized and has
operator norm $O(1)$.

2. If $A,B\in\Psi^{\comp}_{\cl}$ have real-valued principal symbols
and $[A,B]=O(h^\infty)$, then
$$
[\exp(itA/h),B]=O(h^\infty),\
\exp(it(A+B)/h)=\exp(itA/h)\exp(itB/h)+O(h^\infty).
$$
(We do not specify the functional spaces as the estimated families of
operators are compactly microlocalized, so all Sobolev norms are
equivalent.) In particular, if $B=O(h^\infty)$, then the propagators
of $A$ and $A+B$ are the same modulo $O(h^\infty)$.

3. Let $P\in\Psi_{\cl}$ and take
$$
P_t=\exp(itA/h)P\exp(-itA/h).
$$
Then $P_t$ is pseudodifferential and its full symbol depends smoothly
on $t$. The principal symbol of $P_t$ is $p_0\circ \exp(tH_{a_0})$,
where $p_0$ is the principal symbol of $P$; moreover,
$$
\WF(P_t)=\exp(-tH_{a_0})(\WF(P)).
$$

4. Let $K\subset T^*M$ be a compact set invariant under the
Hamiltonian flow of $a_0$. If $X\in\Psi^{\comp}_{\cl}$ is equal to the
identity microlocally near $K$, then the pair
$(X\exp(-itA/h),X\exp(itA/h))$ quantizes the canonical transformation
$\exp(tH_{a_0})$ near $K\times K$.  Moreover, if $P,Q\in\Psi^{\comp}_{\cl}$
are intertwined via these two operators, then
$Q=\exp(-itA/h)P\exp(itA/h)$ microlocally near $K$.

5. Assume that $V$ is a compactly supported vector field on $M$, and
let $\exp(tV):M\to M$ be the corresponding flow, defined for all $t$;
denote by $\exp(tV)^*$ the pull-back operator, acting on functions on
$M$. Let $K\subset T^*M$ be compact and invariant under the flow of
$V$, and $X\in\Psi^{\comp}_{\cl}$ have real-valued principal symbol
and be equal to the identity microlocally near $K$; consider
$(hV/i)X\in \Psi^{\comp}_{\cl}$.  Then for each $t$,
$$
\exp(it(hV/i)X/h)=\exp(tV)^*
$$
microlocally near $K\times K$.

The statements above are true locally uniformly in $t$.
\end{prop}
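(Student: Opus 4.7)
The plan for parts~1 and~2 rests on Duhamel's formula. For part~1, the operator-norm bound $\|\exp(itA/h)\|_{L^2\to L^2}=O(1)$ follows from the fact that $A$ has real principal symbol: writing $A-A^*=hB$ with $B\in\Psi^{\comp}$ of $L^2$-norm $O(1)$, a Gronwall argument applied to $\frac{d}{dt}\|U(t)u\|^2$ yields the bound, locally uniformly in~$t$. For compact microlocalization of $U(t)-I$, I would pick $X\in\Psi^{\comp}$ equal to the identity microlocally on a neighborhood of $\WF(A)$; for any $Y\in\Psi^{\comp}$ with $\WF(Y)\cap\WF(A)=\emptyset$, the product $YA$ lies in $h^\infty\Psi^{-\infty}$, so the Duhamel identity $Y(U(t)-I)=\frac{i}{h}\int_0^t(YA)U(s)\,ds$ is $O(h^\infty)$ (the factor $1/h$ is absorbed by the infinite-order decay), and similarly $(U(t)-I)Y=O(h^\infty)$. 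This yields $U(t)-I=X(U(t)-I)X+O(h^\infty)$. Part~2 is analogous: the commutator identity
\begin{equation*}
[\exp(itA/h),B]=\frac{i}{h}\int_0^t \exp(i(t-s)A/h)[A,B]\exp(isA/h)\,ds
\end{equation*}
is $O(h^\infty)$ whenever $[A,B]=O(h^\infty)$, and the composition statement follows by differentiating $W(t)=\exp(-it(A+B)/h)\exp(itA/h)\exp(itB/h)$ to get $W'(t)=\frac{i}{h}\exp(-it(A+B)/h)[\exp(itA/h),B]\exp(itB/h)=O(h^\infty)$, then integrating.

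Part~3 is the main technical step (Egorov's theorem). The Heisenberg equation $hD_t P_t=[A,P_t]$, $P_0=P$, is solved in the pseudodifferential algebra by the ansatz $\tilde P_t\sim\sum_{k\geq 0}h^k P_t^{(k)}$. Matching powers of~$h$, the principal transport equation $\partial_t p_t^{(0)}=H_{a_0}p_t^{(0)}$ has unique solution $p_t^{(0)}=p_0\circ\exp(tH_{a_0})$, and the subsequent inhomogeneous transport equations along the Hamiltonian flow of~$a_0$ determine the higher-order terms uniquely. Asymptotic summation produces $\tilde P_t\in\Psi_{\cl}$ with $hD_t\tilde P_t-[A,\tilde P_t]=O(h^\infty)$, and a final Duhamel comparison using the norm bound from part~1 shows $P_t-\tilde P_t=O(h^\infty)$, so $P_t$ itself is pseudodifferential with the stated principal symbol. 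The wavefront identity $\WF(P_t)=\exp(-tH_{a_0})(\WF(P))$ is read off the transport equations: vanishing of every $p^{(k)}$ to infinite order near $\exp(tH_{a_0})(x,\xi)$ is equivalent to vanishing of every $p_t^{(k)}$ to infinite order near~$(x,\xi)$.

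Parts~4 and~5 then follow by microlocal bookkeeping. For part~4, the conjugated cutoff $\tilde X(t):=\exp(-itA/h)X\exp(itA/h)$ is pseudodifferential by part~3 with principal symbol $x_0\circ\exp(-tH_{a_0})$, equal to~$1$ near~$K$ by flow-invariance of~$K$; hence $\tilde X(t)=I$ microlocally near~$K$, and $B_1B_2=X\cdot\tilde X(t)=I$ microlocally near~$K$ (and analogously $B_2B_1=I$). Wavefront containment in the graph of~$\Phi$ follows by conjugating test cutoffs through the propagator via part~3 and using disjointness of wavefront sets when target and source are not $\Phi$-related. Setting $Q=\exp(-itA/h)P\exp(itA/h)$ gives the intertwining, with the principal symbol relation $p=q\circ\Phi$ near~$K$. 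For part~5, the identity $\exp(itA_0/h)=\exp(tV)^*$ for $A_0=hV/i$ is verified directly from $\partial_t(\exp(tV)^*u_0)=V\exp(tV)^*u_0=\frac{i}{h}A_0\exp(tV)^*u_0$, using that $V$ commutes with its own flow. The hard part is that $A_0$ is \emph{not} compactly microlocalized, so part~2 does not apply directly; instead I would use
\begin{equation*}
\exp(itA/h)-\exp(tV)^*=\frac{i}{h}\int_0^t\exp((t-s)V)^*(A-A_0)\exp(isA/h)\,ds,
\end{equation*}
noting that $A-A_0=A_0(X-I)$ with $X-I$ vanishing microlocally near~$K$. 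Testing with cutoffs $Y_j\in\Psi^{\comp}$ having wavefront in a small neighborhood of~$K$ and conjugating them through both propagators (via part~3 for $\exp(isA/h)$ and direct flow-invariance for $\exp((t-s)V)^*$, since $K$ is preserved by the Hamiltonian flow of $a_0=\langle\xi,V\rangle$), the integrand becomes $O(h^\infty)$, giving the desired microlocal identity.
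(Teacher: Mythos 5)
Your proposal follows the paper's line of attack essentially step by step: the $L^2$-Gronwall bound from $\Real a_0\in\mathbb R$, Duhamel for parts~1 and~2, Egorov-type construction of $\tilde P_t$ and Duhamel comparison for part~3, conjugation of cutoffs for part~4, and a Duhamel comparison of $\exp(itA/h)$ with $\exp(tV)^*$ using $A-A_0=A_0(X-I)$ for part~5. Parts 2--5 match the paper's argument.

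The one place you should be more careful is the compact microlocalization in part~1. The first-order Duhamel identity $Y(U(t)-I)=\frac{i}{h}\int_0^t (YA)U(s)\,ds$ only sandwiches $A$ on one side of the propagator. Since $U(s)$ is a priori bounded only on $L^2$ (with locally uniform norm), this gives the error term control from $L^2\to H^N$, and the mirror identity $(U(t)-I)Y=\frac{i}{h}\int_0^t U(s)(AY)\,ds$ gives $H^{-N}\to L^2$, but neither alone gives $H^{-N}\to H^N$, which is what membership in $h^\infty\Psi^{-\infty}$ requires. The paper avoids this by iterating once more to the second-order Taylor identity
$$
\exp(itA/h)=I+\frac{it}{h}A+\frac{i}{h}A\Bigl(\int_0^t(t-s)\exp(isA/h)\,ds\Bigr)\frac{i}{h}A,
$$
which places a copy of $A$ on each side of the propagator, so that the unknown factor $\exp(isA/h)$ need only be controlled $L^2\to L^2$ while the two flanking $A$'s (each in $\Psi^{\comp}$, hence $H^{-N}\to H^N$ with $O(1)$ norm) supply the full Sobolev gain. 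Alternatively you could first conjugate by $\langle hD\rangle^N$ and rerun the Gronwall argument to establish $\|U(s)\|_{H^N_h\to H^N_h}=O(1)$ for every $N$, which would close your first-order Duhamel argument; either fix is short, but as written the step has a hole.
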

\begin{proof}
1. First, take $u\in L^2(M)$; then, since the principal symbol of $A$
is real-valued, we have $\|A-A^*\|_{L^2\to L^2}=O(h)$ and thus
$$
D_t \|\exp(itA/h) u\|_{L^2}^2
=h^{-1}((A-A^*)\exp(itA/h)u,\exp(itA/h)u)_{L^2}
=O(\|\exp(itA/h)u\|_{L^2}^2);
$$ 
therefore, $\exp(itA/h)$ is tempered:
$$
\|\exp(itA/h)\|_{L^2\to L^2}=O(e^{C|t|}).
$$
The rest follows from the identity
$$
\exp(itA/h)
=I+{it\over h}A+{i\over h}A\int_0^t (t-s)\exp(isA/ h)\,ds\cdot {i\over h}A.
$$

2. We have
$$
D_t(\exp(itA/h)B\exp(-itA/h))=h^{-1}\exp(itA/h)[A,B]\exp(-itA/h)=O(h^\infty);
$$
this proves the first identity. The second one is proved in a similar fashion:
$$
D_t(\exp(-it(A+B)/h)\exp(itA/h)\exp(itB/h))=O(h^\infty).
$$

3. We construct a family $\widetilde P_t$ of classical
pseudodifferential operators, each equal to $P$ microlocally
outside of a compact set, solving the initial-value problem
$$
D_t \widetilde P_t=h^{-1}[A,\widetilde P_t]+O(h^\infty),\
\widetilde P_0=P+O(h^\infty).
$$
For that, we can write a countable system of equations on the
components of the full symbol of $\widetilde P_t$. In particular, if
$p(t)$ is the principal symbol of $\widetilde P_t$, we get
$$
\partial_t p(t)=\{a_0,p(t)\}=H_{a_0}p(t);
$$
it follows that $p(t)=p_0\circ \exp(tH_{a_0})$. Similarly we can
recover the wavefront set of $\widetilde P_t$ from that of $P$.  Now,
$$
\partial_t (\exp(-itA/h)\widetilde P_t \exp(itA/h))=O(h^\infty);
$$
therefore, $P_t=\widetilde P_t+O(h^\infty)$.

4. Since $X$ is compactly microlocalized, so are the operators
$B_1=X\exp(-itA/h)$ and $B_2=X\exp(itA/h)$. Next, if $Y_2,Y_1\in
\Psi^{\comp}_{\cl}$, then
$$
Y_2B_1Y_1=Y_2X(\exp(-itA/h)Y_1 \exp(itA/h))\exp(-itA/h);
$$
using our knowledge of the wavefront set of the operator in brackets,
we see that this is $O(h^\infty)$ if
$$
\WF(Y_2)\cap \exp(tH_{a_0})\WF(Y_1)=\emptyset.
$$
Therefore, $\WF(B_1)$ is contained in the graph of $\exp(tH_{a_0})$;
similarly, $\WF(B_2)$ is contained in the graph of $\exp(-tH_{a_0})$.
Next,
$$
B_1B_2=X(\exp(-itA/h)X\exp(itA/h));
$$
however, the operator in brackets is the identity microlocally near
$K$, as $X$ is the identity microlocally near $K$ and $K$ is invariant
under $\exp(tH_{a_0})$. Therefore, $B_1B_2$ is the identity
microlocally near $K$.  The intertwining property is proved in a
similar fashion.

5. We have
$$
\partial_t(\exp(tVX)\exp(-tV)^*)=\exp(tVX)V(X-I)\exp(-tV)^*=O(h^\infty)
$$
microlocally near $K\times K$.
\end{proof}

Finally, we consider the special case $a_0=0$; in other words, we
study $\exp(itA)$, where $A\in\Psi^{\comp}_{\cl}$.  Since the
associated canonical transformation is the identity, it is not
unexpected that $\exp(itA)$ is a pseudodifferential operator:
\begin{prop}\label{l:exp-pseudor}
Let $a_1$ be the principal symbol of $A$. Then:

1. $\exp(itA)-I\in\Psi^{\comp}_{\cl}$ and the principal symbol of
$\exp(itA)$ equals $e^{ita_1}$. Moreover, if $A_1=A_2$ microlocally in
some open set, then $\exp(itA_1)=\exp(itA_2)$ microlocally in the same
set.

2. For any $P\in\Psi^{\comp}_{\cl}$, we have the following asymptotic
sum:
$$
\exp(itA)P\exp(-itA)\sim\sum_{j\geq 0} {(it\ad_A)^jP\over j!},
$$
where $\ad_A Q=[A,Q]$ for every $Q$.

3. If $U\subset T^*M$ is connected and $\exp(iA)=I$ microlocally in
$U$, then $A=2\pi l$ microlocally in $U$, where $l$ is an integer
constant.
\end{prop}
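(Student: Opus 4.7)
My plan is to treat the three parts separately, each using standard tools from the semiclassical symbol calculus.

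For Part 1, I would construct an approximate propagator $U(t)$ as a classical pseudodifferential operator with full symbol $u(t,x,\xi;h)\sim\sum_{j\geq 0} h^j u_j(t,x,\xi)$ and $u_j|_{t=0}=\delta_{j0}$, solving $hD_t U(t)=AU(t)$ modulo $h^\infty\Psi^{-\infty}$. Expanding the Moyal product $a\#u$ produces a hierarchy of transport equations: the leading one $\partial_t u_0=ia_1 u_0$, $u_0|_{t=0}=1$, has the explicit solution $u_0=e^{ita_1}$, and the higher $u_j$ are determined by integrating inhomogeneous linear ODEs in $t$. Borel summation then produces $U(t)\in I+\Psi^{\comp}_{\cl}$ with principal symbol $e^{ita_1}$, and a Duhamel argument identifies $\exp(itA)$ with $U(t)$ modulo $h^\infty\Psi^{-\infty}$. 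The microlocal uniqueness follows automatically, since the transport equations depend only on the local full symbol of $A$: if $A_1=A_2$ microlocally on $V$, the corresponding approximate propagators agree modulo $O(h^\infty)$ on $V$.

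For Part 2, I would Taylor expand $F(t)=\exp(itA)P\exp(-itA)$ at $t=0$. The identity $F'(t)=i\ad_A F(t)$ gives $F^{(n)}(0)=(i\ad_A)^n P$, so the expression in the statement is formally the Taylor series. The crucial observation is that $\ad_A$ maps $\Psi^{\comp}_{\cl}$ into $h\Psi^{\comp}_{\cl}$: by the Weyl commutator formula, $[A,Q]$ has full symbol $(h/i)\{a_1,q_0\}+O(h^2)$ when $Q$ has principal symbol $q_0$. Iterating gives $(\ad_A)^n P\in h^n\Psi^{\comp}_{\cl}$, making the asymptotic sum meaningful. The integral form of the Taylor remainder, with the $(\ad_A)^{N+1}F(st)$ factor conjugated by the $L^2$-bounded propagators $\exp(\pm istA)$, is then $O(h^{N+1})$.

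For Part 3, Part 1 gives that the principal symbol of $\exp(iA)$ equals $e^{ia_1}$; the hypothesis $\exp(iA)=I$ microlocally on $U$ therefore forces $a_1\in 2\pi\mathbb Z$ pointwise on $U$, and by continuity and connectedness of $U$ there is a single integer $l$ with $a_1\equiv 2\pi l$ on $U$. Taking $\chi\in\Psi^{\comp}_{\cl}$ equal to $I$ microlocally on $U$ and setting $A'=A-2\pi l\chi$, the scalar $2\pi l$ commutes with $A'$ and $e^{2\pi il}=1$, so $\exp(iA')=I$ microlocally on $U$; moreover the principal symbol of $A'$ vanishes there, i.e., $A'=O(h)$ microlocally on $U$. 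An induction using $\exp(iA')=I+iA'+O((A')^2)$ then shows that if $A'=O(h^N)$ microlocally on $U$, the hypothesis forces $A'=O(h^{2N})$; iterating from $N=1$ yields $A'=O(h^\infty)$ microlocally, hence $A=2\pi l$ microlocally on $U$.

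The main obstacle I anticipate is the Duhamel step in Part 1: ensuring that the constructed parametrix $U(t)$ differs from $\exp(itA)$ by an element of $h^\infty\Psi^{-\infty}$, rather than merely by an operator with small $L^2\to L^2$ norm. This requires propagating the $O(h^\infty)$ remainder through commutators with arbitrary test operators, and is overcome by the fact that $A\in\Psi^{-N}$ for every $N$, so $\exp(itA)$ is uniformly bounded between any two semiclassical Sobolev spaces, together with a Beals-type commutator characterization of $h^\infty\Psi^{-\infty}$. The same device underlies the bootstrap in Part 3, where one must convert the microlocal operator identity $\exp(iA')=I$ into a symbol estimate on $A'$ at each stage of the induction.
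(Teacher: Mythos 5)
Your proposal follows essentially the same route as the paper for all three parts: construct a parametrix for the propagator by solving transport equations for the full symbol and identify it with $\exp(itA)$ via Duhamel; expand $\exp(itA)P\exp(-itA)$ in $t$ using $\partial_t=i\ad_A$ together with the gain $\ad_A:\Psi^{\comp}_{\cl}\to h\Psi^{\comp}_{\cl}$; and in part~3 subtract the leading constant and bootstrap on the order of vanishing of the symbol. One slip to correct: the parametrix equation should be $D_tU(t)=AU(t)$, not $hD_tU(t)=AU(t)$ (the latter is the equation for $\exp(itA/h)$ from the preceding proposition); the transport equations you then derive, starting with $\partial_tu_0=ia_1u_0$, are the correct ones for the $h$-free equation, so this is evidently only a typographical slip. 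Your remaining elaborations (boundedness of $\exp(isA)$ on all $H^N_h$ because $A\in\Psi^{-N}$ for every $N$, which justifies the $h^\infty\Psi^{-\infty}$ remainder; the cutoff $\chi$ and the microlocal uniqueness from part~1 to legitimize subtracting $2\pi l$; the quadratic gain $O(h^N)\to O(h^{2N})$ in the bootstrap) are all sound and simply make explicit what the paper leaves implicit.
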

\begin{proof}
1. We can find a family of pseudodifferential operators $B_t$ solving
$$
\partial_t B_t=iAB_t+O(h^\infty),\
B_0=I,
$$
by subsequently finding each member of the asymptotic decomposition of
the full symbol of $B_t$.  Then
$$
\partial_t (\exp(-itA)B_t)=O(h^\infty);
$$
therefore, $\exp(itA)=B_t+O(h^\infty)$. The properties of $B_t$ can be
verified directly.

2. Follows directly from the equation
$$
\partial_t (\exp(itA)P\exp(-itA))=i\ad_A(\exp(itA)P\exp(-itA)).
$$

3. By calculating the principal symbol of $\exp(iA)$, we see that
$a_1$ has to be equal to $2\pi l$ in $U$ for some constant $l\in
\mathbb Z$. Subtracting this constant, we reduce to the case when
$A=O(h)$. However, if $A=O(h^N)$ for some $N\geq 1$, then
$\exp(iA)=I+iA+O(h^{N+1})$; by induction, we get $A=O(h^N)$
microlocally in $U$ for all $N$.
\end{proof}

\subsection{Integrable systems}\label{s:prelim-integrable}

Assume that $M$ is a two-dimensional manifold and $p_1,p_2$ are two
real-valued functions defined on an open set $U\subset T^*M$ such that:
\begin{itemize}
\item $\{p_1,p_2\}=0$;
\item for $\mathbf p=(p_1,p_2):U\to \mathbb R^2$
and each $\rho\in \mathbf p(U)$, the set
$\mathbf p^{-1}(\rho)$ is compact and connected.
\end{itemize}
We call such $\mathbf p$ an integrable system. Note that if $V\subset
\mathbb R^2$ is open and intersects $\mathbf p(U)$, and $F:V\to
\mathbb R^2$ is a diffeomorphism onto its image, then $F(\mathbf p)$
is an integrable system on $\mathbf p^{-1}(V)$.

We say that an integrable system $\mathbf p:U\to \mathbb R^2$ is
nondegenerate on $U$, if the differentials of $p_1$ and $p_2$ are
linearly independent everywhere on $U$. The following two propositions
describe the normal form for nondegenerate integrable systems:
\begin{prop}\label{l:arnold-liouville-1} Assume that the integrable system $\mathbf p$ is
nondegenerate on $U$. Then:

1. For each $\rho\in \mathbf p(U)$, the set $\mathbf
p^{-1}(\rho)\subset T^*M$ is a Lagrangian torus. Moreover, the family
of diffeomorphisms $$
\phi_t=\exp(t_1 H_{p_1}+t_2H_{p_2}),\ t=(t_1,t_2)\in \mathbb R^2,
$$
defines a transitive action of $\mathbb R^2$ on $\mathbf p^{-1}(\rho)$.
The kernel of this action is a rank two lattice depending
smoothly on $\rho$; we call it the periodicity lattice (at $\rho$).

2. For each $\rho_0\in\mathbf p(U)$, there exists a neighborhood $V(\rho_0)$
and a diffeomorphism $F:V\to \mathbb R^2$ onto its image such that
the nondegenerate integrable system $F(\mathbf p)$ has periodicity lattice
$2\pi \mathbb Z^2$ at every point. Moreover, if the Hamiltonian
flow of $p_2$ is periodic with minimal period $2\pi$, we can take
the second component of $F(\mathbf p)$ to be $p_2$.

3. Assume that $V\subset\mathbf p(U)$ is open and connected
and $F_1,F_2:V\to \mathbb R^2$ are two maps satisfying the conditions of part 2.
Then there exist $A\in\GL(2,\mathbb Z)$ and $b\in \mathbb R^2$ such that
$F_2=A\cdot F_1+b$.
\end{prop}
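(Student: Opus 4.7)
\medskip

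\noindent\textbf{Plan of proof.}

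\emph{Part 1.} First I would verify that each level set $T_\rho=\mathbf p^{-1}(\rho)$ is a Lagrangian torus. Since $H_{p_j}p_i=\{p_j,p_i\}=0$, both Hamiltonian vector fields are tangent to $T_\rho$, and they are linearly independent because $dp_1,dp_2$ are. Commutation $[H_{p_1},H_{p_2}]=H_{\{p_1,p_2\}}=0$ combined with $\omega^S(H_{p_1},H_{p_2})=\{p_1,p_2\}=0$ shows $T_\rho$ is an embedded Lagrangian submanifold. Now the commuting complete flows define an action of $\mathbb R^2$ on $T_\rho$; this action is locally free (by independence of the generators) and its orbit through any point is open. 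Since $T_\rho$ is compact and connected, the orbit is all of $T_\rho$, the stabilizer $\Lambda(\rho)\subset\mathbb R^2$ is a cocompact discrete subgroup, hence a rank two lattice, and $T_\rho\cong\mathbb R^2/\Lambda(\rho)\cong\mathbb T^2$. Smooth dependence of $\Lambda(\rho)$ on $\rho$ follows from the implicit function theorem applied to the equation $\phi_t(\gamma(\rho))=\gamma(\rho)$ for a smoothly varying basepoint.

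\emph{Part 2.} The goal is to renormalize so that $\Lambda(\rho)=2\pi\mathbb Z^2$. Near $\rho_0$ pick a smooth $\mathbb Z$-basis $v_1(\rho),v_2(\rho)$ of $\Lambda(\rho)$; in the distinguished case when $H_{p_2}$ has minimal period $2\pi$, take $v_2(\rho)=(0,2\pi)$ and complete to a basis arbitrarily. Writing $F=(F_1,F_2)$ with $F_j$ a function of $\rho$, one computes $H_{F_j(\mathbf p)}=\sum_i (\partial_i F_j)H_{p_i}$, so the requirement $\Lambda_F=2\pi\mathbb Z^2$ translates to $\nabla F_j(\rho)=v_j(\rho)/(2\pi)$, i.e.\ that the $1$-forms $\alpha_j:=(2\pi)^{-1}\sum_i v_j^i(\rho)\,dp_i$ on $\mathbf p(U)$ be exact. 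The crux of the proof is closedness of $\alpha_j$, and this is exactly where the Lagrangian property is used: for any cycle $\gamma_j(\rho)\subset T_\rho$ representing the class $v_j(\rho)$, one has the identity
\begin{equation*}
\alpha_j(\rho)=\frac{1}{2\pi}\,d_\rho\!\oint_{\gamma_j(\rho)}\sigma^S,
\end{equation*}
where the period integral is well defined (independent of the representative in its homology class) precisely because $T_\rho$ is Lagrangian, and differentiation in $\rho$ under a symplectic isotopy produces the claimed expression via Stokes' theorem. Hence $\alpha_j$ is locally exact and integrating on a simply connected neighborhood $V(\rho_0)$ yields $F_j$. In the distinguished case the second equation $\nabla F_2=(0,1)$ has the solution $F_2=p_2$. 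The map $F$ is a diffeomorphism onto its image since its Jacobian $(2\pi)^{-1}(v_1\,|\,v_2)$ has nonzero determinant.

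\emph{Part 3.} Given two such $F_1,F_2:V\to\mathbb R^2$, the composition $\Phi=F_2\circ F_1^{-1}$ is a diffeomorphism between open subsets of $\mathbb R^2$ whose differential must send the lattice $2\pi\mathbb Z^2$ to itself at every point; consequently $d\Phi\in\GL(2,\mathbb Z)$ pointwise. A continuous $\GL(2,\mathbb Z)$-valued function on a connected set is constant, so $d\Phi\equiv A\in\GL(2,\mathbb Z)$ and $\Phi(x)=Ax+b$ for some $b\in\mathbb R^2$, proving the claim.

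\emph{Main obstacle.} The only nontrivial point is closedness of $\alpha_j$ in Part 2; this is the content of the classical Arnold--Liouville action construction and rests entirely on the fact, established in Part 1, that the common level sets are Lagrangian. The remaining steps are either topological (compact surface with two independent commuting flows is a torus) or linear-algebraic (Part 3).
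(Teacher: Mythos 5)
Your proposal is correct and reproduces the standard Arnold--Liouville argument. The paper itself does not give a proof of this proposition; it simply refers to Duistermaat's paper \cite[Section~1]{du}, which is exactly the argument you reconstruct: tangency and commutation of the Hamiltonian flows give a locally free $\mathbb R^2$-action whose orbits on a compact connected level set exhaust it, so the level set is a torus with a rank-two stabilizer lattice; the action integrals $(2\pi)^{-1}\oint_{\gamma_j(\rho)}\sigma^S$ (well defined by the Lagrangian property) realize the normalizing coordinates; and rigidity of $\GL(2,\mathbb Z)$-valued continuous maps gives the affine uniqueness. Two minor cosmetic points: in Part~2 the $1$-forms $\alpha_j$ live on the base $\mathbf p(U)\subset\mathbb R^2$, so their differentials should be written $d\rho_i$ rather than $dp_i$; and the existence of a smooth local $\mathbb Z$-basis of $\Lambda(\rho)$ should be noted (it follows since $\GL(2,\mathbb R)\to\GL(2,\mathbb R)/\GL(2,\mathbb Z)$ is a covering and so admits local sections). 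Neither affects the substance.
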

\begin{proof}
This is a version of Arnold--Liouville theorem; see~\cite[Section~1]{du} for the proof.
\end{proof}

\begin{prop}\label{l:arnold-liouville-2}
Assume that $\mathbf p:U\to \mathbb R^2$, $\mathbf p':U'\to \mathbb
R^2$, are nondegenerate integrable systems with periodicity lattices
$2\pi \mathbb Z^2$ at every point; here $U\subset T^*M$, $U'\subset
T^*M'$.  Take $\rho_0\in \mathbf p(U)\cap \mathbf p'(U')$. Then:

1. There exists a symplectomorphism $\Phi$ from a neighborhood of
$\mathbf p^{-1}(\rho_0)$ in $T^*M$ onto a neighborhood of $(\mathbf
p')^{-1}(\rho_0)$ in $T^*M'$ such that $\mathbf p=\mathbf
p'\circ\Phi$.

2. $\Phi$ is exact, as defined in Section~\ref{s:prelim-egorov}, if and only if
$$
\int_{\gamma_j}\sigma^S=\int_{\gamma'_j}\sigma^{\prime S},\ j=1,2,
$$
where $\gamma_j$ and $\gamma'_j$ are some fixed ($2\pi$-periodic)
Hamiltonian trajectories of $p_j$ on $\mathbf p^{-1}(\rho_0)$ and
$p'_j$ on $(\mathbf p')^{-1}(\rho_0)$, respectively.
\end{prop}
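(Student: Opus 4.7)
The strategy is to reduce both integrable systems to the standard action--angle model on $T^*\mathbb T^2 = \mathbb T^2_\theta\times\mathbb R^2_I$, with canonical 1-form $I\,d\theta$ and symplectic form $d\theta\wedge dI$, in which $\mathbf p$ becomes simply $I+c$ for some constant $c\in\mathbb R^2$. The symplectomorphism $\Phi$ is then obtained by composing one normalizing map with the inverse of the other, up to a translation matching the constants.

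For Part~1, using Proposition~\ref{l:arnold-liouville-1}, choose smoothly varying $2\pi$-periodic Hamiltonian orbits $\gamma_j(\rho)\subset\mathbf p^{-1}(\rho)$ of $p_j$ (these generate the periodicity lattice $2\pi\mathbb Z^2$ at every $\rho$ by hypothesis), define the action coordinates
$$
I_j(\rho) = {1\over 2\pi}\oint_{\gamma_j(\rho)}\sigma^S,
$$
and use flow-time along $H_{p_1},H_{p_2}$ from a smooth Lagrangian section transverse to the tori to introduce the angle coordinates $\theta_j$. A standard Arnold--Liouville computation yields a symplectomorphism $\Psi$ from a neighborhood of $\mathbf p^{-1}(\rho_0)$ onto a neighborhood of $\mathbb T^2\times\{I(\rho_0)\}$ in $T^*\mathbb T^2$; the combination of $H_{p_j}=\partial_{\theta_j}$ with the canonical symplectic form forces $\Psi^*p_j = I_j + c_j$ for constants $c_j$. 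Construct $\Psi'$ analogously for $\mathbf p'$, producing constants $c'_j$. Then the map
$$
\Phi := (\Psi')^{-1}\circ T\circ\Psi,\quad T(\theta, I) := (\theta, I + c - c'),
$$
is a symplectomorphism (since $T$ is a pure translation) satisfying $\mathbf p = \mathbf p'\circ\Phi$ on a neighborhood of $\mathbf p^{-1}(\rho_0)$.

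For Part~2, the 1-form $\beta := \sigma^S - \Phi^*\sigma^{\prime S}$ is closed, since $d\beta = \omega^S - \Phi^*\omega^{\prime S} = 0$. A neighborhood of the Lagrangian torus $\mathbf p^{-1}(\rho_0)\cong \mathbb T^2$ deformation-retracts onto it, so $H^1$ of the domain of $\Phi$ is spanned by the duals of $[\gamma_1],[\gamma_2]$, and $\beta$ is exact if and only if $\oint_{\gamma_j}\beta = 0$ for $j = 1,2$. Because $\mathbf p = \mathbf p'\circ\Phi$ and $\Phi$ is symplectic, $\Phi\circ\gamma_j$ is a $2\pi$-periodic orbit of $H_{p'_j}$ on $(\mathbf p')^{-1}(\rho_0)$, hence homologous to $\gamma'_j$; since $\sigma^{\prime S}$ restricts to a closed form on the Lagrangian torus $(\mathbf p')^{-1}(\rho_0)$, integrals over homologous cycles coincide, giving
$$
\oint_{\gamma_j}\beta = \oint_{\gamma_j}\sigma^S - \oint_{\Phi\circ\gamma_j}\sigma^{\prime S} = \oint_{\gamma_j}\sigma^S - \oint_{\gamma'_j}\sigma^{\prime S},
$$
which yields the claimed equivalence.

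The main obstacle, I expect, lies in Part~1: one must check that choosing the fundamental loops $\gamma_j$ to be precisely the $2\pi$-periodic orbits of $H_{p_j}$ (rather than some other basis of the periodicity lattice) is what pins down $\mathbf p = I + c$ rather than $\mathbf p = F(I)$ for a nontrivial $F$, and that the residual ambiguities in $\Psi$ (translation in $\theta$, generating-function shifts $\theta\mapsto\theta+\nabla f(I)$) are themselves all exact symplectomorphisms, so that the exactness criterion in Part~2 is independent of which valid $\Phi$ is chosen and depends only on the pair $(\mathbf p,\mathbf p')$.
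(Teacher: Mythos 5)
Your proof takes essentially the same Arnold--Liouville approach as the paper and is substantively correct; the paper simply cites Arnold--Liouville for Part~1 where you spell out the construction, and both arguments for Part~2 rest on identifying $H^1$ of a tubular neighborhood of the Lagrangian torus with $\mathbb Z[\gamma_1]\oplus\mathbb Z[\gamma_2]$.

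Two remarks on the places you flag as uncertain. First, the claim $\Psi^*p_j=I_j+c_j$ does close cleanly: since $p_j$ and $I_j$ are both constant on the invariant tori, which are Lagrangian, $H_{p_j}$ and $H_{I_j}$ are both tangent to each torus, and on each torus both restrict to the \emph{unique} $2\pi$-periodic linear flow with orbits in the class $[\gamma_j]$; hence $H_{p_j}=H_{I_j}$ as vector fields, so $d(p_j-I_j)=0$ and $p_j-I_j$ is locally constant. Second, your worry about whether the exactness criterion depends on the choice of $\Phi$ is moot, because the Part~2 argument works for \emph{any} symplectomorphism satisfying $\mathbf p=\mathbf p'\circ\Phi$: such a $\Phi$ intertwines the $H_{p_j}$- and $H_{p'_j}$-flows, so $\Phi\circ\gamma_j$ is automatically a $2\pi$-periodic $H_{p'_j}$-orbit, and the criterion is manifestly independent of which $\Phi$ was produced in Part~1. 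Your Part~2 differs from the paper's only cosmetically: you observe that $\Phi\circ\gamma_j$ is homologous to $\gamma'_j$ and that $\sigma^{\prime S}$ restricts to a closed $1$-form on the Lagrangian torus, whereas the paper shifts base points to make $\Phi\circ\gamma_j=\gamma'_j$ literally; these are the same computation.
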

\begin{proof}
Part~1 again follows from Arnold--Liouville theorem.  For part~2, we
use that the closed 1-form $\sigma^S-\Phi^*\sigma^{\prime S}$ on a
tubular neighborhood of $\mathbf p^{-1}(\rho_0)$ is exact if and only
if its integral over each $\gamma_j$ is zero. Since $\gamma_j$ lie in
$\mathbf p^{-1}(\rho_0)$ and the restriction of $d\sigma^S=\omega^S$
to $\mathbf p^{-1}(\rho_0)$ is zero, we may shift $\gamma_j$ to make
both of them start at a fixed point $(x_0,\xi_0)\in \mathbf
p^{-1}(\rho_0)$. Similarly, we may assume that both $\gamma'_j$ start
at $\Phi(x_0,\xi_0)$. But in this case $\gamma'_j=\Phi\circ\gamma_j$
and
$$
\int_{\gamma_j}\sigma^S-\Phi^*\sigma^{\prime S}
=\int_{\gamma_j}\sigma^S-\int_{\gamma'_j}\sigma^{\prime S},
$$  
which finishes the proof.
\end{proof}
Next, we establish normal form for one-dimensional Hamiltonian systems
with one degenerate point. For that, consider $\mathbb R^2_{x,\xi}$
with the standard symplectic form $d\xi\wedge dx$, and define
$\zeta=(x^2+\xi^2)/2$; then $\zeta$ has unique critical point at zero
and its Hamiltonian flow is $2\pi$-periodic.
\begin{prop}\label{l:arnold-liouville-degenerate}
Assume that $p(x,\xi)$ is a real-valued function defined on an open subset of
$\mathbb R^2$ and for some $A\in \mathbb R$,
\begin{itemize}
\item the set $K_A=\{p\leq A\}$ is compact;
\item $p$ has exactly one critical point $(x_0,\xi_0)$
in $K_A$, $p(x_0,\xi_0)<A$, and the Hessian of $p$ at $(x_0,\xi_0)$
is positive definite.
\end{itemize}
Then there exists a smooth function $F$ on the segment
$[p(x_0,\xi_0),A]$, with $F'>0$ everywhere and $F(p(x_0,\xi_0))=0$,
and a symplectomorphism $\Psi$ from $K_A$ onto the disc $\{\zeta\leq
F(A)\}\subset \mathbb R^2$ such that $F(p)=\zeta\circ\Psi$. Moreover,
$F'(p(x_0,\xi_0))=(\det\nabla^2 p(x_0,\xi_0))^{-1/2}$.
If $p$ depends smoothly on some parameter $Z$, then $F$ and $\Psi$
can be chosen locally to depend smoothly on this parameter as well.
\end{prop}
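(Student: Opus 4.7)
The plan is to first define $F$ via the classical action (the symplectic area of sublevel sets), then build $\Psi$ by combining a symplectic linear normalization at the critical point with a Moser-type deformation that preserves the level sets of $p$. Concretely, set
\[
F(c)=\frac{1}{2\pi}\iint_{\{p\leq c\}}d\xi\wedge dx,\qquad c\in[p(x_0,\xi_0),A].
\]
Then $F(p(x_0,\xi_0))=0$, and for $c\in(p(x_0,\xi_0),A]$ the level set $\{p=c\}$ is a smooth closed curve (since $K_A$ contains no other critical points) on which the Hamiltonian flow of $p$ is periodic with some period $T(c)>0$; the coarea identity gives $F'(c)=T(c)/(2\pi)>0$. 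To check smoothness of $F$ at the endpoint $c=p(x_0,\xi_0)$ and compute $F'$ there, a linear symplectic change of coordinates diagonalizes the quadratic part of $p$ at $(x_0,\xi_0)$ to $\lambda\zeta$ with $\lambda=\sqrt{\det\nabla^2 p(x_0,\xi_0)}$; this forces $T(c)\to 2\pi/\lambda$ as $c\downarrow p(x_0,\xi_0)$, which both extends $F$ smoothly and yields $F'(p(x_0,\xi_0))=(\det\nabla^2 p(x_0,\xi_0))^{-1/2}$.

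For the symplectomorphism $\Psi$, I first construct a diffeomorphism $\Psi_0:K_A\to\{\zeta\leq F(A)\}$ with $\zeta\circ\Psi_0=F(p)$ and $\Psi_0(x_0,\xi_0)=0$, agreeing with a symplectic linear Morse normalization to second order at $(x_0,\xi_0)$. Such a $\Psi_0$ exists because both $F(p)$ on $K_A$ and $\zeta$ on the target disk are Morse with a unique nondegenerate minimum at value $0$, so on the complement of the critical point one can use action--angle-type coordinates (flowing from a chosen cross-section) to match level sets. By the very definition of $F$, the two area forms $\omega_0=d\xi\wedge dx$ and $\omega_1=\Psi_0^*(d\xi\wedge dx)$ have equal integral over every sublevel set $\{p\leq c\}$. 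Writing $\omega_1-\omega_0=g\,\omega_0$, the coarea formula converts this into the statement that the period average of $g$ along every orbit of $H_p$ vanishes, which lets me solve $\{f,p\}=g$ for a function $f$ defined by orbit integration on $K_A$. The time-$1$ flow of an appropriately chosen interpolating Hamiltonian vector field of the form $H_{f_t}$ then yields a $p$-preserving diffeomorphism $\Phi$ with $\Phi^*\omega_0=\omega_1$, and I set $\Psi=\Psi_0\circ\Phi$.

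The main obstacle is the smoothness of $f$ (hence of $\Phi$, hence of $\Psi$) at the critical point, where the orbit-integration formula must be interpreted carefully as periods approach the nonzero limit $2\pi/\lambda$. This is handled by the requirement that $\Psi_0$ be symplectic to second order at $(x_0,\xi_0)$: it ensures $g$ vanishes to first order there, so that the orbit-averaged primitive $f$ extends smoothly across the critical point by essentially the same argument used in the Birkhoff normal form near an elliptic fixed point. Smooth dependence on a parameter $Z$ follows because every step---the area integral defining $F$, the linear symplectic Morse normalization, the construction of $\Psi_0$ on the annular region via cross-sections, and the Moser deformation---is locally uniform in $Z$, the positive-definiteness of the Hessian being an open condition.
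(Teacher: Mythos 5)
Your proposal follows essentially the same route as the paper: define $F$ by the normalized symplectic area of the sublevel set, construct a level-set--preserving diffeomorphism onto the model disc, and then correct it to a symplectomorphism by a Moser argument, using the equality of areas of all sublevel sets to make the cohomological equation solvable. The main implementation differences are technical. To get smoothness of $F$ and its derivative at the critical value, you argue via the coarea formula and the limit of the period function $T(c)$; this gives $C^1$ regularity and the value $F'(p(x_0,\xi_0))=(\det\nabla^2 p)^{-1/2}$ cheaply, but higher-order smoothness of $F$ at the endpoint would need justification. The paper instead runs the Morse lemma to produce a smooth diffeomorphism $\Theta$ with $p=\zeta\circ\Theta$ globalized by the gradient flow, so that the Jacobian $J$ of $\Theta^{-1}$ is $C^\infty$ and $F$ is recovered as a smooth solution of an ODE in the radial variable, sidestepping the endpoint issue. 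Likewise, you correctly identify that the delicate point of the Moser step is smoothness of the generating function $f$ across the elliptic fixed point; your appeal to a ``Birkhoff normal form''--type argument is in the right spirit but is left as a gesture, whereas the paper resolves it explicitly in its Proposition~\ref{l:moeser-lemma} by decomposing in $z^j\bar z^k$ in polar coordinates and performing the Moser correction as a pure angular shift $(r,\varphi)\mapsto(r,\varphi+\psi)$, which makes the solvability and smoothness at the origin transparent. So the structure of your argument matches the paper's; the places where you flag difficulty are precisely where the paper does extra work (Morse lemma on the nose, and the polar Taylor expansion) to close the gaps.
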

\begin{proof} 
Without loss of generality, we may assume that $p(x_0,\xi_0)=0$.
Recall that in one dimension, symplectomorphisms are diffeomorphisms
that preserve both area and orientation.  By Morse lemma, there
exists an orientation preserving diffeomorphism $\Theta$ from a
neighborhood of $(x_0,\xi_0)$ onto a neighborhood of the origin such
that $p=\zeta\circ\Theta$.  Using the gradient flow of $p$, we can
extend $\Theta$ to a diffeomorphism from $K_A$ to the disc
$\{\zeta\leq A\}$ such that $p=\zeta\circ\Theta$. Let $J$ be the
Jacobian of $\Theta^{-1}$; then the integral of $J$ inside the disc
$\{\zeta\leq a\}$ is a smooth function of $a$.  Therefore, there
exists unique function $F$ smooth on $[0,A]$ such that $F'>0$ everywhere,
$F(0)=0$, and the integral of $J$ inside the disc $\{F(\zeta)\leq
a\}$, that is, the area of $\Theta^{-1}(\{F(\zeta)\leq a\})=\{F(p)\leq a\}\subset K_A$,
is equal to $2\pi a$.

Let $\widetilde\Theta$ be a diffeomorphism from $K_A$ onto $\{\zeta\leq
F(A)\}$ such that $F(p)=\zeta\circ\widetilde\Theta$ (constructed as
in the previous paragraph, taking $F(p)$ in place of $p$) and let $\tilde J$ be
the Jacobian of $\widetilde\Theta^{-1}$. We know that for $0\leq a\leq
F(A)$, the integral of $\tilde J-1$ over $\{\zeta\leq a\}$ is equal to
$0$.  Introduce polar coordinates $(r,\varphi)$; then there exists a
smooth function $\psi$ such that $\tilde J=1+\partial_\varphi \psi$
(see Proposition~\ref{l:moeser-lemma}). The transformation
$$
\widetilde\Psi:(r,\varphi)\mapsto (r,\varphi+\psi)
$$
is a diffeomorphism from $\{\zeta\leq F(A)\}$ to itself and has
Jacobian $\tilde J$; it remains to put
$\Psi=\widetilde\Psi\circ\widetilde\Theta$. To compute $F'(p_0(x_0,\xi_0))$,
we can compare the Hessians of $F(p)$ and $\zeta\circ\Phi$ at
$(x_0,\xi_0)$.

The function $F$ is uniquely determined by $p$ and thus will depend
smoothly on $Z$. As for $\Psi$, we first note that
$\widetilde\Theta$ was constructed using Morse lemma
and thus can be chosen locally to depend smoothly on $Z$
(see for example~\cite[Proof of Theorem~3.15]{e-z}).
Next, we can fix $\psi$ by requiring
that it integrates to zero over each circle centered at the origin
(see~Proposition~\ref{l:moeser-lemma}); then $\psi$, and thus $\widetilde\Psi$, will depend
smoothly on $Z$.  
\end{proof}

\section{Angular problem}\label{s:angular}

\subsection{Outline of the proof}\label{s:angular-outline}

Consider the semiclassical differential operators
(using the notation of~\eqref{e:semiclassical-parameters})
$$
\begin{gathered}
P_1(\tilde\omega,\tilde\nu;h)=h^2P_\theta(\omega)=
{1\over\sin\theta}(hD_\theta)(\Delta_\theta\sin\theta\cdot hD_\theta)\\
+{(1+\alpha)^2\over\Delta_\theta\sin^2\theta}
(a(\tilde\omega+ih\tilde\nu)\sin^2\theta-hD_\varphi)^2,\\
P_2(h)=hD_\varphi
\end{gathered}
$$
on the sphere $\mathbb S^2$.  Then $(\omega,\lambda,k)$ is a pole of
$R_\theta$ if and only if $(\tilde\lambda+ih\tilde\mu,\tilde k)$ lies
in the joint spectrum of the operators $(P_1,P_2)$ (see
Definition~\ref{d:joint-spectrum}). For $a=0$, $P_1$ is the
Laplace--Beltrami operator on the round sphere (multiplied by $-h^2$);
therefore, the joint spectrum of $(P_1,P_2)$ is given by the spherical
harmonics $(\tilde l(\tilde l+h),\tilde k)$, $\tilde k,\tilde l\in h
\mathbb Z$, $|\tilde k|\leq \tilde l$
(see for example~\cite[Section~8.4]{tay}). In the end of this subsection,
we give a short description of which parts of the angular problem are
simplified for $a=0$. For general small $a$, we will prove that the
joint spectrum is characterized by the following
\begin{prop}\label{l:angular-quantization-inner}
Let $\tilde\omega,\tilde\nu$ satisfy~\eqref{e:angular-regime}; we
suppress dependence of the operators and symbols on these parameters.
Consider
$$
\begin{gathered}
\widetilde K=\{(\tilde\lambda,\tilde k)\mid C_\theta^{-1}
\leq\tilde\lambda\leq C_\theta,\
\tilde\lambda\geq (1+\alpha)^2(\tilde k-a\tilde\omega)^2\}
\subset \mathbb R^2,\\
\widetilde K_\pm=\{(\tilde\lambda,\tilde k)\in \widetilde K\mid 
(1+\alpha)(\tilde k-a\tilde\omega)=\pm\sqrt{\tilde\lambda}\}.
\end{gathered}
$$
Then there exist functions $G_\pm(\tilde\lambda,\tilde k;h)$ such that:

1. $G_\pm$ is a complex valued classical symbol in $h$,
smooth in a fixed neighborhood of $\widetilde K$. For $(\tilde
\lambda,\tilde k)$ near $\widetilde K$ and $|\tilde\mu|\leq C_\theta$, we
can define $G_\pm(\tilde\lambda+ih\tilde\mu,\tilde k)$ by means of an
asymptotic (analytic) Taylor series for $G_\pm$ at $(\tilde\lambda,\tilde k)$.

2. For $a=0$, $G_\pm(\tilde\lambda,\tilde
k;h)=-h/2+\sqrt{\tilde\lambda+h^2/4}\mp\tilde k$.

3. $G_-(\tilde\lambda,\tilde k;h)-G_+(\tilde\lambda,\tilde
k;h)=2\tilde k$.

4. Let $F_\pm$ be the principal symbol of $G_\pm$. Then $F_\pm$ is
real-valued, $\partial_{\tilde\lambda} F_\pm>0$ and
$\mp\partial_{\tilde k} F_\pm>0$ on $\widetilde K$, and $F_\pm|_{\widetilde
K_\pm}=0$.

5. For $h$ small enough, the set of elements
$(\tilde\lambda+ih\tilde\mu,\tilde k)$ of the joint spectrum of
$(P_1,P_2)$ satisfying~\eqref{e:angular-regime} lies within $O(h)$ of
$\widetilde K$ and coincides modulo $O(h^\infty)$ with the set of
solutions to the quantization conditions
$$
\tilde k\in h \mathbb Z,\
G_\pm(\tilde\lambda+ih\tilde\mu,\tilde k)\in h \mathbb N;
$$
here $\mathbb N$ is the set of nonnegative integers.  Note that the
conditions $G_+\in \mathbb Z$ and $G_-\in \mathbb Z$ are equivalent;
however, we also require that both $G_+$ and $G_-$ be nonnegative.
Moreover, the corresponding joint eigenspaces are one-dimensional.
\end{prop}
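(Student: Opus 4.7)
The plan is to quantize the classical normal form of the integrable system on $T^*\mathbb{S}^2$ with moment map $\mathbf{p} = (p_1, p_2)$, where $p_j$ is the principal symbol of $P_j$. A direct computation gives $\{p_1, p_2\} = 0$ since $p_1$ is $\varphi$-independent, and one verifies that the joint image of $\mathbf{p}$ coincides, to leading order, with $\widetilde K$: fixing $\tilde k = p_\varphi$ and minimizing $p_1$ in $(\theta, p_\theta)$ gives the minimum value $(1+\alpha)^2(\tilde k - a\tilde\omega)^2$, attained at $(\theta, p_\theta) = (\pi/2, 0)$. Over the interior of $\widetilde K$ the fibers of $\mathbf{p}$ are Lagrangian 2-tori; they collapse onto the equatorial $\varphi$-circle as one approaches either branch $\widetilde K_\pm$. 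Applying the Arnold--Liouville theorem (Proposition~\ref{l:arnold-liouville-1}), I obtain action coordinates on the interior of $\widetilde K$; the two cycle generators can be chosen so that one of them degenerates on $\widetilde K_+$ and the other on $\widetilde K_-$, producing two action functions $F_\pm$ vanishing on the respective branches and satisfying the stated monotonicity in $(\tilde\lambda,\tilde k)$. The identity $F_- - F_+ = 2\tilde k$ comes from part~2 of Proposition~\ref{l:arnold-liouville-1}: since $p_2 = p_\varphi$ generates the $2\pi$-periodic rotation flow, $\tilde k$ is itself one of the action variables, and the two natural basis choices of cycle differ by a shift of $2\tilde k$.

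For the quantum problem I first construct a microlocal conjugation to a model system on $T^*\mathbb{T}^2$. Propositions~\ref{l:arnold-liouville-2} and \ref{l:quantization-canonical} furnish an exact symplectomorphism in a neighborhood of each interior fiber and a semiclassical Fourier integral operator realizing it; conjugating $(P_1, P_2)$ by this FIO yields a model pair whose principal symbols are the action coordinates. I then refine this by an iterative elliptic normal form, solving at each step a cohomological equation on the tori via Fourier decomposition, until the conjugated operators become smooth functions of $hD_{x_1}$ and $hD_{x_2}$ modulo $O(h^\infty)$. The non-self-adjoint $O(h)$ perturbation coming from the $ih\tilde\nu$ term enters only in subprincipal and lower symbols, so the symplectic-geometric construction of \cite{h-s} goes through with complex subprincipal corrections, and I define $G_\pm(\tilde\lambda, \tilde k; h)$ as the functions whose asymptotic expansions have principal parts $F_\pm$ and whose higher-order terms are determined by the iterative normal form. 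Extension to complex $\tilde\lambda + ih\tilde\mu$ is by a formal asymptotic Taylor series, valid because $G_\pm$ is smooth.

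To handle the boundaries $\widetilde K_\pm$ where the tori degenerate, I apply Proposition~\ref{l:arnold-liouville-degenerate} to the reduced one-dimensional Hamiltonian $p_1|_{p_\varphi = \tilde k}$, which has a single nondegenerate minimum at $(\pi/2, 0)$. This produces a symplectomorphism, depending smoothly on $\tilde k$, onto a disc in $\mathbb{R}^2$ on which the reduced Hamiltonian becomes a smooth increasing function $F$ of the harmonic oscillator $\zeta = (x^2+\xi^2)/2$. Quantizing this yields a model whose eigenvalues are $h(m+1/2)$, $m \in \mathbb{N}$; this is the origin of the term $-h/2 + \sqrt{\tilde\lambda + h^2/4}$ in the $a = 0$ formula, which is precisely $F^{-1}$ evaluated on the harmonic oscillator spectrum for the spherical Laplacian. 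Patching this boundary normal form with the interior action-angle normal form gives $G_\pm$ on a full neighborhood of $\widetilde K$, with the nonnegativity constraint $G_\pm \in h\mathbb{N}$ reflecting the requirement that the corresponding cycle has been quantized with a nonnegative oscillator index.

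The main obstacle is upgrading microlocal information to exact statements about the joint spectrum. The conjugation produces operators commuting only modulo $O(h^\infty)$, and because $P_1$ is non-self-adjoint one cannot appeal to spectral theory of selfadjoint operators as in \cite{ch,svn} to pass from approximate to exact joint eigenvectors. To resolve this I set up a Grushin problem for the pair $(P_1 - (\tilde\lambda + ih\tilde\mu), P_2 - \tilde k)$ in the abstract framework of Appendix~\ref{s:prelim-grushin}, indexing the finite-dimensional auxiliary data by the candidate quantum numbers $(m_+, m_-) \in \mathbb{N}^2$ and $\tilde k \in h\mathbb{Z}$. The approximate joint eigenvectors produced by the normal form give an approximate inverse to the Grushin operator, which the multi-operator Grushin formalism upgrades to exact invertibility away from the quantization set; the $O(h^\infty)$ errors in the semiclassical commutation are absorbed into the effective Hamiltonian. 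Reading off the zeros of the effective Hamiltonian then gives the quantization conditions $G_\pm \in h\mathbb{N}$ and $\tilde k \in h\mathbb{Z}$, together with the rank-one structure of each joint eigenspace and the polynomial resolvent estimate.
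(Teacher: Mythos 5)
Your proposal follows the paper's construction at every major stage: classical normal form for the integrable system $\mathbf p$, Egorov conjugation to a model, iterative elimination of subprincipal terms, and the abstract multi-operator Grushin problem of Appendix~\ref{s:prelim-grushin} to pass from $O(h^\infty)$ semiclassical commutation to exact joint-spectrum statements. The essential ideas are all present. A few points diverge from the paper and deserve attention.

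You conjugate the interior of $\widetilde K$ to a torus model on $T^*\mathbb T^2$ and the boundary to a disc model via Proposition~\ref{l:arnold-liouville-degenerate}, and then propose to ``patch'' the two. The paper instead works with a \emph{single} model space $\mathcal M=\mathbb R_x\times\mathbb S^1_y$ and the moment map $(\zeta,\eta)=((x^2+\xi^2)/2,\eta)$ throughout: on $\{\zeta>0\}$ this is a nondegenerate integrable system with periodicity lattice $2\pi\mathbb Z^2$, reproducing your torus model, while on $\{\zeta=0\}$ it incorporates the Morse degeneration. Using one model avoids having to compare torus-model and disc-model quantum normal forms across a transition region, and makes the quantum reference pair $T_1=\tfrac12((hD_x)^2+x^2)-\tfrac h2$, $T_2=hD_y$, with joint spectrum exactly $h(\mathbb N\times\mathbb Z)$, available uniformly; this is precisely what the Grushin problem in Section~\ref{s:angular-grushin} is built on. If you retain your two-chart scheme, you need to show explicitly that the resulting quantization symbols agree in the overlap, and you have not said how.

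Two further omissions: (i) piecing the local quantization symbols into a single smooth $G_\pm$ on a neighborhood of $\widetilde K$ requires a uniqueness statement, namely Proposition~\ref{l:angular-switch}, which shows two symbols producing the same quantization set modulo $O(h^\infty)$ differ by an integer shift $lh$ that vanishes when the zero level $\{F_\pm=0\}$ is touched; without this the global well-definedness of $G_\pm$ is not established. (ii) The identity $G_--G_+=2\tilde k$ requires more than the principal-symbol identity $F_--F_+=2\tilde k$: a priori the difference could carry an $lh$ shift, and the paper excludes it by evaluating at $a=0$ against the explicit spherical-harmonic spectrum. Finally, your indexing of the Grushin data by $(m_+,m_-)\in\mathbb N^2$ over-parametrizes the problem: $G_++G_-=F_-+F_+$ is determined from $G_+$ and $\tilde k$, so a single integer index plus $\tilde k$ suffices (the second nonnegativity requirement is a constraint, not a free quantum number), and this is what the local Grushin problem in Proposition~\ref{l:angular-quantization-local} actually uses.
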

\begin{figure}
\includegraphics{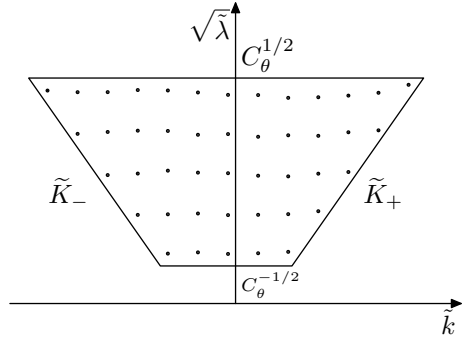}
\caption{The joint spectrum and the set $\widetilde K$.}
\end{figure}

Proposition~\ref{l:angular} follows from the proof of
Proposition~\ref{l:angular-quantization-inner}. In fact, the symbol
$\mathcal F^\theta(\tilde l,\tilde\omega,\tilde\nu,\tilde k;h)$ is
defined as the solution $\tilde\lambda+ih\tilde\mu$ to the equation
$$
G_+(\tilde\lambda+ih\tilde\mu,\tilde k,\tilde\omega+ih\tilde\nu;h)=\tilde l-\tilde k;
$$
this proves part (1) of Definition~\ref{d:asymptotics}.  The resolvent
estimates are an immediate corollary of the ones stated in
Proposition~\ref{l:angular-quantization-local} below. The
decomposition of $\mathcal F^\theta_0$ at $a=0$ follows from
Proposition~\ref{l:f-pm}.

We now give the schema of the proof of Proposition~\ref{l:angular-quantization-inner}.
Let $p_{j0}$ be the principal symbol of $P_j$; note that both
$p_{10}$ and $p_{20}$ are real-valued; also, define $\mathbf
p=(p_{10},p_{20}):T^* \mathbb S^2\to \mathbb R^2$.  In
Section~\ref{s:angular-hamiltonian}, we construct the principal parts
$F_\pm$ of the quantization symbols globally in $\widetilde K$, and
show that the intersection of the image of $\mathbf p$ with
$\{C_\theta^{-1}\leq\tilde\lambda\leq C_\theta\}$ is exactly
$\widetilde K$.  Using the theory of integrable systems described in
Section~\ref{s:prelim-integrable}, we then construct local
symplectomorphisms conjugating $(F_\pm(\mathbf p),p_{20})$ away from
$\widetilde K_\mp$ to the system $(\zeta,\eta)$ on $T^*\mathcal M$,
where $\mathcal M=\mathbb R_x\times \mathbb S^1_y$ is called the model
space, $(\xi,\eta)$ are the momenta corresponding to $(x,y)$, and
$$
\zeta={x^2+\xi^2\over 2}.
$$
Note that the integrable system $(\zeta,\eta)$ is nondegenerate on
$\{\zeta>0\}$ with periodicity lattice $2\pi \mathbb Z^2$, and
$d\zeta=0$ on $\{\zeta=0\}$.

Next, we take $(\tilde\lambda_0,\tilde k_0)\in \widetilde K$ and show
that joint eigenvalues in a certain $h$-independent neighborhood of
this point are given by a quantization condition. For this, we first
use Egorov's theorem and the symplectomorphisms constructed in
Section~\ref{s:angular-hamiltonian} to conjugate $P_1,P_2$
microlocally near $\mathbf p^{-1}(\tilde\lambda_0,\tilde k_0)$ to some
pseudodifferential operators $Q_1,Q_2$ on $\mathcal M$. The principal
symbols of $Q_j$ are real-valued functions of $(\zeta,\eta)$ only; in
Section~\ref{s:angular-moser}, we use Moser averaging to further
conjugate $Q_1,Q_2$ by elliptic pseudodifferential operators so that
the full symbols of $Q_j$ depend only on $(\zeta,\eta)$. In
Section~\ref{s:angular-grushin}, we use spectral theory to construct a
local Grushin problem for $(Q_1,Q_2)$, which we can conjugate back to
a local Grushin problem for $(P_1,P_2)$; then, we can apply the
results of Appendix~\ref{s:prelim-grushin} to obtain local
quantization conditions
(Proposition~\ref{l:angular-quantization-local}). To pass from these
local conditions to the global one, we use
\begin{prop}\label{l:angular-switch}
Assume that $G_j(\tilde\lambda,\tilde k;h)$ are two complex-valued
classical symbols in $h$ defined in some open set $U\subset \mathbb
R^2$, their principal symbols are both equal to some real-valued
$F(\tilde\lambda,\tilde k)$, with $\partial_{\tilde\lambda} F\neq 0$
everywhere and $\{F\geq 0\}$ convex, and solution sets to quantization
conditions
$$
\tilde k\in h\mathbb Z,\
G_j(\tilde\lambda+ih\tilde\mu,\tilde k)\in h \mathbb N
$$
in the region $(\tilde\lambda,\tilde k)\in U$, $\tilde\mu=O(1)$
coincide modulo $O(h^\infty)$. Then $G_1-G_2=hl+O(h^\infty)$ on
$\{F\geq 0\}$ for some constant $l\in \mathbb Z$.  Moreover, if
$\{F=0\}\cap U\neq\emptyset$, then $l=0$.
\end{prop}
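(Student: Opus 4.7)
The plan is to extract the integer shift $l$ from the index labeling of quantization points, argue it is constant, and propagate the resulting pointwise identity to a symbol identity on $\{F\geq 0\}$. Set $H:=G_1-G_2$; since the principal symbols agree, $H$ is a classical symbol of the form $H=hH^{(1)}+h^2H^{(2)}+\cdots$, in particular $H=O(h)$.

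First, fix $\tilde k\in h\mathbb Z$ such that the slice $\{\tilde\lambda:(\tilde\lambda,\tilde k)\in U,\ F(\tilde\lambda,\tilde k)\geq 0\}$ is nonempty. Using $\partial_{\tilde\lambda}F\neq 0$ and the complex Taylor extension of $G_j$ supplied in the hypotheses, the implicit function theorem gives, for each admissible $n\in\mathbb N$, a unique solution $z_j(\tilde k,n)=\tilde\lambda+ih\tilde\mu$ of $G_j(z,\tilde k)=hn$ with $\tilde\mu=O(1)$; consecutive solutions are separated in $\tilde\lambda$ by $\Theta(h)$. Since the two quantization sets coincide modulo $O(h^\infty)$, which is much smaller than the spacing, the induced bijection between the index ranges is order-preserving, hence given by an integer shift $l(\tilde k,h)\in\mathbb Z$ with $z_1(\tilde k,n)=z_2(\tilde k,n-l(\tilde k,h))+O(h^\infty)$. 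Evaluating $H$ at matched points yields $H(z_1(\tilde k,n),\tilde k)=h\,l(\tilde k,h)+O(h^\infty)$.

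Next, I would show $l(\tilde k,h)$ stabilizes to a single integer. Smoothness of $H=hH^{(1)}+O(h^2)$ gives $|H(z_1(\tilde k+h,n),\tilde k+h)-H(z_1(\tilde k,n),\tilde k)|=O(h^2)$, so $l(\tilde k+h,h)-l(\tilde k,h)=O(h)$; being integer-valued, $l(\tilde k,h)$ is constant in $\tilde k$, with common value $l(h)$, where convexity of $\{F\geq 0\}$ ensures the admissible lattice values of $\tilde k$ form a single consecutive set. To upgrade the pointwise identity to a symbol identity I argue inductively on $j$. At leading order, $H^{(1)}(z_1(\tilde k,n))=l(h)+O(h)$; fixing any $(\tilde\lambda_0,\tilde k_0)\in\{F\geq 0\}\cap U$ and choosing $\tilde k_h\to\tilde k_0$ in $h\mathbb Z$ and $n_h\in\mathbb N$ with $hn_h\to F(\tilde\lambda_0,\tilde k_0)$ produces quantization points converging to $(\tilde\lambda_0,\tilde k_0)$, so by continuity $H^{(1)}(\tilde\lambda_0,\tilde k_0)=\lim_{h\to 0}l(h)$. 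Since $H^{(1)}$ is $h$-independent and $l(h)\in\mathbb Z$, the limit exists, is an integer $l$, and $H^{(1)}\equiv l$ on $\{F\geq 0\}$. Subtracting $hl$, the quotient $(H-hl)/h^2$ is a classical symbol still $O(h^\infty)$ at quantization points, so the same density and continuity argument forces its principal part $H^{(2)}$ to vanish on $\{F\geq 0\}$; iterating yields $H^{(j)}\equiv 0$ on $\{F\geq 0\}$ for all $j\geq 2$, hence $G_1-G_2=hl+O(h^\infty)$ on $\{F\geq 0\}$.

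For the final assertion, if $\{F=0\}\cap U\neq\emptyset$ then for $h$ small there exist $\tilde k\in h\mathbb Z$ admitting a $G_1$-quantization point with $n=0$; the same point is also a $G_2$-quantization point with some $n'\in\mathbb N$, so $l=n-n'\leq 0$. Swapping the roles of $G_1$ and $G_2$ gives $l\geq 0$, whence $l=0$. The main obstacle I anticipate is making rigorous the passage from a pointwise identity on the $h$-dependent quantization lattice to a coefficientwise identity of the asymptotic expansion; the argument rests on each $H^{(j)}$ being an $h$-independent smooth function, so that vanishing on a set accumulating at every point of $\{F\geq 0\}\cap U$ forces vanishing everywhere, and on carefully propagating the $O(h^\infty)$ errors through the induction in $j$.
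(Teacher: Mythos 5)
Your proposal is correct and follows essentially the same route as the paper: evaluate $G_1-G_2$ at quantization points accumulating at a fixed point of $\{F\geq 0\}$, extract the integer $l$ from the $h\mathbb Z + O(h^\infty)$ structure, show it is locally constant and hence a constant by density and smoothness, subtract $hl$, and iterate on the order of vanishing; the boundary $\{F=0\}$ then forces $l=0$ by the $\mathbb N$-constraint exactly as you argue. The only difference is that you spell out the order-preserving bijection between quantization indices, which the paper leaves implicit, so this is a matter of exposition rather than a new argument.
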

\begin{proof}
Assume that $(\tilde\lambda_1,\tilde k_1)\in \{F\geq 0\}$. Then for
every $h$, there is a solution $(\tilde
\lambda(h)+ih\tilde\mu(h),\tilde k(h))$ to the quantization conditions
within $O(h)$ of $(\tilde\lambda_1,\tilde k_1)$; we know that
$$
G_j(\tilde\lambda(h)+ih\tilde\mu(h),\tilde k(h))\in h \mathbb Z+O(h^\infty),\ j=1,2,
$$
and thus $(G_1-G_2)(\tilde\lambda(h)+ih\tilde\mu(h),\tilde
k(h))=hl(h)+O(h^\infty)$, for some $l(h)\in \mathbb Z$. Since
$G_1-G_2=O(h)$ in particular in $C^1$, we have
$$
|(G_1-G_2)(\tilde \lambda_1,\tilde k_1)-hl(h)|=O(h^2).
$$
Therefore, $l(h)$ is constant for $h$ small enough and it is equal to
the difference of subprincipal symbols of $G_1$ and $G_2$ at
$(\tilde\lambda_1,\tilde k_1)$. It follows that $l(h)$ is independent
of $(\tilde\lambda_1,\tilde k_1)$; we can subtract it from one of the
symbols to reduce to the case when $G_1-G_2=O(h^2)$. The analysis
in the beginning of this proof then shows that
$$
\|G_1-G_2\|_{C(F\geq 0)}=O(h\|G_1-G_2\|_{C^1(F\geq 0)}+h^\infty).
$$
Arguing by induction, we get $G_1-G_2=O(h^N)$ for all $N$.  The last
statement follows directly by taking solutions to the quantization
conditions with $G_j=0$ and requiring that they satisfy the
quantization conditions $G_{3-j}\geq 0$.
\end{proof}
We can now cover $\widetilde K$ by a finite family of open sets, on
each of which there exists a local quantization condition. Using
Proposition~\ref{l:angular-switch} and starting from $\widetilde
K_\pm$, we can modify the local quantization conditions and piece them
together to get unique (modulo $h^\infty$) global $G_\pm$. The joint spectrum of
$(P_1,P_2)$ in a neighborhood of $\widetilde K$ is then given by the
global quantization condition; the joint spectrum outside of this
neighborhood, but satisfying~\eqref{e:angular-regime}, is empty by
part 2 of Proposition~\ref{l:angular-quantization-local}.

Also, the principal symbol of $G_--G_+$ is equal to $2\tilde k$;
therefore, $G_--G_+-2\tilde k$ is equal to $l\tilde h$ for some fixed
$l\in \mathbb Z$. However, $G_\pm$ depend smoothly on $a$ and thus it
is enough to prove that $l=0$ for $a=0$; in the latter case, the
symbols $G_\pm$ are computed explicitly from the spectrum of Laplacian
on the round sphere. (Without such a reference point, one would need
to analyse the subprincipal symbols of $G_\pm$ using the Maslov index.)
This finishes the proof of Proposition~\ref{l:angular-quantization-inner}.

Finally, let us outline the argument in the special case $a=0$
and indicate which parts of the construction are simplified. The
formulas below are not used in the general argument; we provide
them for the reader's convenience. The
principal symbol $p_{10}$ of $P_1$ is just the square of the
norm on $T^*\mathbb S^2$ generated by the round metric:
$$
p_{10}=\xi_\theta^2+{\xi_\varphi^2\over\sin^2\theta}.
$$
The set $\mathbf p^{-1}(\tilde\lambda,\tilde k)$ consists of all cotangent
vectors with length $\sqrt{\tilde\lambda}$ and momentum $\tilde k$;
therefore
\begin{enumerate}
\item for $\tilde\lambda\leq\tilde k^2$ (corresponding to the complement
of $\widetilde K$), the set $\mathbf p^{-1}(\tilde\lambda,\tilde k)$
is empty;
\item for $\tilde k=\pm\sqrt{\tilde\lambda}$ (corresponding to $\widetilde K_\pm$),
the set $\mathbf p^{-1}(\tilde\lambda,\tilde k)$ is a circle, consisting
of covectors tangent to the equator with length $\sqrt{\tilde\lambda}$ and direction
determined by the choice of sign;
\item for $\tilde \lambda > \tilde k^2$ (corresponding to the interior
of $\widetilde K$), the set $\mathbf p^{-1}(\tilde\lambda,\tilde k)$
is a Liouville torus.
\end{enumerate} 
The principal parts $F_\pm$ of the quantization symbols, constructed
in Proposition~\ref{l:f-pm}, can be computed explicitly:
$F_\pm=\sqrt{\tilde\lambda}\mp\tilde k$ (see the proof of part~2 of
this Proposition). Then $F_\pm^{-1}(\zeta,\eta)=(\zeta\pm\eta)^2$.
For $\pm \tilde k>0$, the canonical transformation~$\Phi_\pm$ from
Proposition~\ref{l:angular-conjugation} can be taken in the form
\begin{equation}\label{e:phi-pm-special}
\begin{gathered}
(\theta,\varphi,\xi_\theta,\xi_\varphi)\mapsto
(x,y,\xi,\eta)\\
=((2p_{10})^{1/2}(\sqrt {p_{10}}\pm\xi_\varphi)^{-1/2}\cos\theta,
\varphi+G,-2^{1/2}(\sqrt {p_{10}}\pm\xi_\varphi)^{-1/2}\sin\theta\xi_\theta
,\xi_\varphi);
\end{gathered}
\end{equation}
here $(x,y,\xi,\eta)$ are coordinates on $T^* \mathcal M$, with
$\mathcal M=\mathbb R_x\times \mathbb S^1_y$ the model space.
The function $G:T^* \mathbb S^2\to \mathbb S^1$ here is given by
$$
(\sqrt{p_{10}}\pm\xi_\varphi)\cos G=p_{10}^{1/2}\sin\theta\pm{\xi_\varphi\over\sin\theta},\
(\sqrt{p_{10}}\pm\xi_\varphi)\sin G=\mp\cos\theta\xi_\theta.
$$
In fact, the maps $\Phi_\pm$ defined in~\eqref{e:phi-pm-special}
extend smoothly to the poles $\{\sin\theta=0\}$ of the sphere and
satisfy the conditions of Proposition~\ref{l:angular-conjugation} on
the complement of the opposite equator $\{\theta=\pi/2,\
\xi_\theta=0,\ \xi_\varphi=\mp\sqrt{\tilde\lambda}\}$.

One can then conjugate the operators $P_1,P_2$ to some model operators
$Q_1,Q_2$ as in Proposition~\ref{l:moeser}.  To bring the subprincipal
terms in $Q_j$ to normal form, one still needs Moser averaging. Once the
normal form of Proposition~\ref{l:moeser} is obtained, it is possible
to use the ellipticity of $\mathbf p-(\tilde\lambda,\tilde k)$ away from
$\mathbf p^{-1}(\tilde\lambda,\tilde k)$ (as in Proposition~\ref{l:local-grushin})
and spectral theory to obtain the quantization condition. The Grushin problem
construction of Section~\ref{s:angular-grushin} and Appendix~\ref{s:prelim-grushin-1}
is not needed, as the operator $P_1$ is self-adjoint.

\subsection{Hamiltonian flow}\label{s:angular-hamiltonian}

Let $(\theta,\varphi)$ be the spherical coordinates on $\mathbb S^2$
and let $(\xi_\theta,\xi_\varphi)$ be the corresponding momenta.  Note
that $\xi_\theta$ is defined away from the poles $\{\sin\theta=0\}$,
while $\xi_\varphi$ is well-defined and smooth on the whole $T^*
\mathbb S^2$.  In the $(\theta,\varphi,\xi_\theta,\xi_\varphi)$
coordinates, the principal symbols of $P_2$ and $P_1$ are
$p_{20}=\xi_\varphi$ and
$$
p_{10}(\theta,\varphi,\xi_\theta,\xi_\varphi)=\Delta_\theta\xi_\theta^2
+{(1+\alpha)^2\over \Delta_\theta\sin^2\theta}(\xi_\varphi-a\tilde\omega\sin^2\theta)^2.
$$
Since $p_{10}$ does not depend on $\varphi$, we have
$$
\{p_{10},p_{20}\}=0.
$$
We would like to apply the results of
Section~\ref{s:prelim-integrable} on integrable Hamiltonian systems to
establish a normal form for $\mathbf p=(p_{10},p_{20})$.  First of
all, we study the points where the integrable system $\mathbf p$ is
degenerate:
\begin{prop}\label{l:hamiltonian-flow} For $a$ small enough,

1. For $C_\theta^{-1}\leq \tilde\lambda\leq C_\theta$, the set
$\mathbf p^{-1}(\tilde\lambda,\tilde k)$ is nonempty if and only if
$(\tilde\lambda,\tilde k)\in \widetilde K$.

2. The integrable system $\mathbf p$ is nondegenerate on $\mathbf
p^{-1}(\widetilde K)$, except at the equators
$$
E_\pm(\tilde\lambda)=\{\theta=\pi/2,\ \xi_\theta=0,\
(1+\alpha)(\xi_\varphi-a\tilde\omega)=\pm\sqrt{\tilde\lambda}\}
\subset T^* \mathbb S^2,\
C_\theta^{-1}\leq \tilde\lambda\leq C_\theta.
$$
Moreover, $p_{10}=\tilde\lambda$ on $E_\pm(\tilde\lambda)$ and the
union of all $E_\pm(\tilde\lambda)$ is equal to $\mathbf p^{-1}(\tilde
K_\pm)$.  Also,
\begin{equation}\label{e:angular-degeneracy}
dp_{10}=\pm 2(1+\alpha)\sqrt{\tilde\lambda}\,dp_{20}
\text{ on }E_\pm(\tilde\lambda).
\end{equation}
\end{prop}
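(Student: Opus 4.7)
The strategy is direct computation in the coordinate patch $(\theta,\varphi,\xi_\theta,\xi_\varphi)$ on $T^*\mathbb S^2$ away from the poles, with perturbation from the explicit $a=0$ case handling the global structure. Since $p_{20}=\xi_\varphi$, we have $dp_{20}=d\xi_\varphi$, and $p_{10}$ is $\varphi$-independent, so $\mathbf p$ is degenerate at a point precisely when $\partial_\theta p_{10}=\partial_{\xi_\theta}p_{10}=0$. From the formula for $p_{10}$, $\partial_{\xi_\theta}p_{10}=2\Delta_\theta\xi_\theta$, so degeneracy forces $\xi_\theta=0$; on the locus $\{\xi_\theta=0,\,\xi_\varphi=\tilde k\}$, $p_{10}$ reduces to
$$
f(\theta)={(1+\alpha)^2(\tilde k-a\tilde\omega\sin^2\theta)^2\over\Delta_\theta\sin^2\theta},\quad\theta\in(0,\pi).
$$

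I would next analyze the critical points of $f$ via the logarithmic derivative
$$
(\ln f)'(\theta)=2\cos\theta\bigg[-{2a\tilde\omega\sin\theta\over\tilde k-a\tilde\omega\sin^2\theta}+{\alpha\sin\theta\over\Delta_\theta}-{1\over\sin\theta}\bigg],
$$
whose critical set is $\{\cos\theta=0\}$ together with the zeros of the bracket. Multiplying the bracket by $\sin\theta$ gives $-1+O(a)$ uniformly in $\theta\in(0,\pi)$ and in $\tilde k$ on a bounded range (including $\tilde k=0$, where the first term simplifies to the constant $2$ and the resulting expression is still bounded away from zero for small $a$). Hence for $a$ small enough, the unique interior critical point of $f$ is $\theta=\pi/2$, at which $f$ takes the value $(1+\alpha)^2(\tilde k-a\tilde\omega)^2$. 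When $\tilde k\ne 0$, $f\to+\infty$ at both endpoints, so this is a global minimum and the image of $p_{10}$ on the fiber $\{\xi_\varphi=\tilde k\}$ is $[(1+\alpha)^2(\tilde k-a\tilde\omega)^2,+\infty)$; intersected with $[C_\theta^{-1},C_\theta]$, this matches $\widetilde K$. When $\tilde k=0$, a separate check in coordinates near the poles shows that $p_{10}$ sweeps out $[0,+\infty)$ on the fiber $\{\xi_\varphi=0\}$, and for $a$ small $(1+\alpha)^2a^2\tilde\omega^2<C_\theta^{-1}$, so every $(\tilde\lambda,0)$ with $\tilde\lambda\in[C_\theta^{-1},C_\theta]$ lies simultaneously in $\widetilde K$ and in the image. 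This establishes Part~1.

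For Part~2, the above analysis identifies the degenerate set in $\mathbf p^{-1}(\widetilde K)$ as $\{\theta=\pi/2,\,\xi_\theta=0\}$ (with no extra contribution from the poles, since there $p_{10}$ is forced below $C_\theta^{-1}$ for small $a$). On this locus $p_{10}=(1+\alpha)^2(\xi_\varphi-a\tilde\omega)^2$, so the equation $p_{10}=\tilde\lambda$ splits into the two branches $(1+\alpha)(\xi_\varphi-a\tilde\omega)=\pm\sqrt{\tilde\lambda}$, which are exactly $E_\pm(\tilde\lambda)$. The identity $p_{10}=\tilde\lambda$ on $E_\pm(\tilde\lambda)$ is tautological, and since the defining relation of $E_\pm(\tilde\lambda)$ rewritten with $\tilde\lambda=p_{10}$, $\tilde k=\xi_\varphi$ is precisely the defining relation of $\widetilde K_\pm$, the union $\bigcup_{\tilde\lambda}E_\pm(\tilde\lambda)=\mathbf p^{-1}(\widetilde K_\pm)$. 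Finally, on $E_\pm$ every partial of $p_{10}$ vanishes except
$$
\partial_{\xi_\varphi}p_{10}={2(1+\alpha)^2(\xi_\varphi-a\tilde\omega)\over\Delta_\theta\sin^2\theta}\bigg|_{\theta=\pi/2}=\pm 2(1+\alpha)\sqrt{\tilde\lambda},
$$
which combined with $dp_{20}=d\xi_\varphi$ yields~\eqref{e:angular-degeneracy}.

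The one step requiring care is the uniqueness of the interior critical point $\theta=\pi/2$ of $f$ uniformly down to $\tilde k=0$: the bracket in $(\ln f)'$ contains the singular term $-1/\sin\theta$, which blows up at the endpoints and precludes a naive $O(a)$-continuity argument. Factoring out $\cos\theta$ and then multiplying by $\sin\theta$ to regularize both endpoints is the key maneuver; once this is done, all remaining steps are routine calculus and a perturbation argument uniform over the compact parameter range.
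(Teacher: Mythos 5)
Your approach differs from the paper's, which handles this proposition by a short perturbation argument: verify the statements directly at $a=0$, verify~\eqref{e:angular-degeneracy} for all $a$, deduce nondegeneracy for small $a$ by perturbation, and then derive Part~1 from Part~2 via an extremum problem for $\xi_\varphi$ on $\{p_{10}=\tilde\lambda\}$. You instead carry out the degeneracy analysis explicitly via the critical points of $f$ on $\{\xi_\theta=0\}$, which is a legitimate route, but the key uniformity claim is wrong. Asserting that the $\sin\theta$--regularized bracket is $-1+O(a)$ uniformly over a $\tilde k$-range that contains $\tilde k=0$ is inconsistent with your own parenthetical, which correctly observes that at $\tilde k=0$ the value is $+1+O(a^2)$. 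For fixed $\theta\in(0,\pi)$ the regularized bracket is continuous and strictly increasing in $\tilde k$ on $(-\infty, a\tilde\omega\sin^2\theta)$, running from $-1+O(a^2)$ at $\tilde k\to-\infty$ to $+\infty$ as $\tilde k\to(a\tilde\omega\sin^2\theta)^-$; it therefore vanishes near $\tilde k\approx -a\tilde\omega\sin^2\theta$, producing interior critical points of $f$ distinct from $\theta=\pi/2$ for every $\tilde k\in(-a\tilde\omega,0)$. Separately, for $0<\tilde k<a\tilde\omega$ the numerator $(\tilde k-a\tilde\omega\sin^2\theta)^2$ has an interior double zero $\theta_*$, so $f(\theta_*)=f'(\theta_*)=0$; the factorization $(\ln f)'=2\cos\theta\cdot[\cdots]$ is only valid where $f>0$ and misses these entirely. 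Thus the claim that $\theta=\pi/2$ is the unique interior critical point of $f$ fails for $|\tilde k|\lesssim a\tilde\omega$, which is precisely the regime you were trying to fold in via the $\tilde k=0$ discussion.

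The conclusion is still correct, but a missing observation is needed to reach it. Every extra critical point arising when $|\tilde k|\lesssim a\tilde\omega$ has $f$-value $O(a^2)$: zeros of the regularized bracket occur at $\sin^2\theta\approx|\tilde k|/(a\tilde\omega)$, giving $f\approx 4|\tilde k|\,a\tilde\omega<4(a\tilde\omega)^2$, and the double-zero points have $f=0$. For $a$ small this is strictly less than $C_\theta^{-1}$, so all such points fall outside $\mathbf p^{-1}(\widetilde K)$ and contribute nothing to Part~2; the same bound, together with the sweep through $\xi_\theta\neq 0$, rescues Part~1. An equivalent repair is to notice that on any $E_\pm(\tilde\lambda)$ with $\tilde\lambda\geq C_\theta^{-1}$ one has $|\xi_\varphi-a\tilde\omega|\geq C_\theta^{-1/2}/(1+\alpha)$, hence a uniform lower bound $|\tilde k|\geq\delta_0>0$ on degenerate points of interest, and restricted to $\{|\tilde k|\geq\delta_0\}$ your estimate of the regularized bracket as $-1+O(a/\delta_0)$ is genuinely uniform. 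Without stating one of these, the inference from the bracket computation to uniqueness of the critical point is unjustified.
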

\begin{proof}
We can verify directly the statements above for $a=0$, and
also~\eqref{e:angular-degeneracy} for all $a$. Then part~2 follows for
small $a$ by a perturbation argument; part~1 follows from part~2 by
studying the extremum problem for $\xi_\varphi$ restricted to
$\{p_{10}=\tilde\lambda\}$.
\end{proof}
Next, we construct the principal parts $F_\pm$ of the quantization symbols globally:
\begin{prop}\label{l:f-pm} For $a$ small enough,

1. There exist unique smooth real-valued functions
$F_\pm(\tilde\lambda,\tilde k)$ on $\widetilde K$ such that
$F_\pm|_{\widetilde K_\pm}=0$ and $(F_\pm(\mathbf p),p_{20})$ is a
nondegenerate completely integrable system on $\mathbf p^{-1}(\widetilde
K\setminus (\widetilde K_+\cup \widetilde K_-))$ with periodicity lattice
$2\pi \mathbb Z^2$.

2. $\partial_{\tilde\lambda} F_\pm>0$, $\mp \partial_{\tilde
k}F_\pm>0$, and $F_-(\tilde\lambda,\tilde k)-F_+(\tilde\lambda,\tilde
k)=2\tilde k$ on $\widetilde K$.  In particular, one can define the
inverse $F_\pm^{-1}(\zeta,\tilde k)$ of $F_\pm$ in the $\tilde\lambda$
variable, with $\tilde k$ as a parameter. Also,
$F_\pm=\sqrt{\tilde\lambda}\mp\tilde k$ for $a=0$ and
$\partial_{\tilde\lambda} F_\pm=\pm(2\tilde k)^{-1}+O(a^2)$ on
$\widetilde K_\pm$.

3. If $(\tilde\lambda,\tilde k)\in \widetilde K\setminus (\widetilde
K_+\cup\widetilde K_-)$ and $\gamma_\pm$ are some ($2\pi$-periodic)
trajectories of $F_\pm(\mathbf p)$ on $\mathbf
p^{-1}(\tilde\lambda,\tilde k)$, then
\begin{equation}\label{e:classical-action-F}
\int_{\gamma_\pm} \sigma^S=2\pi F_\pm(\tilde\lambda,\tilde k).
\end{equation}
\end{prop}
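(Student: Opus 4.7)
The key idea is to \emph{define} $F_\pm(\tilde\lambda,\tilde k)$ as the Arnold--Liouville action variable corresponding to the primitive cycle $\gamma_\pm$ on the Liouville torus $\mathbf p^{-1}(\tilde\lambda,\tilde k)$, where $\gamma_\pm$ is chosen so that it collapses to a point as $(\tilde\lambda,\tilde k)\to\widetilde K_\pm$. With this choice the boundary condition $F_\pm|_{\widetilde K_\pm}=0$ comes for free, the classical action formula of part~3 is just the usual interpretation of the action variable, and the sign convention $\partial_{\tilde\lambda}F_\pm>0$ selects which of the two possible signs to use.

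On the nondegenerate interior $\widetilde K\setminus(\widetilde K_+\cup\widetilde K_-)$, guaranteed by Proposition~\ref{l:hamiltonian-flow}(2), I would apply Proposition~\ref{l:arnold-liouville-1}(2): since $p_{20}$ has $2\pi$-periodic Hamiltonian flow (the $\varphi$-rotation), there is a local smooth function $F$ of $(\tilde\lambda,\tilde k)$ such that $(F(\mathbf p),p_{20})$ has periodicity lattice $2\pi\mathbb Z^2$. By Proposition~\ref{l:arnold-liouville-1}(3), any two such $F$ are related by $F\mapsto \pm F+m\tilde k+c$ with $m\in\mathbb Z$ and $c\in\mathbb R$ (the matrix in $\GL(2,\mathbb Z)$ must preserve the second row $(0,1)$), so once we impose the sign and the boundary normalization these local definitions are unique and patch together globally on the simply connected region $\widetilde K$.

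The critical step is the smooth extension of $F_\pm$ up to the degenerate boundary $\widetilde K_\pm$. Here I would carry out symplectic reduction of $T^*\mathbb S^2$ by the $S^1$-action generated by $p_{20}$: for each fixed $\tilde k$ the reduced phase space at the level $p_{20}=\tilde k$ is two-dimensional and symplectic, and by~\eqref{e:angular-degeneracy} together with Proposition~\ref{l:hamiltonian-flow}(2) the reduced Hamiltonian $p_{10}^{\mathrm{red}}$ has a single nondegenerate minimum at the reduced equator, with critical value $(1+\alpha)^2(\tilde k-a\tilde\omega)^2$. Applying Proposition~\ref{l:arnold-liouville-degenerate} in $(\theta,\xi_\theta)$ with $\tilde k$ as the smooth parameter yields the function $F_\pm(\tilde\lambda,\tilde k)$ and a symplectomorphism $\Psi_\pm$ such that $F_\pm(p_{10}^{\mathrm{red}})=\zeta\circ\Psi_\pm$. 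This forces $F_\pm|_{\widetilde K_\pm}=0$ and $\partial_{\tilde\lambda}F_\pm>0$, and the formula $F'(p(x_0,\xi_0))=(\det\nabla^2 p)^{-1/2}$ from Proposition~\ref{l:arnold-liouville-degenerate} gives $\partial_{\tilde\lambda}F_\pm$ on $\widetilde K_\pm$; a direct Hessian computation yields $\pm 1/(2\tilde k)$ at $a=0$, and perturbation in $a$ gives $\pm(2\tilde k)^{-1}+O(a^2)$. The sign $\mp\partial_{\tilde k}F_\pm>0$ then follows by differentiating the identity $F_\pm\equiv 0$ along $\widetilde K_\pm$ together with the convexity of $\widetilde K$.

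Finally, for the identity $F_--F_+=2\tilde k$ and the $a=0$ formulas: both $F_\pm(\mathbf p)$ paired with $p_{20}$ satisfy the hypotheses of Proposition~\ref{l:arnold-liouville-1}(3), so globally $F_-=F_++m\tilde k+c$ for some $m\in\mathbb Z$ and $c\in\mathbb R$, where the coefficient of $F_+$ is $+1$ by the sign convention. At $a=0$ a direct computation of the radial action on the round sphere (alternatively, matching the known spherical-harmonic spectrum $l(l+1)h^2$ with the Bohr--Sommerfeld condition $F_\pm\in h\mathbb N$) gives $F_\pm=\sqrt{\tilde\lambda}\mp\tilde k$, whence $m=2$ and $c=0$; both values persist for all small $a$ by continuity in $a$ ($m$ being locally constant as an integer, $c$ being continuously pinned by the boundary condition). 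The classical action formula~\eqref{e:classical-action-F} is immediate from the action-variable construction. The main obstacle here is the parameter-dependent smooth extension of $F_\pm$ across $\widetilde K_\pm$ via the degenerate Arnold--Liouville normal form: one must verify that the $F_\pm$ produced by Proposition~\ref{l:arnold-liouville-degenerate} on each $\tilde k$-slice assembles into a smooth function of $(\tilde\lambda,\tilde k)$ and agrees on overlap with the interior action variable from Step~2.
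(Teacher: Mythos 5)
Your plan matches the paper's structure quite closely: construct $F_\pm$ near $\widetilde K_\pm$ by fixing $\xi_\varphi$ (i.e.\ symplectically reducing by the $\varphi$-rotation) and applying the parameter-dependent Morse normal form of Proposition~\ref{l:arnold-liouville-degenerate}, extend into the interior via the Arnold--Liouville normal form of Proposition~\ref{l:arnold-liouville-1}, patch by simple connectivity, and anchor signs and the constant in $F_--F_+=2\tilde k+c$ at $a=0$. Two of your steps, however, are too quick and leave genuine gaps.

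First, for~\eqref{e:classical-action-F}: the reduced Morse normal form on a $\xi_\varphi$-slice only yields $\oint_{\gamma_\pm}\xi_\theta\,d\theta=2\pi F_\pm$, while $\sigma^S$ also contains the term $\xi_\varphi\,d\varphi$, whose contribution along $\gamma_\pm$ is $\tilde k\,\Delta\varphi$ and is not obviously zero (the orbit of $F_\pm(\mathbf p)$ does drift in $\varphi$). The paper closes this by promoting the two-dimensional normal form $\Psi$ to a genuine symplectomorphism $\Phi$ from a full neighborhood of $\mathbf p^{-1}(\widetilde K_\varepsilon)$ in $T^*\mathbb S^2$ onto a neighborhood in $T^*\mathcal M$ --- in particular constructing the angular component $\Phi_y=\varphi+G(\Psi_x,\Psi_\xi,\xi_\varphi)$ by solving a compatibility system for $G$ --- and then applying Stokes with a transversal disc: $\int_{\gamma_\pm}\sigma^S=\int_{D_\pm}\omega^S=\int_{\Phi(D_\pm)}\omega^S_{\mathcal M}=2\pi F_\pm$. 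This extension, and the correctness of the induced $y$-coordinate, is a nontrivial added step; ``immediate from the action-variable construction'' does not cover it when $F_\pm$ is produced by the slice-wise Morse lemma rather than directly as an action integral.

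Second, ``convexity of $\widetilde K$'' is not an argument for $\mp\partial_{\tilde k}F_\pm>0$ on all of $\widetilde K$: differentiating $F_\pm\equiv 0$ along the boundary curve $\widetilde K_\pm$ only gives the sign there, and convexity of the region says nothing about monotonicity of a smooth function of $(\tilde\lambda,\tilde k)$ in its interior. The robust route (and the paper's) is to compute $F_\pm=\sqrt{\tilde\lambda}\mp\tilde k$ explicitly at $a=0$, where $\partial_{\tilde k}F_\pm=\mp1$ everywhere on $\widetilde K$, and then use smooth dependence on $a$ to keep the sign for $a$ small; since you already invoke the $a=0$ formula to pin down $m=2$ and $c=0$ in $F_--F_+=m\tilde k+c$, you should use it here as well.
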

\begin{proof}
1. We first construct $F_+$ in a neighborhood of $\widetilde K_+$.  In
fact, we take small $\varepsilon_k>0$ and define $F_+$ on the set
$$
\widetilde K_\varepsilon=\{(\tilde\lambda,\tilde k)\mid \tilde k\geq \varepsilon_k,\
(1+\alpha)^2(\tilde k-a\tilde\omega)^2\leq\tilde\lambda\leq C_\theta\}.
$$
We will pick $\varepsilon_k$ small enough so that $\widetilde K_+\subset
\widetilde K_\varepsilon$; note, however, that $\widetilde K_\varepsilon$ does
not lie in $\widetilde K$.  Moreover, we will construct a
symplectomorphism $\Phi$ from $\mathbf p^{-1}(\widetilde K_\varepsilon)$
onto a subset of $T^* \mathcal M$ such that $F_+(\mathbf
p)\circ\Phi^{-1}=\zeta$ and $p_{20}\circ\Phi^{-1}=\eta$.

Note that $\xi_\varphi=0$ on the poles of the sphere
$\{\sin\theta=0\}$; therefore,
$(\theta,\varphi,\xi_\theta,\xi_\varphi)$ is a symplectic system of
coordinates near $\mathbf p^{-1}(\widetilde K_\varepsilon)$. Next, fix
$\xi_\varphi\geq \varepsilon_k$ and consider $p_{10}$ as a function of
$(\theta,\xi_\theta)$; then for $a$ small enough, this function has a
unique critical point $(0,0)$ on the compact set
$\{p_{10}(\cdot,\cdot,\xi_\varphi)\leq C_\theta\}$; the Hessian at
this point is positive definite. Indeed, it is enough to verify these
statements for $a=0$ and check that $\partial_\theta
p_{10}=\partial_{\xi_\theta}p_{10}=0$ for
$(\theta,\xi_\theta)=(\pi/2,0)$ and small $a$. Now, we may apply
Proposition~\ref{l:arnold-liouville-degenerate} to the function
$\{p_{10}(\cdot,\cdot,\xi_\varphi)\}$ and obtain a function
$F_+(\tilde\lambda;\tilde k)$ on $\widetilde K_\varepsilon$ such that
$F_+|_{\widetilde K_+}=0$ and $\partial_{\tilde\lambda}F_+>0$ and a
mapping
$$
\Psi:(\theta,\xi_\theta,\xi_\varphi)\mapsto
(\Psi_x(\theta,\xi_\theta,\xi_\varphi),
\Psi_\xi(\theta,\xi_\theta,\xi_\varphi))
$$
that defines a family of symplectomorphisms
$(\theta,\xi_\theta)\mapsto (\Psi_x,\Psi_\xi)$, depending smoothly on
the parameter $\xi_\varphi$, and
$$
F_+(p_{10}(\theta,\xi_\theta,\xi_\varphi),\xi_\varphi)
={1\over 2}(\Psi_x(\theta,\xi_\theta,\xi_\varphi)^2
+\Psi_\xi(\theta,\xi_\theta,\xi_\varphi)^2),\
(\theta,\xi_\theta,\xi_\varphi)\in\mathbf p^{-1}(\widetilde K_\varepsilon).
$$
Now, define $\Phi:(\theta,\varphi,\xi_\theta,\xi_\varphi)\mapsto
(\Phi_x,\Phi_y,\Phi_\xi,\Phi_\eta)\in T^* \mathcal M$ by
$$
\Phi_x=\Psi_x,\
\Phi_y=\varphi+G(\Psi_x,\Psi_\xi,\xi_\varphi),\
\Phi_\xi=\Psi_\xi,\
\Phi_\eta=\xi_\varphi.
$$
Here $G(x,\xi;\xi_\varphi)$ is some smooth function.  For $\Phi$ to be
a symplectomorphism, $G$ should satisfy
$$
\partial_\xi G(\Psi_x,\Psi_\xi,\xi_\varphi)
=\partial_{\xi_{\varphi}} \Psi_x,\
\partial_x G(\Psi_x,\Psi_\xi,\xi_\varphi)
=-\partial_{\xi_{\varphi}}\Psi_\xi.
$$
Since $(x,\xi)$ vary in a disc, this system has a solution
if and only if
$$
0=\{\Psi_\xi,\partial_{\xi_\varphi}\Psi_x\}+\{\partial_{\xi_\varphi}\Psi_\xi,\Psi_x\}
=\partial_{\xi_\varphi}\{\Psi_\xi,\Psi_x\};
$$
this is true since $\{\Psi_\xi,\Psi_x\}=1$. The defining properties of
$F_+$ now follow from the corresponding properties of the integrable
system $(\zeta,\eta)$; uniqueness follows from part~3 of
Proposition~\ref{l:arnold-liouville-1} and the condition $F_+|_{\widetilde
K_+}=0$.

Now, by part~2 of Proposition~\ref{l:arnold-liouville-1} and
Proposition~\ref{l:hamiltonian-flow}, for each
$(\tilde\lambda_0,\tilde k_0)\in \widetilde K\setminus (\widetilde K_+\cup
\widetilde K_-)$, there exists a smooth function $F(\tilde\lambda,\tilde
k)$ defined in a neighborhood of $(\tilde\lambda_0,\tilde k_0)$ such
that $\partial_\lambda F\neq 0$ and $(F(\mathbf p),p_{20})$ has
periodicity lattice $2\pi \mathbb Z^2$; moreover, part~3 of
Proposition~\ref{l:arnold-liouville-1} describes all possible $F$.
Then we can cover $\widetilde K\setminus \widetilde K_\varepsilon$ by a finite
set of the neighborhoods above and modify the resulting functions $F$
and piece them together, to uniquely extend the function $F_+$
constructed above from $\widetilde K_\varepsilon$ to $\widetilde
K\setminus\widetilde K_-$. (Here we use that $\widetilde K\setminus\widetilde K_-$
is simply connected.)
Similarly, we construct $F_-$ on $\widetilde
K\setminus\widetilde K_+$.  (The fact that $F_\pm$ is smooth at $\widetilde
K_\mp$ will follow from smoothness of $F_\mp$ at $\widetilde K_\mp$ and
the identity $F_--F_+=2\tilde k$.)

2. We can verify the formulas for $F_\pm$ for $a=0$ explicitly, using
the fact that the Hamiltonian flow of $\sqrt{\tilde\lambda}$ is
$2\pi$-periodic in this case.  The first two identities now follow
immediately. As for the third one, we know by part~3 of
Proposition~\ref{l:arnold-liouville-1} and the case $a=0$ that
$F_--F_+=2\tilde k+c$ for some constant $c$; we can then show that
$c=0$ using part 3 of this proposition. Finally,
$\partial_{\tilde\lambda} F_\pm|_{\widetilde K_\pm}$ can be computed using
Proposition~\ref{l:arnold-liouville-degenerate}.

3. First, assume that $(\tilde\lambda,\tilde k)\in \tilde
K_\varepsilon$ and let $\Phi$ be the symplectomorphism constructed in
part 1.  Then
$$
\Phi\circ\gamma_+=\{\zeta=F_+(\tilde\lambda,\tilde k),\ \eta=\tilde k,\ \varphi=\const\}
$$
is a circle. Let $D_+$ be the preimage under $\Phi$ of the disc with
boundary $\Phi\circ\gamma_+$; then
$$
\int_{\gamma_+} \sigma^S=\int_{D_+} \omega^S
=\int_{\Phi\circ D_+}\omega^S_{\mathcal M}=2\pi F_+(\tilde\lambda,\tilde k).
$$

We see that~\eqref{e:classical-action-F} holds for $F_+$ near $\widetilde
K_+$; similarly, it holds for $F_-$ near $\widetilde K_-$.  It now
suffices to show that for each $(\tilde\lambda_0,\tilde k_0)\in \tilde
K\setminus (\widetilde K_+\cup \widetilde K_-)$, there exists a neighborhood
$V(\tilde\lambda_0,\tilde k_0)$ such that if $(\tilde\lambda_j,\tilde
k_j)\in V$, $j=1,2$, and $\gamma^\pm_j$ are some ($2\pi$-periodic)
Hamiltonian trajectories of $F_\pm(\mathbf p)$ on $\mathbf
p^{-1}(\tilde\lambda_j,\tilde k_j)$, then
\begin{equation}\label{e:classical-action-d}
\int_{\gamma^\pm_2}\sigma^S-\int_{\gamma^\pm_1}\sigma^S
=2\pi(F_\pm(\tilde\lambda_2,\tilde k_2)-F_\pm(\tilde\lambda_1,\tilde k_1)).
\end{equation}
In particular, if~\eqref{e:classical-action-F} holds for one point of
$V$, it holds on the whole $V$. One way to
prove~\eqref{e:classical-action-d} is to use part~1 of
Proposition~\ref{l:arnold-liouville-2} to conjugate $(F_\pm(\mathbf
p),p_{20})$ to the system $(\xi_x,\xi_y)$ on the torus $\mathbb
T_{x,y}$ and note that the left-hand side
of~\eqref{e:classical-action-d} is the integral of the symplectic form
over a certain submanifold bounded by $\gamma_1,\gamma_2$; therefore,
it is the same for the conjugated system, where it can be computed
explicitly.
\end{proof}
Finally, we construct local symplectomorphisms conjugating
$(F_\pm(\mathbf p),p_{20})$ to $(\zeta,\eta)$:
\begin{prop}\label{l:angular-conjugation}
For each $(\tilde\lambda_0,\tilde k_0)\in \widetilde K\setminus \widetilde K_\mp$,
there exists an exact symplectomorphism $\Phi_\pm$ from a neighborhood
of $\mathbf p^{-1}(\tilde\lambda_0,\tilde k_0)$ in $T^*\mathbb S^2$
onto a neighborhood of
$$
\Lambda_{\mathcal M}=\{\zeta=F_\pm(\tilde\lambda_0,\tilde k_0),\
\eta=\tilde k_0\}
$$
in $T^* \mathcal M$ such that
$$
p_{10}\circ\Phi_\pm^{-1}=F_\pm^{-1}(\zeta,\eta),\
p_{20}\circ\Phi_\pm^{-1}=\eta.
$$
\end{prop}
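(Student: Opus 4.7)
The plan is to split into two cases according to whether $(\tilde\lambda_0,\tilde k_0)$ lies in the fully nondegenerate region $\widetilde K\setminus(\widetilde K_+\cup\widetilde K_-)$ or on the degenerate boundary component $\widetilde K_\pm$ itself. In both cases the strategy is to first produce a symplectomorphism intertwining the systems $(F_\pm(\mathbf p),p_{20})$ and $(\zeta,\eta)$, and then to verify exactness by comparing classical actions along two generating loops.

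For the nondegenerate case, I would invoke Proposition~\ref{l:arnold-liouville-2} directly. By Proposition~\ref{l:f-pm}, $(F_\pm(\mathbf p),p_{20})$ is a nondegenerate integrable system on a neighborhood of $\mathbf p^{-1}(\tilde\lambda_0,\tilde k_0)$ with periodicity lattice $2\pi\mathbb Z^2$, and on $T^*\mathcal M\cap\{\zeta>0\}$ the system $(\zeta,\eta)$ has the same properties. Part 1 of Proposition~\ref{l:arnold-liouski-2} thus yields a symplectomorphism $\Phi_\pm$ intertwining the two systems, i.e.\ $F_\pm(\mathbf p)=\zeta\circ\Phi_\pm$ and $p_{20}=\eta\circ\Phi_\pm$, which gives the claimed identities upon applying $F_\pm^{-1}$ in the first coordinate. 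To upgrade to exactness via part 2, I would check the classical actions along two generating $2\pi$-periodic trajectories. For the $F_\pm$-loop, Proposition~\ref{l:f-pm} part 3 gives $\int_{\gamma_\pm}\sigma^S=2\pi F_\pm(\tilde\lambda_0,\tilde k_0)$; on the model side the Hamiltonian orbit of $\zeta$ is a circle of radius $\sqrt{2F_\pm}$ in $(x,\xi)$, and a direct computation yields $\oint\xi\,dx=2\pi\zeta=2\pi F_\pm(\tilde\lambda_0,\tilde k_0)$. For the $p_{20}$-loop, the $\varphi$-orbit on $\mathbf p^{-1}(\tilde\lambda_0,\tilde k_0)$ gives action $2\pi\xi_\varphi=2\pi\tilde k_0$, matching $\oint\eta\,dy=2\pi\tilde k_0$ on the $y$-orbit of $\eta$. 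Both classical actions agree, so $\Phi_\pm$ is exact.

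For the degenerate case $(\tilde\lambda_0,\tilde k_0)\in\widetilde K_\pm$, Proposition~\ref{l:arnold-liouville-2} does not apply since $\zeta$ is degenerate on $\{\zeta=0\}$. However the required symplectomorphism has already been built in the proof of Proposition~\ref{l:f-pm} part 1: the explicit formula $\Phi:(\theta,\varphi,\xi_\theta,\xi_\varphi)\mapsto(\Psi_x,\varphi+G,\Psi_\xi,\xi_\varphi)$ (and its analog for $\Phi_-$, obtained by swapping the roles and applying Proposition~\ref{l:arnold-liouville-degenerate} near the opposite equator) satisfies $F_\pm(p_{10})=\tfrac12(\Psi_x^2+\Psi_\xi^2)=\zeta\circ\Phi$ and $\xi_\varphi=\eta\circ\Phi$ on a full neighborhood of $\mathbf p^{-1}(\tilde\lambda_0,\tilde k_0)=E_\pm(\tilde\lambda_0)$. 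Exactness in this case follows from the fact that the ambient neighborhood deformation retracts onto the single circle $E_\pm(\tilde\lambda_0)$, so the closed 1-form $\sigma^S-\Phi^*\sigma^S_{\mathcal M}$ is exact as soon as its integral over $E_\pm(\tilde\lambda_0)$ vanishes; but this integral equals $2\pi\tilde k_0-\oint_{\{\zeta=0,\eta=\tilde k_0\}}\eta\,dy=0$.

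The main obstacle I expect is the degenerate case, because the standard Arnold--Liouville machinery of Proposition~\ref{l:arnold-liouville-2} requires nondegeneracy and a torus fibration, neither of which holds at $\widetilde K_\pm$; the fibre there is a circle rather than a torus, and the action variable $F_\pm$ attains its critical value $0$. This obstacle is sidestepped by reusing the generating-function construction from the proof of Proposition~\ref{l:f-pm}, whose core ingredient is precisely the degenerate normal form of Proposition~\ref{l:arnold-liouville-degenerate}. A small point of care is that the two local maps (nondegenerate and degenerate cases) need not glue globally, but this is immaterial since the proposition only requires existence separately for each $(\tilde\lambda_0,\tilde k_0)$.
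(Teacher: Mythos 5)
Your proof is correct and follows essentially the same route as the paper: existence via part~1 of Proposition~\ref{l:arnold-liouville-2} away from $\widetilde K_\pm$ and via the explicit construction in the proof of Proposition~\ref{l:f-pm} near $\widetilde K_\pm$, and exactness by matching the two classical actions using part~3 of Proposition~\ref{l:f-pm} and the direct computation $\oint\xi_\varphi\,d\varphi=2\pi\tilde k_0=\oint\eta\,dy$. Your separate homotopy argument for exactness in the degenerate case (retraction onto the circle $E_\pm(\tilde\lambda_0)$, reducing to a single loop) is a slightly more explicit unpacking of the paper's remark that part~2 of Proposition~\ref{l:arnold-liouville-2} ``still applies in the degenerate case,'' but it is the same underlying mechanism.
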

\begin{proof}
The existence of $\Phi_\pm$ away from $\widetilde K_\pm$ follows from
part~1 of Proposition~\ref{l:arnold-liouville-2}, applied to the
systems $(F_\pm(\mathbf p),p_{20})$ and $(\zeta,\eta)$; near $\widetilde
K_\pm$, these symplectomorphisms have been constructed in the proof of
part 1 of Proposition~\ref{l:f-pm}.  Exactness follows by part 2 of
Proposition~\ref{l:arnold-liouville-2} (which still applies in the
degenerate case); the equality of classical actions over the flows of
$F_\pm(\mathbf p)$ and $\zeta$ follows from part~3 of
Proposition~\ref{l:f-pm}, while the classical actions over the flows
of both $p_{20}$ on $\mathbf p^{-1}(\tilde\lambda_0,\tilde k_0)$ and
$\eta$ on $\Lambda_{\mathcal M}$ are both equal to $2\pi\tilde k_0$.
\end{proof}

\subsection{Moser averaging}\label{s:angular-moser}

Fix $(\tilde\lambda_0,\tilde k_0)\in \widetilde K\setminus \widetilde K_\mp$,
take small $\varepsilon>0$, and define (suppressing the dependence on
the choice of the sign)
$$
\begin{gathered}
\Lambda^0=\mathbf p^{-1}(\tilde\lambda_0,\tilde k_0),\
\zeta_0=F_\pm(\tilde\lambda_0,\tilde k_0),\
\Lambda^0_{\mathcal M}=\{\zeta=\zeta_0,\ \eta=\tilde k_0\}
\subset T^*\mathcal M,\\
V^\varepsilon=\{(\tilde\lambda,\tilde k)\mid
|F_\pm(\tilde\lambda,\tilde k)-\zeta_0|\leq\varepsilon,\
|\tilde k-\tilde k_0|\leq\varepsilon\}\subset \mathbb R^2,\\
V^\varepsilon_{\mathcal M}=\{|\zeta-\zeta_0|\leq\varepsilon,\
|\eta-\tilde k_0|\leq\varepsilon\}\subset T^* \mathcal M;
\end{gathered}
$$
then $V^\varepsilon$ and $V^\varepsilon_{\mathcal M}$ are compact
neighborhoods of $(\tilde\lambda_0,\tilde k_0)$ and
$\Lambda^0_{\mathcal M}$, respectively. Here the functions $F_\pm$ are
as in Proposition~\ref{l:f-pm}.  Let $\Phi_\pm$ be the
symplectomorphism constructed in
Proposition~\ref{l:angular-conjugation}; we know that for
$\varepsilon$ small enough, $\Phi_\pm(\mathbf
p^{-1}(V^\varepsilon))=V^\varepsilon_{\mathcal M}$.  In this
subsection, we prove
\begin{prop}\label{l:moeser}
For $(\tilde\lambda_0,\tilde k_0)\in\widetilde K\setminus\widetilde
K_\pm$ and $\varepsilon>0$ small enough, there exists a pair of
operators $(B_1,B_2)$ quantizing $\Phi_\pm$ near $\mathbf
p^{-1}(V^\varepsilon)\times V^\varepsilon_{\mathcal M}$ in the sense
of Proposition~\ref{l:quantization-canonical} and operators
$Q_1,Q_2\in\Psi^{\comp}_{\cl}(\mathcal M)$ such that:

1. $P_1$ and $P_2$ are intertwined with $Q_1$ and $Q_2$, respectively,
via $(B_1,B_2)$, near $\mathbf p^{-1}(V^\varepsilon)\times
V^\varepsilon_{\mathcal M}$.  It follows immediately that the
principal symbols of $Q_1$ and $Q_2$ are $F_\pm^{-1}(\zeta,\eta)$ and
$\eta$, respectively, near $V^\varepsilon_{\mathcal M}$.

2. $Q_2=hD_y$ and the full symbol of $Q_1$ is a function of
$(\zeta,\eta)$, microlocally near $V^\varepsilon_{\mathcal M}$.  Here
we use Weyl quantization on $\mathcal M$, inherited from the covering
space $\mathbb R^2$.
\end{prop}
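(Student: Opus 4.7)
The plan is to combine the quantization of the canonical transformation $\Phi_\pm$ (Proposition~\ref{l:quantization-canonical}) with Moser averaging along the two commuting periodic Hamiltonian flows available on $T^*\mathcal M$: the $y$-translation generated by $\eta$ and the rotation in $(x,\xi)$ generated by $\zeta$.

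First, I would apply Proposition~\ref{l:quantization-canonical} to $\Phi_\pm$ to obtain an initial quantizing pair $(\widetilde B_1, \widetilde B_2)$ near $\mathbf p^{-1}(V^\varepsilon) \times V^\varepsilon_{\mathcal M}$ and operators $\widetilde Q_j \in \Psi^{\comp}_{\cl}(\mathcal M)$ intertwined with $P_j$ (after multiplication by compactly supported cutoffs, which does not affect the intertwining microlocally on $V^\varepsilon_{\mathcal M}$). The principal symbols $F_\pm^{-1}(\zeta,\eta)$ and $\eta$ follow from the last statement of Proposition~\ref{l:quantization-canonical}. Combining $[P_1,P_2]=0$ with the microlocal identities $\widetilde B_1 \widetilde B_2 = I$, $\widetilde B_2 \widetilde B_1 = I$ on the appropriate sets yields $[\widetilde Q_1, \widetilde Q_2] = O(h^\infty)$ microlocally on $V^\varepsilon_{\mathcal M}$.

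Second, I would normalize $\widetilde Q_2$ to $hD_y$. Writing $\widetilde Q_2 - hD_y \in h\Psi^{\comp}_{\cl}$, I iteratively conjugate by $\exp(iA^{(j)})$ with $A^{(j)} \in h^{j} \Psi^{\comp}_{\cl}$, reducing at each order to a cohomological equation of the form $-i\partial_y a = r$ along the $2\pi$-periodic $y$-translation flow; the $y$-averaged obstruction at each step is absorbed by a functional-calculus conjugation in $(x, hD_x, hD_y)$ (Proposition~\ref{l:formal-functional-calculus}), using that $\partial_\eta \eta = 1$ supplies the required ellipticity. Borel summing the $A^{(j)}$ produces an elliptic operator $U_2$ with $U_2^{-1} \widetilde Q_2 U_2 = hD_y$ microlocally; absorbing $U_2$ into $(\widetilde B_1, \widetilde B_2)$ gives an updated quantizing pair $(B_1, B_2)$ under which $P_2$ is intertwined with $Q_2 := hD_y$.

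Third, set $Q_1 := U_2^{-1}\widetilde Q_1 U_2$; it still has principal symbol $F_\pm^{-1}(\zeta,\eta)$ and satisfies $[Q_1, hD_y] = O(h^\infty)$, forcing its full Weyl symbol to be $y$-independent modulo $O(h^\infty)$. To reduce the residual $(x,\xi)$-dependence to the single combination $\zeta = (x^2+\xi^2)/2$, I would apply a second Moser averaging along the $2\pi$-periodic Hamiltonian flow of $\zeta$, which acts as rotation in the $(x,\xi)$-plane. This flow is free on $\{\zeta > 0\}$, and the hypothesis $(\tilde\lambda_0, \tilde k_0) \notin \widetilde K_\pm$ guarantees $\zeta_0 = F_\pm(\tilde\lambda_0, \tilde k_0) > 0$, so $V^\varepsilon_{\mathcal M} \subset \{\zeta > 0\}$ for $\varepsilon$ small. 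The corresponding cohomological equations $\{\zeta, a_j\} = r_j - \langle r_j\rangle_\zeta$ (with $\langle\cdot\rangle_\zeta$ the rotational average, automatically landing in symbols of $(\zeta, \eta)$) are solvable, and the $a_j$ can be chosen $y$-independent since the $r_j$ already are, so the corresponding conjugations preserve $Q_2 = hD_y$.

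The main obstacle is the second step: pure Moser averaging along the $y$-flow only conjugates $\widetilde Q_2$ to the form $hD_y + h g(x, hD_x, hD_y; h) + O(h^\infty)$ for some $y$-independent residue $g$, and the additional functional-calculus conjugation needed to remove $g$ must be carried out while preserving the near-commutation with $\widetilde Q_1$ modulo $O(h^\infty)$ that will be needed in Step 3. The freeness requirement for the $\zeta$-flow in the third step, encoded in $\zeta_0 > 0$, is the geometric reason for excluding $\widetilde K_\pm$ from the statement.
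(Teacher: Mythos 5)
Your overall architecture matches the paper's: quantize $\Phi_\pm$ via Proposition~\ref{l:quantization-canonical}, then conjugate the resulting $\widetilde Q_1,\widetilde Q_2$ by elliptic $\Psi$DOs to put them in normal form using Moser averaging along the $y$- and $\zeta$-flows. Step~1 and Step~3 of your plan agree in substance with the paper's Steps~1 and~3.

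However, Step~2 has a genuine gap, and you have correctly identified where the obstacle is but not why it cannot be closed by your method. After the $y$-averaging one has $\widetilde Q_2 = hD_y + hR + O(h^\infty)$ with $R\in\Psi^{\comp}_{\cl}$ having a full symbol depending only on $(x,\xi,\eta)$. A cohomological conjugation $\exp(ih^{n}B)$ with $b$ $y$-independent changes the order-$h^{n+1}$ term of $q_2$ by $\{\eta,b\}=\partial_y b=0$, so it removes nothing from $R$; and a $y$-dependent $b$ only shifts the $y$-dependent part, which you have already averaged away. ``Functional-calculus conjugation in $(x,hD_x,hD_y)$'' is not a conjugation that can change the symbol of $Q_2$ either: by Proposition~\ref{l:formal-functional-calculus}(4) it commutes with $Q_2$. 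Indeed, even the toy case $R=c$ a constant already shows the problem: $hD_y+hc$ can be conjugated to $hD_y$ on $\mathbb S^1_y$ only by $e^{icy}$, which is a well-defined operator on $\mathcal M$ only for $c\in\mathbb Z$. So ``$\partial_\eta\eta=1$ supplies the required ellipticity'' does not give you anything---there is a genuine arithmetic obstruction, not a local one, and no amount of $\Psi$DO conjugation on $\mathcal M$ can get around it.

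The paper closes this gap by a global spectral input that is absent from your argument: $P_2=hD_\varphi$ is \emph{exactly} $2\pi$-periodic on $\mathbb S^2$, so $\exp(2\pi iP_2/h)=I$ exactly, and transporting this identity through $B_1,B_2$ (using Proposition~\ref{l:exp-fourior}, parts~2 and~5) gives $\exp(2\pi i Q_2/h)=I$ microlocally near $V^\varepsilon_{\mathcal M}$. Writing $R=h^{-1}(Q_2-(hD_y)X_{\mathcal M})$ and using that $[Q_2,hD_y]=O(h^\infty)$, Proposition~\ref{l:exp-pseudor}(3) then forces $R=l$ for an \emph{integer} constant $l$, and the remaining conjugation by $e^{ily}$ is legitimate on $\mathcal M$. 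This is precisely the step your plan is missing. A secondary, minor point: excluding $\widetilde K_\pm$ does not in fact buy you $\zeta_0>0$---the paper's Moser lemma (Proposition~\ref{l:moeser-lemma}(2)) is explicitly set up to handle $\zeta_0=0$ via the power-series argument in $z=x+i\xi$; the relevant geometric restriction is rather $(\tilde\lambda_0,\tilde k_0)\notin\widetilde K_\mp$, so that $\Phi_\pm$ is defined at all.
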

First of all, we use Proposition~\ref{l:quantization-canonical} to
find some $(B_1,B_2)$ quantizing $\Phi_\pm$ and $Q_1,Q_2$ intertwined
with $P_1$ and $P_2$ by $(B_1,B_2)$. Then we will find a couple of
operators $X,Y\in\Psi^{\comp}_{\cl}(\mathcal M)$ such that $Y=X^{-1}$
near $V^\varepsilon_{\mathcal M}$ and the operators
$Q'_1=XQ_1Y,Q'_2=XQ_2Y$ satisfy part 2 of
Proposition~\ref{l:moeser}. This is the content of this subsection and
will be done in several conjugations by pseudodifferential operators
using Moser averaging technique. We can then change $B_1,B_2$
following the remark after Proposition~\ref{l:quantization-canonical}
so that $P_1$ and $P_2$ are intertwined with $Q'_1$ and $Q'_2$, which
finishes the proof.

The averaging construction is based on the following
\begin{prop}\label{l:moeser-lemma}
Assume that the functions $p_0,f_0,g\in
C^\infty(V^\varepsilon_{\mathcal M})$ are given by one of the
following:
\begin{enumerate}
\item $p_0=f_0=\eta$ and $g$ is arbitrary;
\item $p_0=\zeta$ and $f_0=f_0(\zeta,\eta)$ is smooth in $V^\varepsilon_{\mathcal M}$,
with $\partial_\zeta f_0\neq 0$ everywhere, and $g$ is independent of
$y$.
\end{enumerate}
Define
$$
\langle g\rangle={1\over 2\pi}\int_0^{2\pi} g\circ \exp(tH_{p_0})\,dt.
$$
Then there exists unique $b\in C^\infty(V^\varepsilon_{\mathcal M})$ such that
$\langle b\rangle=0$ and
$$
g=\langle g\rangle+\{f_0,b\}.
$$
Moreover, in case (2) $b$ is independent of~$y$.
\end{prop}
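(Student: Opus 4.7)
The plan is to reduce the equation $g = \langle g\rangle + \{f_0, b\}$ in each case to a cohomological equation along the $2\pi$-periodic Hamiltonian flow of $p_0$ and then to invert the resulting operator by an explicit integral formula.

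In case (1), a direct computation gives $\{f_0, b\} = \{\eta, b\} = \partial_y b$, so I need $\partial_y b = g - \langle g\rangle$. Here $\langle g\rangle$ coincides with the $y$-average of $g$ on the circle $\mathbb S^1_y$, so the right-hand side has vanishing mean in $y$ on every fiber. I would set
$$
b(x,y,\xi,\eta) = \int_0^y (g - \langle g\rangle)(x, y', \xi, \eta)\, dy' - c(x,\xi,\eta),
$$
choosing $c$ so that $\langle b\rangle = 0$; equivalently, expand $g = \sum_n \hat g_n(x,\xi,\eta)\, e^{iny}$ and set $b = \sum_{n\ne 0} (in)^{-1}\hat g_n\, e^{iny}$. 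Smoothness is automatic from the rapid decay of the $\hat g_n$, and uniqueness is immediate since $\partial_y b = 0$ together with $\langle b\rangle = 0$ forces $b = 0$.

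In case (2), $b$ is $y$-independent and $f_0 = f_0(\zeta,\eta)$, so $\{f_0, b\} = \partial_\zeta f_0\cdot H_\zeta b$, with averaging now over the $2\pi$-periodic rotation $\exp(tH_\zeta)$ in the $(x,\xi)$-plane. Because $\partial_\zeta f_0$ is rotation-invariant, $\tilde g := (g - \langle g\rangle)/\partial_\zeta f_0$ is smooth and $y$-independent on $V^\varepsilon_{\mathcal M}$ with $\langle \tilde g\rangle = 0$, and the task reduces to solving $H_\zeta b = \tilde g$ with $\langle b\rangle = 0$. I would use the explicit formula
$$
b = \int_0^{2\pi} \frac{t}{2\pi}\, \tilde g\circ\exp(tH_\zeta)\, dt,
$$
adjusted by a function of $\eta$ alone to enforce $\langle b\rangle = 0$; integrating by parts in $t$ and using the $2\pi$-periodicity of the flow together with $\langle\tilde g\rangle = 0$ yields $H_\zeta b = \tilde g$. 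Uniqueness again follows since $H_\zeta b = 0$ and $\langle b\rangle = 0$ force $b = 0$.

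The main technical concern is smoothness of $b$ at the degenerate fixed circle $\{x = \xi = 0\}$, which lies in $V^\varepsilon_{\mathcal M}$ when $\zeta_0 \leq \varepsilon$ (in particular whenever $(\tilde\lambda_0,\tilde k_0)\in\widetilde K_\pm$). The integral representation bypasses this cleanly: since $V^\varepsilon_{\mathcal M}$ is invariant under $\exp(tH_\zeta)$, the integrand is smooth in $(t,x,\xi,\eta)$ on $[0,2\pi]\times V^\varepsilon_{\mathcal M}$ and hence so is its integral. This avoids the polar-coordinate expansion, in which one would have to verify by hand that the Fourier modes in the angular variable reassemble into a function smooth at the origin.
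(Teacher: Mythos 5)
Your argument is correct, and it follows the same top-level reduction as the paper's proof: in case (2) divide off the rotation-invariant factor $\partial_\zeta f_0$, reduce to the cohomological equation $H_\zeta b=\tilde g$ along the $2\pi$-periodic rotation (using that $\langle\tilde g\rangle=0$), and invert. Where you differ is in the key technical point: smoothness of $b$ at the degenerate circle $\{x=\xi=0\}$, which is in $V^\varepsilon_{\mathcal M}$ precisely when $\zeta_0\le\varepsilon$. The paper handles this by passing to polar coordinates away from $r=0$ and then, to get smoothness at the origin, decomposing $\tilde g$ into an asymptotic sum of the monomials $z^j\bar z^k$ ($z=x+i\xi$) and inverting mode by mode by $z^j\bar z^k\mapsto z^j\bar z^k/(i(k-j))$. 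You instead write down the closed-form homotopy inverse
$$
b=\int_0^{2\pi}\frac{t}{2\pi}\,\tilde g\circ\exp(tH_\zeta)\,dt\quad(\text{up to the adjustment }-\langle b\rangle),
$$
whose smoothness is automatic because the integrand is jointly smooth on $[0,2\pi]\times V^\varepsilon_{\mathcal M}$ and the domain is invariant under $\exp(tH_\zeta)$; integrating by parts in $t$ and using $2\pi$-periodicity and $\langle\tilde g\rangle=0$ gives $H_\zeta b=\tilde g$. This route is cleaner: it avoids the polar/Taylor machinery entirely and shows smoothness at the fixed circle in one stroke, while restricting to the standard primitive $\int_0^\theta\tilde g\,d\theta'+\mathrm{const}$ for $r>0$. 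One small slip: the constant you subtract to enforce $\langle b\rangle=0$ is $\langle b\rangle$ itself, a smooth rotation-invariant function of $(\zeta,\eta)$, not of $\eta$ alone — but since any such function lies in the kernel of $H_\zeta$, the argument is unaffected. Your uniqueness remark, like the paper's, is really uniqueness in the class of $y$-independent $b$ in case (2), which is what the application in Section~\ref{s:angular-moser} requires.
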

\begin{proof}
We only consider case (2); case (1) is proven in a similar
fashion. First of all, if $b$ is $y$-independent, then
$\{f_0,b\}=\partial_\zeta f_0\cdot\{\zeta,b\} =\{\zeta,\partial_\zeta
f_0\cdot b\}$; therefore, without loss of generality we may assume
that $f_0=\zeta$.  The existence and uniqueness of $b$ now follows
immediately if we treat $y,\eta$ as parameters and consider polar
coordinates in the $(x,\xi)$ variables. To show that $b$ is smooth at
$\zeta=0$ (in case $\zeta_0\leq\varepsilon$), let $z=x+i\xi$ and
decompose $g-\langle g\rangle$ into an asymptotic sum of the terms
$z^j\bar z^k$ with $j,k\geq 0$, $j\neq k$, and coefficients smooth in
$(y,\eta)$; the term in $b$ corresponding to $z^j\bar z^k$ is $z^j\bar
z^k/(i(k-j))$.
\end{proof}
Henceforth in this subsection we will work with the operators $Q_j$ on
the level of their full symbols, microlocally in a neighborhood of
$V^\varepsilon_{\mathcal M}$.  (The operators $X$ and $Y$ will then be
given by the product of all operators used in conjugations below,
multiplied by an appropriate cutoff.)  Denote by $q_j$ the full symbol
of $Q_j$. We argue in three steps, following in
part~\cite[Section~3]{h-s}.

\noindent\textbf{Step 1:} Use Moser averaging to make $q_2$ independent of $y$.

Assume that $q_2$ is independent of $y$ modulo $O(h^{n+1})$ for some $n\geq 0$;
more precisely,
$$
q_2=\sum_{j=0}^n h^j q_{2,j}(x,\xi,\eta)+h^{n+1}r_n(x,y,\xi,\eta)+O(h^{n+2}).
$$
Take some $B\in\Psi^{\loc}_{\cl}$
with principal symbol $b$ and consider the conjugated operator
$$
Q'_2=\exp(ih^n B)Q_2\exp(-ih^n B).
$$
Here $\exp(\pm ih^n B)\in\Psi^{\loc}_{\cl}$ are well-defined by
Proposition~\ref{l:exp-pseudor} and inverse to each other; using the
same proposition, we see that the full symbol of $Q'_2$ is
$$
\sum_{j=0}^n h^j q_{2,j}(x,\xi,\eta)+h^{n+1}(r_n-\{\eta,b\})+O(h^{n+2}).
$$
If we choose $b$ as in Proposition~\ref{l:moeser-lemma}(1), then
$r_n-\{\eta,b\}=\langle r_n\rangle$ is a function of $(x,\xi,\eta)$
only; thus, the full symbol of $Q'_2$ is independent of $y$ modulo
$O(h^{n+2})$. Arguing by induction and taking the asymptotic product
of the resulting sequence of exponentials, we make the full symbol of
$Q_2$ independent of $y$.

\noindent\textbf{Step 2:} Use our knowledge of the spectrum of $P_2$ to make $q_2=\eta$.

First of all, we claim that
\begin{equation}\label{e:exp-1}
\exp(2\pi iQ_2/h)=I
\end{equation}
microlocally near $V^\varepsilon_{\mathcal M}\times
V^\varepsilon_{\mathcal M}$.  For that, we will use
Proposition~\ref{l:exp-fourior}. Let $X\in\Psi^{\comp}_{\cl}(\mathbb
S^2)$ have real-valued principal symbol, be microlocalized in a small
neighborhood of $\mathbf p^{-1}(V^\varepsilon)$, but equal to the
identity microlocally near this set.  Consider
$$
\mathcal P_t=\exp(itQ_2/h)B_1\exp(-it(P_2X)/h)B_2.
$$
We see that
$$
hD_t \mathcal P_t=\exp(itQ_2/h)(Q_2B_1-B_1P_2X)\exp(-it(P_2X)/h)B_2
$$
vanishes microlocally near $V^\varepsilon_{\mathcal M}\times
V^\varepsilon_{\mathcal M}$; integrating between~0 and~$2\pi$ and
using part~5 of Proposition~\ref{l:exp-fourior} to show that
$\exp(-2\pi i(P_2X)/h)=I$ microlocally near $\mathbf
p^{-1}(V^\varepsilon)
\times\mathbf p^{-1}(V^\varepsilon)$, we get~\eqref{e:exp-1}.

Now, let $X_{\mathcal M}\in\Psi^{\comp}_{\cl}(\mathcal M)$ be equal to
the identity microlocally near $\WF(Q_2)$; since the full symbol of
$Q_2$ is independent of $y$, we have $[Q_2,(hD_y)X_{\mathcal
M}]=O(h^\infty)$.  Therefore, by parts~2 and~5 of
Proposition~\ref{l:exp-fourior}
$$
\exp(2\pi i(Q_2-(hD_y)X_{\mathcal M})/h)=\exp(-2\pi iD_yX_{\mathcal M})\exp(2\pi iQ_2/h)=I
$$
microlocally near $V^\varepsilon_{\mathcal M}
\times V^\varepsilon_{\mathcal M}$. However,
$R=h^{-1}(Q_2-(hD_y)X_{\mathcal M})\in\Psi^{\loc}_{\cl}$ near $V^\varepsilon_{\mathcal M}$
and thus the left-hand side $\exp(2\pi i R)$ is pseudodifferential;
by part~3 of Proposition~\ref{l:exp-pseudor}, we get $R=l$ for some constant $l\in \mathbb Z$
and therefore
$$
Q_2=hD_y+hl
$$
microlocally near $V^\varepsilon_{\mathcal M}$.  It remains to
conjugate $Q_2$ by $e^{ily}$ to get $q_2=\eta$.

\noindent\textbf{Step 3:} Use Moser averaging again to make $q_1$ a function of $(\zeta,\eta)$, while
preserving $q_2=\eta$.

Recall that $[P_1,P_2]=0$; therefore, $[Q_1,Q_2]=0$ (microlocally near
$V^\varepsilon_{\mathcal M}$). Since $q_2=\eta$, this means that $q_1$
is independent of $y$. We now repeat the argument of Step 1, using
Proposition~\ref{l:moeser-lemma}(2) with $f_0=F_\pm^{-1}(\zeta,\eta)$.
The function $b$ at each step is independent of $y$; thus, we can take
$[B,hD_y]=O(h^\infty)$.  But in that case, conjugation by
$\exp(ih^nB)$ does not change $Q_2$; the symbol of the conjugated
$Q_1$ is still independent of $y$.  Finally, $\langle r_n\rangle$ is a
function of $(\zeta,\eta)$; therefore, $q_1$ after conjugation will
also be a function of $(\zeta,\eta)$.

\subsection{Construction of the Grushin problem}\label{s:angular-grushin}

In this subsection, we establish a local quantization condition:
\begin{prop}\label{l:angular-quantization-local} 
1. Assume that $(\tilde\lambda_0,\tilde k_0)\in\widetilde
K\setminus\widetilde K_\mp$ and $V_\varepsilon$ is the neighborhood of
$(\tilde\lambda_0,\tilde k_0)$ introduced in the beginning of
Section~\ref{s:angular-moser}. Then for $\varepsilon>0$ small
enough, there exists a classical symbol $\widetilde
G_\pm(\tilde\lambda,\tilde k;h)$ on $V_\varepsilon$ with principal
symbol $F_\pm$ and such that for $k\in \mathbb Z$, the poles
$\tilde\lambda+ih\tilde\mu$ of $R_\theta(\omega,\lambda,k)$ with
$(\tilde\lambda,\tilde k)\in V_\varepsilon$ and $|\tilde\mu|\leq
C_\theta$ are simple with polynomial resolvent estimate $L^2\to
L^2$, in the sense of Definition~\ref{d:asymptotics}, and coincide
modulo $O(h^\infty)$ with the solution set of the quantization
condition
\begin{equation}\label{e:local-qc}
\widetilde G_\pm(\tilde\lambda+ih\tilde\mu,\tilde k;h)\in h\mathbb N.
\end{equation}

2. Assume that $(\tilde\lambda_0,\tilde k_0)$
satisfies~\eqref{e:angular-regime}, but does not lie in $\widetilde K$.
Then there exists a neighborhood $V(\tilde\lambda_0,\tilde k_0)$ such
that for $h$ small enough, there are no elements
$(\tilde\lambda+ih\tilde\mu,\tilde k)$ of the joint spectrum of
$P_1,P_2$ with $(\tilde\lambda,\tilde k)\in V$ and $|\tilde\mu|\leq
C_\theta$, and $R_\theta(\omega,\lambda,k)$ is bounded $L^2\to L^2$
by $O(h^2)$.
\end{prop}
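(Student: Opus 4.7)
The plan is to reduce the problem microlocally to the model on $T^*\mathcal M$ furnished by Proposition~\ref{l:moeser}, read off the joint spectrum of the model from ordinary spectral theory of the harmonic oscillator and $hD_y$, and then transfer the answer back to $(P_1,P_2)$ through a Grushin problem that absorbs the $O(h^\infty)$ commutation errors produced by microlocalization.

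I would begin with part~2, which is the easy half. If $(\tilde\lambda_0,\tilde k_0)$ satisfies~\eqref{e:angular-regime} but is not in $\widetilde K$, then by Proposition~\ref{l:hamiltonian-flow}(1) the fiber $\mathbf p^{-1}(\tilde\lambda_0,\tilde k_0)$ is empty, so on some neighborhood $V$ the symbol $p_{10}-\tilde\lambda$ stays bounded away from zero along the leaf $\{p_{20}=\tilde k\}$ uniformly in $(\tilde\lambda+ih\tilde\mu,\tilde k)\in V\times\{|\tilde\mu|\le C_\theta\}$. Restricting $P_\theta(\omega)-\lambda$ to the rotation eigenspace $\mathcal D'_k$ yields an operator elliptic in the class $\Psi^2_{\cl}$, and a standard elliptic parametrix supplies the desired $O(h^2)$ bound $L^2\to L^2$ for $R_\theta(\omega,\lambda,k)$ together with the absence of joint spectrum on~$V$.

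For part~1, apply Proposition~\ref{l:moeser} to conjugate $(P_1,P_2)$ microlocally near $\mathbf p^{-1}(V^\varepsilon)$ to operators $(Q_1,Q_2)$ on $\mathcal M$ with $Q_2=hD_y$ and with the full Weyl symbol of $Q_1$ a function $q_1(\zeta,\eta;h)$ whose principal part is $F_\pm^{-1}(\zeta,\eta)$. Since $\zeta^w$ on $L^2(\mathbb R_x)$ has simple eigenvalues $h(m+1/2)$, $m\in\mathbb N$, with Hermite-function eigenvectors, and $hD_y$ on $L^2(\mathbb S^1_y)$ has eigenvalues $hk$ with eigenvectors $e^{iky}/\sqrt{2\pi}$, functional calculus in the two commuting self-adjoint model operators identifies the joint spectrum of $(Q_1,Q_2)$ in $V_{\mathcal M}^\varepsilon$, modulo $O(h^\infty)$, as
$$
\bigl\{\bigl(q_1(h(m+1/2),hk;h),\,hk\bigr):m\in\mathbb N,\ k\in\mathbb Z\bigr\}\cap V_{\mathcal M}^\varepsilon,
$$
with one-dimensional joint eigenspaces. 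Inverting $q_1$ in its first slot and absorbing the $h/2$ ground-state shift then produces a classical symbol $\widetilde G_\pm(\tilde\lambda,\tilde k;h)$ whose principal part is $F_\pm$, so that~\eqref{e:local-qc} is exactly the assertion that $\tilde\lambda+ih\tilde\mu$ is a joint eigenvalue of $(Q_1,Q_2)$ at momentum $\tilde k$.

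The main obstacle is passing this characterization back to $(P_1,P_2)$ while preserving simplicity and polynomial $L^2\to L^2$ resolvent control, because Proposition~\ref{l:moeser} only intertwines the two pairs modulo $h^\infty\Psi^{-\infty}$ and hence $(Q_1,Q_2)$ do not commute on the nose. I would handle this using the abstract machinery of Appendix~\ref{s:prelim-grushin} for Grushin problems associated to several commuting operators. On the model side one builds a Grushin problem with upper-left block $(Q_1-\tilde\lambda,\,Q_2-\tilde k)$ and off-diagonal blocks assembled from the Hermite/Fourier projectors; its Schur complement factors, modulo $O(h^\infty)$, as an invertible symbol times $\widetilde G_\pm(\tilde\lambda+ih\tilde\mu,\tilde k;h)-hm$. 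Conjugating by $(B_1,B_2)$, cutting off, and gluing with the elliptic region from part~2 (applied to the complementary open set in $\mathbb S^2$) produces a well-posed global Grushin problem for $P_\theta(\omega)-\lambda$ restricted to $\mathcal D'_k$. Its invertibility away from the zeros of the Schur complement furnishes the polynomial resolvent bounds, while residues at those zeros are automatically rank one since the corresponding joint eigenspaces on the model side are one-dimensional.
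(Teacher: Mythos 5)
Your proposal follows essentially the same route as the paper: part~2 exploits ellipticity of $(P_1,P_2)$ away from $\widetilde K$; part~1 conjugates to the Moser normal form of Proposition~\ref{l:moeser}, reads the joint spectrum off the model operators $T_1=\tfrac12((hD_x)^2+x^2)-h/2$ and $T_2=hD_y$ by spectral theory/functional calculus (Proposition~\ref{l:spectral-microlocal}), and transfers back to $(P_1,P_2)$ through the local Grushin machinery of Appendix~\ref{s:prelim-grushin} to absorb the $O(h^\infty)$ intertwining errors. The only cosmetic differences are that the paper packages the global-from-local step via the formal functional calculus cutoffs $\chi_\varepsilon[P_j],\psi_\varepsilon[P_j]$ inside Proposition~\ref{l:local-grushin} rather than ``gluing with the elliptic region,'' and that the operative issue is not that $(Q_1,Q_2)$ fail to commute (they are arranged to be functions of the exactly commuting $(T_1,T_2)$ modulo $h^\infty\Psi^{-\infty}$) but that they are only intertwined with the exactly commuting $(P_1,P_2)$ up to $O(h^\infty)$; this is precisely the error the abstract Grushin problem is designed to control.
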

To prove part~1, we will use the microlocal conjugation constructed above. Let
$(\tilde\lambda_0,\tilde k_0)\in\widetilde K\setminus\widetilde K_\mp$
and $\varepsilon>0,B_1,B_2,Q_1,Q_2$ be given by Proposition~\ref{l:moeser}.
Consider the operators
$$
T_1={1\over 2}((hD_x)^2+x^2)-{h\over 2},\
T_2=hD_y
$$
on $\mathcal M$;
their full symbols are $\zeta-h/2$ and $\eta$, respectively. 
We know that $T_1$ and $T_2$ commute; the joint spectrum of
$T_1,T_2$ is $h(\mathbb N\times \mathbb Z)$.
Therefore, for any bounded function $f$ on $\mathbb R^2$,
we can define $f(T_1,T_2)$ by means of spectral theory; this is
a bounded operator on $L^2(\mathcal M)$.
\begin{prop}\label{l:spectral-microlocal}
1. For $f\in C_0^\infty(\mathbb R^2)$, the operator $f(T_1,T_2)$ is
pseudodifferential; moreover,
$f(T_1,T_2)\in\Psi^{\comp}_{\cl}(\mathcal M)$ and
$\WF(f(T_1,T_2))\subset\{(\zeta,\eta)\in\supp f\}$.  The full symbol
of $f(T_1,T_2)$ in the Weyl quantization is a function of $\zeta$ and
$\eta$ only; the principal symbol is $f(\zeta,\eta)$.

2. Assume that $\zeta_1\in h\mathbb N$, $\eta_1\in h \mathbb Z$.
Let $u$ be the $L^2$ normalized joint eigenfunction of $(T_1,T_2)$ with
eigenvalue $(\zeta_1,\eta_1)$. Then $u$ is compactly microlocalized
and
$$
\WF(u)\subset \{\zeta=\zeta_1,\ \eta=\eta_1\}.
$$

3. Assume that the function $f(\zeta,\eta;h)$ is Borel measurable,
has support contained in
a compact $h$-independent subset $K_f$ of $\mathbb R^2$, and
$$
\max\{|f(\zeta,\eta;h)|\mid \zeta\in h\mathbb N,\ \eta\in h \mathbb Z\}
\leq Ch^{-r}
$$
for some $r\geq 0$. 
Then the operator $f(T_1,T_2;h)$ is compactly microlocalized, its
wavefront set is contained in the square of $\{(\zeta,\eta)\in K_f\}$,
and the operator norm of $f(T_1,T_2;h)$ is $O(h^{-r})$.
\end{prop}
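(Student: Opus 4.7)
The operators $T_1$ and $T_2$ commute, are essentially self-adjoint on $L^2(\mathcal M)$, and admit an explicit joint eigenbasis $u_{n,k}(x,y)=\phi_n^h(x)(2\pi)^{-1/2}e^{iky}$, where $\phi_n^h$ is the $L^2$-normalized semiclassical Hermite function with $T_1\phi_n^h=hn\,\phi_n^h$; the corresponding joint eigenvalue is $(hn,hk)$ for $n\in\mathbb N$, $k\in\mathbb Z$. The joint spectral theorem therefore defines $f(T_1,T_2;h)$ as a bounded operator for any bounded Borel $f$, and it admits the decomposition $f(T_1,T_2;h)=\sum_{n,k}f(hn,hk;h)\,u_{n,k}\otimes u_{n,k}^*$. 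All three parts will be deduced from this decomposition combined with pseudodifferential calculus.

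\textbf{Part 1.} For $f\in C_0^\infty(\mathbb R^2)$ I would apply the Helffer--Sj\"ostrand formula with almost-analytic extensions $\widetilde f(z_1,z_2)$, writing
\[
f(T_1,T_2)=-\frac1{\pi^2}\int \bar\partial_{z_1}\bar\partial_{z_2}\widetilde f(z_1,z_2)(z_1-T_1)^{-1}(z_2-T_2)^{-1}\,dL(z_1)\,dL(z_2).
\]
For $\Imag z_j\neq 0$ the symbols $\zeta-h/2-z_1$ and $\eta-z_2$ are elliptic, so by Proposition~\ref{l:elliptic-detail} the resolvents $(z_j-T_j)^{-1}$ are pseudodifferential with full symbols depending only on $\zeta$ and $\eta$ respectively; these symbols grow polynomially in $|\Imag z_j|^{-1}$. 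Since $\widetilde f$ may be chosen with $\bar\partial\widetilde f$ vanishing to arbitrary order on $\mathbb R^2$, the integral defines an element of $\Psi_{\cl}^{\comp}(\mathcal M)$ whose full symbol is a function of $(\zeta,\eta)$ alone with principal part $f(\zeta,\eta)$ and wavefront set contained in $\{(\zeta,\eta)\in\supp f\}$. As a consistency check (and an alternative proof), $f(T_1,T_2)$ commutes with both $T_1$ and $T_2$, so its full symbol must be invariant under the Hamilton flows of $\zeta$ (rotation in $(x,\xi)$) and $\eta$ (translation in $y$), whose common invariants are exactly the functions of $(\zeta,\eta)$.

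\textbf{Parts 2 and 3.} For part 2, the eigenfunction $u=u_{n_1,k_1}$ satisfies $(T_1-\zeta_1)u=0$ and $(T_2-\eta_1)u=0$ exactly. At any point $(x_0,y_0,\xi_0,\eta_0)$ outside $\{\zeta=\zeta_1,\eta=\eta_1\}$ one of the principal symbols $\zeta-h/2-\zeta_1$ or $\eta-\eta_1$ is nonzero, so one of $T_j-\eta_j$ is microlocally elliptic there and microlocal elliptic regularity gives $(x_0,y_0,\xi_0,\eta_0)\notin\WF(u)$; compactness of $\WF(u)$ follows from the classical super-exponential decay of $\phi_{n_1}^h$ outside the classically allowed region $\{\zeta\leq\zeta_1\}$. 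For part 3, the spectral expansion
\[
f(T_1,T_2;h)=\sum_{(hn,hk)\in K_f}f(hn,hk;h)\,u_{n,k}\otimes u_{n,k}^*
\]
is an orthogonal sum of rank-one projections with coefficients bounded by $Ch^{-r}$, so the operator norm is $O(h^{-r})$. Picking $\chi\in C_0^\infty(\mathbb R^2)$ equal to $1$ on $K_f$ and supported in an arbitrarily small neighborhood, the identity $f(T_1,T_2;h)=\chi(T_1,T_2)f(T_1,T_2;h)\chi(T_1,T_2)$ combined with part 1 applied to $\chi(T_1,T_2)\in\Psi_{\cl}^{\comp}$ shows that $f(T_1,T_2;h)$ is compactly microlocalized, with wavefront set contained in the square of $\{(\zeta,\eta)\in K_f\}$ after shrinking $\supp\chi$.

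\textbf{Main obstacle.} The delicate step is verifying in part 1 that the full Weyl symbol of $f(T_1,T_2)$ is genuinely a function of $(\zeta,\eta)$ modulo $O(h^\infty)$, rather than only up to errors at each order. A pure commutator argument yields invariance of each symbolic term separately but accumulates $h^\infty$ remainders whose control requires care; the cleanest route is to push the Helffer--Sj\"ostrand computation through directly, using the explicit form of the symbol of $(z_j-T_j)^{-1}$ from Proposition~\ref{l:elliptic-detail}, which is manifestly a function of $\zeta$ (respectively $\eta$) at every order in $h$, together with the compact-support reduction provided by $\chi(T_1,T_2)$.
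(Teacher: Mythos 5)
Your overall route---Helffer--Sj\"ostrand for part~1, the $\chi(T_1,T_2)$-conjugation trick for parts~2 and~3---is the same as the paper's, and the conclusions you draw are correct. The difference is in how you resolve what you rightly flag as the main obstacle: showing that the full Weyl symbol of $f(T_1,T_2)$ is a function of $(\zeta,\eta)$ alone, not merely so at each order with uncontrolled remainders.

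You claim this is ``manifest'' from Proposition~\ref{l:elliptic-detail} applied to $(z_j-T_j)^{-1}$, asserting those resolvent symbols are ``a function of $\zeta$ (respectively $\eta$) at every order in $h$.'' But that proposition writes each term of the inverse symbol as a sum of products $a_0^{-M-1}\prod\partial_x^{\alpha_m}\partial_\xi^{\beta_m} a_{l_m}$, and for $a_0=z-\zeta$ the building blocks $\partial_x a_0=-x$ and $\partial_\xi a_0=-\xi$ are not functions of $\zeta$; nothing in the formula is manifestly rotation-invariant, and you would have to see a nontrivial cancellation to conclude. What is needed is the extra input that the Weyl star-product of two symbols constant along $\exp(tH_\zeta)$ is again constant along $\exp(tH_\zeta)$, i.e.\ the invariance of Weyl quantization under the linear symplectic flow of a quadratic Hamiltonian. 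The paper supplies exactly this, in the sharpest form: since $\zeta$ is a polynomial of degree two and $\eta$ is linear, the Weyl commutator $[A,T_j]$ has full Weyl symbol \emph{exactly} $-ih\{a,\zeta\}$ (resp.\ $-ih\{a,\eta\}$), with no higher-order Moyal corrections (see~[Sj2], discussion before~(1.11)). With that, $[f(T_1,T_2),T_j]=0$ immediately gives $\{a,\zeta\}=\{a,\eta\}=0$ for the full Weyl symbol $a$ with no accumulation of errors---the ``pure commutator argument'' you set aside as delicate is in fact the clean one. So the outline is right and the conclusion holds, but you should replace the unsupported appeal to Proposition~\ref{l:elliptic-detail} with this exact commutator identity (or, equivalently, metaplectic covariance of the Weyl calculus).

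Parts~2 and~3 are fine: the explicit Hermite-basis decomposition gives the same facts as the paper's argument via $u=\chi(T_1,T_2)u$ and $f(T_1,T_2)=\chi(T_1,T_2)f(T_1,T_2)\chi(T_1,T_2)$; you also invoke the latter, so there is no substantive difference, only that the Hermite-function decay estimate you mention is not needed once compact microlocalization of $\chi(T_1,T_2)$ is in hand from part~1.
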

\begin{proof}
For part~1, we can show that the operator $f(T_1,T_2)$ is
pseudodifferential by means of Helffer--Sj\"ostrand formula in
calculus of several commuting pseudodifferential operators; see for
example~\cite[Chapter~8]{d-s}. This also gives information on the
principal symbol and the wavefront set of this operator.  To show
that the full symbol of $f(T_1,T_2)$ depends only on $(\zeta,\eta)$,
note that if $A\in\Psi^{\loc}(\mathcal M)$ and $a$ is its full
symbol in the Weyl quantization, then the full symbol of $[A,T_1]$
in the Weyl quantization is $-ih\{a,\zeta\}$; similarly, the full
symbol of $[A,T_2]$ in the Weyl quantization is $-ih\{a,\eta\}$ (see
for example~\cite[discussion before~(1.11)]{sj}). Since
$[f(T_1,T_2),T_j]=0$, the full symbol of $f(T_1,T_2)$ Poisson
commutes with $\zeta$ and $\eta$.

To show part~2, we take $\chi(\zeta,\eta)\in C_0^\infty(\mathbb R^2)$
equal to 1 near $(\zeta_1,\eta_1)$; then $u=\chi(T_1,T_2)u$.
Similarly, to show part~3, we take $\chi$ equal to 1 near $K_f$; then
the $L^2$ operator norm of $f(T_1,T_2)$ can be estimated easily and
$f(T_1,T_2)=\chi(T_1,T_2)f(T_1,T_2)\chi(T_1,T_2)$.
\end{proof}
Now, recall that by Proposition~\ref{l:moeser},
the full symbol of $Q_1$ in the Weyl quantization is
a function of $(\zeta,\eta)$ near $V^{\varepsilon}_{\mathcal M}$;
therefore, we can find a compactly supported symbol $\widetilde
G_\pm(\tilde\lambda,\tilde k;h)$ such that the principal symbol of
$\widetilde G_\pm$ near $V^\varepsilon$ is $F_\pm$ and
$$
Q_1=\widetilde G_\pm^{-1}(T_1,T_2;h)
$$
microlocally near $V^\varepsilon_{\mathcal M}$, where
$\widetilde G_\pm^{-1}(\zeta,\tilde k;h)$ is the inverse of $\widetilde G_\pm$ in the
$\tilde\lambda$ variable. Recall also that $Q_2=T_2$ microlocally near
$V^\varepsilon_{\mathcal M}$. Multiplying $Q_1,Q_2$ by an appropriate
cutoff, which is a function of $T_1,T_2$, we can assume that $Q_1,Q_2$
are functions of $T_1,T_2$ modulo $h^\infty\Psi^{-\infty}$.  We can
now construct a local Grushin problem for $Q_1,Q_2$:
\begin{prop}\label{l:grushin-q} Let $(\tilde\lambda_1,\tilde k_1)\in V^\varepsilon$
and $|\tilde\mu_1|\leq C_\theta$.

1. Assume that $(\tilde\lambda_1+ih\tilde\mu_1,\tilde k_1)$
satisfies~\eqref{e:local-qc}, with $\zeta_1=\widetilde G_\pm
(\tilde\lambda_1+ih\tilde\mu_1,\tilde k_1)\in h \mathbb N$.  Then
there exist operators $A_1,A_2,S_1,S_2$ such that
conditions~(L1)--(L5) of Appendix~\ref{s:prelim-grushin-2} are
satisfied, with $r=1$, $(P_1,P_2)$ replaced by
$(Q_1-\tilde\lambda_1-ih\tilde\mu_1,Q_2-\tilde k_1)$,
$K=\{\zeta=\zeta_1,\ \eta=\tilde k_1\}$, and
\begin{equation}\label{e:lg-identity-1}
A_1(Q_1-\tilde\lambda_1-ih\tilde\mu_1)+A_2(Q_2-\tilde k_1)=I-S_1S_2
\end{equation}
microlocally near $V^\varepsilon_{\mathcal M}\times
V^\varepsilon_{\mathcal M}$.

2. Fix $\delta>0$ and assume that
$$
|(\tilde\lambda_1+ih\tilde\mu_1,\tilde k_1)-(G_\pm^{-1}(\zeta,\eta),\eta)|\geq\delta h,\
\zeta\in h \mathbb N,\
\eta\in h \mathbb Z.
$$
Then there exist operators $A_1,A_2$ such that the conditions
(L1)--(L2) of Appendix~\ref{s:prelim-grushin-2} are satisfied, with
$r=1$, $(P_1,P_2)$ replaced by
$(Q_1-\tilde\lambda_1-ih\tilde\mu_1,Q_2-\tilde k_1)$,
$K=\{\zeta=\zeta_1,\ \eta=\tilde k_1\}$, and
$$
A_1(Q_1-\tilde\lambda_1-ih\tilde\mu_1)+A_2(Q_2-\tilde k_1)=I
$$
microlocally near $V^\varepsilon_{\mathcal M}\times V^\varepsilon_{\mathcal M}$.
\end{prop}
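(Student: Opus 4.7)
\textbf{Plan for Proposition~\ref{l:grushin-q}.} By Proposition~\ref{l:moeser}, microlocally near $V^\varepsilon_{\mathcal M}\times V^\varepsilon_{\mathcal M}$ we have $Q_2=T_2$ and the full Weyl symbol of $Q_1$ depends only on $(\zeta,\eta)$; after multiplying by a cutoff that is itself a function of $(T_1,T_2)$, we may write $Q_1=g(T_1,T_2;h)$ modulo $h^\infty\Psi^{-\infty}$, where $g(\zeta,\eta;h)$ is a classical symbol with principal part $\widetilde G_\pm^{-1}(\zeta,\tilde k)$. The plan is to build every component of the Grushin problem by spectral calculus in the commuting pair $(T_1,T_2)$, using Proposition~\ref{l:spectral-microlocal}(3) to translate algebraic identities between symbols restricted to the joint spectrum $h(\mathbb N\times\mathbb Z)$ into microlocal identities between operators whose $L^2\to L^2$ norms are governed by the sup of those symbols on the lattice.

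Set $f_1(\zeta,\eta;h)=g(\zeta,\eta;h)-\tilde\lambda_1-ih\tilde\mu_1$ and $f_2(\zeta,\eta)=\eta-\tilde k_1$. For part~(2) the hypothesis says $|f_1|^2+|f_2|^2\geq c(\delta h)^2$ on every lattice point near $(\tilde\lambda_1,\tilde k_1)$, so I would put
$$
a_j(\zeta,\eta;h)=\chi(\zeta,\eta)\,\frac{\overline{f_j(\zeta,\eta;h)}}{|f_1(\zeta,\eta;h)|^2+|f_2(\zeta,\eta)|^2},\qquad j=1,2,
$$
with $\chi\in C_0^\infty$ equal to $1$ in a neighborhood of $(\tilde\lambda_1,\tilde k_1)$ and supported in $V^\varepsilon_{\mathcal M}$, and set $A_j=a_j(T_1,T_2;h)$. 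The pointwise identity $a_1 f_1+a_2 f_2=\chi$ together with $\chi(T_1,T_2)=I$ microlocally near the point of interest produces the required relation, and Proposition~\ref{l:spectral-microlocal}(3) yields $\|A_j\|=O(h^{-1})$, giving condition (L2) with $r=1$.

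For part~(1) the exact quantization condition $g(\zeta_1,\tilde k_1;h)=\tilde\lambda_1+ih\tilde\mu_1$ makes $(\zeta_1,\tilde k_1)$ the unique joint zero of $(f_1,f_2)$ on $h(\mathbb N\times\mathbb Z)\cap V^\varepsilon_{\mathcal M}$. I would take $S_1:\mathbb C\to L^2(\mathcal M)$ to be multiplication by the $L^2$-normalized joint eigenfunction $u_1$ of $(T_1,T_2)$ at $(\zeta_1,\tilde k_1)$, which is microlocalized on $K=\{\zeta=\zeta_1,\eta=\tilde k_1\}$ by Proposition~\ref{l:spectral-microlocal}(2), and set $S_2=S_1^*$; then $S_1S_2$ is the rank-one spectral projector at $(\zeta_1,\tilde k_1)$. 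I define $A_j$ by the same spectral recipe as above but with the lattice value at $(\zeta_1,\tilde k_1)$ set to zero, so that $a_1f_1+a_2f_2$ equals $1$ at every other lattice point in $\supp\chi$ and $0$ at $(\zeta_1,\tilde k_1)$; this gives \eqref{e:lg-identity-1}. The needed lower bound $|f_1|^2+|f_2|^2\geq ch^2$ away from $(\zeta_1,\tilde k_1)$ on the lattice splits into two cases: a lattice point differing in $\eta$ contributes $|f_2|\geq h$ because $\tilde k_1\in h\mathbb Z$, and one differing only in $\zeta$ yields $|f_1|\geq ch$ because $\partial_\zeta g\neq 0$ (equivalently $\partial_{\tilde\lambda}F_\pm>0$ from part~4 of Proposition~\ref{l:f-pm}); this again gives $\|A_j\|=O(h^{-1})$.

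The main obstacle is the bookkeeping required to verify the full list of conditions (L1)--(L5) of Appendix~\ref{s:prelim-grushin-2} — in particular the microlocalization statements for $A_j,S_j$ and the insistence that \eqref{e:lg-identity-1} hold as an equation of local operators (i.e.\ modulo only $h^\infty\Psi^{-\infty}$) rather than up to a bounded remainder. Microlocalization is automatic from Proposition~\ref{l:spectral-microlocal} once the defining symbols are chosen supported in $V^\varepsilon_{\mathcal M}$, and the Bezout identity on the discrete lattice $h(\mathbb N\times\mathbb Z)$ is exact by construction, so Proposition~\ref{l:spectral-microlocal}(3) promotes it to a microlocal operator identity with only an $h^\infty\Psi^{-\infty}$ error; the only delicate point is checking that the chosen symbols $a_j$ are bounded in $h$ on the whole lattice, which is exactly the content of the two quantitative lower bounds noted above.
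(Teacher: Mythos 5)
Your proposal is correct and follows the same overall strategy the paper uses: exploit the normal form to reduce everything to exact functional calculus in the commuting pair $(T_1,T_2)$, build the Grushin operators by spectral calculus from explicit symbols, and then read off the norm and wavefront properties from Proposition~\ref{l:spectral-microlocal}. The one genuine difference is the choice of Bezout-type combination: the paper introduces a positively homogeneous partition of unity $\chi_1,\chi_2$ on $\mathbb R^2\setminus 0$ and puts
$a_j=\chi\,\chi_j(|f_1|^2,|f_2|^2)/f_j$, so each $a_j$ is only singular where $f_j$ dominates, while you take the least-squares combination
$a_j=\chi\,\overline{f_j}/(|f_1|^2+|f_2|^2)$.
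Both identities give $a_1f_1+a_2f_2=\chi$ on the support, both keep each $a_j$ a classical symbol off the single zero of $(f_1,f_2)$ (classical symbols are stable under conjugation and products/reciprocals where elliptic), and both require exactly the lattice lower bound $|f_1|+|f_2|\ge h/C$ that you prove by the same two-case argument the paper implicitly uses; so the two constructions are interchangeable here, with yours being a touch cleaner algebraically and the paper's a touch cleaner symbolically. Two small points worth tightening in your write-up: first, in part~1 one more needs either the explicit normalization $a_j(\zeta_1,\tilde k_1)=0$ (which you do state) or the observation that $f_j(\zeta_1,\tilde k_1)=0$, so that (L5) holds exactly by functional calculus, not just up to $O(h^\infty)$; second, your phrase about Proposition~\ref{l:spectral-microlocal}(3) ``promoting'' the lattice identity to a microlocal one up to $O(h^\infty)$ slightly undersells it --- since $Q_1,Q_2,A_j,S_1S_2$ are all functions of $(T_1,T_2)$ after the preliminary cutoff, the identity~\eqref{e:lg-identity-1} holds as an exact operator equation, and the $O(h^\infty)$ errors enter only when one compares the cutoff $Q_j$ with the conjugates of $P_j$ from Proposition~\ref{l:moeser}.
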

\begin{proof}
1. Let $S_1:\mathbb C\to L^2(\mathcal M)$ and
$S_2:L^2(\mathcal M)\to \mathbb C$ be the inclusion and
the orthogonal projection onto, respectively, the unit
joint eigenfunction of $(T_1,T_2)$ with eigenvalue $(\zeta_1,\tilde k_1)$.
The properties (L3) and (L4) now follow from part 2 of
Proposition~\ref{l:spectral-microlocal}.

Next, we use a partition of unity on the circle to construct
the functions $\chi_1,\chi_2$ with the following properties:
\begin{itemize}
\item $\chi_j\in C^\infty(\mathbb R^2\setminus 0)$ is positively homogeneous
of degree 0;
\item $\chi_j\geq 0$ and $\chi_1+\chi_2=1$ everywhere on $\mathbb R^2\setminus 0$;
\item $\chi_j(s_1,s_2)=0$ for $|s_j|<|s_{3-j}|/2$.
\end{itemize}
It follows
that
\begin{equation}\label{e:chi-estimate}
|s_j^{-1}\chi_j(|s_1|^2,|s_2|^2)|\leq C(|s_1|+|s_2|)^{-1}.
\end{equation}
Take $\chi(\zeta,\eta)\in C_0^\infty$
supported in a small neighborhood of $V^\varepsilon_{\mathcal M}$,
while equal to 1 near $V^\varepsilon_{\mathcal M}$;
define the functions $f_1,f_2$ as follows:
$$
\begin{gathered}
f_1(\zeta,\eta;h)={\chi(\zeta,\eta)\chi_1(|\widetilde G_\pm^{-1}(\zeta,\eta;h)-\tilde\lambda_1-ih\tilde\mu_1|^2,
|\eta-\tilde k_1|^2)\over \widetilde G_\pm^{-1}(\zeta,\eta;h)-\tilde\lambda_1-ih\tilde\mu_1},\\
f_2(\zeta,\eta;h)={\chi(\zeta,\eta)\chi_2(|\widetilde G_\pm^{-1}(\zeta,\eta;h)-\tilde\lambda_1-ih\tilde\mu_1|^2,
|\eta-\tilde k_1|^2)\over \eta-\tilde k_1},
\end{gathered}
$$
for $(\zeta,\eta)\neq (\zeta_1,\tilde k_1)$; we put $f_j(\zeta_1,\tilde k_1)=0$.
We now take $A_j=f_j(T_1,T_2;h)$.
Noticing that
$$
|\widetilde G_\pm^{-1}(\zeta,\eta;h)-\tilde\lambda_1|+|\eta-\tilde k_1|\geq h/C,\
(\zeta,\eta)\in h(\mathbb N\times \mathbb Z)\cap\supp\chi\setminus (\zeta_1,\tilde k_1),
$$
and using Proposition~\ref{l:spectral-microlocal}
and~\eqref{e:chi-estimate}, we get that $A_j$ are compactly
microlocalized and $\|A_j\|=O(h^{-1})$. Moreover, if
$\tilde\chi(\zeta,\eta)$ is equal to $1$ near $(\zeta_1,\tilde k_1)$,
then $(1-\tilde\chi)f_j$ are smooth symbols; then,
$A''_j=(1-\tilde\chi)(T_1,T_2)A_j$ belongs to $\Psi^{\comp}_{\cl}$ by
part~1 of Proposition~\ref{l:spectral-microlocal} and
$A'_j=\tilde\chi(T_1,T_2)A_j$ is microlocalized in the Cartesian square of
$\{(\zeta,\eta)\in\supp\tilde\chi\}$; we have established
property~(L1), with $r=1$.  The properties~(L2), (L5),
and~\eqref{e:lg-identity-1} are easy to verify, given that all the
operators of interest are functions of $T_1,T_2$.

2. This is proved similarly to part 1.
\end{proof}
Finally, we conjugate the operators of the previous proposition by
$B_1,B_2$ to get a local Grushin problem for $P_1,P_2$ and obtain
information about the joint spectrum:
\begin{proof}[Proof of Propositon~\ref{l:angular-quantization-local}]
1. Assume first that $\tilde\lambda_1,\tilde k_1,\tilde\mu_1$ satisfy
the conditions of part~1 of Proposition~\ref{l:grushin-q}; let
$A_1,A_2,S_1,S_2$ be the operators constructed there. Recall that
$A_j$ are microlocalized in a small neighborhood of
$V^\varepsilon_{\mathcal M}$.  Then the operators
$$
\widetilde A_j=B_2A_jB_1,\
\widetilde S_1=B_2S_1,\
\widetilde S_2=S_2B_1,
$$
together with $P_1-\tilde\lambda_1-ih\tilde\mu_1,P_2-\tilde k_1$ in
place of $P_1,P_2$ satisfy the conditions~(L1)--(L5) of
Appendix~\ref{s:prelim-grushin-2} with $K=\mathbf
p^{-1}(\tilde\lambda_1,\tilde k_1)$ and
$$
\widetilde A_1(P_1-\tilde\lambda_1-ih\tilde\mu_1)+\widetilde A_2(P_2-\tilde k_1)=I-\widetilde S_1\widetilde S_2
$$
microlocally near $\mathbf p^{-1}(V^\varepsilon)$. Moreover,
$P_1-\tilde\lambda_1-ih\tilde\mu_1,P_2-\tilde k_1$ satisfy
conditions~(E1)--(E2) of Appendix~\ref{s:prelim-grushin-2} and the set
where both their principal symbols vanish is exactly $K$.  We can now
apply part~2 of Proposition~\ref{l:local-grushin} to show that for $h$
small enough and some $\delta>0$, independent of
$h,\tilde\lambda_1,\tilde k_1$, there is exactly one element of the
joint spectrum of $(P_1,P_2)$ in the ball of radius $\delta h$
centered at $(\tilde\lambda_1+ih\tilde\mu_1,\tilde k_1)$, and this
point is within $O(h^\infty)$ of the center of the ball.

Now, we assume that $(\tilde\lambda+ih\tilde\mu,\tilde k)$ satisfies
the conditions of part~2 of Proposition~\ref{l:grushin-q}, with
$\delta$ specified in the previous paragraph. Then we can argue as
above, using part~1 of Proposition~\ref{l:local-grushin}, to show that
this point does not lie in the joint spectrum for $h$ small enough.

Since every point $(\tilde\lambda+ih\tilde\mu,\tilde k)$ such that
$(\tilde\lambda,\tilde k)\in V^\varepsilon$ and $|\tilde\mu|\leq
C_\theta$ is covered by one of the two cases above, we have
established that the angular poles in the indicated region coincide
modulo $O(h^\infty)$ with the set of solutions to the quantization
condition. Moreover, Proposition~\ref{l:global-grushin-special}
together with the construction of a global Grushin problem from a
local one carried out in the proof of
Proposition~\ref{l:local-grushin} provides the resolvent estimates
required in Definition~\ref{d:asymptotics}.

2. The set $\mathbf p^{-1}(\tilde\lambda_0,\tilde k_0)$ is empty by
Proposition~\ref{l:hamiltonian-flow}; therefore, the operator
$$
T=(P_1-\tilde\lambda-ih\tilde\mu)^*(P_1-\tilde \lambda-ih\tilde\mu)+(P_2-\tilde k)^2
$$
is elliptic in the class $\Psi^2(\mathbb S^2)$ for
$(\tilde\lambda,\tilde k)$ close to $(\tilde\lambda_0,\tilde k_0)$ and
$\tilde \mu$ bounded; therefore, for $h$ small enough,
$\|T^{-1}\|_{L^2\to L^2}=O(1)$. The absense of joint spectrum and
resolvent estimate follow immediately if we notice that
the restriction of $T$ to $\mathcal D'_k$ is $h^4
(P_\theta-\lambda)^*(P_\theta-\lambda)$.
\end{proof}

\section{Radial problem}\label{s:radial}

\subsection{Trapping}\label{s:radial-trapping}

In~\cite[Section~4]{skds}, we use a Regge--Wheeler change of variables
$r\to x$, under which and after an appropriate rescaling the radial
operator becomes (using the notation of~\eqref{e:semiclassical-parameters})
$$
\begin{gathered}
P_x(h)=h^2D_x^2+V(x,\tilde\omega,\tilde\nu,\tilde\lambda,\tilde\mu,\tilde k;h),\\
V(x;h)=(\tilde\lambda+ih\tilde\mu)\Delta_r
-(1+\alpha)^2((r^2+a^2)(\tilde\omega+ih\tilde\nu)-a\tilde k)^2
\end{gathered}
$$
(note the difference in notation with~\cite[Section~7]{skds}).  Let
$V(x;h)=V_0(x)+hV_1(x)+h^2V_2(x)$, where
$$
V_0(x)=\tilde\lambda\Delta_r-(1+\alpha)^2((r^2+a^2)\tilde\omega-a\tilde k)^2
$$
is the semiclassical principal part of $V(x)$; note that $V_0$ is
real-valued and for $1\leq\tilde\omega\leq 2$ and $a$ small enough,
$V_0(\pm\infty)<0$. Now, \cite[Proposition~7.4]{skds} establishes an
arbitrarily large strip free of radial poles in the nontrapping cases;
therefore, the only radial poles in the region~\eqref{e:radial-regime}
appear in case (3) of~\cite[Proposition~7.3]{skds}. Using the proof
of the latter proposition, we may assume that:
\begin{itemize}
\item $|\tilde\lambda-\tilde\lambda_0(\tilde\omega,\tilde k)|<\varepsilon_r$, where
$\tilde\lambda_0^{-1}$ is the value of the function
$$
F_V(r;\tilde\omega,\tilde k)={\Delta_r\over (1+\alpha)^2((r^2+a^2)\tilde\omega-a\tilde k)^2} 
$$
at its only maximum point. Under the assumptions~\eqref{e:radial-regime},
$1/C\leq\tilde\lambda_0\leq C$ for some constant $C$;
\item $V_0$, as a function of $x$, has unique global maximum $x_0$,
$|V_0(x_0)|<\varepsilon_r^3$ and $V''_0(x)<0$ for $|x-x_0|\leq\varepsilon_r$;
\item $V_0(x)<-\varepsilon_r^3$ for $|x-x_0|\geq\varepsilon_r$.
\end{itemize}
Here $\varepsilon_r>0$ is a small constant we will choose later.
We can also compute
\begin{equation}\label{e:pot-0}
\begin{gathered}
\tilde\lambda_0={27M_0^2\tilde\omega^2\over 1-9\Lambda M_0^2}
\text{ for }a=0;\\
V''_0(x_0)=-18M_0^4(1-9\Lambda M_0^2)^2\tilde\lambda
\text{ for }a=0,\ \tilde\lambda=\tilde\lambda_0.
\end{gathered}
\end{equation}
Letting
$$
p_0(x,\xi)=\xi^2+V_0(x)
$$
be the principal symbol of $P_x$, we see that
$p_0$ has a nondegenerate hyperbolic critical point
at $(x_0,0)$ and this is the only critical point
in the set $\{p_0\geq -\varepsilon_r^3\}$.
\begin{figure}
\includegraphics{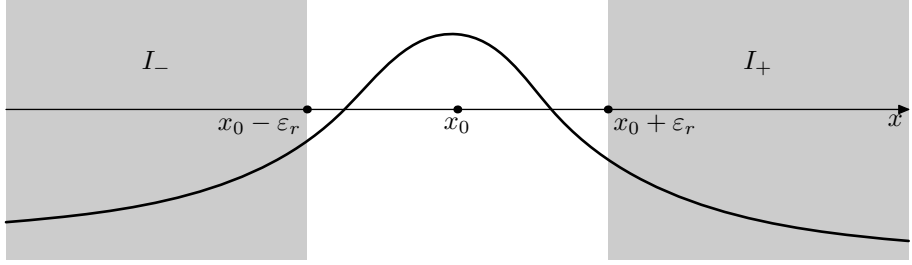}
\caption{The potential $V_0$ and the intervals $I_\pm$.}
\end{figure}

\subsection{WKB solutions and the outgoing condition}\label{s:radial-wkb}

Firstly, we obtain certain approximate solutions to the equation
$P_xu=0$ in the region $|x-x_0|>\varepsilon_r$, where $V_0$ is known
to be negative. (Compare with~\cite[Sections~2 and~3]{ra}.)  Define the
intervals
\begin{equation}\label{e:intervals}
I_+=(x_0+\varepsilon_r,+\infty),\
I_-=(-\infty,x_0-\varepsilon_r),\
I_0=(x_0-2\varepsilon_r,x_0+2\varepsilon_r).
\end{equation}
Let $\psi_0(x)$ be a smooth function on $I_+\cup I_-$ solving the
eikonal equation
$$
\psi'_0(x)=\sgn(x-x_0)\sqrt{-V_0(x)}.
$$
(We will specify a normalization condition for $\psi_0$ later.)  Then
we can construct approximate WKB solutions
\begin{equation}\label{e:radial-wkb-form}
u_\pm^+(x;h)=e^{i\psi_0(x)/h}a_\pm^+(x;h),\
u_\pm^-(x;h)=e^{-i\psi_0(x)/h}a_\pm^-(x;h),\
x\in I_\pm,
\end{equation}
such that $P_x u_\pm^\delta=O(h^\infty)$ in $C^\infty(I_\pm)$%
\footnote{Henceforth we say that $u=O(h^\infty)$ in
$C^\infty(I)$ for some open set $I$, if for every compact $K\subset I$
and every $N$, $\|u\|_{C^N(K)}=O(h^N)$. In particular, this does not
provide any information on the growth of $u$ at the ends of
$I$. Similarly, we say that $u$ is polynomially bounded in
$C^\infty(I)$ if for every $K$ and $N$, there exists $M$ such that
$\|u\|_{C^N(K)}=O(h^{-M})$.} and
$a_\gamma^\delta$ are smooth classical symbols in $h$, for
$\gamma,\delta\in\{+,-\}$. Indeed, if
$$
a_\gamma^\delta(x;h)\sim\sum_{j\geq 0}h^j a_\gamma^{\delta(j)}(x),
$$
then the functions $a_\gamma^{\delta(j)}$ have to solve the transport
equations
\begin{equation}\label{e:wkb-transport}
\begin{gathered}
(2\psi'_0(x)\partial_x+\psi''_0(x)\pm i V_1(x))a_\gamma^{\pm (0)}=0,\\
(2\psi'_0(x)\partial_x+\psi''_0(x)\pm i V_1(x))a_\gamma^{\pm(j+1)}=
\pm i(\partial_x^2-V_2(x))a_\gamma^{\pm (j)},\ j\geq 0;
\end{gathered}
\end{equation}
the latter can be solved inductively in $j$. We will fix the
normalization of $a_\gamma^{\delta(0)}$ later; right now, we only
require that for $x$ in a compact set, $a_\gamma^{\delta(0)}\sim 1$ in
the sense that $C^{-1}\leq |a_\gamma^{\delta(0)}|\leq C$ for some
$h$-independent constant $C$. Put
\begin{equation}\label{e:radial-gamma-pm}
\Gamma^\pm_\gamma=\{(x,\pm\psi'_0(x))\mid x\in I_\gamma\}\subset T^* I_\gamma,\
\gamma\in\{+,-\};
\end{equation}
then by Proposition~\ref{l:oi-wf} (with $m=0$),
\begin{equation}\label{e:wkb-wf}
\WF(u_\gamma^\delta)\subset \Gamma_\gamma^\delta,\
\gamma,\delta\in \{+,-\}.
\end{equation}

Now, we show that the Cauchy problem for the equation $P_xu=0$ is
well-posed semiclassically in $I_\pm$.  For two smooth functions
$v_1,v_2$ on some interval, define their semiclassical Wronskian by
$$
W(v_1,v_2)=v_1\cdot h \partial_xv_2-v_2\cdot h \partial_x v_1;
$$
then
\begin{equation}\label{e:w-diff}
h\partial_x W(v_1,v_2)=v_2\cdot P_x v_1-v_1\cdot P_x v_2.
\end{equation}
Also, if $W(v_1,v_2)\neq 0$ and $u$ is some smooth function, then
\begin{equation}\label{e:w-sol}
u={W(u,v_1)v_2-W(u,v_2)v_1\over W(v_2,v_1)}.
\end{equation}
We have $W(u_\pm^+,u_\pm^-)\sim 1$; therefore, the following fact applies:
\begin{prop}\label{l:w-cauchy}
Assume that $I\subset \mathbb R$ is an interval and $U\subset I$ is a
nonempty open set. Let $v_1(x;h),v_2(x;h)\in C^\infty(I)$ be two
polynomially bounded functions such that $P_xv_j(x;h)=O(h^\infty)$ in
$C^\infty(I)$ and $W(v_1,v_2)^{-1}$ is polynomially bounded. (Note
that by~\eqref{e:w-diff}, $d_x W(v_1,v_2)=O(h^\infty)$.)  Let
$u(x;h)\in C^\infty(I)$ be polynomially bounded in $C^\infty(U)$ and
$P_x u=O(h^\infty)$ in $C^\infty(I)$. Then
$u=c_1v_1+c_2v_2+O(h^\infty)$ in $C^\infty(I)$, where the constants
$c_1,c_2$ are polynomially bounded.  Moreover,
$c_j=W(u,v_{3-j})/W(v_j,v_{3-j})+O(h^\infty)$.
\end{prop}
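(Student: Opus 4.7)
The plan is to apply variation of parameters with respect to $v_1, v_2$ directly to $u$ and then use a Gronwall argument to propagate the polynomial bound from $U$ across all of $I$. First I would fix a base point $x_0 \in U$ and set
\[
c_1 = \frac{W(u,v_2)(x_0)}{W(v_1,v_2)(x_0)}, \qquad c_2 = \frac{W(u,v_1)(x_0)}{W(v_2,v_1)(x_0)}.
\]
Polynomial boundedness of $u, v_j$ in $C^\infty(U)$ together with polynomial boundedness of $W(v_1,v_2)^{-1}$ makes both $c_j$ polynomially bounded. From \eqref{e:w-diff} and the hypothesis $P_x u, P_x v_j = O(h^\infty)$ one also sees that $W(u,v_j)$ and $W(v_1,v_2)$ each have $x$-derivative $O(h^\infty)$ in $C^\infty(I)$, hence each is constant modulo $O(h^\infty)$ on every compact; this observation will eventually give the closing assertion $c_j = W(u,v_{3-j})/W(v_j,v_{3-j}) + O(h^\infty)$ at any point.

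Next, fix a compact interval $K \subset I$ with $x_0 \in K$. Since $W(v_1,v_2)$ is bounded polynomially away from zero on $K$, variation of parameters produces a unique smooth decomposition
\[
u = a(x) v_1(x) + b(x) v_2(x), \qquad u' = a v_1' + b v_2',
\]
with $a = W(u,v_2)/W(v_1,v_2)$ and $b = W(u,v_1)/W(v_2,v_1)$, so $(a,b)(x_0) = (c_1,c_2)$. Differentiating these two relations, substituting the ODE for $u''$ and the approximate ODEs for $v_j''$, and solving the resulting $2\times 2$ linear system via Cramer's rule yields
\[
\partial_x a = \frac{-v_2\,(a\,P_x v_1 + b\,P_x v_2 - P_x u)}{h\,W(v_1,v_2)}, \qquad \partial_x b = \frac{v_1\,(a\,P_x v_1 + b\,P_x v_2 - P_x u)}{h\,W(v_1,v_2)}.
\]
Since $P_x u, P_x v_j = O(h^\infty)$ in $C^\infty$ and $v_j$ and $W(v_1,v_2)^{-1}$ are polynomially bounded, both right-hand sides have the form $O(h^\infty)(1 + |a| + |b|)$ on $K$.

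A standard Gronwall argument with initial data $(a,b)(x_0) = (c_1,c_2)$ then gives $|a| + |b| \leq C(|c_1| + |c_2|) + O(h^\infty)$ on $K$, hence polynomial boundedness of $a,b$; integrating the ODEs once more produces $a = c_1 + O(h^\infty)$ and $b = c_2 + O(h^\infty)$ on $K$, so $u - c_1 v_1 - c_2 v_2 = O(h^\infty)$ in $C^0(K)$. To upgrade to $C^\infty$, I would differentiate the ODE system and note that each derivative of $a,b$ is again $O(h^\infty)$ times a polynomially bounded factor, so $a,b$ are $O(h^\infty)$ perturbations of $c_1, c_2$ in $C^\infty(K)$; smoothness of $v_1,v_2$ then gives $u = c_1 v_1 + c_2 v_2 + O(h^\infty)$ in $C^\infty(K)$, and since $K$ was arbitrary, in $C^\infty(I)$. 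The step I expect to be the main obstacle is precisely this propagation from $U$ to all of $I$: a direct Gronwall estimate applied to $-h^2 u'' + V u = O(h^\infty)$ alone would lose a factor $e^{C/h}$, so the availability of two approximate solutions with polynomially bounded Wronskian inverse is what converts the problem into a benign first-order system for $(a,b)$.
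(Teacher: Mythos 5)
Your proof is correct and follows essentially the paper's route: both derive a first-order linear ODE with $O(h^\infty)$ coefficients for the Wronskians $W(u,v_j)$ --- in your version the equivalent variation-of-parameters coefficients $a,b$, which are just $W(u,v_{3-j})$ divided by the (essentially constant, polynomially bounded, polynomially invertible) $W(v_1,v_2)$ --- and then close with Gronwall starting from the data in $U$. The only small blemish is the preliminary remark that \eqref{e:w-diff} by itself gives $\partial_x W(u,v_j)=O(h^\infty)$ on all of $I$, which tacitly presupposes $u$ is polynomially bounded there (the thing you are trying to prove); your subsequent Gronwall argument is self-contained and does not actually rely on that remark, so this does not affect the proof.
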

\begin{proof}
Let $W_j=W(u,v_j)$. Combining~\eqref{e:w-diff} and~\eqref{e:w-sol}, we
get $|d_x W_j|=O(h^\infty)(|W_1|+|W_2|)$. Also, $W_j$ are polynomially
bounded on $U$.  By Gronwall's inequality, we see that $W_j$ are
polynomially bounded on $I$ and constant modulo $O(h^\infty)$; it
remains to use~\eqref{e:w-sol}.
\end{proof}
Now, recall~\cite[Section~4]{skds} that for $X_0$ large enough, we have
$V(x)=V_\pm(e^{\mp A_\pm x})$ for $\pm x>X_0$, where $A_\pm>0$ are some
constants and $V_\pm(w)$ are holomorphic functions in the discs
$\{|w|<e^{-A_\pm X_0}\}$, and $V_\pm(0)=-\omega_\pm^2$, where
\begin{equation}\label{e:omega-pm}
\omega_\pm=(1+\alpha)((r_\pm^2+a^2)(\tilde\omega+ih\tilde\nu)-a\tilde k).
\end{equation}
For $a$ and $h$ small enough, we have $\Real\omega_\pm>0$.
In~\cite[Section~4]{skds}, we constructed exact solutions $u_\pm(x)$
to the equation $P_x u_\pm=0$ such that
$$u_\pm(x)=e^{\pm i\omega_\pm x/h}v_\pm(e^{\mp A_\pm x})\text{ for }\pm x>X_0,
$$
with $v_\pm(w)$ holomorphic in the discs $\{|w|<e^{-A_\pm X_0}\}$ and
$v_\pm(0)=1$.  Note that we can use a different normalization
condition than~\cite[Proposition~4.2]{skds}, as $\Imag\omega_\pm=O(h)$
under the assumptions~\eqref{e:radial-regime}.
\begin{prop}\label{l:u-pm-wkb}
For a certain normalization of the functions $\psi_0$ and $a_\pm^{+(0)}$,
\begin{equation}\label{e:u-pm-wkb}
u_\pm(x)=u_\pm^+(x)+O(h^\infty)\text{ in }C^\infty(I_\pm).
\end{equation}
In particular, by~\eqref{e:wkb-wf}
\begin{equation}\label{e:u-pm-wf}
\WF(u_\pm|_{I_\pm})\subset \Gamma_\pm^+.
\end{equation}
\end{prop}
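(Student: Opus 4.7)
I focus on the $+$ case; the $-$ case is symmetric. The plan is to use the uniqueness clause in the construction of $u_+$ in \cite[Section~4]{skds} together with the exact WKB method of \cite[Section~3]{ra} to establish~\eqref{e:u-pm-wkb} in the analytic region $\{x>X_0\}$, and then propagate it to all of $I_+$ via Proposition~\ref{l:w-cauchy}.

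The first step is to fix the normalizations. Writing $\omega_+^{(0)}$ and $\omega_+^{(1)}$ for the $h=0$ value and the coefficient of $h$ in $\omega_+$, the asymptotic $V_0(x)+(\omega_+^{(0)})^2=O(e^{-A_+x})$ at $+\infty$ allows me to normalize $\psi_0$ by $\psi_0(x)-\omega_+^{(0)}x\to 0$ as $x\to+\infty$. A short calculation at $+\infty$, using $V_1(+\infty)=-2\omega_+^{(0)}\omega_+^{(1)}$ and the (non-trivial) identity $(\omega_+^{(1)})^2=-V_2(+\infty)$, shows that the transport equations~\eqref{e:wkb-transport} at $+\infty$ reduce to $(a_+^{+(j)})'=i\omega_+^{(1)}a_+^{+(j)}+O(e^{-A_+x})$; I normalize $a_+^{+(0)}(x)e^{-i\omega_+^{(1)}x}\to 1$ and $a_+^{+(j)}(x)e^{-i\omega_+^{(1)}x}\to 0$ for $j\geq 1$ as $x\to+\infty$.

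The main step is to construct an exact solution in the analytic region $\{x>X_0\}$. There $V$ is jointly holomorphic in $(w,h)$ near $(0,0)$, where $w=e^{-A_+x}$. Seeking an exact solution of $P_x\tilde u_+=0$ of the form $\tilde u_+(x;h)=e^{i\psi_0(x)/h}\mathcal A(x;h)$ and passing to the variable $w$ yields a linear second-order ODE for $\mathcal A$ with coefficients holomorphic in $(w,h)$. Following \cite[Section~3]{ra}, $\mathcal A$ can be constructed as a Neumann-type series in $w$, convergent in a disc $\{|w|<e^{-A_+X_0}\}$ (after enlarging $X_0$ if necessary), whose asymptotic expansion in $h$ coincides term-by-term with the formal amplitude $a_+^+$. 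I fix the free constants of integration by requiring $\mathcal A(x;h)e^{-i\omega_+^{(1)}x}\to 1$ as $x\to+\infty$, consistently with the normalizations above. This convergent construction is the main technical point of the proof, and is the step that requires the analyticity of $V_+$ near $w=0$.

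Finally, writing $\psi_0(x)-\omega_+^{(0)}x=\sigma(e^{-A_+x})$ with $\sigma$ holomorphic and $\sigma(0)=0$, I can rearrange $\tilde u_+(x;h)=e^{i\omega_+x/h}\tilde v_+(e^{-A_+x};h)$ with $\tilde v_+$ holomorphic in $w$ and $\tilde v_+(0;h)=1$. The uniqueness in \cite[Section~4]{skds} then forces $\tilde u_+=u_+$ on $\{x>X_0\}$, so $u_+=u_+^++O(h^\infty)$ in $C^\infty(\{x>X_0\})$. To promote this to $I_+$, I apply Proposition~\ref{l:w-cauchy} on $I_+$ with $v_1=u_+^+$, $v_2=u_+^-$; the Wronskian $W(u_+^+,u_+^-)=-2i\psi_0'a_+^{+(0)}a_+^{-(0)}+O(h)$ is bounded away from zero uniformly in $h$ (the symmetric normalization gives $a_+^{+(0)}(x)a_+^{-(0)}(x)\to 1$ at $+\infty$), so the proposition writes $u_+=c^+u_+^++c^-u_+^-+O(h^\infty)$ in $C^\infty(I_+)$ with polynomially bounded $c^\pm$. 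Evaluating the Wronskian formulas from Proposition~\ref{l:w-cauchy} at any point $x_*>X_0$ and using the already established identity there yields $W(u_+,u_+^+)=O(h^\infty)$ and $W(u_+,u_+^-)=W(u_+^+,u_+^-)+O(h^\infty)$, hence $c^+=1+O(h^\infty)$ and $c^-=O(h^\infty)$, proving~\eqref{e:u-pm-wkb}. The wavefront inclusion~\eqref{e:u-pm-wf} is then immediate from~\eqref{e:wkb-wf}.
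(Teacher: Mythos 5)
Your overall plan — reduce to the analytic region $\{x>X_0\}$ via Proposition~\ref{l:w-cauchy}, construct the exact solution there in WKB form by an exact power/Neumann series in $w=e^{-A_+x}$, identify it with $u_+$ by the uniqueness in~\cite[Section~4]{skds}, and then propagate backward — is the same plan the paper uses, and your Wronskian endgame is a spelled-out version of the paper's one-line appeal to Proposition~\ref{l:w-cauchy}. Your normalization of $\psi_0$ by $\psi_0(x)-\omega_+^{(0)}x\to 0$ also agrees with the paper's implicit choice. However, there is a genuine gap in the central step: you seek the exact solution in the form $e^{i\psi_0(x)/h}\mathcal A(x;h)$ with $\psi_0$ the \emph{leading} ($h$-independent) eikonal, and claim $\mathcal A$ is given by a convergent ``Neumann-type series in $w$'' while simultaneously claiming $\tilde v_+=e^{i(\psi_0-\omega_+x)/h}\mathcal A$ is holomorphic in $w$ with $\tilde v_+(0;h)=1$. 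These two claims are incompatible whenever $\Imag\omega\ne 0$: since $\omega_+=\omega_+^{(0)}+h\omega_+^{(1)}$ with $\omega_+^{(1)}=i(1+\alpha)(r_+^2+a^2)\tilde\nu\ne 0$, the exact amplitude relative to $\psi_0$ carries the factor $e^{i\omega_+^{(1)}x}=w^{-i\omega_+^{(1)}/A_+}$, a nontrivial Frobenius ramification at $w=0$ — indeed the ODE for $\mathcal A$ in the $w$ variable has a regular singular point at $w=0$ with indicial exponents $\rho_1=-i\omega_+^{(1)}/A_+$ and $\rho_2=2i\omega_+^{(0)}/(hA_+)+i\omega_+^{(1)}/A_+$, and the solution you want is the Frobenius solution belonging to $\rho_1\ne 0$. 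Hence $\mathcal A$ is not a power series in $w$, and no ``Neumann-type series in $w$, convergent in a disc'' produces it; this is exactly the point your own normalization $\mathcal A e^{-i\omega_+^{(1)}x}\to 1$ (rather than $\mathcal A\to 1$) already signals.

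The paper sidesteps this entirely by constructing the \emph{full} $h$-dependent eikonal $\psi$ solving $(\partial_x\psi)^2+V(x;h)=0$ with $\psi(x)=\omega_+x+\tilde\psi(e^{-A_+x})$, $\tilde\psi(0)=0$, so that the amplitude $a(w;h)=e^{-i\psi/h}u_+$ has indicial exponent $0$ at $w=0$, is solved by a genuine power series $\sum a_j(h)w^j$ with the explicit uniform bound $|a_j(h)|\le R^j$, and is a classical symbol; only at the very end is $\psi_0$ normalized so that $\psi=\psi_0+h\psi_1$ with $\psi_1$ a classical symbol, and $e^{i\psi_1}$ absorbed into the amplitude to produce the claimed WKB form. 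You could salvage your version by making the Frobenius exponent explicit (writing $\mathcal A=w^{\rho_1}\mathcal B(w;h)$ and solving for $\mathcal B$ by power series, with the uniform bound using that $\rho_1+j-\rho_2\sim -2i\omega_+^{(0)}/(hA_+)$ for bounded $j$), but this is not what is written, and as stated the construction of $\mathcal A$ and the assertion $\tilde v_+(0;h)=1$ are unjustified.
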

\begin{proof}
We will consider the case of $u_+$.
By Proposition~\ref{l:w-cauchy}, it is enough to show~\eqref{e:u-pm-wkb}
for $\pm x>X_0$, where $X_0$ is large, but fixed.
We choose $X_0$ large enough so that $\Real V_\pm(w)<0$ for
$|w|\leq e^{-A_\pm X_0}$. Then there exists a function $\psi(x)$
such that
$$
\begin{gathered}
(\partial_x\psi(x))^2+V(x)=0,\
x>X_0;\\
\psi(x)=\omega_+x+\tilde\psi(e^{-A_+x}),
\end{gathered}
$$
with $\tilde\psi$ holomorphic in $\{|w|<e^{-A_+ X_0}\}$.
We can fix $\psi$ by requiring that $\tilde\psi(0)=0$.
Take
$$
u_+(x)=e^{i\psi(x)/h}a(e^{-A_+x};h);
$$
then $P_xu_+=0$ if and only if
$$
([hA_+wD_w+A_+w\tilde\psi'(w)-\omega_+]^2+V)a=0.
$$
This can be rewritten as
$$
-A_+(w\tilde\psi'(w))'a+(2\omega_++ihA_+-2A_+w\tilde\psi'(w))\partial_w a+ihA_+w\partial_w^2 a=0.
$$
We will solve this equation by a power series in $w$ and estimate the terms of this series
uniformly in $h$. Let us write
$$
\tilde\psi'(w;h)=\sum_{l\geq 0}\psi_l(h) w^l,\
a(w;h)=\sum_{j\geq 0}a_j(h) w^j
$$
and solve for $a_j$ with the initial condition $a_0=1$, obtaining
$$
a_{j+1}(h)={A_+\over (j+1)(2\omega_++ihA_+(j+1))}\sum_{0\leq l\leq j} \psi_l(h)(1+2j-l)a_{j-l}(h).
$$
We claim that for some $R$, all $j$, and small $h$, $|a_j(h)|\leq R^j$. Indeed, we
have $|2\omega_++ihA_+(j+1)|\geq \varepsilon>0$; combining this with an estimate
on $\psi_l$, we get
$$
|a_{j+1}|\leq {C\over j+1}\sum_{0\leq l\leq j}S^l(1+2j-l)|a_{j-l}|
\leq 2C\sum_{0\leq l\leq j}S^l|a_{j-l}|
$$
for some constants $C$ and $S$. We can then conclude by induction if
$R\geq 2C+S$. In a similar way, we can estimate the derivatives of
$a_j$ in $h$; therefore, $a(w;h)$ is a classical symbol for
$|w|<R^{-1}$.

Now, we take $X_0$ large enough so that $e^{-A_+X_0}<R^{-1}$
and restrict ourselves to real $x>X_0$. We can normalize $\psi_0$
so that $\psi(x)=\psi_0(x)+h\psi_1(x;h)$ for some classical symbol
$\psi_1$; then
$$
u_+(x)=e^{i\psi_0(x)/h}[e^{i\psi_1(x;h)}a(e^{-A_+x};h)].
$$
The expression in square brackets is a classical symbol; therefore,
this expression solves the transport
equations~\eqref{e:wkb-transport}; it is then equal to a constant
times $a_+^+$, modulo $O(h^\infty)$ errors.
\end{proof}

\subsection{Transmission through the barrier}\label{s:radial-barrier}

First of all, we establish a microlocal normal form for $P_x$ near the
potential maximum.  Let $\varepsilon_0>0$ be small; define
$$
\begin{gathered}
K_0=\{|x-x_0|\leq \varepsilon_0,\
|\xi|\leq \varepsilon_0\}\subset T^* \mathbb R.
\end{gathered}
$$
We pick $\varepsilon_r$ small enough, depending on $\varepsilon_0$,
such that $\varepsilon_r<\varepsilon_0/2$ and
$$
\{p_0=0\}\subset K_0\cup \bigcup_{\gamma,\delta}\Gamma_\gamma^\delta,
$$
with $\Gamma_\gamma^\delta$ defined in~\eqref{e:radial-gamma-pm}.
(Recall from Section~\ref{s:radial-trapping} that $\varepsilon_r$
controls how close we are to the trapping region.)
We also assume that $\varepsilon_0$ is small enough so that $(x_0,0)$
is the only critical point of $p_0$ in $K_0$.
\begin{figure}
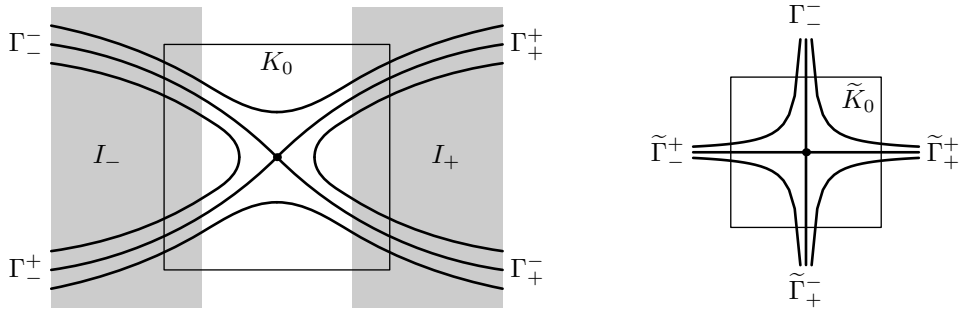

\includegraphics{zeeman.1}
\hskip.5in
\includegraphics{zeeman.2}
\caption{Level lines of $p_0$ before and after conjugation by $\Phi$.}
\end{figure}
\begin{prop}\label{l:radial-conj}
For $\varepsilon_0$ small enough and $\varepsilon_r$ small enough
depending on $\varepsilon_0$, there exists a symplectomorphism $\Phi$
from a neighborhood of $K_0$ onto a neighborhood of the origin in $T^*
\mathbb R$ and operators $B_1,B_2$ quantizing $\Phi$ near
$K_0\times\Phi(K_0)$ in the sense of
Proposition~\ref{l:quantization-canonical}, such that $P_x$ is
intertwined via $(B_1,B_2)$ with the operator $SQ(\beta)$ microlocally
near $K_0\times \Phi(K_0)$, with $S\in\Psi^0_{\cl}$ elliptic in
the class $\Psi^0(\mathbb R)$,
$$
Q(\beta)=hxD_x-\beta,
$$
and $\beta=\beta(\tilde\omega,\tilde\nu,\tilde\lambda,\tilde\mu,\tilde k;h)$ a classical
symbol. Moreover, the principal part $\beta_0$ of $\beta$ is
real-valued, independent of $\tilde\nu,\tilde\mu$, and vanishes if and only
$\tilde\lambda=\tilde\lambda_0(\tilde\omega,\tilde k)$. Also,
\begin{gather}
\label{e:radial-beta-1}
\beta_0=-{V_0(x_0)\over \sqrt{-2V''_0(x_0)}}+O(V_0(x_0)^2),\\
\label{e:radial-beta-2}
\beta_0={27M_0^2\tilde\omega^2-\tilde\lambda(1-9\Lambda M_0^2)\over
2\sqrt{\tilde\lambda}(1-9\Lambda M_0^2)}+O(V_0(x_0)^2)\text{ for }a=0.
\end{gather}
Finally, $\Phi(K_0)\supset\widetilde K_0=\{|x|\leq\tilde\varepsilon_0,\
|\xi|\leq\tilde\varepsilon_0\}$ for some $\tilde\varepsilon_0$
depending on $\varepsilon_0$, and, with $I_0$ defined in~\eqref{e:intervals},
\begin{gather}
\label{e:radial-mapping+}
\Phi(\Gamma_\pm^+)\subset \widetilde\Gamma_\pm^+=
\{\pm x>0,\ \xi=\beta_0/x,\ |\xi|\leq \tilde\varepsilon_0/2\},\\
\label{e:radial-mapping-}
\Phi(\Gamma_\pm^-)\subset \widetilde\Gamma_\pm^-=
\{\mp \xi>0,\ x=\beta_0/\xi,\ |x|\leq\tilde\varepsilon_0/2\};\\
\label{e:radial-mapping0}
\{p_0=0\}\cap \{x\in I_0\}\subset \Phi^{-1}(\widetilde K_0)\subset K_0.
\end{gather}
\end{prop}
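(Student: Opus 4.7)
The plan is to adapt the barrier-top normal form of Colin de Verdi\`ere--Parisse~\cite{cdv-p} to this nonselfadjoint setting, proceeding in three stages: a classical Morse-type reduction at the hyperbolic critical point, a quantization via a Fourier integral operator, and an iterative averaging that absorbs the subprincipal $(x,\xi)$-dependence into a constant $\beta$.

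For the classical reduction, the linearization of $H_{p_0}$ at $(x_0,0)$ has eigenvalues $\pm\sqrt{-2V_0''(x_0)}$, while that of $H_{x\xi}$ at the origin has eigenvalues $\pm 1$. A hyperbolic Morse lemma (cf.~\cite{cdv-p}) then yields a symplectomorphism $\Phi$ from a neighborhood of $K_0$ onto a neighborhood of the origin and a nonvanishing smooth function $s_0$ with $|s_0(0,0)|=\sqrt{-2V_0''(x_0)}$, such that $p_0\circ\Phi^{-1}=s_0(x,\xi)(x\xi-\beta_{0,0})$ for a constant $\beta_{0,0}$. Evaluating at the critical point gives $\beta_{0,0}=-V_0(x_0)/\sqrt{-2V_0''(x_0)}+O(V_0(x_0)^2)$, which is~\eqref{e:radial-beta-1}; substituting~\eqref{e:pot-0} produces~\eqref{e:radial-beta-2}. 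Independence of $\beta_0$ from $\tilde\nu,\tilde\mu$ is automatic, since these enter $V$ only at subprincipal order. The set $\{p_0=0\}$ splits near $(x_0,0)$ into the four branches $\Gamma^\delta_\gamma$, which $\Phi$ sends to the four branches of $\{x\xi=\beta_{0,0}\}$; after fixing $\tilde\varepsilon_0$ depending on $\varepsilon_0$ this yields~\eqref{e:radial-mapping+}--\eqref{e:radial-mapping0}. The form $\sigma^S-\Phi^*\sigma^S$ is closed on the contractible domain in question, hence exact, so $\Phi$ is exact after a translation of coordinates.

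Quantizing $\Phi$ via Proposition~\ref{l:quantization-canonical} produces $B_1,B_2$ and a conjugated operator $\widetilde P\in\Psi^{\comp}_{\cl}$ with principal symbol $s_0(x\xi-\beta_{0,0})$. Using the parametrix construction of Proposition~\ref{l:elliptic-detail} I construct $\widetilde S\in\Psi^0_{\cl}$ elliptic with principal symbol $s_0$ and factor $\widetilde P=\widetilde S\widetilde Q$ microlocally, so that $\widetilde Q$ has principal symbol $x\xi-\beta_{0,0}$. Reducing $\widetilde Q$ to $hxD_x-\beta$ proceeds by iterated Moser averaging as in Section~\ref{s:angular-moser}, but with the hyperbolic model $x\xi$ in place of the elliptic $\zeta$. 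The required cohomological fact is that every $a\in C^\infty$ near the origin admits a decomposition $a=a(0,0)+(x\partial_x-\xi\partial_\xi)b$ for some smooth $b$: decomposing into monomials $x^j\xi^k$, the diagonal terms $j=k$ contribute the constant $a(0,0)$, while off-diagonal monomials are $(x\partial_x-\xi\partial_\xi)$-images of $x^j\xi^k/(j-k)$. Writing $\widetilde Q\equiv x\xi-\beta_{0,0}+hq_1+O(h^2)$, applying this decomposition as $q_1=c_1+\{x\xi,b_1\}$, and conjugating by $\exp(ihB_1)$ with principal symbol $b_1$ replaces $\widetilde Q$ by $x\xi-\beta_{0,0}+hc_1+O(h^2)$. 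Iterating order by order in $h$ and taking an asymptotic product of exponentials produces $\beta\sim\beta_{0,0}+h\beta_1+\cdots$ such that the conjugated operator equals $hxD_x-\beta$ microlocally; the conjugating factors and $\widetilde S$ are absorbed into $S$ and into a redefinition of $(B_1,B_2)$ via the remark following Proposition~\ref{l:quantization-canonical}.

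The main obstacle is precisely this cohomological step in the hyperbolic setting: unlike the periodic averaging of Section~\ref{s:angular-moser}, the flow of $x\xi$ is non-compact, so one cannot average over a compact group, and the constant extracted at each order is forced to be the value at the fixed point rather than an integral mean. One must also track smooth dependence of $b$ on the parameters $(\tilde\omega,\tilde\nu,\tilde\lambda,\tilde\mu,\tilde k)$ to ensure $\beta$ is a genuine classical symbol jointly in $h$ and these parameters, and verify that the whole construction, being purely microlocal near $K_0$, respects the inclusion $\Phi(K_0)\supset\widetilde K_0$ needed for~\eqref{e:radial-mapping+}--\eqref{e:radial-mapping0}.
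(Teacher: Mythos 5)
Your overall strategy is close to what the paper does (the paper also appeals to Colin de Verdi\`ere--Parisse and to Moser averaging in the style of Section~\ref{s:angular-moser}), but the key cohomological step you invoke is wrong, and this is precisely the point where the hyperbolic case differs from the elliptic one. You claim that every smooth $a$ near the origin decomposes as $a=a(0,0)+(x\partial_x-\xi\partial_\xi)b$, with the diagonal monomials ``contributing the constant $a(0,0)$''. But $(x\partial_x-\xi\partial_\xi)x^j\xi^k=(j-k)x^j\xi^k$, so the kernel of $x\partial_x-\xi\partial_\xi$ on formal power series is the span of \emph{all} diagonal monomials $(x\xi)^m$, $m\geq 0$, and its image consists of series with vanishing diagonal part. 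Already $a=x\xi$ has $a(0,0)=0$ yet does not lie in the image. The correct decomposition is $a=g(x\xi)+(x\partial_x-\xi\partial_\xi)b$ for a smooth one-variable function $g$, so the obstruction at each order of the averaging is a \emph{function of $x\xi$}, not a constant.

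Because of this, the iterated conjugation you set up does not reduce $\widetilde Q$ to $hxD_x-\beta$ directly; it reduces the conjugated $P_x$ to an operator of the form $f[hxD_x]$, where $f(s;h)$ is a full classical symbol in $s$ and $h$ (this is exactly the formal functional-calculus normal form stated in the paper's proof, and it is what~\cite[Theorem~12]{cdv-p} provides). The constant $\beta$ only appears in a second step: since $\partial_sf_0>0$ near $s=0$ (which the paper extracts from the orientation of the outgoing branch and from the absence of other critical points of $p_0$), the equation $f(s;h)=0$ has a unique solution $\beta(h)$, and then $f(s;h)=f_1(s;h)(s-\beta)$ with $f_1$ nonvanishing, giving $f[hxD_x]=SQ(\beta)$ with $S=f_1[hxD_x]$ elliptic. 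Your proposal, by collapsing the cokernel to constants, fuses these two steps and thereby hides both where $\beta$ actually comes from and where $S$ acquires its nontrivial $hxD_x$-dependence. Relatedly, the classical Morse-type reduction produces $p_0\circ\Phi^{-1}=f_0(x\xi)$ with $f_0$ a function of the single variable $x\xi$, not merely a factorization $s_0(x,\xi)(x\xi-\beta_{0,0})$ with $s_0$ an arbitrary nonvanishing function of two variables; tracking that $s_0$ is a function of $x\xi$ alone is what makes the subsequent factorization of $f$ into $f_1\cdot(s-\beta)$ yield a $\Psi^0$-elliptic $S$ defined by functional calculus. Your argument is recoverable if you replace the constant in the cohomological lemma by a function of $x\xi$, absorb the nonconstant part into an elliptic factor of the form (function of $hxD_x$), and absorb the value at $s=\beta$ into the next-order correction of $\beta$ --- but as written, the averaging step fails at the very first order.
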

\begin{proof}
First of all, we use~\cite[Theorem~12]{cdv-p} to construct
$\Phi,B_1,B_2$ conjugating $P_x$ microlocally near the critical point
$(x_0,0)$ to an operator of the
form $f[hxD_x]$, for some symbol $f(s;h)$, where the latter employs
the formal functional calculus of Section~\ref{s:prelim-functional}.
The techniques in the proof are similar to those of
Section~\ref{s:angular-moser} of the present paper, with appropriate
replacements for Propositions~\ref{l:arnold-liouville-degenerate}
and~\ref{l:moeser-lemma}; therefore, the proof goes through for
complex valued symbols with real principal part.

Let $f_0$ be the principal part of $f$; then
$p_0\circ\Phi^{-1}=f_0(x\xi)$. Note that $\Phi(x_0,0)=(0,0)$.  The
level set $\{p_0=V_0(x_0)\}$ at the trapped energy contains in
particular the outgoing trajectory $\{x>x_0,\
\xi=\sqrt{V_0(x_0)-V_0(x)}\}$; we can choose $\Phi$ mapping this
trajectory into $\{x>0,\ \xi=0\}$. Since the latter is also outgoing
for the Hamiltonian flow of $x\xi$, we have $\partial_s f_0(0)>0$; it
follows that $\partial_s f_0(s)>0$ for all $s$ (if $\partial_s f_0$
vanishes, then $p_0$ has a critical point other than $(x_0,0)$). The
function $f(s;h)$ not uniquely defined; however, its Taylor
decomposition at $s=0$, $h=0$ is and we can compute in particular
\begin{equation}\label{e:radial-beta-3}
f_0(s)=V_0(x_0)+s\sqrt{-2V''_0(x_0)}
+O(s^2).
\end{equation}
Therefore, for $\varepsilon_r$ small enough, we can solve the equation
$f(s;h)=0$ for $s$; let $\beta$ be the solution.  We now write
$f(s;h)=f_1(s;h)(s-\beta)$ for some nonvanishing $f_1$ and get
$f[hxD_x]=SQ(\beta)$ microlocally in $\Phi(K_0)$, with $S=f_1[hxD_x]$
in $\Phi(K_0)$ and extended to be globally elliptic outside of this
set. The equation~\eqref{e:radial-beta-1} follows from~\eqref{e:radial-beta-3},
while~\eqref{e:radial-beta-2} follows from~\eqref{e:radial-beta-1}
and~\eqref{e:pot-0}.

Finally, $p_0=0$ on each $\Gamma_\gamma^\delta$ and thus $x\xi=\beta$
on $\Phi(\Gamma_\gamma^\delta)$. By analysing the properties of $\Phi$
near $(x_0,0)$, we can deduce which part of the sets $\{x\xi=\beta\}$
each $\Gamma_\gamma^\delta$ maps into;
\eqref{e:radial-mapping+}--\eqref{e:radial-mapping0}
follow for $\varepsilon_r$ small enough.
\end{proof}
We now describe the radial quantization condition and provide a
non-rigorous explanation for it.  Recall from~\cite[Section~4]{skds}
that $(\omega,\lambda,k)$ is a pole of $R_r$ if and only if the
functions $u_\pm$, studied in the previous subsection, are multiples
of each other. Assume that this is true and $u=u_+\sim u_-$.  However,
by~\eqref{e:u-pm-wf} the function $B_1u$ is microlocalized on the
union of $\widetilde\Gamma_\pm^+$, but away from $\widetilde\Gamma_\pm^-$; it
also solves $Q(\beta)B_1u=0$ microlocally.  By propagation of
singularities, this can happen only if the characteristic set of
$Q(\beta)$ is $\{x\xi=0\}$, and in this case, $B_1u$ is smooth near
$x=0$. Then $B_1u$ must be given by $x^{i\beta/h}$, with $\beta\in
-ih \mathbb N$ and $\mathbb N$ denoting the set of nonnegative integers.
Therefore, we define the radial quantization symbol $\mathcal
F^r(m,\tilde\omega,\tilde\nu,\tilde k;h)$ as the solution
$\tilde\lambda+ih\tilde\mu$ to the equation
$$
\beta(\tilde\omega,\tilde\nu,\tilde\lambda,\tilde\mu,\tilde k;h)=-ihm,\
m\in \mathbb Z,\
0\leq m\leq C_m.
$$
The expansions for $\mathcal F^r$ near $a=0$ described in Proposition~\ref{l:radial}
follow from~\eqref{e:radial-beta-2} and~\eqref{e:numerics-radial-stuff}.

We now prove the rest of Proposition~\ref{l:radial}. We start with quantifying
the statement that in order for the equation $Q(\beta)u=0$ to
have a nontrivial solution smooth near $x=0$, the quantization condition
must be satisfied:
\begin{prop}\label{l:radial-q-technical}
Assume that $\beta\in \mathbb C$ satisfies
\begin{equation}\label{e:radial-technical-beta}
|\beta|\leq C_\beta,\
|\Imag\beta|\leq C_\beta h,\
\min_{m\in \mathbb N} |\beta+ihm|\geq C_\beta^{-1}h,
\end{equation}
for some constant $C_\beta$. Let $U\subset \mathbb R$ be a bounded
open interval, $I\subset U$ be a compact interval centered at zero,
and $X\in\Psi^{\comp}_{\cl}(\mathbb R)$. Then there exist
constants $C$ and $N$ such that for each $u\in L^2(\mathbb R)$,
\begin{equation}\label{e:radial-technical-0}
\|Xu\|_{L^2(I)}\leq Ch^{-N}\|Q(\beta)Xu\|_{L^2(U)}+O(h^\infty)\|u\|_{L^2(\mathbb R)}.
\end{equation}
\end{prop}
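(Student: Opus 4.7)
My plan is to combine a microlocal reduction with the explicit first-order ODE structure of $Q(\beta)=hxD_x-\beta$, exploiting the Mellin transform on each half-line.

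The first step is a microlocal reduction to smooth compactly supported $w$. Since $X\in\Psi^{\comp}_{\cl}(\mathbb R)$ has compact wavefront set, the function $Xu$ is smooth with $C^k$ norms on $\mathbb R\setminus\pi_x(\WF(X))$ being $O(h^\infty)\|u\|_{L^2}$. I choose $\chi\in C_c^\infty(U)$ equal to $1$ on a neighborhood of $I\cup\pi_x(\WF(X))$ and set $w:=\chi Xu$. The commutator $[Q(\beta),\chi]=-ihx\chi'$ is supported where $\chi'\neq 0$, hence away from $\pi_x(\WF(X))$, so applied to $Xu$ it contributes $O_{L^2}(h^\infty)\|u\|_{L^2}$. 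Thus it suffices to prove $\|w\|_{L^2(I)}\le Ch^{-N}\|Q(\beta)w\|_{L^2(U)}$ for every smooth $w$ compactly supported in $U$.

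The second step is the explicit solution of $Q(\beta)w=g$. Since $Q(\beta)$ does not couple $\mathbb R_+$ and $\mathbb R_-$, I treat each half-line separately. On $\mathbb R_+$ the substitution $w(x)=x^{i\beta/h}\psi(x)$ yields $\psi'(x)=(i/h)g(x)x^{-i\beta/h-1}$, and combined with the compact support of $w$ this gives the explicit representation
$$
w(x)=-\tfrac{i}{h}\,x^{i\beta/h}\int_x^\infty g(s)\,s^{-i\beta/h-1}\,ds,\qquad x>0,
$$
with an analogous formula on $\mathbb R_-$. Equivalently, the change of variables $x=\pm e^t$ with $u\mapsto e^{t/2}u(\pm e^t)$ gives a unitary isomorphism $L^2(\mathbb R_\pm,dx)\to L^2(\mathbb R_t,dt)$ that conjugates $Q(\beta)|_{L^2(\mathbb R_\pm)}$ to the translation-invariant operator $hD_t-(\beta+ih/2)$.

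The third step is the quantitative inversion of this constant-coefficient operator on a suitably weighted space. The hypothesis $\min_{m\ge 0}|\beta+ihm|\ge C_\beta^{-1}h$ encodes that the spectral parameter $z=\beta+ih/2$ stays at distance $\gtrsim h$ from the shifted resonance set $\{ih(m+1/2):m\ge 0\}$. Combining the Fourier--Mellin inversion of $hD_t-z$ with a weighted $L^2$ estimate (a polynomial weight in $x$, which corresponds to shifting the Mellin contour by an integer $\sigma$ with $|\sigma|\le C_\beta+1$) gives a bound on the inverse that is polynomial in $h^{-1}$. Since the restriction to $I$ in $x$ translates to restriction to a half-line in $t$, the weighted estimate transfers back to the desired bound on $\|w\|_{L^2(I)}$.

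The main obstacle is in the third step: selecting a Mellin weight that avoids all the resonances $\{-ihm\}$ simultaneously. The hypothesis provides the needed separation, but the correct weight must be tuned adaptively to $\Imag\beta$, with integer shifts probing different resonances; the full strength of the assumption across all $m\ge 0$ is required to guarantee that at least one choice of shift yields a favorable bound. Once the Mellin inverse is controlled polynomially in $h^{-1}$, the $L^2(I)$ estimate follows by routine unitarity and weight-comparison arguments, and the polynomial loss from the weighted Mellin estimate is absorbed into the constant $N$.
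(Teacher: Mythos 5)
Your Steps~1 and~2 are correct, and the Mellin picture on each half-line (conjugating $Q(\beta)|_{\mathbb R_\pm}$ to $hD_t-z$ via $x=\pm e^t$, $w\mapsto e^{t/2}w(\pm e^t)$; note $z=\beta-ih/2$, not $\beta+ih/2$) is a reasonable alternative framing. But there is a genuine gap in Step~3, precisely at the point you flag and then dismiss as ``routine unitarity and weight-comparison arguments.'' The weight $|x|^\sigma$ corresponds to $e^{\sigma t}$, which is \emph{not} comparable to~$1$ on the half-line $t\in(-\infty,\log c_I]$ that $I\cap\mathbb R_+$ maps to. Concretely: taking $\sigma>0$ yields an estimate on $\|\,|x|^\sigma w\|_{L^2(I)}$, which does \emph{not} control $\|w\|_{L^2(I)}$ since $|x|^\sigma\to 0$ at the origin; taking $\sigma<0$ forces you to measure $g=Q(\beta)w$ in the norm $\|\,|x|^\sigma g\|_{L^2(U)}$, which is \emph{not} controlled by $\|g\|_{L^2(U)}$; and $\sigma=0$ gives $\|(hD_t-z)^{-1}\|_{L^2\to L^2}=1/|\Imag\beta-h/2|$, which the hypothesis \eqref{e:radial-technical-beta} does not bound --- e.g.\ $\beta=1+ih/2$ is admissible with any $C_\beta\ge 1$, yet $\Imag z=0$. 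Moreover, shifting by an integer $\sigma\le -1$ is only legitimate if $w$ vanishes to order $|\sigma|$ at $x=0$, because $\hat w(\tau)$ has poles at $\tau=i(m+1/2)$ with residues proportional to $w^{(m)}(0)$; to pass the contour through these you must subtract them off, i.e.\ subtract the Taylor polynomial of $w$ at~$0$ and control that polynomial separately using the full lattice hypothesis. That subtraction-and-control step is exactly the content of the paper's argument, not a routine afterthought.

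The paper instead runs the argument keeping $Xu$, not just a generic compactly supported $w$: for $\Imag\beta\ge h$ it integrates the ODE $hD_x(x^{-i\beta/h}Xu)=x^{-1-i\beta/h}Q(\beta)Xu$ from $x=0$ to get a \emph{pointwise} estimate $|Xu(x)|\le C h^{-1}|x|^{-\Imag\beta/h}\|Q(\beta)Xu\|_{L^2}$, combines it with the $L^\infty$ bound $\|Xu\|_{L^\infty(I')}\le Ch^{-N_0}\|Xu\|_{L^2}+O(h^\infty)\|u\|$ coming from the compact microlocalization of $X$ (which your Step~1 reduction discards), and splits $I$ at $|x|=h^\varkappa$ for $\varkappa$ large; for general $\beta$ it replaces $Xu$ by $Xu-v$, $v$ the $M$-th Taylor polynomial at~$0$, using $D_x^M Q(\beta)=Q(\beta+ihM)D_x^M$ to raise $\Imag\beta$, and then estimates $v$ directly from $\min_m|\beta+ihm|\ge C_\beta^{-1}h$. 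To rescue your Mellin approach you would need to import both of these ingredients --- the compactly-microlocalized $L^\infty$ input near $x=0$ and the Taylor-polynomial subtraction to cross the Mellin poles --- which brings you back to essentially the paper's proof.
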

\begin{proof}
First, assume that $\Imag\beta\geq h$. Let $I'$ be an interval
compactly contained in $I$ and centered at zero. We will use the fact
that every $C^\infty(I')$ seminorm of $Xu$ is bounded by
$Ch^{-N}\|Xu\|_{L^2(I)}+O(h^\infty)\|u\|_{L^2(\mathbb R)}$ for some
constants $C$ and $N$, depending on the seminorm chosen. (Henceforth
$C$ and $N$ will be constants whose actual values may depend on the
context.)  Since $Xu\in C^\infty$, we can write
$$
hD_x(x^{-i\beta/h}Xu)=x^{-1-i\beta/h}Q(\beta)Xu.
$$
However, $\Real(-i\beta/h)\geq 1$; therefore, $x^{-i\beta/h}Xu$
vanishes at $x=0$ and we can integrate to get
\begin{equation}\label{e:radial-technical-1}
\|x^{-i\beta/h}Xu\|_{L^\infty(I)}\leq C\|Q(\beta)Xu\|_{L^2(I)}.
\end{equation}
On the other hand, 
\begin{equation}\label{e:radial-technical-2}
\|Xu\|_{L^\infty(I')}\leq Ch^{-N_0}\|Xu\|_{L^2(I)}+O(h^\infty)\|u\|_{L^2(\mathbb R)}.
\end{equation}
for some constants $C$ and $N_0$.  Now, take a large constant
$\varkappa$; using~\eqref{e:radial-technical-2} in
$\{|x|<h^\varkappa\}$ and~\eqref{e:radial-technical-1} elsewhere, we
get
$$
\|Xu\|_{L^2(I)}\leq Ch^{\varkappa/2-N_0}\|Xu\|_{L^2(I)}+h^{-\varkappa C_\beta}\|Q(\beta)Xu\|_{L^2(I)}
+O(h^\infty)\|u\|_{L^2(\mathbb R)};
$$
taking $\varkappa$ large enough, we get~\eqref{e:radial-technical-0}.

For the general case, we choose an integer $M$ large enough so that
$\Imag(\beta+ihM)\geq h$. Let $v$ be $M$-th Taylor polynomial of $Xu$
at zero. Since
$$
\|Q(\beta+ihM)D_x^M Xu\|_{L^2(I)}=\|D_x^MQ(\beta)Xu\|_{L^2(I)}
\leq Ch^{-N}\|Q(\beta)Xu\|_{L^2(U)}+O(h^\infty)\|u\|_{L^2(\mathbb R)},
$$
we apply the current proposition for the case $\Imag\beta\geq h$
considered above to get
$$
\|Xu-v\|_{L^2(I)}\leq C\|D_x^MXu\|_{L^2(I)}
\leq Ch^{-N}\|Q(\beta)Xu\|_{L^2(U)}+O(h^\infty)\|u\|_{L^2(\mathbb R)};
$$
therefore, $\|Q(\beta)v\|_{L^2(I)}$ is bounded by the same expression.
However, one can verify directly that if $\beta$ is $C_\beta^{-1}h$
away from $-ih \mathbb N$, then
$$
\|v\|_{L^2(I)}\leq Ch^{-N}\|Q(\beta)v\|_{L^2(I)};
$$
this completes the proof.
\end{proof}
Now, we show that each radial pole lies within $o(h)$ of a pseudopole:
\begin{prop}
Assume that $\beta(h)$ satisfies~\eqref{e:radial-technical-beta}.
Then for $h$ small enough, $(\omega,\lambda,k)$ is not a radial pole,
and for each compact interval $I\subset \mathbb R$, there exist
constants $C$ and $N$ such that
$$
\|1_I R_r(\omega,\lambda,k) 1_I\|_{L^2\to L^2}\leq Ch^{-N}.
$$
\end{prop}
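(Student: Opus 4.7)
The plan is to combine the microlocal normal form of Proposition~\ref{l:radial-conj} with the kernel estimate of Proposition~\ref{l:radial-q-technical}, transporting the problem $P_x u = f$ on $\mathbb{R}$ to a problem for $Q(\beta)$ near the origin, and then handling the tails where $V_0 < 0$ via the WKB machinery of Section~\ref{s:radial-wkb}. It suffices to prove the estimate $\|1_I R_r(\omega,\lambda,k) f\|_{L^2} \leq Ch^{-N}\|f\|_{L^2}$ for compactly supported $f \in L^2$: the $f=0$ case forces the putative pole eigenfunction to vanish and hence rules out radial poles.

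First I would fix a cutoff $X \in \Psi^{\comp}_{\cl}(\mathbb{R})$ microlocalized in $\Phi(K_0)$ and equal to the identity near $\Phi(\widetilde K_0)$, and set $v = X B_1 u$ for $u = R_r f$. The intertwining $B_1 P_x \equiv S Q(\beta) B_1$ of Proposition~\ref{l:radial-conj} together with ellipticity of $S$ gives
\[
Q(\beta) v = X S^{-1} B_1 f + [Q(\beta), X] B_1 u + O(h^\infty)\|u\|_{L^2_{\loc}}.
\]
Applied with an interval $I' \ni 0$ inside $\Phi(\widetilde K_0)$ and $U$ a slightly larger neighborhood, Proposition~\ref{l:radial-q-technical} then yields $\|v\|_{L^2(I')} \leq Ch^{-N}\|f\|_{L^2} + Ch^{-N}\|[Q(\beta),X] B_1 u\|_{L^2(U)} + O(h^\infty)\|u\|$. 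The wavefront information from \eqref{e:u-pm-wf} and \eqref{e:radial-mapping+} guarantees that $v$ is microlocally smooth near the transition region of $X$ (when $\beta_0$ is away from $0$) and controllable by Proposition~\ref{l:radial-q-technical} in general, so the commutator term is essentially under control.

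Outside $K_0$ the Hamiltonian flow of $p_0$ is non-trapping and $u$ is expressed via WKB: Proposition~\ref{l:u-pm-wkb} and the outgoing condition characterize $u|_{I_\pm}$ as scalar multiples of $u_\pm^+ = u_\pm$ plus a particular solution built from $f$, and the Wronskian-based Cauchy result of Proposition~\ref{l:w-cauchy} gives polynomial control of these multiples in terms of $\|v\|_{L^2(I')}$ and $\|f\|_{L^2}$. Combining the microlocal estimate from the previous paragraph with this one-dimensional scattering estimate (in the spirit of \cite[Section~4]{skds}), I obtain $\|1_I u\|_{L^2} \leq Ch^{-N}\|f\|_{L^2} + O(h^\infty)\|u\|_{L^2}$, and absorbing the last term for $h$ small closes the argument and yields the polynomial cutoff resolvent bound.

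The main obstacle will be the case $\beta_0 \approx 0$, where $\widetilde\Gamma_+^+$ and $\widetilde\Gamma_-^+$ approach the hyperbolic fixed point and the commutator term $[Q(\beta), X] B_1 u$ is supported close to the region where Proposition~\ref{l:radial-q-technical} is being applied, making a clean separation between the local barrier-top contribution and the non-local WKB contribution delicate. This is precisely the scenario Proposition~\ref{l:radial-q-technical} was engineered for: its uniform polynomial estimate (with constants depending only on $C_\beta$ and the choice of cutoff geometry) allows one to close the estimate provided that all other $h$-losses remain polynomial, which they do by construction of the normal form and by the properties of WKB continuation.
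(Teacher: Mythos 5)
Your overall strategy is the same as the paper's: conjugate to the normal form $SQ(\beta)$, apply Proposition~\ref{l:radial-q-technical} near the origin, and control the tails on $I_\pm$ via the outgoing WKB solutions and Proposition~\ref{l:w-cauchy}. (The cosmetic difference is that you localize on the model side with a cutoff $X$ and track the commutator $[Q(\beta),X]B_1u$, whereas the paper works with the intertwining error $T=SQ(\beta)B_1-B_1P_x$ and writes $T=T(X_1+X_2)+O(h^\infty)$ with $X_1,X_2$ on the original side; these are equivalent bookkeeping.) Your observation that $XS^{-1}T=O(h^\infty)$ when $\WF(X)$ sits strictly inside $\Phi(K_0)$ is correct, so your displayed equation for $Q(\beta)v$ is essentially right.

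The genuine gap is in the middle paragraph: you never actually estimate $\|[Q(\beta),X]B_1u\|_{L^2(U)}$, which is the crux of the proposition. The remark "(when $\beta_0$ is away from $0$)" is a red herring--- the proposition is stated precisely for $\beta_0$ near $0$ (after Section~\ref{s:radial-trapping} we are in the trapping regime), and the outgoing/incoming distinction works uniformly. And "controllable by Proposition~\ref{l:radial-q-technical} in general" is circular: that proposition converts $\|Xu\|$ into $\|Q(\beta)Xu\|$, so it cannot itself be used to bound the commutator term feeding into its own right-hand side. What is actually needed is the following microlocal dichotomy over the transition region $\WF([Q(\beta),X])\subset\Phi(K_0)\setminus\Phi(\widetilde K_0)$: points off the characteristic set $\{x\xi=\beta_0\}$ are controlled by ellipticity of $P_x$ and $P_xu=f$; points on the characteristic set with $|\xi|>\tilde\varepsilon_0$ (hence small $|x|$) lie in $\widetilde\Gamma_\pm^-$, where $B_1u$ is negligible by the outgoing condition~\eqref{e:u-pm-wf}; and points with $|x|>\tilde\varepsilon_0$ lie in $\widetilde\Gamma_\pm^+$, whose $x$-projection is \emph{disjoint} from the interval $\widetilde I_0$ on which Proposition~\ref{l:radial-q-technical} requires the right-hand side. (The last step also requires the intermediate estimate $\|u-c_\pm u_\pm\|_{L^2(I_\pm\cap I)}\leq Ch^{-1}\|f\|$ from Proposition~\ref{l:w-cauchy} to replace $u$ by $c_\pm u_\pm$ before invoking~\eqref{e:u-pm-wf}.) None of this is in your write-up, and it is precisely what the paper carries out via the $X_1,X_2$ split and the wavefront computation for $TX_2u_1$; without it the argument does not close.
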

\begin{proof}
Let $u\in H^2_{\loc}(\mathbb R)$ be an outgoing solution to the
equation $P_xu=f$, with $f\in L^2$ supported in a fixed compact subset
inside the open interval $I$.  Then
$$
u(x)=c_\pm u_\pm(x),\ \pm x\gg 0,
$$
for some constants $c_\pm$. Clearly, $|c_\pm|\leq C\|u\|_{L^2(I)}$.
Using the method of proof of Proposition~\ref{l:w-cauchy}, we get
\begin{equation}\label{e:radial-propagation}
\|u-c_\pm u_\pm\|_{L^2(I\cap I_\pm)}\leq Ch^{-1}\|f\|_{L^2}.
\end{equation}
Next, let $\chi\in C_0^\infty(I_0)$ be equal to 1 near the complement of
$I_+\cup I_-$ and $B_1$ be the operator introduced in
Proposition~\ref{l:radial-conj}; consider the compactly microlocalized
operator
$$
T=SQ(\beta)B_1-B_1P_x.
$$
Then by Proposition~\ref{l:quantization-canonical}, we can write
$T=TX+O(h^\infty)$, where $X\in\Psi^{\comp}_{\cl}$ is a certain
operator vanishing microlocally on $K_0$.
By~\eqref{e:radial-mapping0}, we can further write $X=X_1+X_2$, where
$X_j\in\Psi^{\comp}_{\cl}$, $\WF(X_j)\cap K_0=\emptyset$,
$\WF(X_1)\cap \{p_0=0\}=\emptyset$ and $\WF(X_2)\subset \{x\in I_+\cup
I_-\}$. By ellipticity,
$$
\|X_1 u\|_{L^2}\leq C\|f\|_{L^2}+O(h^\infty)\|u\|_{L^2(I)}.
$$
Take $\chi_\pm\in C_0^\infty(I_\pm)$ such that $\chi_\pm=1$
near $I_\pm\cap \pi(\WF(X_2))$ (here $\pi:\mathbb T^*\mathbb R\to \mathbb R$
is the projection map onto the base variable and $\pi(\WF(X_2))$ is a compact
subset of $I_+\cup I_-$); then by~\eqref{e:radial-propagation},
$$
\begin{gathered}
\|X_2 (u-u_1)\|_{L^2}\leq Ch^{-1}\|f\|_{L^2}+O(h^\infty)\|u\|_{L^2(I)},\\
u_1=c_+\chi_+(x)u_++c_-\chi_-(x)u_-.
\end{gathered}
$$
It follows that
$$
\|SQ(\beta)B_1u-TX_2u_1\|_{L^2}\leq Ch^{-1}\|f\|_{L^2}+O(h^\infty)\|u\|_{L^2(I)}.
$$
Combining~\eqref{e:u-pm-wf} with~\eqref{e:radial-mapping+}
and the fact that $\WF(X_2)\cap K_0=\emptyset$, we get
$$
\WF(TX_2\chi_\pm(x)u_\pm)\subset \widetilde\Gamma_\pm^+\setminus \widetilde K_0.
$$
The projections of the latter sets onto the $x$ variable do not
intersect $\tilde I_0=\{|x|\leq\tilde\varepsilon_0\}$; therefore, for
some open $\widetilde U_0$ containing $\tilde I_0$,
$$
\|TX_2u_1\|_{L^2(\widetilde U_0)}=O(h^\infty)\|u\|_{L^2(I)}.
$$
Using ellipticity of $S$, we then get
$$
\|Q(\beta)B_1u\|_{L^2(\widetilde U_0)}\leq Ch^{-1}\|f\|_{L^2}+O(h^\infty)\|u\|_{L^2(I)}. 
$$
Applying Proposition~\ref{l:radial-q-technical} to $B_1u$ on $I_0$ and
using that $B_1$ is compactly microlocalized, we get
$$
\|\widetilde XB_1u\|_{L^2}\leq Ch^{-N}\|f\|_{L^2}+O(h^\infty)\|u\|_{L^2(I)},
$$
for any $\widetilde X\in\Psi^{\comp}_{\cl}$ microlocalized in $\widetilde
K_0$.  Using the elliptic estimate and~\eqref{e:radial-mapping0}, we
get
$$
\|u\|_{L^2(I_0)}\leq Ch^{-N}\|f\|_{L^2}+O(h^\infty)\|u\|_{L^2(I)}.
$$
From here by~\eqref{e:radial-propagation},
$$
|c_\pm|\leq C\|c_\pm u_\pm\|_{L^2(I_\pm\cap I_0)}
\leq Ch^{-N}\|f\|_{L^2}+O(h^\infty)\|u\|_{L^2(I)};
$$
combining the last two estimates with~\eqref{e:radial-propagation}, we
get the required estimate:
$$
\|u\|_{L^2(I)}\leq Ch^{-N}\|f\|_{L^2}+O(h^\infty)\|u\|_{L^2(I)}.\qedhere
$$
\end{proof}
To finish the proof of Proposition~\ref{l:radial}, it remains to show
\begin{prop}\label{l:radial-last}
Fix $\tilde\omega,\tilde\nu,\tilde k$ satisfying~\eqref{e:radial-regime},
$m\in\mathbb N$ bounded by a large constant $C_m$, and let
$V$ be the set of all $\lambda$ such that
$$
|\beta(\tilde\omega,\tilde\nu,\tilde k,\tilde\lambda,\tilde\mu;h)+ihm|<h/3.
$$
Then for $h$ small enough, $R_r(\omega,\lambda,k)$ has a unique pole
$\lambda_0$ in $V$, and $\lambda_0$ is within $O(h^\infty)$
of $\mathcal F^r(m,\tilde\omega,\tilde\nu,\tilde k;h)$.
Moreover, we can write
$$
R_r(\omega,\lambda,k)={S(\lambda)\over \lambda-\lambda_0},\
\lambda\in V,
$$
where the family of operators $S(\lambda):L^2_{\comp}(\mathbb R)
\to L^2_{\loc}(\mathbb R)$ is bounded polynomially in $h$
and $S(\lambda_0)$ is a rank one operator.
\end{prop}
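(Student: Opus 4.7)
The plan is to study the semiclassical Wronskian $w(\lambda) = W(u_+, u_-)$ of the outgoing solutions from Proposition~\ref{l:u-pm-wkb} as a holomorphic function of $\lambda \in V$, using the microlocal normal form of Proposition~\ref{l:radial-conj} to compute it modulo $O(h^\infty)$. Recall from~\cite[Section~4]{skds} that $\lambda$ is a pole of $R_r(\omega,\lambda,k)$ precisely when $u_+$ and $u_-$ are proportional, i.e., when $w(\lambda) = 0$; moreover, the variation-of-constants formula
$$
1_I R_r(\omega,\lambda,k) 1_I f(x) = \frac{1_I(x)}{w(\lambda)} \Bigl[u_+(x)\int_{-\infty}^x u_-(y) f(y)\,dy + u_-(x)\int_x^{+\infty} u_+(y) f(y)\,dy\Bigr]
$$
exhibits $1_I R_r 1_I$ in the form $S(\lambda)/w(\lambda)$ with $S$ holomorphic in $\lambda$ and polynomially bounded in $h$. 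Hence the proposition reduces to locating the unique zero of $w$ in $V$ and controlling its multiplicity.

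First I would establish the boundary estimate $|w(\lambda)| \geq c h^{N'}$ on $\partial V$. On this circle $|\beta + ihm| = h/3$, so (taking $C_\beta = 3$) the hypotheses~\eqref{e:radial-technical-beta} of the preceding proposition hold and give $\|1_I R_r 1_I\|_{L^2 \to L^2} = O(h^{-N})$; with the fixed normalizations of $u_\pm$ from Proposition~\ref{l:u-pm-wkb}, this translates into the desired lower bound on $w$.

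Next I would compute $w(\lambda)$ modulo $O(h^\infty)$. After the conjugation of Proposition~\ref{l:radial-conj}, the microlocalized functions $B_1 u_\pm$ live on the disjoint branches $\widetilde\Gamma_\pm^+ \subset \{x\xi = \beta_0\}$ by~\eqref{e:u-pm-wf} and~\eqref{e:radial-mapping+}, and satisfy $Q(\beta) B_1 u_\pm = O(h^\infty)$ microlocally near $\widetilde K_0$. The explicit local solutions $x_\pm^{i\beta/h}$ of the model equation $Q(\beta) v = 0$ on the half-lines $\{\pm x > 0\}$, together with the smoothness constraint imposed by $u_\pm \in C^\infty(\mathbb{R})$, allow one to match $B_1 u_+$ across $x = 0$ and read off a scalar connection coefficient that vanishes exactly when $x^{i\beta/h}$ extends smoothly through the origin, i.e., when $\beta \in -ih\mathbb{N}$. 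Tracking this carefully, in the spirit of~\cite[Theorems~2 and~4]{ra}, will give
$$
w(\lambda) = c(\lambda;h)\bigl(\beta(\lambda) + ihm\bigr) + O(h^\infty),
$$
with $c$ holomorphic in $\lambda$ on $V$ and bounded between two fixed powers of $h$.

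From here the proof concludes by complex analysis. Since $\partial_{\tilde\lambda} \beta_0 \neq 0$, visible from~\eqref{e:radial-beta-2} at $a = 0$ and stable for small $a$, the map $\lambda \mapsto \beta(\lambda)$ is a biholomorphism from $V$ onto a disk of radius $\sim h$ around $-ihm$. On $\partial V$ the leading term $c \cdot (\beta + ihm)$ has size at least $c h^{N'+1}$, dominating the $O(h^\infty)$ error, so Rouché's theorem yields exactly one zero $\lambda_0$ of $w$ in $V$. Then $\beta(\lambda_0) = -ihm + O(h^\infty)$ forces $\lambda_0 = \mathcal{F}^r(m,\tilde\omega,\tilde\nu,\tilde k;h) + O(h^\infty)$ by definition of $\mathcal{F}^r$; the zero is simple because $c(\lambda_0;h)$ and $\partial_\lambda \beta(\lambda_0)$ are both nonvanishing; and $S(\lambda_0)$ has rank one because at $\lambda_0$ the integral kernel in the variation-of-constants formula collapses to a product of two proportional functions $u_\pm(\lambda_0)$. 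The hard part will be the connection-coefficient computation above: extracting the precise prefactor $\beta + ihm$ from the barrier-top matching with a quantitative lower bound on $c(\lambda;h)$, which requires combining the explicit solutions of $Q(\beta) v = 0$ with the a priori estimate of the preceding proposition to rule out cancellations between the two branches of $\{x\xi = \beta_0\}$.
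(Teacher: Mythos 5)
Your proposal follows essentially the same route as the paper: study $w(\lambda)=W(u_+,u_-)$ as a holomorphic function on $V$, reduce its computation to a connection problem across $x=0$ for the model equation $Q(\beta)v=0$ via the conjugation of Proposition~\ref{l:radial-conj}, and conclude that the Wronskian vanishes with multiplicity one exactly once in $V$. The only substantive difference is how the prefactor $c(\lambda;h)$ is controlled. The paper makes the connection coefficient completely explicit: working with $\tilde u_\pm^+=(x\pm i0)^{i\beta/h}$ and the formula for the Fourier transform of these homogeneous distributions, one gets
$$
W(u_+,u_-)=c\,\bigl(\Gamma(-i\beta/h)^{-1}+O(h^\infty)\bigr),
$$
and the polynomial two-sided bound on $c$ falls out of the transition-coefficient estimate~\eqref{e:radial-transition} rather than from a separate a priori resolvent bound. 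Since on $V$ the function $\Gamma(-i\beta/h)^{-1}$ is $\sim(\beta+ihm)/h$, this is exactly the factorization you posit. By contrast, your plan of importing a lower bound on $w$ from Proposition~\ref{l:radial-q-technical} only controls $c$ on $\partial V$; to conclude that $w$ has exactly one zero via Rouch\'e you still need $c$ to be nonvanishing throughout $V$, which must come from the matching computation itself (as you acknowledge). So be aware that the a priori boundary estimate is in the end redundant once the matching is carried out carefully, and that the cleanest way to ``rule out cancellations'' is to exhibit the $\Gamma$-factor explicitly as the paper does. The variation-of-constants structure for $S(\lambda)$ and the rank-one statement at $\lambda_0$ (where $\widetilde S\propto u_+\otimes u_+$) are handled as you describe, using~\cite[(4.8)]{skds}.
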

\begin{proof}
We will use Proposition~\ref{l:radial-conj} and the fact that $\beta=O(h)$ to
extend the WKB solutions $u_\pm^+$ from $I_\pm$ to the whole $\mathbb
R$. Consider the locally integrable functions
$$
\tilde u_\pm^+=(x\pm i0)^{i\beta/h}.
$$
solving the equation $Q(\beta)\tilde u_\pm^+=0$. (See for
example~\cite[Section~3.2]{ho} for the definition and basic properties
of $(x\pm i0)^b$.) We have
$$
\WF(\tilde u_\pm^+)\subset \{\xi=0\}\cup \{x=0,\ \pm\xi>0\}; 
$$
$\tilde u_\pm^+(x)=x^{i\beta/h}$ microlocally near $\{x>0,\ \xi=0\}$,
$\tilde u_\pm^+(x)=e^{\mp \pi\beta/h}(-x)^{i\beta/h}$ microlocally near
$\{x<0,\ \xi=0\}$. Using the formulas for the Fourier transform of $\tilde u_\pm^+$
\cite[Example~7.1.17]{ho}, we get
$$
\tilde u_\pm^+(x)={h^{i\beta/h}e^{\mp \beta\pi/(2h)}\over \Gamma(-i\beta/h)}
\int_0^\infty\chi(\xi) \xi^{-1-i\beta/h}e^{\pm ix\xi/h}\,d\xi
$$
microlocally near $\{x=0,\ \pm\xi\in K_\xi\}$, for every $\chi\in
C_0^\infty(0,\infty)$ such that $\chi=1$ near $K_\xi\subset
(0,\infty)$. Let $B_2$ be the operator constructed in
Proposition~\ref{l:radial-conj}. By~\eqref{e:radial-mapping0}
$P_xB_2\tilde u_\pm^+=O(h^\infty)$ in $C^\infty(I_0)$, and $B_2\tilde
u^+_\pm=\tilde c_\pm u_\pm^++O(h^\infty)$ in $C^\infty(I_\pm\cap I_0)$
for some constants $\tilde c_\pm\sim 1$. To prove the latter, we can
use the theory of Fourier integral operators and Lagrangian
distributions to represent $B_2\tilde u^+_\pm$ in the
form~\eqref{e:radial-wkb-form} microlocally near $\Gamma_\pm^+$; the
symbols in these WKB expressions will have to solve the transport
equations. Then we can use $B_2\tilde u^+_\pm$ to extend $u_\pm^+$ to
$I_\pm\cup I_0$ so that $P_xu_\pm^+=O(h^\infty)$ there. We claim that
\begin{equation}\label{e:radial-transition}
u_\pm^+=c_{\pm 1}u_\mp^++c_{\pm 2}\Gamma(-i\beta/h)^{-1}u_\mp^-+O(h^\infty) 
\end{equation}
in $C^\infty(I_\mp\cap I_0)$, with $c_{\pm j}$ constants such that
$c_{\pm j}$ and $c_{\pm j}^{-1}$ are polynomially bounded in $h$.  To
show~\eqref{e:radial-transition}, we can apply the theory of Lagrangian
distributions to $B_2\tilde u_\pm^+$ one more time; alternatively, we
know that this function is an $O(h^\infty)$ approximate solution to
the equation $P_xu=0$ on $I_0\cap I_\mp$, and we have control on its
$L^2$ norm when microlocalized to $\Gamma^+_\mp$ and $\Gamma^-_\mp$.
Thus, we can extend $u_\pm^+$ to the whole $\mathbb R$ as a
polynomially bounded family with $P_x u_\pm^+=O(h^\infty)$ in
$C^\infty(\mathbb R)$ and~\eqref{e:radial-transition} holding on
$I_\mp$. Similarly we can extend $u_\pm^-$; using either of the
families $(u_\pm^+,u_\pm^-)$ in Proposition~\ref{l:w-cauchy} together
with Proposition~\ref{l:u-pm-wkb}, we get $u_\pm=u_\pm^++O(h^\infty)$
in $C^\infty(\mathbb R)$. It now follows
from~\eqref{e:radial-transition} that
$$
W(u_+,u_-)=c(\Gamma(-i\beta/h)^{-1}+O(h^\infty)),
$$
with $c$ and $c^{-1}$ bounded polynomially in $h$. By~\cite[(4.8)]{skds}, we get
$$
R_r(\omega,\lambda,k)={\widetilde S(\omega,\lambda,k)\over W(u_+,u_-)}, 
$$
with the family $\widetilde S$ holomorphic and bounded polynomially in $h$.
Moreover, for $W(u_+,u_-)=0$, $\widetilde S$ is proportional to $u_+\otimes u_+$
and thus has rank one. We are now done if we let
$\lambda_0$ be the unique solution to the equation
$W(u_+,u_-)=0$ in $V$.
\end{proof}

\appendix
\section{Grushin problems for several commuting operators}\label{s:prelim-grushin}

\subsection{Global Grushin problem}\label{s:prelim-grushin-1}

Assume that $P_1,\dots,P_n$ are pseudodifferential operators on a
compact manifold $M$, with $P_j\in\Psi^{k_j}(M)$ and $k_j\geq 0$.
\begin{defi}\label{d:joint-spectrum}
We say that $\lambda=(\lambda_1,\dots,\lambda_n)\in \mathbb C^n$
belongs to the joint spectrum%
\footnote{Strictly speaking, this is the definition of the joint \emph{point} spectrum.
However, the operators we study in Section~\ref{s:angular} are joint elliptic near
the fiber infinity,
as in Proposition~\ref{l:local-grushin}, thus all joint spectrum is given by eigenvalues.}
of $P_1,\dots,P_n$, if the joint
eigenspace
$$
\{u\in C^\infty(M)\mid P_ju=\lambda_ju,\ j=1,\dots,n\}
$$
is nontrivial. (In our situation, one of the operators $P_j$ will be
elliptic outside of a compact set, so all joint eigenfunctions will be
smooth.)
\end{defi}
The goal of this appendix is to extract information about the joint
spectrum of $P_1,\dots,P_n$ from certain microlocal information.
Essentially, we will construct exact joint eigenfunctions based on
approximate eigenfunctions and certain invertibility conditions.  The
latter will be given in the form of operators $A_1,\dots,A_n$, with
the following properties:
\begin{enumerate}
\item[(G1)] Each $A_j$ can be represented as $A'_j+A''_j$, where
$A'_j$ is compactly microlocalized and has operator norm $O(h^{-r})$;
 $A''_j\in h^{-r}\Psi^{-k_j}(M)$.
Here $r>0$ is a constant.
\item[(G2)] The commutator of any two of the operators $P_1,\dots,P_n,A_1,\dots,A_n$
lies in $h^\infty\Psi^{-\infty}(M)$.
\end{enumerate}
We would like to describe the joint spectrum of $P_1,\dots,P_n$ in a
ball of radius $o(h^r)$ centered at zero. First, we consider a
situation when there is no joint spectrum:
\begin{prop}\label{l:global-grushin-empty}
Assume that conditions (G1) and (G2) hold and additionally,
$$
\sum_{j=1}^n A_jP_j=I\mod h^\infty\Psi^{-\infty}(M).
$$
Then there exists $\delta>0$ such that for $h$ small enough, the
ball of radius $\delta h^r$ centered at zero contains no joint
eigenvalues of $P_1,\dots,P_n$.
\end{prop}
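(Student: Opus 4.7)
The proof is a direct application of the approximate left inverse $\sum_j A_j P_j = I + R$, with $R\in h^\infty\Psi^{-\infty}(M)$, to a hypothetical joint eigenfunction. The plan is to argue by contradiction: suppose $\lambda=(\lambda_1,\dots,\lambda_n)\in\mathbb C^n$ lies in the ball of radius $\delta h^r$ around $0$ and is a joint eigenvalue with nonzero joint eigenfunction $u\in C^\infty(M)$, normalized so that $\|u\|_{L^2}=1$. Then from $P_j u=\lambda_j u$ and the identity $\sum_j A_j P_j=I+R$,
$$
u = \sum_{j=1}^n \lambda_j A_j u - Ru,
$$
so taking $L^2$ norms gives
$$
1=\|u\|_{L^2}\leq \sum_{j=1}^n |\lambda_j|\,\|A_j u\|_{L^2}+\|Ru\|_{L^2}.
$$

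The main estimates are then just the norm bounds built into (G1). Writing $A_j=A'_j+A''_j$, the compactly microlocalized piece $A'_j$ has operator norm $O(h^{-r})$ on $L^2$ by assumption, while $A''_j\in h^{-r}\Psi^{-k_j}(M)$ with $k_j\geq 0$ is bounded $L^2\to L^2$ with norm $O(h^{-r})$ because $M$ is compact and $\Psi^{-k_j}\subset\Psi^0$ acts boundedly on $L^2$. Hence $\|A_j u\|_{L^2}\leq C h^{-r}$ for some constant $C$ independent of $h$ and $\lambda$. The remainder satisfies $\|Ru\|_{L^2}=O(h^N)$ for every $N$ by definition of $h^\infty\Psi^{-\infty}(M)$.

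Plugging in and using $|\lambda_j|\leq |\lambda|<\delta h^r$ for each $j$, we obtain
$$
1\leq nC\delta+O(h^\infty),
$$
which is a contradiction once $\delta$ is chosen so that $nC\delta<\tfrac12$ and $h$ is small enough to absorb the $O(h^\infty)$ term. This $\delta$ depends only on the constants in (G1) and on $n$, so the conclusion holds uniformly.

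There is essentially no obstacle to overcome here; the only point requiring care is the uniform $L^2\to L^2$ bound on the full operator $A_j$, which is immediate from (G1) together with $k_j\geq 0$. Condition (G2) plays no role in this direction, since we only need the approximate left inverse identity, not the commutation relations which will be used elsewhere to produce the $A_j$'s. The statement is sharp in the sense that the threshold $\delta h^r$ matches the scale on which the $A_j$'s fail to contract, which is exactly why in Appendix~\ref{s:prelim-grushin-2} one augments the setup with the operators $S_1, S_2$ to detect joint eigenvalues on this scale.
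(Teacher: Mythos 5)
Your proof is correct and follows essentially the same route as the paper's: apply the approximate left inverse $\sum_j A_j P_j = I + O(h^\infty)$ to a joint eigenfunction, use the $O(h^{-r})$ bound on $\|A_j\|_{L^2\to L^2}$ from (G1), and absorb the $O(\delta)$ and $O(h^\infty)$ terms to force $u=0$. Your observation that (G2) is not used here also matches the paper's proof.
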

\begin{proof}
Assume that $u\in L^2(M)$ and $P_j u=\lambda_j u$, where
$|\lambda_j|\leq \delta h^r$.  Then
$$
0=\sum_{j=1}^n A_j(P_j-\lambda_j)u=(I+h^\infty\Psi^{-\infty})u-\sum_{j=1}^n \lambda_jA_j u.
$$
It follows from condition (G1) that $\|A_j\|_{L^2\to L^2}=O(h^{-r})$; therefore,
$$
\|u\|_{L^2}=O(\delta+h^\infty)\|u\|_{L^2}
$$
and we must have $u=0$ for $\delta$ and $h$ small enough.
\end{proof}
Now, we study the case when the joint spectrum is nonempty. Assume
that $S_1:\mathbb C\to C^\infty(M)$ and $S_2:\mathcal D'(M)\to \mathbb
C$ are operators with the following properties:
\begin{enumerate}
\item[(G3)] Each $S_j$ is compactly microlocalized with operator norm $O(1)$.
\item[(G4)] $S_2S_1=1+O(h^\infty)$.
\item[(G5)] If $Q$ is any of the operators $P_1,\dots,P_n,A_1,\dots,A_n$,
then $QS_1\in h^\infty\Psi^{-\infty}$ and
$S_2Q\in h^\infty\Psi^{-\infty}$.
\item[(G6)] We have
$$
\sum_{j=1}^n A_jP_j=I-S_1S_2\mod h^\infty\Psi^{-\infty}(M).
$$ 
\end{enumerate}
Note that (G5) implies that the image of $S_1$ consists of
$O(h^\infty)$-approximate joint eigenfunctions. For $n=1$, one
recovers existence of exact eigenfunctions from approximate ones using
Grushin problems, based on Schur complement formula; see for
example~\cite[Section~6]{h-s}. The proposition below constructs an
analogue of these Grushin problems for the case of several operators.
This construction is more involved, since we need to combine the fact
that $P_j$ commute exactly, needed for the existence of joint
spectrum, with microlocal assumptions (G1)--(G6) having $O(h^\infty)$
error. Note also that condition (G2) does not appear in the case
$n=1$.
\begin{prop}\label{l:global-grushin}
Assume that the conditions (G1)--(G6) hold and the operators
$P_1,\dots,P_n$ commute exactly; that is, $[P_j,P_k]=0$ for all $j,k$.
Then there exists $\delta>0$ such that for $h$ small enough, the
ball of radius $\delta h^r$ contains exactly one joint eigenvalue
of $P_1,\dots,P_n$. Moreover, this eigenvalue is $O(h^\infty)$ and the
corresponding eigenspace is one dimensional.
\end{prop}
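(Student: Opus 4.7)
The plan is a Grushin-problem argument: first construct a distinguished $O(h^\infty)$-approximate simultaneous null vector of the $P_j$, and then use the \emph{exact} commutativity $[P_j,P_k]=0$ to upgrade it to an honest joint eigenfunction at a uniquely determined eigenvalue $\lambda^*$ that is forced to be $O(h^\infty)$. The main obstacle will be this last step, where exactness (as opposed to mere $O(h^\infty)$ smallness) of the commutators is indispensable.

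Define $\widetilde T(\lambda) := \sum_{j=1}^n A_j(P_j-\lambda_j) + S_1 S_2$ on $L^2(M)$. Combining (G4), (G6) and the bound $\|A_j\|=O(h^{-r})$ from (G1) gives $\widetilde T(\lambda) = I - \sum_j\lambda_j A_j + O_{L^2\to L^2}(h^\infty)$; on $|\lambda|\le \delta h^r$ the perturbation has norm $\le Cn\delta$, so for $\delta$ and $h$ small $\widetilde T(\lambda)$ is invertible with inverse $\widetilde R(\lambda) = I + O(\delta)+O(h^\infty)$ holomorphic in $\lambda$. Set $u_0 := \widetilde R(0) S_1(1)$; by (G4), $S_2 u_0 = 1 + O(h^\infty)$, so $u_0$ is non-degenerate. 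Using (G2), (G5), and the exact identity $[P_j,P_k]=0$, every commutator $[\widetilde T(0),P_k]$ lies in $h^\infty\Psi^{-\infty}(M)$, hence $[\widetilde R(0),P_k]=O(h^\infty)$ in operator norm; combined with $P_kS_1\in h^\infty\Psi^{-\infty}$ this yields
$$
P_ku_0 \;=\; \widetilde R(0)P_kS_1(1) \;+\; [P_k,\widetilde R(0)]S_1(1) \;=\; O(h^\infty)_{L^2}
$$
for every $k$; thus $u_0$ is an $O(h^\infty)$-approximate joint kernel of $P_1,\dots,P_n$.

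The main obstacle is to upgrade $u_0$ to an exact joint eigenfunction. I propose a Newton-type iteration: put $\lambda^{(0)}:=0$, $u^{(0)}:=u_0$, and recursively define the scalars $\lambda^{(\ell+1)}_j := S_2(P_j u^{(\ell)})/S_2 u^{(\ell)}$ and the vectors $u^{(\ell+1)} := \widetilde R(\lambda^{(\ell+1)})S_1(1)$. A key lemma, crucially using the exact commutativity $[P_j,P_k]=0$, should show that $\lambda^{(\ell+1)}-\lambda^{(\ell)}=O(h^{N_\ell})$ with $N_\ell\to\infty$, so that the iteration converges ultra-rapidly to a limit $\lambda^* = O(h^\infty)$; exactness (as opposed to $O(h^\infty)$-commutativity) is indispensable because bilinear defect terms of the form $[P_j,P_k]u^{(\ell)}$ would otherwise accumulate across iterations and obstruct convergence to an \emph{exact} joint eigenfunction. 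The corresponding $u^*:=\widetilde R(\lambda^*)S_1(1)$ then satisfies $P_j u^* = \lambda^*_j u^*$ identically.

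For uniqueness and simplicity, suppose $\lambda'$ in the ball is another joint eigenvalue with eigenfunction $u'$. Then $\widetilde T(\lambda')u' = S_1 S_2 u'$, whence $u' = (S_2 u')\,\widetilde R(\lambda')S_1(1)$; applying $S_2\circ P_j$ and dividing shows that $\lambda'$ is also a fixed point of the iteration map, so $\lambda'=\lambda^*$ and $u'\propto u^*$. Thus the ball contains exactly one joint eigenvalue and the corresponding eigenspace is one-dimensional.
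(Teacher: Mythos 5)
There is a genuine gap, and it sits exactly at the step you flag as the ``key lemma'': the claim that the limit $u^*:=\widetilde R(\lambda^*)S_1(1)$ of the Newton iteration satisfies $P_ju^*=\lambda^*_ju^*$ \emph{identically} does not follow from the setup. A fixed point of your iteration map $\lambda\mapsto \bigl(S_2(P_j\widetilde R(\lambda)S_1(1))/S_2\widetilde R(\lambda)S_1(1)\bigr)_j$ only gives the $n$ \emph{scalar} constraints $S_2\bigl(P_ju^*-\lambda^*_ju^*\bigr)=0$. This kills a rank-one piece of the residual $r_j:=P_ju^*-\lambda^*_ju^*\in L^2(M)$, but says nothing about $r_j$ itself. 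Moreover, the only place exact commutativity $[P_j,P_k]=0$ enters your argument is in proving $[\widetilde R(\lambda),P_k]=O(h^\infty)$; this would have been equally available from the weaker hypothesis $[P_j,P_k]=O_{\Psi^{-\infty}}(h^\infty)$, which in general does \emph{not} admit an exact joint eigenfunction. So your mechanism cannot be distinguishing the exact-commutativity case from the approximate one, and therefore cannot be sufficient.

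The paper closes this gap by enlarging the Grushin operator, not by iterating in the scalar parameter $\lambda$. For $n=2$ the system has an auxiliary unknown $u_2$ (of form degree two) coupled to $u$ via $(P_1-\lambda_1)u=A_2 u_2$, $(P_2-\lambda_2)u=-A_1 u_2$; the point of exact commutativity is that applying $(P_2-\lambda_2)$ to the first equation and $(P_1-\lambda_1)$ to the second and subtracting produces $[P_1,P_2]u=0$ exactly, leaving $\bigl((P_2-\lambda_2)A_2+(P_1-\lambda_1)A_1\bigr)u_2=0$, which by (G2), (G6) and $S_2u_2=0$ is an invertible small perturbation of the identity on $u_2$, forcing $u_2=0$ and hence $P_ju=\lambda_ju$. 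In the general case the analogous cancellation is $d_P^2=0$ acting on the graded space $L^2(M)\otimes\Lambda^\bullet\mathbb C^n$. Your formulation, which lives entirely on $L^2(M)\oplus\mathbb C^n$, carries no state capable of recording the residual that $u_2$ records; the information you need is projected away by $S_2$ before it can be used. To repair the proof you would have to reintroduce a vector-valued residual and show it vanishes, which essentially reproduces the paper's Grushin matrix.

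Two smaller remarks. First, the assertion that the Newton iteration converges ``ultra-rapidly'' is not argued and is itself doing the work of a lemma; as written it is an IOU, not a proof. Second, your uniqueness argument is fine as far as it goes: any joint eigenvalue is a fixed point of the iteration map, and the map is a contraction on $|\lambda|\le\delta h^r$, so there is at most one. But uniqueness without existence does not establish the proposition.
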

\begin{proof}
We prove the proposition in the case $n=2$ (which is the case we will
need in the present paper); the proof in the general case can be found
in Appendix~\ref{s:prelim-grushin-3}.

For $\lambda=(\lambda_1,\lambda_2)\in \mathbb C^2$, consider the
operator
$$
\begin{gathered}
T(\lambda)=\begin{pmatrix}
P_1-\lambda_1&-A_2&S_1&0\\
P_2-\lambda_2&A_1&0&S_1\\
S_2&0&0&0\\
0&S_2&0&0
\end{pmatrix}:\mathcal H_1\to \mathcal H_2;\\
\mathcal H_1=L^2(M)\oplus H_h^{-k_1-k_2}(M)\oplus \mathbb C^2,\
\mathcal H_2=H_h^{-k_1}(M)\oplus H_h^{-k_2}(M)\oplus \mathbb C^2.
\end{gathered}
$$
The conditions (G1)--(G6) imply that for
$$
Q=\begin{pmatrix}
A_1&A_2&S_1&0\\
-P_2&P_1&0&S_1\\
S_2&0&0&0\\
0&S_2&0&0
\end{pmatrix}:\mathcal H_2\to \mathcal H_1,
$$
we have $T(0)Q=I+O_{\mathcal H_2\to \mathcal H_2}(h^\infty)$,
$QT(0)=I+O_{\mathcal H_1\to \mathcal H_1}(h^\infty)$.  By (G1) and
(G3), we have $\|Q\|_{\mathcal H_2\to \mathcal H_1}=O(h^{-r})$;
therefore, if $\delta>0$ and $h$ are small enough and
$|\lambda|\leq \delta h^r$, then $T(\lambda)$ is invertible and
$$
\|T(\lambda)^{-1}\|_{\mathcal H_2\to \mathcal H_1}=O(h^{-r}).
$$
Now, let $|\lambda|\leq\delta h^r$ and put
$$
(u(\lambda),u_2(\lambda),f(\lambda))=T(\lambda)^{-1}(0,0,1,0),\
f(\lambda)=(f_1(\lambda),f_2(\lambda))\in \mathbb C^2.
$$
This is the only solution to the following system of equations, which
we call global Grushin problem:
\begin{align}
\label{e:magic-system-1}
(P_1-\lambda_1)u(\lambda)-A_2u_2(\lambda)+S_1f_1(\lambda)&=0,\\
\label{e:magic-system-2}
(P_2-\lambda_2)u(\lambda)+A_1u_2(\lambda)+S_1f_2(\lambda)&=0,\\
\label{e:magic-system-3}
S_2u(\lambda)&=1,\\
\label{e:magic-system-4}
S_2u_2(\lambda)&=0.
\end{align}
We claim that $\lambda$ is an element of the joint spectrum if and
only if $f(\lambda)=0$, and in that case, the joint eigenspace is one
dimensional and spanned by $u(\lambda)$. First, assume that $u$ is a
joint eigenfunction with the eigenvalue $\lambda$. Then
$T(\lambda)(u,0,0,0)=(0,0,s,0)$, where $s$ is some nonzero number; it
follows immediately that $f(\lambda)=0$ and $u$ is a multiple of
$u(\lambda)$.

Now, assume that $f(\lambda)=0$; we need to prove that $u(\lambda)$ is
a joint eigenfunction for the eigenvalue
$\lambda$. By~\eqref{e:magic-system-1} and~\eqref{e:magic-system-2},
it suffices to show that $u_2(\lambda)=0$. For that, we
multiply~\eqref{e:magic-system-2} by $P_1-\lambda_1$ and
subtract~\eqref{e:magic-system-1} multiplied by $P_2-\lambda_2$; since
$f(\lambda)=0$ and $[P_1,P_2]=0$, we get
$$
((P_1-\lambda_1)A_1+(P_2-\lambda_2)A_2)u_2(\lambda)=0.
$$ 
Recalling (G6), we get
$$
(I-S_1S_2+O_{H_h^{-k_1-k_2}\to H_h^{-k_1-k_2}}(\delta+h^\infty))u_2(\lambda)=0.
$$
By~\eqref{e:magic-system-4},
$(I+O(\delta+h^\infty))u_2(\lambda)=0$ and thus
$u_2(\lambda)=0$. The claim is proven.

It remains to show that the equation $f(\lambda)=0$ has exactly one
root in the disc of radius $\delta h^r$ centered at zero, and
this root is $O(h^\infty)$.  For that, let $QT(\lambda)=I-R(\lambda)$;
we have
$$
\begin{gathered}
R(\lambda)=\begin{pmatrix}
\lambda_1A_1+\lambda_2A_2&0&0&0\\
-\lambda_1P_2+\lambda_2P_1&0&0&0\\
\lambda_1S_2&0&0&0\\
\lambda_2S_2&0&0&0
\end{pmatrix}
+O_{\mathcal H_1\to \mathcal H_1}(h^\infty);\\
T(\lambda)^{-1}=(I+R(\lambda)+(I-R(\lambda))^{-1}R(\lambda)^2)Q.
\end{gathered}
$$
One can verify that $R(\lambda)^2Q(0,0,1,0)=O_{\mathcal
H_1}(h^\infty)$ and then
$$
f(\lambda)=\lambda-g(\lambda;h),
$$
where $g(\lambda;h)=O(h^\infty)$ uniformly in $\lambda$. It remains to
apply the contraction mapping principle.
\end{proof} 
Finally, we establish a connection between global Grushin problem and
meromorphic resolvent expansions, using some more information about
our particular application:
\begin{prop}\label{l:global-grushin-special}
Assume that $n=2$, $P_1,P_2$ satisfy the properties
stated in the beginning of this subsection, $[P_1,P_2]=0$, $k_1>0$, and
$P_1-\lambda$ is elliptic in the class $\Psi^{k_1}$ for some
$\lambda\in \mathbb C$.  If $V$ is the kernel of $P_2$, then by
analytic Fredholm theory (see for example~\cite[Theorem~D.4]{e-z}),
the resolvent
$$
R(\lambda)=(P_1-\lambda)^{-1}|_V:H^{-k_1}_h(M)\cap V\to L^2(M)\cap V
$$
is a meromorphic family of operators in $\lambda\in \mathbb C$ with
poles of finite rank. Then:
\begin{enumerate}
\item Assume that the conditions of Proposition~\ref{l:global-grushin-empty} hold
and let $\delta>0$ be given by this proposition. Then for $h$
small enough, $R(\lambda)$ is holomorphic in $\{|\lambda|<\delta
h^r\}$ and $\|R(\lambda)\|_{L^2\cap V\to L^2}=O(h^{-r})$ in this
region.
\item Assume that the conditions of Proposition~\ref{l:global-grushin} hold
and let $(\lambda_0,\lambda_2^0)$ be the joint eigenvalue and
$\delta>0$ the constant given by this proposition. Suppose that
$\lambda_2^0=0$. Then for $h$ small enough,
$$
R(\lambda)=S(\lambda)+{\Pi\over\lambda-\lambda_0},\
|\lambda|<\delta h^r,
$$
where $S(\lambda)$ is holomorphic, $\Pi$ is a rank one operator, and
the $L^2\cap V\to L^2$ norms of $S(\lambda)$ and $\Pi$ are $O(h^{-N})$
for some constant $N$.
\end{enumerate}
\end{prop}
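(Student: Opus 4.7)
The plan is to deduce both parts from the Grushin setups already established together with two simple facts: $[P_1,P_2]=0$ implies $V=\Ker P_2$ is $P_1$-invariant, so $R(\lambda)$ is meromorphic by analytic Fredholm theory applied to the elliptic operator $P_1|_V-\lambda$; and the poles of $R(\lambda)$ in the disc correspond bijectively to joint eigenvalues of $(P_1,P_2)$ of the form $(\lambda,0)$, with matching eigenspaces.

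For part~(1), I would apply the identity $\sum_jA_jP_j=I\mod h^\infty\Psi^{-\infty}$ from the hypothesis of Proposition~\ref{l:global-grushin-empty} to $u\in V\cap L^2$ satisfying $(P_1-\lambda)u=f$ for $f\in V\cap H_h^{-k_1}$. Since $P_2u=0$, this yields
$$(I-\lambda A_1-O_{L^2\to L^2}(h^\infty))u=A_1f.$$
By (G1), $\|A_1\|_{L^2\to L^2}$ and $\|A_1\|_{H_h^{-k_1}\to L^2}$ are $O(h^{-r})$, so for $\delta$ small and $|\lambda|\le\delta h^r$ the operator on the left is invertible on $L^2$ by a Neumann series with $O(1)$ inverse norm. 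The resulting a priori bound $\|u\|_{L^2}\le Ch^{-r}\|f\|_{H_h^{-k_1}}$ gives injectivity of $P_1|_V-\lambda$; combined with Fredholmness of index zero (from ellipticity of $P_1-\lambda\in\Psi^{k_1}$ with $k_1>0$), this upgrades to invertibility, proving holomorphy and the norm bound.

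For part~(2), Proposition~\ref{l:global-grushin} supplies a unique joint eigenvalue $(\lambda_0,0)$ in the disc with one-dimensional joint eigenspace $\mathbb C\cdot u_0$, so $R(\lambda)$ has a unique pole at $\lambda_0$ in the disc. To obtain the $S(\lambda)+\Pi/(\lambda-\lambda_0)$ decomposition, I would extract $R(\lambda)$ from the bordered inverse $T(\lambda,0)^{-1}$, which is holomorphic in $\lambda$ with $O(h^{-r})$ norm: for $f\in V$ the resolvent identity $(P_1-\lambda)u=f$, $P_2u=0$, is equivalent to $(u,0,0,0)=T(\lambda,0)^{-1}(f,0,S_2u,0)$; writing this by linearity as a combination of $T(\lambda,0)^{-1}(f,0,0,0)$ and the approximate-eigenfunction data $T(\lambda,0)^{-1}(0,0,1,0)$ produces a scalar equation for $\sigma=S_2u$ of the form $d(\lambda)\sigma=L(\lambda)f$, where $d,L$ are built from the $\mathbb C^2$ border blocks of $T(\lambda,0)^{-1}$.

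The main obstacle is showing that $d$ has a simple zero at $\lambda_0$ and extracting the rank-one structure of the residue. Simplicity follows from the explicit description in the proof of Proposition~\ref{l:global-grushin}: the border output there satisfies $f(\lambda_1,\lambda_2)=\lambda-g(\lambda;h)$ with $g=O(h^\infty)$, so the relevant Jacobian is $I+O(h^\infty)$; after restricting to $\lambda_2=0$, this translates to $d'(\lambda_0)=1+O(h^\infty)$. Rank one follows from the one-dimensional joint eigenspace, which forces the residue to factor as $u_0\otimes\ell$ with $\ell$ a left co-eigenvector obtained from the Grushin construction applied to $(P_1^*,P_2^*)$. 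The polynomial bounds $\|S(\lambda)\|,\|\Pi\|=O(h^{-N})$ then follow from the $O(h^{-r})$ bounds on the Grushin blocks and from $|d'(\lambda_0)|^{-1}=O(1)$.
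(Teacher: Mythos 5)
Your argument follows essentially the same route as the paper's proof. Part~(1) is the same computation: use $A_1P_1=I\bmod h^\infty\Psi^{-\infty}$ on $V$ to rewrite $A_1(P_1-\lambda)=I-\lambda A_1+h^\infty\Psi^{-\infty}$, and invert the right side by Neumann series for $|\lambda|\le\delta h^r$; your added remark that injectivity plus index-zero Fredholmness gives invertibility is a small but correct elaboration of what the paper leaves implicit. Part~(2) is likewise the same Schur-complement extraction of $R(\lambda)$ from the bordered inverse $T(\lambda,0)^{-1}$, together with the fact from the proof of Proposition~\ref{l:global-grushin} that $f_1(\lambda)=\lambda+O(h^\infty)$ has a simple zero at $\lambda_0$. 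The only substantive difference is at the very end: the paper observes that $T_{13}^{-1}(\lambda_0)T_{31}^{-1}(\lambda_0)$ factors through $\mathbb C$, so the residue has rank $\le 1$ at once (and $\neq 0$ by evaluating on $u_0$); you instead appeal to the one-dimensionality of the joint eigenspace and a dual Grushin problem for $(P_1^*,P_2^*)$, which is valid but more roundabout — the direct factorization-through-$\mathbb C$ observation is what you want to use here, and it also makes the nonvanishing of the residue something to check separately via $\Pi u_0\neq 0$.
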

\begin{proof}
1. We have
$$
A_1(P_1-\lambda)=I-\lambda A_1+h^\infty\Psi^{-\infty}(M)\text{ on }V;
$$
the right-hand side is invertible for $\delta$ small enough. 
Therefore, $R$ has norm $O(h^{-r})$.

2. We know that $R(\lambda)$ has a pole at $\lambda$ if and only if
there exists nonzero $u\in L^2(M)\cap V$ such that
$(P_1-\lambda)u=0$; that is, a joint eigenfunction of $(P_1,P_2)$ with
joint eigenvalue $(\lambda, 0)$.  Therefore, $\lambda_0$ is the only
pole of $R(\lambda)$ in $\{|\lambda|<\delta h^r\}$.

Now, take $\lambda\neq\lambda_0$, $|\lambda|<\delta h^r$, and
assume that $v\in H^{-k_1}_h(M)\cap V$ and $u=R(\lambda)v\in
L^2(M)\cap V$. Let $T(\lambda)$ be the family of operators introduced
in the proof of Proposition~\ref{l:global-grushin}, with
$\lambda_1=\lambda$ and $\lambda_2=0$; we know that $T(\lambda)$ is
invertible. We represent $T(\lambda)^{-1}$ as a $4\times 4$
operator-valued matrix; let $T_{ij}^{-1}(\lambda)$ be its entries.  We
have $T(\lambda)(u,0,0,0)=(v,0,c,0)$ for some number $c$. However,
then $(u,0,0,0)=T(\lambda)^{-1}(v,0,c,0)$; taking the third entry of
this equality, we get
$T_{31}^{-1}(\lambda)v+T_{33}^{-1}(\lambda)c=0$. Now,
$T_{33}^{-1}(\lambda)=f_1(\lambda)$, with the latter introduced in the
proof of Proposition~\ref{l:global-grushin}.  Therefore, we can
compute $c$ in terms of $v$; substituting this into the expression for
$u$, we get the following version of the Schur complement formula:
\begin{equation}\label{e:schur-complement}
R(\lambda)=\bigg(T_{11}^{-1}(\lambda)-
{T_{13}^{-1}(\lambda)T_{31}^{-1}(\lambda)\over f_1(\lambda)}\bigg)\bigg|_{V}.
\end{equation}
Next, by the proof of Proposition~\ref{l:global-grushin},
$f_1(\lambda_0)=0$ and $f_1(\lambda)=\lambda+O(h^\infty)$. Therefore,
we may write $f_1(\lambda)=(\lambda-\lambda_0)/g(\lambda)$, with $g$
holomorphic and bounded by $O(1)$. Let $u_0$ be the joint
eigenfunction of $(P_1,P_2)$ with eigenvalue $(\lambda_0,0)$; then
$\Pi=-g(\lambda_0)T_{13}^{-1}(\lambda_0)T_{31}^{-1}(\lambda_0)$ is a
rank one operator, as $T_{13}^{-1}(\lambda_0)$ acts $\mathbb C\to V$
and $\Pi u_0=-(1+O(h^\infty))u_0$.  Since the operators $T_{ij}^{-1}$
are polynomially bounded in $h$, we are done.
\end{proof}

\subsection{Local Grushin problem}\label{s:prelim-grushin-2}

In this subsection, we show how to obtain information about the joint
spectrum of two operators $P_1,P_2$ based only on their behavior
microlocally near the set where neither of them is elliptic. For that,
we use global Grushin problems discussed in the previous subsection.
Assume that $P_1\in\Psi^{k_1}_{\cl}(M)$, $P_2\in\Psi^{k_2}_{\cl}(M)$
satisfy
\begin{enumerate}
\item[(E1)] The principal symbol $p_{j0}$ of $P_j$ is real-valued.
\item[(E2)] The symbol $p_{10}$ is elliptic in the class $S^{k_1}(M)$
outside of some compact set. As a corollary, the set
$$
K=\{(x,\xi)\in T^*M\mid p_{10}(x,\xi)=p_{20}(x,\xi)=0\}
$$
is compact.
\end{enumerate}
Next, assume that $A_1,A_2$ are compactly microlocalized operators on $M$ such that:
\begin{enumerate}
\item[(L1)] For each $j$ and every bounded neighborhood $U$ of $K$,
$A_j$ can be represented as $A'_j+A''_j$, where both $A'_j$ and
$A''_j$ are compactly microlocalized, $\|A'_j\|=O(h^{-r})$,
$\WF(A'_j)\subset U\times U$, and $A''_j\in
h^{-r}\Psi^{\comp}_{\cl}(M)$.  Here $r\geq 0$ is some constant.
\item[(L2)] The commutator of any two of the
operators $P_1,P_2,A_1,A_2$ lies in $h^\infty\Psi^{-\infty}(M)$.
\end{enumerate}
Finally, let $S_1:\mathbb C\to C^\infty(M)$,
$S_2:\mathcal D'(M)\to \mathbb C$ be compactly microlocalized operators such that:
\begin{enumerate}
\item[(L3)] $\|S_j\|=O(1)$ and $\WF(S_j)\subset K$.
\item[(L4)] $S_2S_1=1+O(h^\infty)$.
\item[(L5)] If $Q$ is any of the operators $P_1,P_2,A_1,A_2$, then
$QS_1\in h^\infty\Psi^{-\infty}$ and $S_2Q\in h^\infty\Psi^{-\infty}$.
\end{enumerate}
\begin{prop}\label{l:local-grushin}
1. If the conditions (E1)--(E2) and (L1)--(L2) hold, and
$$
A_1P_1+A_2P_2=I
$$
microlocally near $K\times K$, then there exists $\delta>0$ such
that for $h$ small enough, there are no joint eigenvalues of $P_1,P_2$
in the ball of radius $\delta h^r$ centered at zero.

2. If the conditions (E1)--(E2) and (L1)--(L5) hold, $[P_1,P_2]=0$, and
\begin{equation}\label{e:local-grushin-2}
A_1P_1+A_2P_2=I-S_1S_2
\end{equation}
microlocally near $K\times K$, then there exists $\delta>0$ such
that for $h$ small enough, the ball of radius $\delta h^r$
centered at zero contains exactly one joint eigenvalue $\lambda$ of
$P_1,P_2$. Moreover, $\lambda=O(h^\infty)$ and the corresponding joint
eigenspace is one dimensional.
\end{prop}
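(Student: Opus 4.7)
The plan is to deduce the proposition from the global results of Propositions~\ref{l:global-grushin-empty} and~\ref{l:global-grushin} by upgrading the microlocal hypothesis on $A_1P_1+A_2P_2$ near $K\times K$ to a genuinely global identity modulo $h^\infty\Psi^{-\infty}$, while preserving the commutation property (G2) and the norm estimates (G1). Thus the work is a patching argument: we modify $(A_1,A_2)$ on the complement of a small neighborhood of $K$ using a parametrix built from the joint ellipticity of $(P_1,P_2)$ there.

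First, I construct a parametrix off $K$. By (E1)--(E2), the symbols $p_{10},p_{20}$ do not vanish simultaneously outside $K$, and $p_{10}$ is elliptic in $S^{k_1}$ near fiber infinity; since $K$ is compact this makes $(P_1,P_2)$ a jointly elliptic system away from any fixed neighborhood $V$ of $K$. Using the joint functional calculus of the commuting pair $(P_1,P_2)$ (an operator version for two commuting operators of the formal functional calculus in Proposition~\ref{l:formal-functional-calculus}, implemented via Helffer--Sj\"ostrand), I produce $B_j\in\Psi^{-k_j}$ and $\Xi\in\Psi^0$ such that:
\begin{itemize}
\item $\Xi=I$ microlocally near $K$, and $I-\Xi$ is microlocally supported in $V$, chosen so small that $V\times V$ lies inside the region where the microlocal hypothesis on $A_1P_1+A_2P_2$ holds;
\item $B_1P_1+B_2P_2=I-\Xi$ modulo $h^\infty\Psi^{-\infty}$, and each $B_j$ is microlocally supported off a slightly smaller neighborhood of $K$;
\item $[\Xi,P_k]$, $[B_j,P_k]$, and $[B_1,B_2]$ all lie in $h^\infty\Psi^{-\infty}$, because all of $\Xi, B_j$ are functions of the commuting operators $(P_1,P_2)$.
\end{itemize}

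The patching is then direct: set $\tilde A_j=\Xi A_j+B_j$. Using the local identity $A_1P_1+A_2P_2=I-S_1S_2$ microlocally where $\Xi=I$, together with $\WF(S_j)\subset K$ from (L3) (so $\Xi S_1S_2=S_1S_2$ modulo $h^\infty\Psi^{-\infty}$), a direct computation gives
$$
\tilde A_1 P_1+\tilde A_2 P_2=\Xi(I-S_1S_2)+(I-\Xi)+O_{\Psi^{-\infty}}(h^\infty)
=I-S_1S_2+O_{\Psi^{-\infty}}(h^\infty).
$$
In case~(1) the $S_1S_2$ term is absent throughout. The conditions (G1) and (G2) transfer from (L1), (L2), the construction of $B_j$, and the commutation of $\Xi$ with $P_k$; conditions (G3)--(G5) are precisely (L3)--(L5). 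Invoking Proposition~\ref{l:global-grushin-empty} gives part~1, and Proposition~\ref{l:global-grushin} gives part~2 (using $[P_1,P_2]=0$).

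The main obstacle is securing the commutation $[B_j,P_k]\in h^\infty\Psi^{-\infty}$ and $[\Xi,P_k]\in h^\infty\Psi^{-\infty}$ globally, not just microlocally. Building $\Xi$ and $B_j$ as functions of the commuting pair $(P_1,P_2)$ through joint functional calculus makes this automatic. As a backup, one can begin with the symbolic parametrix using the symbols $p_{j0}/(p_{10}^2+p_{20}^2)$ off $K$ and correct it iteratively in powers of $h$; at each order the obstruction to simultaneous commutation with $P_1$ and $P_2$ is a pair of transport equations that are solvable because $\{p_{10},p_{20}\}=0$ (inherited from $[P_1,P_2]=0$) and the coefficients are jointly elliptic on the region where the correction is supported.
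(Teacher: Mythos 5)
Your proposal follows essentially the same route as the paper: the paper constructs precisely your $\Xi$ and $B_j$ via the formal functional calculus of Section~\ref{s:prelim-functional} (setting $\Xi=\chi_\varepsilon[P_1]\chi_\varepsilon[P_2]$, $B_1=\psi_\varepsilon[P_1]$, $B_2=\chi_\varepsilon[P_1]\psi_\varepsilon[P_2]$ with $t\psi_\varepsilon(t)=1-\chi_\varepsilon(t)$), defines $\widetilde A_j=\Xi A_j+B_j$, and reduces to Proposition~\ref{l:global-grushin}. The one step you gloss over that the paper spells out is showing $[A_j,\Xi]$ and $[A_j,B_k]$ lie in $h^\infty\Psi^{-\infty}$, which uses the decomposition $A_j=A'_j+A''_j$ from (L1) together with the fact that $\Xi$ and $B_k$ are microlocally $0$ or $I$ near $K$; this detail is routine and your overall argument is correct.
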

\begin{proof}
We will prove part 2; part 1 is handled similarly. Take small
$\varepsilon>0$ and let $\chi_\varepsilon\in C_0^\infty(\mathbb R)$
be supported in $(-\varepsilon,\varepsilon)$ and equal to 1 on
$[-\varepsilon/2,\varepsilon/2]$.  Also, let $\psi_\varepsilon\in
C^\infty(\mathbb R)$ satisfy
$t\psi_\varepsilon(t)=1-\chi_\varepsilon(t)$ for all $t$; then
$\psi_\varepsilon(t)=0$ for $|t|\leq \varepsilon/2$.  The function
$\psi_\varepsilon$ is a symbol of order $-1$, as it is equal to
$t^{-1}$ for $|t|\geq\varepsilon$.

By~(E1), we can define the operators
$\chi_\varepsilon[P_j],\psi_\varepsilon[P_j]\in \Psi^{\loc}_{\cl}(T^*M)$ using
the formal functional calculus introduced in
Section~\ref{s:prelim-functional}. By~(E2) and
Proposition~\ref{l:formal-functional-elliptic}
$\psi_\varepsilon[P_1]\in\Psi^{-k_1}_{\cl}(M)$, and
$\chi_\varepsilon[P_1]\in\Psi^{\comp}_{\cl}$. Therefore, we can define
uniquely up to $h^\infty\Psi^{-\infty}$ the operators
\begin{equation}\label{e:local-grushin-op}
X_\varepsilon=\chi_\varepsilon[P_1]\chi_\varepsilon[P_2]\in \Psi^{\comp}_{\cl}(M),\
\psi_\varepsilon[P_1]\in\Psi^{-k_1}_{\cl}(M),\
\chi_\varepsilon[P_1]\psi_\varepsilon[P_2]\in\Psi^{\comp}_{\cl}(M).
\end{equation}
By Proposition~\ref{l:formal-functional-calculus}, these operators
commute with each other and with $P_1,P_2$ modulo
$h^\infty\Psi^{-\infty}$. Let $Y$ be any of the operators
in~\eqref{e:local-grushin-op}; we will show that it commutes with each
$A_j$ modulo $h^\infty\Psi^{-\infty}$.  Take a neighborhood $U$ of $K$
so small that $|p_{10}|+|p_{20}|\leq \varepsilon/4$ on $U$; then $Y$ is
either zero or the identity operator microlocally on $U$. By~(L1),
decompose $A_j=A'_j+A''_j$, where $\WF(A'_j)\subset U\times U$ and
$A''_j\in\Psi^{\comp}_{\cl}$.  We have $A_j=A''_j$ microlocally away
from $U\times U$; therefore, $[A''_j,P_k]=0$ microlocally near
$T^*M\setminus U$.  By Proposition~\ref{l:formal-functional-calculus},
$[A''_j,Y]=0$ microlocally near $T^*M\setminus U$; therefore, the
commutator $[A_j,Y]$ is compactly microlocalized and
$\WF([A_j,Y])\subset U\times U$.  However, since $Y=0$ or $Y=I$
microlocally in $U$, we have $[A_j,Y]\in h^\infty\Psi^{-\infty}$, as
needed.

Since $X_\varepsilon=I$ microlocally near $K$ and $\WF(S_j)\subset K$, we
get $(I-X_\varepsilon)S_1,S_2(I-X_\varepsilon)\in h^\infty\Psi^{-\infty}$.
Multiplying~\eqref{e:local-grushin-2} by $X_\varepsilon$, we get for
$\varepsilon$ small enough,
\begin{equation}\label{e:local-grushin-1.1}
(X_\varepsilon A_1)P_1+(X_\varepsilon A_2)P_2+S_1S_2=X_\varepsilon\mod h^\infty\Psi^{-\infty}.
\end{equation}
Next, by Proposition~\ref{l:formal-functional-calculus}
\begin{equation}\label{e:local-grushin-1.2}
\psi_\varepsilon[P_1]P_1+\chi_\varepsilon[P_1]\psi_\varepsilon[P_2]P_2=I-X_\varepsilon\mod h^\infty\Psi^{-\infty}.
\end{equation}
Adding these up, we get
\begin{equation}
(X_\varepsilon A_1+\psi_\varepsilon[P_1])P_1+(X_\varepsilon A_2+\chi_\varepsilon[P_1]\psi_\varepsilon[P_2])P_2
+S_1S_2=I+h^\infty\Psi^{-\infty}.
\end{equation}
The operators $P_1$, $P_2$, $\widetilde A_1=X_\varepsilon
A_1+\psi_\varepsilon[P_1]$, $\widetilde A_2=X_\varepsilon
A_2+\chi_\varepsilon[P_1]\psi_\varepsilon[P_2]$, $S_1$, $S_2$ satisfy the
assumptions of Proposition~\ref{l:global-grushin}. Applying it, we get
the desired spectral result.
\end{proof}

\subsection{Proof of Proposition~\ref{l:global-grushin} in the general case}
\label{s:prelim-grushin-3}

In this subsection, we prove Proposition~\ref{l:global-grushin} for
the general case of $n\geq 2$ operators.  For simplicity, we assume
that $k_1=\dots=k_n=0$; that is, each $P_j$ lies in $\Psi^0(M)$.  (If
this is not the case, one needs to replace $L^2(M)$ below with certain
semiclassical Sobolev spaces.)

Let $V$ be the space of all exterior forms on $\mathbb C^n$; we can
represent it as $V_{\Even}\oplus V_{\Odd}$, where
$$
\begin{gathered}
V_{\Even}=\bigoplus_{j\geq 0}\Lambda^{2j} \mathbb C^n,\
V_{\Odd}=\bigoplus_{j\geq 0}\Lambda^{2j+1} \mathbb C^n
\end{gathered}
$$
are the vector spaces of the even and odd degree forms, respectively.
Note that $V_{\Even}$ and $V_{\Odd}$ have the same dimension.  Define
the spaces
$$
L^2_{\Even}=L^2(M)\otimes V_{\Even},\
L^2_{\Odd}=L^2(M)\otimes V_{\Odd},\
L^2_V=L^2(M)\otimes V.
$$
We call elements of $L^2_V$ forms. They posess properties similar to
those of differential forms; beware though that they are not
differential forms in our case. We will use the families of operators
$(A_j)$ and $(P_j)$ to define the operators
$$
d_P,d^*_A:L^2_V\to L^2_V,
$$
given by the formulas
$$
\begin{gathered}
d_P(u\otimes v)=\sum_{j=1}^n (P_j u)\otimes (e_j\wedge v),\\
d^*_A(u\otimes v)=\sum_{j=1}^n (A_j u)\otimes (i_{e_j} v);\\
u\in L^2(M),\
v\in V.
\end{gathered}
$$
Here $e_1,\dots,e_n$ is the canonical basis of $\mathbb C^n$.  The
notation $i_{e_j}$ is used for the interior product by $e_j$; this is
the adjoint of the operator $v\mapsto e_j\wedge v$ with respect to the
inner product on $V$ induced by the canonical bilinear inner product
on $\mathbb C^n$.  Note that $d_P$ and $d^*_A$ map even forms to
odd and vice versa.

A direct calculation shows that under the assumptions (G1)--(G6),
\begin{equation}\label{e:forms-eq-1}
(d_P+d^*_A)^2=I-S_1S_2\otimes I_V+O_{\Psi^{-\infty}}(h^\infty).
\end{equation}
Here $I_V$ is the identity operator on $V$, while $I$ is the identity
operator on $L^2_V$. Moreover, since the operators $P_1,\dots,P_n$
commute exactly, we have
\begin{equation}\label{e:forms-eq-2}
d_P^2=0.
\end{equation}
For $\lambda=(\lambda_1,\dots,\lambda_n)\in\mathbb C^n$, define the
operator
$$
\begin{gathered}
T(\lambda)=\begin{pmatrix}(d_{P-\lambda}+d^*_A)|_{L^2_{\Even}}&S_1\otimes I_V\\
S_2\otimes I_V&0
\end{pmatrix}:\mathcal H_1\to \mathcal H_2,\\
\mathcal H_1=L^2_{\Even}\oplus V_{\Odd},\
\mathcal H_2=L^2_{\Odd}\oplus V_{\Even}.
\end{gathered}
$$
Here $d_{P-\lambda}$ is defined using the operators
$P_1-\lambda_1,\dots,P_n-\lambda_n$ in place of $P_1,\dots,P_n$.  It
follows from~\eqref{e:forms-eq-1} that for
$$
Q=\begin{pmatrix} (d_P+d^*_A)|_{L^2_{\Odd}}&S_1\otimes I_V\\
S_2\otimes I_V&0\end{pmatrix}:\mathcal H_2\to \mathcal H_1,
$$
we have $QT(0)=I+O_{\mathcal H_1\to \mathcal H_1}(h^\infty)$,
$T(0)Q=I+O_{\mathcal H_2\to \mathcal H_2}(h^\infty)$.  Moreover, it
follows from (G1) and (G3) that $\|Q\|_{\mathcal H_2\to \mathcal
H_1}=O(h^{-r})$.  Therefore, for $|\lambda|\leq\delta h^r$ and
$h$ and $\delta>0$ small enough, the operator $T(\lambda)$ is
invertible, with $\|T(\lambda)^{-1}\|_{\mathcal H_2\to \mathcal
H_1}=O(h^{-r})$.

Assume that $|\lambda|\leq \delta h^r$ and let $\mathbf 1\in
V_{\Even}$ be the basic zero-form on $\mathbb C^n$. Put
$(\alpha(\lambda),v(\lambda))=T(\lambda)^{-1}(0,\mathbf 1)$, where
$\alpha(\lambda)\in L^2_{\Even}$, $v(\lambda)\in V_{\Odd}$; then
$(\alpha(\lambda),v(\lambda))$ is the unique solution to the system
\begin{equation}\label{e:forms-eq-3}
\begin{gathered}
(d_{P-\lambda}+d^*_A)\alpha(\lambda)+S_1(1)\otimes v(\lambda)=0,\\
(S_2\otimes I_V)\alpha(\lambda)=\mathbf 1.
\end{gathered}
\end{equation}
We further write $v(\lambda)=f(\lambda)+w(\lambda)$, where
$f(\lambda)$ is a 1-form and $w(\lambda)$ is a sum of forms of degree
3 or more. Note that both $f$ and $w$ are holomorphic functions of
$\lambda$, with $f(\lambda)\in\mathbb C^n$.

We claim that $\lambda$ is a joint eigenvalue of $P_1,\dots,P_n$ if
and only if $f(\lambda)=0$. First of all, if $u$ is a joint
eigenfunction, then $T(\lambda)(u\otimes \mathbf 1,0)=c(0,\mathbf 1)$
for some scalar $c\neq 0$; therefore, $f(\lambda)=0$ and the joint
eigenspace is one dimensional.

Now, assume that $f(\lambda)=0$. We will prove that the solution
to~\eqref{e:forms-eq-3} satisfies $\alpha(\lambda)=u\otimes \mathbf 1$ for some
$u\in L^2(M)$; it follows immediately that
$(P_1-\lambda_1)u=\dots=(P_n-\lambda_n)u=0$.  Let $\alpha=u\otimes
1+\beta$, where $\beta$ is a sum of forms of degree 2 or higher. Then
by~\eqref{e:forms-eq-2},
\begin{equation}\label{e:forms-eq-4}
(d_{P-\lambda}+d^*_A)^2(u\otimes 1)\in L^2(M)\otimes \mathbf 1.
\end{equation}
Next, we get from~\eqref{e:forms-eq-3}
$$
(S_2\otimes I_V)(d_{P-\lambda}+d^*_A)\alpha+(1+O(h^\infty))v=0.
$$
The components of this equation corresponding to odd forms of degree 3
or higher depend only on $\beta$ and $w$; therefore, for $h$ small
enough, $w=W\beta$ for some operator $W$ of norm $O(h^{-r})$.  Since
$f=0$, we get $v=W\beta$; therefore, by~\eqref{e:forms-eq-4}
and~\eqref{e:forms-eq-3} multiplied by $d_{P-\lambda}+d^*_A$,
$$
(d_{P-\lambda}+d^*_A)^2\beta+(d_{P-\lambda}+d^*_A)(S_1(1)\otimes W\beta)\in L^2(M)\otimes\mathbf 1.
$$
Taking the components of this equation corresponding to forms
of even degree 2 or higher and recalling~\eqref{e:forms-eq-1},
we get
$$
((I-S_1S_2)\otimes I_V+O(\delta+h^\infty))\beta=0.
$$
However, $(S_2\otimes I_V)\beta=0$ by~\eqref{e:forms-eq-3}; therefore,
$$
(I+O(\delta+h^\infty))\beta=0.
$$
It follows that $\beta=0$ and the claim is proven.

It remains to show that the equation $f(\lambda)=0$ has exactly one
solution in the disk of radius $\delta h^r$. For that, we write
$QT(\lambda)=I-R(\lambda)$,
$$
T(\lambda)^{-1}=(I+R(\lambda)+(I-R(\lambda))^{-1}R(\lambda)^2)Q.
$$
We have $Q(\lambda)(0,\mathbf 1)=(S_1(1)\otimes \mathbf 1,0)$
and
$$
R(\lambda)=\begin{pmatrix}(d_P+d^*_A)d_\lambda&0\\
(S_2 \otimes I_V)d_\lambda&0\end{pmatrix}+O(h^\infty).
$$
Here $d_\lambda$ is constructed using $\lambda_1,\dots,\lambda_n$ in
place of $P_1,\dots,P_n$.  Now, we use that $R(\lambda)^2Q(0,\mathbf
1)=O_{\mathcal H_1}(h^\infty)$ to conclude that
$f(\lambda)=\lambda-g(\lambda;h)$ with $g=O(h^\infty)$; it then
remains to use the contraction mapping principle.


\section{Numerical results}\label{s:numerics}

\subsection{Overview}\label{s:numerics-intro}

This section describes a procedure for computing the quantization
symbol $\mathcal F(m,l,k)$ from Theorem~\ref{l:theorem-asymptotics} to
an arbitrarily large order in the case
\begin{equation}\label{e:numerics-l-prime}
l'=l-|k|=O(1).
\end{equation}
The reason for the restriction $l'=O(1)$ is because then we can use
bottom of the well asymptotics for eigenvalues of the angular
operator; otherwise, we would have to deal with nondegenerate
trajectories, quantization conditions for which are harder to compute
numerically; see for example~\cite{cdv}.

We first use the equation~\eqref{e:final-q-eq}; once we get rid of the
semiclassical parameter $h$ (remembering that the original problem was
$h$-independent), the number $\omega=\mathcal F(m,l,k)$ is the
solution to the equation
\begin{equation}\label{e:numerics-q-eq}
\mathcal G^r(m,\omega,k)=\mathcal G^\theta(l',\omega,k).
\end{equation}
Here $\mathcal G^r,\mathcal G^\theta$ are the non-semiclassical
analogues of~$\mathcal F^r,\mathcal F^\theta$;
namely, \eqref{e:radial-qc} and~\eqref{e:angular-qc} take the form
$$
\begin{gathered}
\lambda = \mathcal G^r(m,\omega,k)\sim\sum_{j\geq 0} \mathcal G^r_j(m,\omega,k),\\
\lambda = \mathcal G^\theta(l',\omega,k)\sim\sum_{j\geq 0} \mathcal G^\theta_j(l',\omega,k),
\end{gathered}
$$
respectively. The functions $\mathcal G^r_j,\mathcal G^\theta_j$ are
homogeneous of degree $2-j$ in the following sense:
\begin{equation}\label{e:numerics-homogeneous}
\begin{gathered}
\mathcal G^r_j(m,M_s\omega,sk)=s^{2-j} \mathcal G^r_j(m,\omega,k),\
\mathcal G^\theta_j(l',M_s\omega,sk)=s^{2-j} \mathcal G^\theta_j(l',\omega,k),\
s>0.
\end{gathered}
\end{equation}
Here $M_s\omega=s\Real\omega+i\Imag\omega$; the lack of dilation in
the imaginary part of $\omega$ reflects the fact that it is very close
to the real axis.

\begin{figure}
\includegraphics[width=17.5cm]{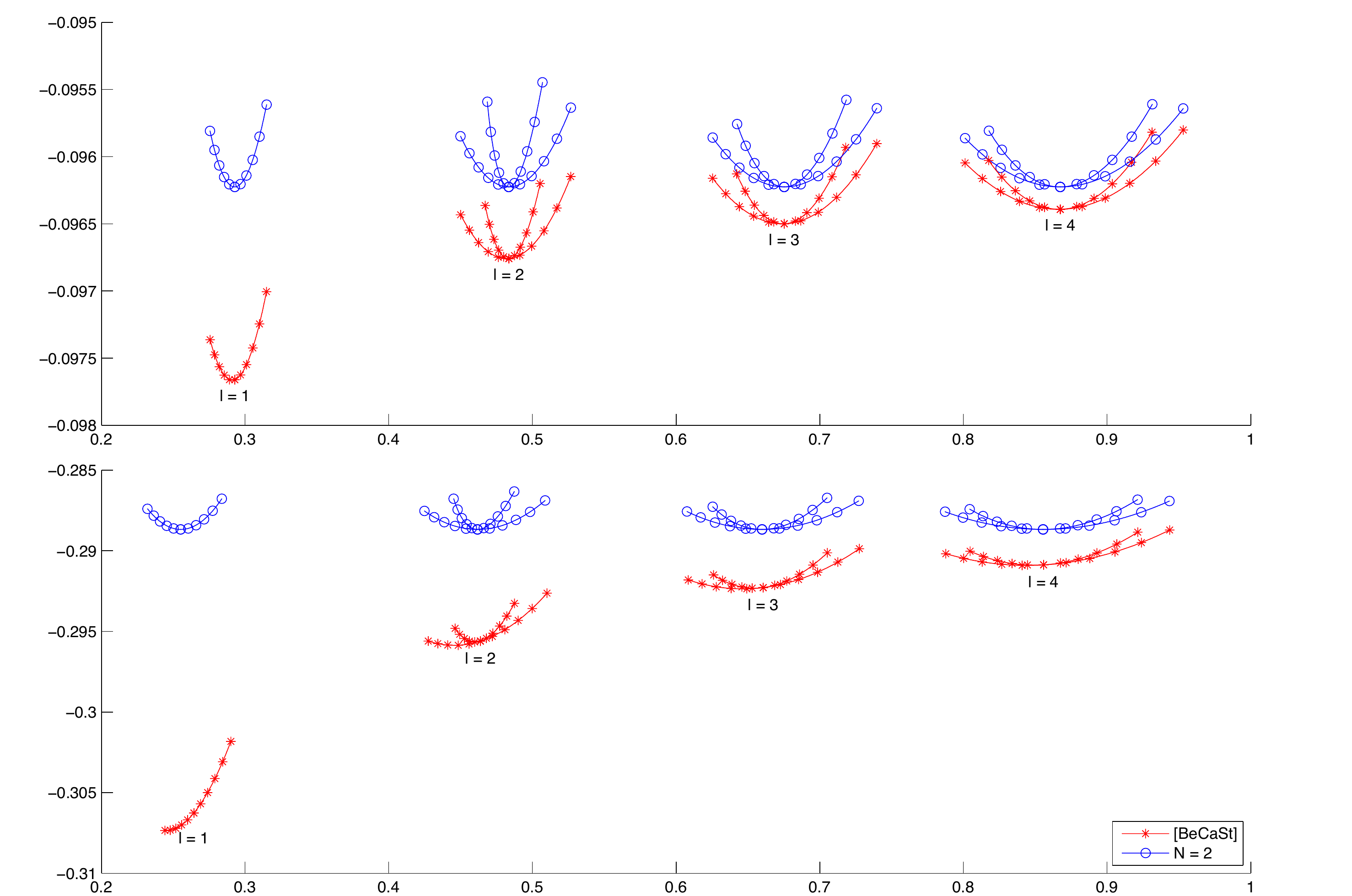}
\caption{Comparison of order 2 approximation to QNMs with
the data of~\cite{b-c-s}. Here $l=1,\dots,4$, $k=-l,-l+1,l-1,l$
(left to right), and $m=0$ (top) and $1$ (bottom).}
\label{f:numerics-1}
\end{figure}

We will describe how to compute $\mathcal G^r_j,\mathcal G^\theta_j$
for an arbitrary value of $j$ in Section~\ref{s:numerics-specific}.
The method is based on a quantization condition for barrier-top
resonances, studied in Section~\ref{s:radial-barrier}; their
computation is explained in Section~\ref{s:numerics-general} and a
MATLAB implementation and data files for several first QNMs can be
found online at \url{http://math.berkeley.edu/~dyatlov/qnmskds}.  We
explain why the presented method gives the quantization conditions of
Propositions~\ref{l:radial} and~\ref{l:angular}, but we do not provide
a rigorous proof.

\begin{figure}
\includegraphics[width=17cm]{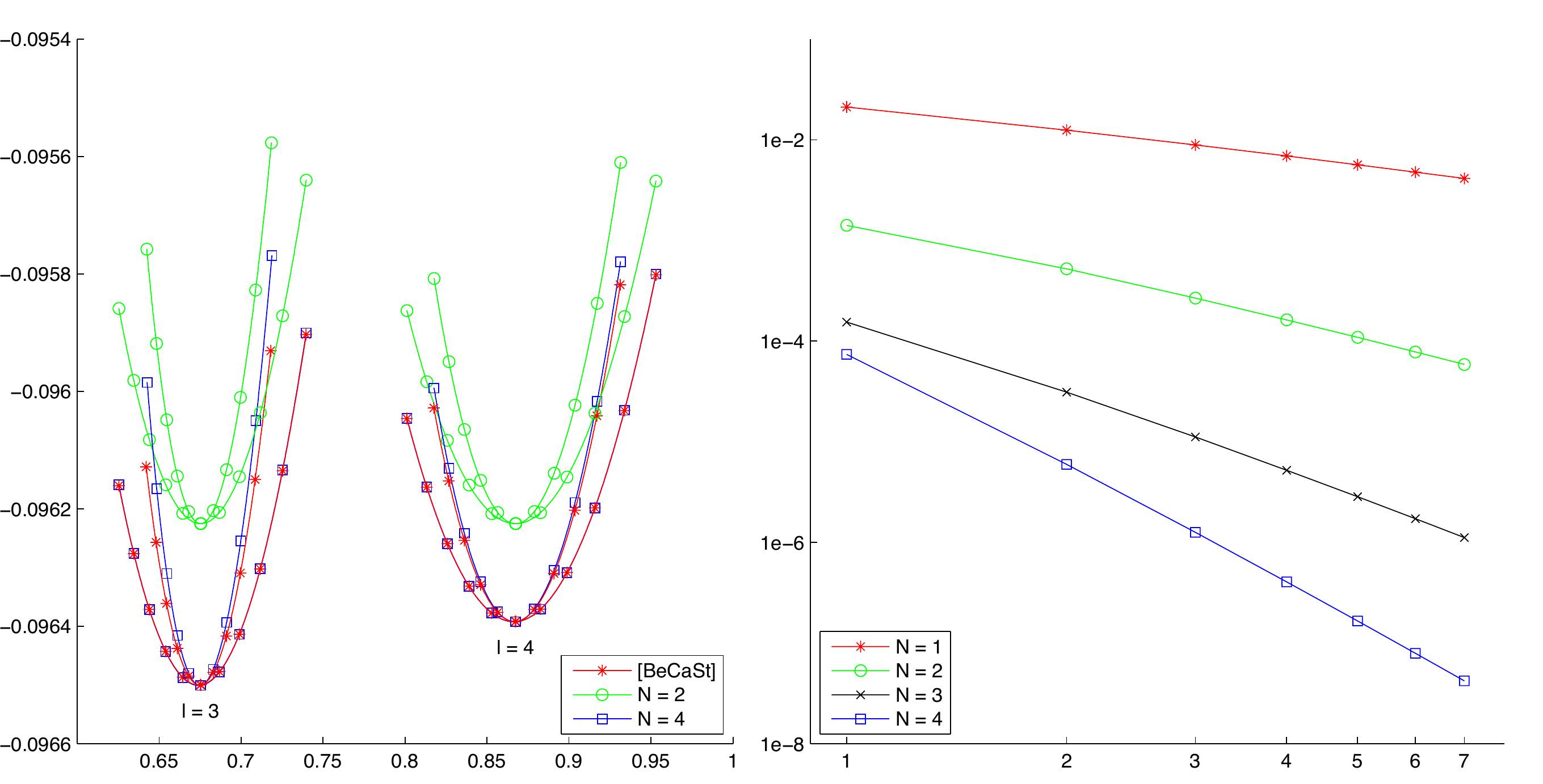}
\caption{Left: comparison of order 2 and 4 approximations
to QNMs with the data of~\cite{b-c-s}. Here $l=3, 4$, $l'=0,1$,
and $m=0$.  Right: log-log plot of the error of order 1--4
approximations to QNMs, as compared to~\cite{b-c-s}. Here
$a=0.1$, $k=l$, $m=0$, and $l$, plotted on the $x$ axis, ranges from
$1$ to $7$.}
\label{f:numerics-2}
\end{figure}

We now compare the pseudopoles given by quantization conditions to
QNMs for the Kerr metric\footnote{The results of the present paper do
  not apply to the Kerr case $\Lambda=0$, due to lack of control on
  the scattering resolvent at the asymptotically flat spatial
  infinity. However, the resonances described by~\eqref{e:our-q} are
  generated by trapping, which is located in a compact set; therefore,
  we can still make sense of the quantization condition and compute
  approximate QNMs.} computed by the authors of~\cite{b-c-s} using
Leaver's continued fraction method~--- see~\cite[Section~4.6]{b-c-s}
for an overview of the method and~\cite[Appendix~E]{b-c-c}
and~\cite[Section~IV]{b-k} for more details. The QNM data for the case
of scalar perturbations, studied in this paper, computed using Leaver's
method can be found online at
\url{http://www.phy.olemiss.edu/~berti/qnms.html}.

Figure~\ref{f:numerics-1} compares the second order approximation to
QNMs (that is, solution to the equation~\eqref{e:numerics-q-eq}
constructed using $\mathcal G^r_j$ and $\mathcal G^\theta_j$ for
$j\leq 2$) to the QNMs of~\cite{b-c-s}. Each branch on the picture
shows the trajectory of the QNM with fixed parameters $m,l,k$ for
$a\in [0,0.25]$; the marked points correspond to
$a=0,0.05,\dots,0.25$.  The branches for same $m,l$ and different $k$
converge to the Schwarzschild QNMs as $a\to 0$.  We see that the
approximation gets better when $l$ increases, but worse if one
increases $m$; this agrees well with the fact that the computed
quantization conditions are expected to work when $l$ is large and $m$
is bounded.

The left part of
Figure~\ref{f:numerics-2} compares the second and fourth order
approximations with the QNMs of~\cite{b-c-s} (with the same values of
$a$ as before); we see that the fourth order approximation is
considerably more accurate than the second order one, and the former
is more accurate for a smaller value of $l'$.  Finally, the right part
of Figure~\ref{f:numerics-2} is a log-log plot of the error of
approximations of degree 1 through 4, as a function of $l$; we see
that the error decreases polynomially in $l$.

\subsection{Barrier-top resonances}\label{s:numerics-general}

Here we study a general spectral problem to which we will reduce both
the radial and the angular problems in the next subsection. Our
computation is based on the following observation: when the
quantization condition of Section~\ref{s:radial-barrier} is satisfied,
the function $u_+$ has the microlocal form~\eqref{e:radial-wkb-form},
with the symbol behaving like $(r-r_0)^m$ near the trapped set. This
can be seen from the proof of Proposition~\ref{l:radial-last}: if
$\beta=-ihm$, then $\tilde u_\pm^+(x)=x^m$ and $B_1\tilde u_\pm^+$ has
to have the form~\eqref{e:radial-wkb-form}. The calculations below
are similar to~\cite[Section~3]{d-s}.

Consider the operator
\begin{equation}\label{e:numerics-general-form}
P_y=D_yA(y)D_y+B(y;\omega,k).
\end{equation}
Here the function $A(y)$ is independent of $\omega,k$, real-valued,
and $A(0)>0$; $B$ is a symbol of order 2:
$$
B(y;\omega,k)\sim\sum_{j\geq 0} B_j(y;\omega,k),
$$ 
with $B_j$ homogeneous of degree $2-j$ in the sense of~\eqref{e:numerics-homogeneous}.
We also require that $B_0$
be real-valued and
$$
B'_0(0;\omega,k)=0,\
B''_0(0;\omega,k)<0.
$$
We will describe an algorithm to find the
quantization condition for eigenvalues $\lambda$ of $P_y$
with eigenfunctions having the outgoing WKB form~\eqref{e:numerics-wkb} near $y=0$;
we will compute $\lambda$ as a symbol of order 2:
$$
\lambda\sim \sum_{j\geq 0}\lambda_j(\omega,k),\
\lambda_j(M_s\omega,sk)=s^{2-j}\lambda(\omega,k),\ s>0.
$$
More precisely, we will show how to inductively compute each
$\lambda_j$.  The principal part $\lambda_0$ is given by the following
barrier-top condition:
\begin{equation}\label{e:numerics-lambda0}
\lambda_0=B_0(0;\omega,k).
\end{equation}
In this case, we have
$$
B_0(y;\omega,k)=\lambda_0(\omega,k)-y^2U_0(y;\omega,k),
$$
where $U_0$ is a smooth function, and $U_0(0)=-V''(0)/2>0$.  Define
the phase function $\psi_0(y;\omega,k)$ such that
$$
\psi'_0(y;\omega,k)=y\sqrt{U_0(y;\omega;k)/A(y)};
$$
note that $\psi_0$ is homogeneous of degree 1. We will look
for eigenfunctions of the WKB form
\begin{equation}\label{e:numerics-wkb}
u(y;\omega,k)=e^{i\psi_0(y;\omega,k)} a(y;\omega,k),
\end{equation}
solving the equation $P_y u=\lambda u$ up to
$O(|\omega|+|k|)^{-\infty}$ error near $y=0$. Here $a$ is a symbol of
order zero:
$$
a(y;\omega,k)\sim\sum a_j(y;\omega,k),
$$
with $a_j$ homogeneous of order $-j$. 

Substituting~\eqref{e:numerics-wkb} into the equation $P_y u=\lambda
u$ and gathering terms with the same degree of homogeneity, we get the
following system of transport equations:
\begin{equation}\label{e:numerics-transport}
\begin{gathered}
(L_0-B_1+\lambda_1)a_j=-L_1a_{j-1}+\sum_{0<l\leq j}(B_{l+1}-\lambda_{l+1}) a_{j-l},\ j\geq 0,\\
L_0=2i\psi'_0 A \partial_y+i(A\psi'_0)'
=2i\sqrt{U_0(y)A(y)}y\partial_y
+i(y\sqrt{U_0(y)A(y)})';\\
L_1=\partial_y A(y) \partial_y,
\end{gathered}
\end{equation}
with the convention $a_{-1}=0$.

Now, consider the space of infinite sequences
$$
\mathbb C^\infty = \{\mathbf a=(\mathbf a^j)_{j=0}^\infty\mid
\mathbf a^j\in \mathbb C\}
$$
and the operator $T:C^\infty(\mathbb R)\to \mathbb C^\infty$
defined by
$$
T(a)=\mathbf a,\
\mathbf a_j=\partial_y^j a(0)/j!.
$$
Let the operators $\mathbf L_j,\mathbf B_j:\mathbb C^\infty\to \mathbb C^\infty$
be defined by the relations
$$
TL_j=\mathbf L_j T,\
TB_j=\mathbf B_j T.
$$
We treat $\mathbf L_j,\mathbf B_j$ as infinite dimensional matrices.
We see that each $\mathbf B_j$ is lower triangular, with elements on
the diagonal given by $B_j(0)$; $(\mathbf L_1)_{jk}=0$ for $j+2<k$. As
for $\mathbf L_0$, due to the factor $y$ in front of the
differentiation it is lower triangular and
$$
(\mathbf L_0)_{jj}=i(2j+1)\sqrt{U_0(0)A(0)}.
$$
One can show that there exists a smooth nonzero function $a_0$ solving
$(L_0-B_1+\lambda_1)a_0=0$ if and only if one of the diagonal elements
of the matrix $\mathbf L_0-\mathbf B_1+\lambda_1$ is zero (the kernel
of this matrix being spanned by $Ta_0$).  Let $m\geq 0$ be the index
of this diagonal element; this will be a parameter of the quantization
condition. We can now find
\begin{equation}\label{e:numerics-lambda1}
\lambda_1=B_1(0)-i(2m+1)\sqrt{U_0(0)A(0)}.
\end{equation}
Now, there exists a nonzero functional $\mathbf f$ on $\mathbb
C^\infty$, such that $\mathbf f(\mathbf a)$ depends only on $\mathbf
a^0,\dots,\mathbf a^m$, and $\mathbf f$ vanishes on the image of
$\mathbf L_0-\mathbf B_1+\lambda_1$.  Moreover, one can show that the
equation $(L_0-B_1+\lambda_1)a=b$ has a smooth solution $a$ if and
only if $\mathbf f(Tb)=0$.

Take $a_0$ to be a nonzero element of the kernel of
$L_0-B_1+\lambda_1$; we normalize it so that $\mathbf f(Ta_0)=0$. Put
$\mathbf a_j=Ta_j$; then the transport equations become
\begin{equation}\label{e:numerics-a}
(\mathbf L_0-\mathbf B_1+\lambda_1)\mathbf a_j=-\mathbf L_1 \mathbf a_{j-1}
+\sum_{0<l\leq j} (\mathbf B_{l+1}-\lambda_{l+1})\mathbf a_{j-l},\ j>0.
\end{equation}
We normalize each $a_j$ so that $\mathbf f(\mathbf a_j)=0$ for $j>0$.
The $j$-th transport equation has a solution if and only if the
$\mathbf f$ kills the right-hand side, which makes it possible to find
\begin{equation}\label{e:numerics-lambda2}
\lambda_{j+1}=\mathbf f\Big(-\mathbf L_1a_{j-1}+\sum_{0<l<j}\mathbf B_{l+1}\mathbf a_{j-l}\Big),\
j>0. 
\end{equation}
Using the equations~\eqref{e:numerics-lambda0},
\eqref{e:numerics-lambda1},
\eqref{e:numerics-lambda2}, and~\eqref{e:numerics-a},
we can find all $\lambda_j$ and $\mathbf a_j$ inductively.

\subsection{Radial and angular quantization conditions}\label{s:numerics-specific}

We start with the radial quantization condition. Consider the original
radial operator
$$
\begin{gathered}
P_r=D_r(\Delta_r D_r)+V_r(r;\omega,k),\\
V_r(r;\omega,k)=-\Delta_r^{-1}(1+\alpha)^2((r^2+a^2)\omega-ak)^2.
\end{gathered}
$$
It has the form~\eqref{e:numerics-general-form}, with
\begin{equation}\label{e:numerics-radial-form}
y=r-r_0,\
A(y)=\Delta_r,\
B(y;\omega,k)=V_r(r;\omega,k).
\end{equation}
Here $r_0$ is the point where $V_r$ achieves its maximal value,
corresponding to the trapped point $x_0$ in
Section~\ref{s:radial-trapping}. Now the previous subsection applies,
with the use of the outgoing microlocalization mentioned in the
beginning of that subsection.  Using~\eqref{e:numerics-lambda0}
and~\eqref{e:numerics-lambda1}, we can compute near $a=0$,
\begin{equation}\label{e:numerics-radial-stuff}
\begin{gathered}
r_0=3M_0-{2ak(1-9\Lambda M_0^2)\over 9 M_0\Real \omega}+O(a^2(|k|^2+|\omega|^2)),\\
\mathcal G^r_0={27M_0^2\over 1-9\Lambda M_0^2}
\bigg(1-{2ak\over 9M_0^2\Real\omega}\bigg)(\Real\omega)^2+O(a^2(|k|^2+|\omega|^2)),\\
\mathcal G^r_0+\mathcal G^r_1=\bigg[
i(m+1/2)+{3\sqrt 3 M_0\omega\over \sqrt{1-9\Lambda M_0^2}}\bigg]^2
+O(1)\text{ for }a=0;
\end{gathered}
\end{equation}
reintroducing the semiclassical parameter, we get the formulas
for $\mathcal F^r$ in Proposition~\ref{l:radial}.
\medskip

Now, we consider the angular problem. Without loss of generality, we
assume that $k>0$. After the change of variables $y=\cos\theta$, the
operator $P_\theta|_{\mathcal D'_k}$ takes the form
$$
P_y=D_y(1-y^2)(1+\alpha y^2)D_y+{(1+\alpha)^2(a\omega (1-y^2)-k)^2\over (1-y^2)(1+\alpha y^2)}.
$$
We are now interested in the bottom of the well asymptotics for the
eigenvalues of $P_y$, with the parameter $l'$
from~\eqref{e:numerics-l-prime} playing the role of the quantization
parameter $m$. The critical point for the principal symbol of the
operator $P_y$ is $(0,0)$. To reduce the bottom of the well problem to
the barrier-top problem, we formally rescale in the complex plane,
introducing the parameter $y'=e^{i\pi/4}y$, so that $(y')^2=iy^2$. We
do not provide a rigorous justification for such an operation; we only
note that the WKB solution of~\eqref{e:numerics-wkb} looks like
$e^{ic(y')^2}a=e^{-cy^2}a$ near $y=0$ for some positive constant $c$;
therefore, it is exponentially decaying away from the origin,
reminding one of the exponentially decaying Gaussians featured in the
bottom of the well asymptotics (see for
example~\cite[Section~3]{d-s} or the
discussion following~\cite[Proposition~4.3]{sb-z}).  There is a similar
calculation of the bottom of the well resonances based on quantum
Birkhoff normal form; see for example~\cite{cdv-g}. The rescaled
operator $P_{y'}=-iP_y$ takes the
form~\eqref{e:numerics-general-form}, with $y'$ taking the place of
$y$ and
\begin{equation}\label{e:numerics-angular-form}
A(y')=(1+i(y')^2)(1-i\alpha(y')^2),\
B(y';\omega,k)=-{i(1+\alpha)^2(a\omega (1+i(y')^2)-k)^2\over (1+i(y')^2)(1-i\alpha(y')^2)}.
\end{equation}
We can now formally apply the results of
Section~\ref{s:numerics-general}; note that, even though $A$ and $B$
are not real-valued, we have
$$
\begin{gathered}
A(0)=1,\
B_0(0)=-i(1+\alpha)^2(a\Real\omega-k)^2,\
B''_0(0)<0.
\end{gathered}
$$
An interesting note is that when $a=0$ and $k>0$, the process
described in Section~\ref{s:numerics-general} gives the spherical
harmonics $\lambda=l(l+1)$ exactly and without the
assumption~\eqref{e:numerics-l-prime}.  In fact, the first three terms
of the asymptotic expansion of $\lambda$ sum to $l(l+1)$ and the
remaining terms are zero.

\medskip

\addcontentsline{toc}{section}{Acknowledgements}

\noindent\textbf{Acknowledgements.} I would like to thank Maciej Zworski
for suggesting this problem, helpful advice, and constant
encouragement.
I would also like to thank Andr\'as Vasy for
sharing with me early versions of~\cite{v} and his
interest in the project, Michael Hitrik for guiding me through the
complexities of~\cite{h-s}, and Kiril Datchev, Hamid Hezari, and Jakub
Kominiarczuk for friendly consultations and references. I am
especially grateful to Emanuele Berti and Vitor Cardoso for providing
the data on quasi-normal modes for the Kerr metric in the case of
scalar perturbations, used in Appendix~\ref{s:numerics-intro}.
Finally, I would like to thank an anonymous referee for carefully
reading the manuscript and many suggestions for improving it. 


\end{document}